\documentclass[11pt]{amsart}

\usepackage[T1]{fontenc}
\usepackage{amsmath,amssymb,amsthm}
\usepackage{tikz}
\usepackage{tikz-cd}
\usepackage{enumerate}  
\usepackage{mathrsfs}
\usepackage{graphicx}
\usepackage{bm}
\usepackage{verbatim}
\usepackage{hyperref}
\usepackage[usenames,dvipsnames]{xcolor}
\newtheorem{theorem}{Theorem}[section]
\newtheorem*{theorem*}{Theorem}
\newtheorem{proposition}[theorem]{Proposition}
\newtheorem{lemma}[theorem]{Lemma}
\newtheorem{corollary}[theorem]{Corollary}
\theoremstyle{definition}
\newtheorem{definition}[theorem]{Definition}
\theoremstyle{definition}
\newtheorem{defthm}[theorem]{Definition/Theorem}
\theoremstyle{remark}
\newtheorem{remark}[theorem]{Remark}
\theoremstyle{remark}
\newtheorem{example}[theorem]{Example}
\theoremstyle{definition}
\newtheorem{notation}[theorem]{Notation}
\theoremstyle{conjecture}
\newtheorem{conjecture}[theorem]{Conjecture}
\newtheorem*{conjecture*}{Conjecture}
\newtheorem{assumption}[theorem]{Assumption}

\usepackage[normalem]{ulem}

\newcommand{\NS}{\mathrm{NS}}
\newcommand{\MW}{\mathrm{MW}}
\newcommand{\Pic}{\mathrm{Pic}}

\newcommand{\Q}{\mathbb Q}

\def\PP{{\mathbb P}}

\title[Bounds on Mordell-Weil ranks]{Bounds on the Mordell-Weil rank of elliptic  fibrations}

\author[Grassi]{Antonella Grassi\textsuperscript{1}}
\address{\textsuperscript{1} Dipartimento di Matematica and INFN, Universit\`a di Bologna,  Bologna, Italy}
\address{\textsuperscript{1} Department of Mathematics, University of Pennsylvania, Philadelphia, USA}
 \email{antonella.grassi3@unibo.it}
 
\author[Miranda]{Rick Miranda\textsuperscript{2}}
\address{\textsuperscript{2} Department of Mathematics, Colorado State University, Fort Collins (CO), 80523, USA}
\email{rick.miranda@colostate.edu}

\author[Paranjape]{Kapil Paranjape\textsuperscript{3}}
\address{\textsuperscript{3} Department of Mathematics, Indian Institute of Science Education and
Research, Mohali, Sector 81, SAS Nagar, Mohali, Punjab 140306, India}
\email{kapil@iisermohali.ac.in}

\author[Srinivas]{Vasudevan Srinivas\textsuperscript{4}}
\address{\textsuperscript{4}Department of Mathematics, University at Buffalo - SUNY, Buffalo, NY 14260, USA}
\email{vs74@buffalo.edu}

\author[Weigand]{Timo Weigand\textsuperscript{5}}
\address{\textsuperscript{5}II. Institut f\"ur Theoretische Physik, Universit\"at Hamburg, Notkestrasse 9, 22607 Hamburg, Germany} 
\address{\textsuperscript{5}Zentrum f\"ur Mathematische Physik, Universit\"at Hamburg, Bundesstrasse 55, 20146 Hamburg, Germany }
\email{timo.weigand@desy.de}

\date{\today}

\begin{document}

\begin{flushright}
 \small{ZMP-HH/26-6} \vspace{4mm}
\end{flushright}

\begin{abstract} We prove that the Mordell–Weil group of a higher dimensional elliptic fibration naturally embeds into that of a suitable elliptic surface. We give sufficient conditions for the existence of such  surfaces. \color{black} We apply our result to obtain explicit and  uniform bounds for the Mordell–Weil rank of elliptic threefolds of Kodaira dimension zero, including Calabi–Yau threefolds, confirming predictions from physics. We prove new explicit bounds for a broad class of elliptic fourfolds. These results suggest a general linear bound for the Mordell–Weil rank in terms of the dimension of the elliptic fibration, which we formulate as a conjecture.

\end{abstract}

\maketitle

\section{Introduction}

  It is known that the rank of the Mordell-Weil group  of a  smooth  elliptic
  K3 surface can only take values 
  $0 \leq {\rm rk}\MW(X/B)\leq 18$,  and that each such value can be obtained \cite{Cox1982, Kuwata2000MW,Kloosterman2007}. These are  the elliptic surfaces with section, singular fibers and $\kappa(X)=0$.

Unlike the case of elliptic surfaces, very little is known  about bounds of the rank of the Mordell-Weil group in higher dimensions. 
We prove an explicit bound on $ {\rm rk}\MW(X/B)$, for elliptic threefolds, with singular fibers and   $\kappa(X) = 0$. 
  For  Calabi-Yau threefolds we prove the  bound 
  $\mathrm{rk}\,{\rm MW} (X/B) \le 28$, and for elliptic fourfolds we establish  under natural assumptions the %new 
  bound $\mathrm{rk}\,{\rm MW} (X/B) \le 38$.  The precise statements %and hypotheses 
 are given in   Theorem \ref{boundCY3}, Theorem \ref{thm:boundKodZero3} and Theorem \ref{thm-4foldgen}.

We present  two types of arguments with a common philosophy: the reduction to particular elliptic surfaces $Z \subset X$.   
We  prove a natural inclusion of the Mordell-Weil groups
$\MW(X/B) \subseteq \MW(Z/C)$
(Section \ref{sec:inclusion}). 
The two types of arguments provide different sufficient conditions  for the existence of  the surface $Z$. 
  Each method leads to possible generalizations  and applications in complementary directions.  
\color{black}

For Calabi–Yau threefolds and fourfolds, the existence of a bound on ${\rm rk}\MW(X/B)$  follows from the boundedness of families of elliptic threefolds \cite{Filipazzi:2021dcw} (see also \cite{Gross1994}) and birational boundedness of fourfolds \cite{engel2025boundednessfiberedktrivialvarieties}.  
 However, existing boundedness results for elliptic Calabi-Yau do not provide explicit numerical bounds on the Mordell–Weil rank and the  boundedness  of other elliptic varieties with $\kappa(X)=0$  is proven under additional conditions, (for example, excluding  some product type),  \cite{Birkar_2024, DiCerboSvaldi, Filipazzi:2021dcw, Gross1994} and the  recent \cite{FilipazziXu2026boundednessellipticcalabiyau4folds}.

 Our  results  provide explicit numerical Mordell–Weil bounds and they also apply in certain product type and non birationally bounded families.

 At the same time, the highest Mordell–Weil rank currently known for elliptic Calabi-Yau threefolds is ${\rm rk}\MW(X/B) = 10$ 
 \cite{Grassi:2021wii,Elkies}. Also, for  Calabi-Yau manifolds, results from physics give the bound $\mathrm{rk}\,{\rm MW} (X/B)\le 28$ for threefolds and provide partial bounds for fourfolds under additional hypotheses \cite{Lee:2019skh,Martucci:2022krl,Lee:2022swr}, see Appendix \ref{sec:Appendix}.

Our theorems  prove (for Calabi-Yau threefolds)  and go beyond (for Calabi-Yau fourfolds) the  results obtained in  physics  \cite{Lee:2019skh,Martucci:2022krl,Lee:2022swr}.

Our methods apply beyond the Calabi–Yau setting. In particular, we establish explicit uniform Mordell–Weil bounds for elliptic threefolds of Kodaira dimension zero, even when the underlying varieties do not belong to birationally bounded families (see \cite{DiCerboSvaldi} and \cite{Birkar_2024}).
We improve the bounds to the known rank of $10$ when the Mordell-Weil group has non trivial torsion and $B$ is ruled.

The first method  of reduction to a surface (Section \ref{sec:arithmproof}) is via function field techniques,  and the base curve of the surface is a  general smooth movable rational curve without base points. This approach uses only elements of the classical minimal model theory for rational surfaces, without
considering degenerate fibers, and avoids difficult results in higher dimensional birational geometry. It is formulated in the scheme language. In its current state this method does not provide an explicit effective bound for Calabi-Yau fourfolds.

The second  method (Section \ref{sec:geomproof}) assumes that there are singular fibers, builds on a birational geometric approach, and  on the properties of the Shioda pairing and its height. The base curve of the fibration is not necessarily movable nor rational, and it provides  explicit effective bounds for  Calabi-Yau fourfolds.

\subsection{Outline of the  paper}
 
 In Section \ref{sec:Background} we collect the necessary background for our analysis, including results from the Minimal Model Problem applied to elliptic fibrations and properties of the Shioda pairing for rational sections. We also revisit Noether's formula (in all characteristics).

In Section \ref{sec:arithmproof}, we prove the structure theorem \ref{main2} via field extensions. 
While philosophically similar to the procedure of the following Section \ref{sec:geomproof}, the proof presented in this section differs in two aspects. It uses that the Mordell-Weil rank of an elliptic curve over a field only increases
on base change to a larger field, and that an elliptic curve over a function field of higher transcendence degree may sometimes be viewed as the generic fiber of an elliptic fibration on a smooth proper surface $S_K$, which is itself defined over some function field $K$. Then Noether's formula directly yields a bound for the Picard rank of this surface (and hence our desired elliptic Mordell-Weil rank), without considering degenerate fibers of the elliptic fibration, the Shioda pairing, or heights.  
The method in Section \ref{sec:arithmproof} is formulated in the scheme language. 
  In its current state, it does not provide an explicit effective bound for fourfolds, which we obtain, instead, by the method developed in Section \ref{sec:geomproof}. 

 In Section \ref{sec:geomproof} we combine methods from birational geometry with fundamental properties of the Shioda map and its height-pairing to  develop and prove the two structure theorems \ref{thm:BoundsFromNoetherMov} and \ref{thm:BoundsFromNoetherPic1} for elliptic fibrations (not necessarily Calabi-Yau).  
 The argument builds on  
 the physics approach of \cite{Lee:2018ihr, Lee:2019skh, Martucci:2022krl, Lee:2022swr} and proves an injection from $\MW(X/B)$ to ${\rm MW}(Z/C)$ modulo torsion  for a suitable surface $Z$ (smooth, or  birationally  either a rational or  K3 surface) obtained by restricting the elliptic fibration to a smooth  movable curve $C$ in the smooth locus of  $B$. 
 If $C$ is not movable, other arguments are needed (see in particular Lemma \ref{lemma:bsaZ}).

In Section \ref{sec:inclusion} we prove  the inclusion of the whole Mordell-Weil groups, $\MW(X/B) \subseteq \MW (Z/C)$, given by the natural restriction of sections.
 We  give sufficient conditions on  the curves $C$ for the inclusion to hold.
 
 In Section \ref{sec:dim3}, we apply  Theorems \ref{main2} and  \ref{thm:BoundsFromNoetherMov} to prove a universal bound, formulated in Theorems \ref{thm:boundKodZero3},  \ref{boundCY3} and \ref{them:boundKodZeroBNonLDP}, on the Mordell-Weil rank of  elliptic 
 threefolds with  $\kappa(X) = 0$, in particular Calabi-Yau. We describe the singularities on $B$ in Corollary \ref{cor:basketK0}.
   
   In  Proposition \ref{prop:KodNeg} we prove explicit bounds for a broad class of  threefold elliptic fibrations with $\kappa(X) < 0$. 
  \color{black}

Building on the results of Section  \ref{sec:inclusion} and \ref{sec:dim3} we describe the possible torsion subgroups of the Mordell-Weil group of elliptic threefolds in Theorem \ref{thm:TorsionCY3}. In some cases these further constrain 
the rank of the free part. 

\color{black}
In Section \ref{sec:4folds}, we apply the structure theorems  \ref{thm:BoundsFromNoetherMov} and \ref{thm:BoundsFromNoetherPic1} to obtain an explicit bound for the Mordell-Weil rank of a broad class of elliptic  fourfolds with $K_X \equiv 0$, in particular Calabi-Yau, Theorem \ref{thm-4foldgen}.   The proof requires a more delicate argument than the threefold case, because, while rational and very free curves of  minimal degree are known to exist on rank-one Fano threefolds, it is a subtle problem to find those with minimal degree (with respect to the anticanonical divisor). We find that unlike for elliptic threefolds,  the curves giving the best bound  do not need to be  movable nor are necessarily rational. A slightly modified argument proves explicit bounds for elliptic   fourfolds fibrations with $\kappa(X) < 0$.

Our  explicit bounds on ${\rm rk} \MW(X/B)$ hold  even when  $X$ does not belong to a (birational) bounded family, and  it does not satisfy the hypotheses in  \cite{Filipazzi:2021dcw}, \cite{DiCerboSvaldi} and \cite{Birkar_2024}  which ensure boundedness of ${\rm rk} \MW(X/B)$. 
The examples in Sections \ref{sec:dim3} and \ref{sec:4folds} illustrate that if  the base $B$ is not  rationally connected, the elliptic fibration decomposes  
into an  isotrivial component and a  varying part. 
The Mordell–Weil rank depends on the non isotrivial component, and it remains uniformly bounded, 
Remark \ref{remark:producttype}. 
\color{black}

Our results suggest that the Mordell–Weil rank of an elliptic fibration should admit a linear bound depending only on the dimension of the total space, and we formulate this as a conjecture in Section \ref{conjecture}.

Appendix \ref{sec:Appendix} contains a review of related work from the physics of string theory. The original motivation of this project was  in fact to prove the physics predictions for threefolds.

\color{black}

\section*{Acknowledgements}
We thank Stefano Filipazzi, Seung-Joo Lee, Luca Martucci, Matthias Schütt, and  Sho Tanimoto for discussions. 
TW especially thanks Seung-Joo Lee for collaboration on the related work \cite{Lee:2018ihr, Lee:2019skh} and Luca Martucci and Nicolo Risso for collaboration on the related work \cite{Martucci:2022krl}.
TW thanks the University of Bologna for its hospitality. KHP wishes to thank VS and the University of Buffalo for hospitality during July-Aug 2024 when VS introduced him to the problems studied here that led to this paper.
RM and VS wish to thank the organisers of the Summer Research Institute in Algebraic Geometry held in July 2025 at CSU, where we started discussing this together. 
VS thanks the UB Research Foundation for support to attend that event.

The work of TW is supported in part by Deutsche Forschungsgemeinschaft under Germany’s Excellence Strategy EXC 2121 Quantum Universe 390833306, by Deutsche Forschungsgemeinschaft through a German-Israeli Project Cooperation (DIP) grant “Holography and the Swampland” and by Deutsche Forschungsgemeinschaft through the Collaborative Research Center 1624 “Higher Structures, Moduli Spaces and Integrability.” 
The work of AG is supported in part by 
Prin 2022BTA242 ``Geometry of algebraic structures: moduli, invariants, deformations'' of Italian MUR. AG is a member of GNSAGA of  INdAM.

\section{Background on elliptic fibrations and their Mordell-Weil group} \label{sec:Background}
 We work over the complex numbers; {unless otherwise specified, all the varieties are assumed to be normal, projective and irreducible}.

\begin{definition} \label{def-CY}
    A complex variety $X$ with canonical bundle $\omega_X$ is called Calabi-Yau if $\omega_X \simeq {\mathcal O}_X$ and $h^{i}(X,{\mathcal{O} }_X) =0$ for $0<i< {\rm dim}(X)$.
\end{definition}

\subsection{Movable curves and effective divisors}
Let $B$ be a $\mathbb Q$-factorial variety; let $B^{sm} \subset B$ be the non singular locus of $B$.
Let $N_1(B)_{\mathbb{R}}$ be the set of  numerical equivalence  classes of  real $1$-cycles and 
  $N^1(B)_{\mathbb{R}}$ the set of  numerical equivalence  classes of  real  divisors.
\begin{definition}\label{movable}
\begin{enumerate}
\item  A curve $C\subset B$ is movable if and only if there is an irreducible subvariety ${\mathcal C}\subset B\times T$, with $T$ irreducible, such that all fibers $C_t,\;t\in T$, of the projection ${\mathcal C}\to T$
are irreducible curves in $B$, $C=C_{t_0}$ for some $t_0\in T$ and  the projection ${\mathcal C}\to B$ has dense image.
\item With notation as above, if, in addition, the intersection
    $\cap_{t\in T}C_t$ is empty, we say that the curve $C\subset B$ is
        freely movable.
   \item $\overline{\mathrm{NM}(B)}=\overline{\{ \sum a_i \gamma_i, \ a_i \geq 0, \ \gamma_i \in \ N_1(B)_{\mathbb{R}}, \ \gamma_i \text{ movable }\}}.$
	\item $\overline{\mathrm{Eff}(B)}=\overline{\{ \sum a_i D_i, \ a_i \geq 0, \ D_i \in \ N^1(B)_{\mathbb{R}}, \ D_i \text{ effective }\}}.$
\end{enumerate}

\end{definition}
\begin{theorem}\label{thm:BDPaP}%[Bauksom-Demailly-Paun-Peternell]
\cite{BDPP}
 Let $B$ be projective  and irreducible. Then: $$\overline{\mathrm{NM}(B)}= \{ \gamma \in N_1(B)_{\mathbb{R}} \text{ such that } D \cdot \gamma \geq 0, \ \forall  \ D \in \overline{\mathrm{Eff}(B)} \}.$$

\end{theorem}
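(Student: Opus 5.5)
The plan is to prove the two inclusions separately; the inclusion $\overline{\mathrm{NM}(B)}\subseteq \mathrm{Eff}(B)^\vee$ is elementary and carries no real content. Let $\gamma$ be the class of a movable curve $C=C_{t_0}$ coming from a family ${\mathcal C}\subset B\times T$ that dominates $B$, and let $D$ be an effective divisor. We want to show $D\cdot\gamma\ge 0$.

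Since ${\mathcal C}\to B$ has dense image, a general member $C_t$ is not contained in $\mathrm{Supp}(D)$. Numerical classes of fibres are constant in a flat family, after replacing $T$ by a curve through $t_0$ and a general point and normalizing. Hence $D\cdot C=D\cdot C_t\ge 0$, because a curve meeting an effective divisor properly has nonnegative intersection with it. Nonnegativity is preserved by nonnegative linear combinations and by limits, and the pairing is continuous in $\overline{\mathrm{Eff}(B)}$, so the inclusion follows.

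The reverse inclusion is the content of \cite{BDPP}. By duality of closed convex cones in the finite-dimensional space $N^1(B)_{\mathbb R}$, it is equivalent to showing that the dual cone $\overline{\mathrm{NM}(B)}^\vee$ is contained in $\overline{\mathrm{Eff}(B)}$. In other words, a divisor class $D$ with $D\cdot\gamma\ge 0$ for every movable $\gamma$ must be pseudo-effective. I would first reduce to $B$ smooth by passing to a resolution $\mu:\tilde B\to B$. Images of movable curves are movable, and $\mu_*$ maps the movable cone of $\tilde B$ onto that of $B$, because any movable curve on $B$ lifts to a movable curve on $\tilde B$ through general points. Pseudo-effectivity of $\mu^*D$ implies that of $D$.

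On smooth $\tilde B$ I would argue by contradiction. Suppose $D$ is not pseudo-effective. Choose an ample class $A$ and let $\epsilon>0$ be maximal with $L=D+\epsilon A$ on the boundary of $\overline{\mathrm{Eff}}$; thus $L$ is pseudo-effective but not big. The key input is the orthogonality estimate: for a big class $L'$ and a Fujita approximation $\pi:\hat B\to \tilde B$ with $\pi^*L'=\hat A+E$, where $\hat A$ is ample and $E$ is effective, one has
\[
(\hat A^{n-1}\cdot E)^2\le C\,\bigl(\mathrm{vol}(L')-\hat A^n\bigr)\,\mathrm{vol}(L').
\]
This yields movable classes $\gamma=\pi_*(\hat A^{n-1})$ with $L'\cdot\gamma\approx \mathrm{vol}(L')$. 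Applying this to $L+\delta A$ with $\delta\to 0$ and passing to a limit produces a nonzero movable class $\gamma$ with $L\cdot\gamma=0$. Then $D\cdot\gamma=-\epsilon A\cdot\gamma<0$, which is a contradiction.

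I expect the orthogonality estimate to be the main obstacle. It requires Fujita approximation and a Khovanskii–Teissier/Hodge-index type inequality on $\hat B$, and one must also make sure the limiting $\gamma$ is nonzero, which can be achieved by normalizing $A\cdot\gamma=1$. Since the paper treats this theorem as background, I would simply cite \cite{BDPP} for this step.
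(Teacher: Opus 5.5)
Your proposal is correct in substance and follows the paper's route. The paper offers nothing beyond the citation of \cite{BDPP}, and you likewise rest the nontrivial inclusion $\overline{\mathrm{NM}(B)}^\vee\subseteq\overline{\mathrm{Eff}(B)}$ on that reference.

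Your additions are worthwhile. The most useful one is the reduction to a resolution $\mu:\tilde B\to B$. The paper passes over this step, although \cite{BDPP} is stated for projective manifolds and the result is later applied to singular $\mathbb{Q}$-factorial klt bases. Only $\mu_*(\text{movable})\subseteq\text{movable}$ and the projection formula are used there; the surjectivity you mention is not needed. Pseudo-effectivity descends because $\mu^*(D+\epsilon A)=\mu^*D+\epsilon\,\mu^*A$ is big, and bigness descends along $\mu$ since $B$ is normal.

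Three details of the sketch should be fixed.

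First, $D\cdot C_{t_0}=D\cdot C_t$ can fail. The paper's definition only asks the fibres to be irreducible as sets, so the flat limit of $C_t$ over $t_0$ may be $m\,C_{t_0}$ with $m>1$, as with conics specializing to a double line. One then only gets $D\cdot C_t=m\,D\cdot C_{t_0}$; the sign conclusion survives.

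Second, the orthogonality estimate is misquoted. BDPP prove $(\hat A^{n-1}\cdot E)^2\le 20\,(C\omega)^n\,(\mathrm{vol}(L')-\hat A^n)$, for a fixed ample $\omega$ with $C\omega\pm L'$ nef. With $\mathrm{vol}(L')$ in place of $(C\omega)^n$ and a constant independent of $L'$, the inequality is false. On the blow-up of $\mathbb{P}^2$ at a point, let $F$ be the ruling class and $E_1$ the exceptional curve, and take $L'=xF+E_1$, $\hat A=xF+(x-s)E_1$ with $0<s<x\to 0$. The left side is $(1-x+s)^2s^2$ while the right side is $C\,s^2x^2$. Your argument only uses the estimate for each fixed $L'$, so nothing breaks.

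Third, Khovanskii--Teissier is not what proves the orthogonality estimate; that comes from $\mathrm{vol}(\alpha-\beta)\ge\alpha^n-n\,\alpha^{n-1}\cdot\beta$ for nef $\alpha,\beta$. Khovanskii--Teissier is what your normalization step needs. Rescaling to $A\cdot\gamma=1$ guarantees a nonzero limit. However, $L\cdot\gamma_{\lim}=0$ requires $L'\cdot\gamma/(A\cdot\gamma)\to 0$, while both numerator and denominator tend to $0$. This follows from $\pi^*A\cdot\hat A^{n-1}\ge (A^n)^{1/n}(\hat A^n)^{(n-1)/n}$. That inequality bounds the ratio by $(\mathrm{vol}(L')/A^n)^{1/n}$ plus an error that a sufficiently fine Fujita approximation makes arbitrarily small.
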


 \begin{definition}\label{def:freeVerFree}[See for example, \cite[Section 2.4]{Debarre2011GAEL}] Let $f: \PP^1\to B$ be a non constant immersion such that $f(\PP^1) \subset B^{sm} \subseteq B$. 
 
 $C=f(\mathbb{P}^1)$  is said to be free (very free) if \( f^*T_B\) is nef (\( f^*T_B \) is ample), or equivalently
$N_{C/B}$ is the direct sum of non-negative (positive) summands.
 
 \end{definition}
In the following we denote by
 $\rho(B):=\operatorname{rk} \operatorname{Pic} (B)$.
\subsection{ Elliptic fibrations }

\begin{definition} An elliptic   fibration is a morphism $\pi : Y\rightarrow V$ whose fibers over a dense set in $V$ are elliptic 
	curves. The complement of this dense locus in $V$, the discriminant of the fibration, is  denoted by  $\Sigma_{Y/V}$.
\end{definition}

\begin{remark}
For $X$ a variety, we indicate by $K_X$ the canonical divisor, interpreted as the first Chern class of the canonical bundle $\omega_X$, and whenever a divisor $\Lambda$ is associated with a  line bundle, we call the latter $\mathcal{L}$.
\end{remark}
\begin{definition}\label{def:bireq} Let  $\tilde \pi : Y \rightarrow V$ be an elliptic fibration. A birationally equivalent elliptic fibration $  \pi :  X  \to B $ is a fibration which makes the diagram 
\[
\begin{tikzcd}
Y \arrow[d, "\tilde \pi "'] \arrow[r, dashed,"\phi"] & X \arrow[d, "\pi"] \\
V \arrow[r, dashed,"\psi" ] & B
\end{tikzcd}
\]
 commutative, where $\phi$   and $\psi$ are birational maps.
\end{definition}

\begin{defthm}[after \cite{Kodaira1963}] Let  $\tilde \pi : Y \rightarrow V$ be an elliptic fibration between smooth varieties. To the general point of each irreducible (reduced) codimension $1$ component $\Sigma_i$ of   the discriminant  $\Sigma_{Y/V}$, there is an associated $a_i \in \mathbb{Q}$.
 $\Lambda_{Y/V}\stackrel{def}=\sum _ia_i\Sigma_i$  is  %called
 the discriminant $\mathbb{Q}$-divisor.
\end{defthm}
\color{black}
\begin{theorem}\label{thm:birminmodel} 
	Let  $ 
	Y \rightarrow V$ be an elliptic fibration between smooth varieties, $\Lambda_V$ be the discriminant $\Q$-divisor. Then  the following properties hold:
    	\begin{enumerate}
        \item $\kappa(Y)=\kappa(V + \Lambda_V).$
 \item If $\Lambda_V \neq 0$, then $h^1(Y, \mathcal{O}_Y )= h^1(V, \mathcal{O}_V)$.
 
\item  If either $\dim (Y) \leq 5$ or $\dim (Y) > 5$ and klt flips exist and terminate in ${\dim Y-1}$, there exists a birationally equivalent elliptic fibration $  \pi :  X  \to B $, $X$ with  $\mathbb{Q}$-factorial terminal singularities, $( B ,   \Lambda )$ with $\mathbb{Q}$-factorial klt singularities,  such that 
			$  \Lambda  $ is supported on $\Sigma_{X/B}$, the discriminant  of the fibration, and $K_{ X }  \equiv  {\pi}^*(K_{ B } +   \Lambda )$.
  \item  If $\kappa (Y) \geq 0$ and either  $\dim (Y) \leq 5$ or $\dim (Y) > 5$ and klt flips exist and terminate in ${\dim Y-1}$, $X$ can be taken to be minimal.
\item  If $\dim (Y) \leq 5$, $\pi$ can be assumed to be
		equidimensional over an open set $U \subset B$ with $\operatorname{codim} (B \setminus U )\geq 3$.

\end{enumerate}
Furthermore:
\begin{enumerate}
 \setcounter{enumi}{5}

\item If  $\dim Y = 3$,  there are no multiple fibers, $\kappa(X)=0$  and either $\omega_X \simeq \mathcal{O}_X$   or $h^1(\mathcal{O}_X) \geq 1$, then $ B $ is smooth.
\item If  $\dim Y = 3$, $\omega_X \simeq \mathcal{O}_X$,   $h^1(X, \mathcal{O}_X)=0$, and there are no multiple fibers,   then $B$ is either $\mathbb{P}^2$ or the blow up of the Hirzebruch surface $\mathbb{F}_n, \  n \leq 12$.
\item If  $\kappa (Y)= 0$, $\Lambda \neq 0$ and either $\dim (Y) \leq 5$ or $\dim (Y) > 5$  and klt flips exist and terminate in ${\dim Y-1}$, then $B$ is birationally a Mori fiber space (see Definition \ref{def:MoriFS}).

		\end{enumerate}
	\end{theorem}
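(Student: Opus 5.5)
This theorem collects known results, so the plan is to treat each item as a consequence of the canonical bundle formula for elliptic fibrations together with the relative Minimal Model Program. We assemble the items from the literature rather than reprove them from scratch.

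\textbf{Items (1) and (2).} We start from Kodaira's canonical bundle formula in its higher-dimensional form (Fujita, Kawamata, Nakayama). After a birational modification we may assume $\tilde\pi$ is flat over a big open set with normal crossing discriminant. Then
\[
K_Y \sim_{\mathbb Q} \tilde\pi^*(K_V+\Lambda_V)+E,
\]
where $E$ is effective and $\tilde\pi$-exceptional or supported on non-reduced fibers. Taking plurisections and using that $\tilde\pi_*\mathcal O_Y(mE)=\mathcal O_V$ for such $E$ gives $\kappa(Y)=\kappa(V,K_V+\Lambda_V)$, which is (1).

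For (2), we use the Leray spectral sequence: $h^1(\mathcal O_Y)=h^1(\mathcal O_V)+h^0(R^1\tilde\pi_*\mathcal O_Y)$. The sheaf $R^1\tilde\pi_*\mathcal O_Y$ is the dual of the Hodge line bundle, whose degree is governed by the $j$-map and $\Lambda_V$. When $\Lambda_V\neq0$ we expect it to have negative degree on movable curves, so it has no sections.

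\textbf{Items (3), (4) and (5).} We run a relative $K$-MMP of $Y$ over $V$ and then a $(K_V+\Lambda_V)$-MMP on the base pair; existence is guaranteed in dimension $\le 5$ by the cited results, and by hypothesis in higher dimensions. This follows Grassi's construction (and Nakayama for the equidimensionality in codimension $2$). Klt singularities of $(B,\Lambda)$ come from the discriminant coefficients $a_i<1$, while terminal singularities of $X$ and the relation $K_X\equiv\pi^*(K_B+\Lambda)$ follow from relative minimality. For (4), when $\kappa\ge0$ we continue the MMP on the base to a minimal model, using abundance where it is needed.

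\textbf{Items (6), (7) and (8).} These are specific to low dimension. For (6), following Grassi and Nakayama, with no multiple fibers the pair $(B,\Lambda)$ has at worst singularities that would force multiple fibers unless $B$ is smooth. For (7), $B$ is a smooth rational surface with $-K_B$ effective, so Gross's and Grassi's classification gives $\mathbb P^2$ or blow-ups of $\mathbb F_n$ with $n\le12$. For (8), $\kappa(B,K_B+\Lambda)=0$ with $\Lambda\neq0$ effective forces $K_B$ to be not pseudo-effective, so $B$ is uniruled and the MMP ends in a Mori fiber space by BCHM.

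The main obstacle is (6): showing that $B$ is smooth, rather than merely klt, requires the local analysis of the klt points of $(B,\Lambda)$. I would handle this by citing Grassi's classification instead of rederiving it.
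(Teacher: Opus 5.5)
Your overall strategy is the paper's own. Its proof of this theorem is purely bibliographic: it cites \cite{Grassi1991, GrassiSingBase1993} for (2), (6), (7) and \cite{GrassiWen} for the remaining items. As an assembly of references your proposal is therefore acceptable. The mechanisms you attach to the citations, however, are not always the ones that make the statements true, and for the item you single out, (6), your sketch would fail. It never uses the hypothesis that $\omega_X\simeq\mathcal O_X$ or $h^1(\mathcal O_X)\ge 1$, and without that hypothesis the conclusion is false even when there is a section.

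For an abelian surface $A$, take $X=(E\times A)/\{\pm1\}$. It is a $\mathbb{Q}$-factorial terminal, equidimensional elliptic threefold with $\kappa(X)=0$ and a section, the image of $\{0\}\times A$. Its base is the Kummer surface $A/\{\pm1\}$, with sixteen nodes. The fibres over the nodes are non-reduced, but they lie over codimension-two points, and that is not what ``no multiple fibres'' excludes.

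What Grassi's argument actually does is the following. Each singular point $P\in B$ is a cyclic quotient point whose resolution is a Hirzebruch--Jung string $\langle n_P,q_P\rangle$ with $n_P\le 6$. For $\kappa(X)=0$ one gets $2\chi(\mathcal O_X)=\sum_P(1-1/n_P)$ (compare Lemma~\ref{lemma:SingsBaseK0}). The hypothesis enters only by forcing $\chi(\mathcal O_X)=0$: by Serre duality when $\omega_X\simeq\mathcal O_X$, and by Kawamata's structure theory of $\kappa=0$ threefolds when $q>0$. So the sum is empty and $B$ is smooth. In the example above, $K_X$ is nontrivial $2$-torsion, $h^1(\mathcal O_X)=0$ and $\chi(\mathcal O_X)=4$, which matches sixteen points with $n_P=2$.

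Two other sketches need repair.

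\emph{Item (2).} Your ``we expect'' carries the whole argument, and the expectation fails for the part of $\Lambda_V$ coming from multiple fibres, which the Hodge bundle $\mathcal L$ does not see. A bielliptic surface fibred over $\mathbb{P}^1$ with four fibres of type ${}_2I_0$ is a counterexample. Normalizing $\Lambda$ as (3) requires, so that $K_Y\equiv\tilde\pi^*(K_{\mathbb{P}^1}+\Lambda)$, it has $\Lambda=\tfrac12(p_1+\dots+p_4)\neq 0$, yet $\mathcal L$ is trivial and $h^1(\mathcal O_Y)=1$. With no multiple fibres (for instance with a section, as in the rest of the paper) your argument goes through: $12\mathcal L$ is then a nonzero effective class, so $\mathcal L^{-1}\cdot H^{\dim V-1}<0$ and $\mathcal L^{-1}$ has no sections.

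\emph{Item (7).} Rationality of $B$ requires $\Lambda\neq 0$, which gives $\kappa(B)=-\infty$ and, by (2), $q(B)=0$. Without it the statement fails: take $(S\times E)/(\mathbb{Z}/2)$, with $S$ a K3 surface carrying an Enriques involution and $-1$ acting on $E$. This is a Calabi--Yau threefold elliptically fibred, with a section, over an Enriques surface. Moreover, effectivity of $-K_B$ cannot give $n\le 12$, since $-K$ is effective on every $\mathbb{F}_n$. The bound comes from the Weierstrass coefficients in $H^0(-4K_B)$ and $H^0(-6K_B)$. These must vanish to orders at least $4$ and $6$ along any smooth rational curve of self-intersection $\le -13$, which would produce non-minimal fibres.
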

\begin{proof} (2), (6), (7) are  proved in  \cite{Grassi1991} and \cite{GrassiSingBase1993}; the other statements  are found in  \cite{GrassiWen}, which generalizes to higher dimensions the results for threefolds in \cite{Grassi1991, GrassiEqui}. \end{proof}
\begin{remark}{Note that the Minimal Model Program is known to hold  in general when  $\dim Y \leq 4$  ($Y$ is elliptically fibered and $K_Y$ cannot be  a big divisor).}
\end{remark}

\begin{definition}\label{def:MoriFS} A {Mori fiber space} is a  dominant  projective morphism
$
f : B' \to U$
such that
\begin{enumerate}[(\alph*)]
\item $B'$ is a normal projective variety with $\mathbb{Q}$-factorial terminal (klt) singularities,
\item $ f_*(\mathcal O_{B'})= \mathcal O_U$ and $\dim B '> \dim U$,
\item $ \rho(B')- \rho(U)=1$,
\item  $-K_{B'}$ is $f$-ample, that is the general fiber is a Fano variety.
\end{enumerate}
\end{definition}
The general fiber F also has ${\mathbb Q}$-factorial terminal (klt)
singularities, since $B'$ does.
By the classical results on the Minimal Model Program for threefolds, see for example \cite{MatsukiIntroMMP}, we have the following more precise structure theorem:

\begin{theorem}\label{thm:MMP} Let $\pi: X \to B$ an elliptic fibration as in Theorem \ref{thm:birminmodel},  ${\rm dim}(X) \leq 5$, and  $K_X= \pi^*(K_B+ \Lambda)$. Assume also  that $ \Lambda \neq 0$. Then there is a birational transformation $\mu: B \dasharrow B'$, where $B'$ 
has the structure of a Mori fiber space. 

Furthermore, if $B$ has terminal singularities, the following possibilities can occur:
\begin{enumerate}
\item   Any component of a fiber of $f$ is a smooth rational curve. The general fiber is a smooth rational curve.
\item The general fiber of $f$ is a smooth del Pezzo surface.
\item  $B'$ is a Fano  variety with $\mathbb{Q}$-factorial terminal singularities, $\rho (B')=1$.
\item If $\dim B'=4$, the general fiber $F$ is a Fano threefold with $\mathbb{Q}$-factorial terminal singularities.
\end{enumerate}    
\end{theorem}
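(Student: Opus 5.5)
Since $\Lambda\neq 0$ and $K_X\equiv\pi^*(K_B+\Lambda)$, the plan is to run a $K_B$-MMP on $B$, show it must end in a Mori fiber space, and then read off the possible fibrations from the classification of extremal contractions in low dimension.

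\textbf{Step 1: $K_B$ is not pseudo-effective.} By Theorem \ref{thm:birminmodel}, $(B,\Lambda)$ is $\mathbb{Q}$-factorial klt, so $B$ is $\mathbb{Q}$-factorial klt. We may assume $\kappa(X)\le 0$, which is the situation in the paper's applications via Theorem \ref{thm:birminmodel}(8). Since $\Lambda$ is effective and nonzero, $K_B\equiv (K_B+\Lambda)-\Lambda$ is "smaller" than $K_B+\Lambda$. If $K_B+\Lambda\equiv 0$, then $K_B\equiv-\Lambda$ is anti-effective and nonzero, so $K_B\cdot\gamma<0$ for a general movable complete intersection curve $\gamma$. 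Theorem \ref{thm:BDPaP} then gives $K_B\notin\overline{\mathrm{Eff}(B)}$. When $\kappa(X)<0$, $K_B+\Lambda$ itself is not pseudo-effective, so neither is $K_B$.

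\textbf{Step 2: run the MMP.} Since $\dim B\le 4$, the $K_B$-MMP with scaling exists for klt $B$. Because $K_B$ is not pseudo-effective, it terminates by BCHM in a Mori fiber space $f\colon B'\to U$, and the composite $\mu\colon B\dashrightarrow B'$ is birational. Each divisorial contraction or flip preserves $\mathbb{Q}$-factoriality and the klt, resp.\ terminal, property, which gives conditions (a)--(d) of Definition \ref{def:MoriFS}.

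\textbf{Step 3: case analysis when $B$ is terminal.} Terminality is preserved along the MMP, so $B'$ is terminal. Split by $\dim U$:
\begin{itemize}
\item If $\dim U=0$, then $\rho(B')=1$ and $-K_{B'}$ is ample, so $B'$ is a $\mathbb{Q}$-factorial terminal Fano variety. This is case (3).
\item If $\dim B'-\dim U=1$, the general fiber is a smooth rational curve, since a terminal variety is smooth in codimension two and a one-dimensional Fano is $\mathbb{P}^1$. That every fiber component is a smooth rational curve comes from the structure of extremal conic bundles: Mori's theorem for smooth threefolds, and Cutkosky's and the terminal-case results in general (cf.\ \cite{MatsukiIntroMMP}). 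This is case (1).
\item If the relative dimension is $2$, the general fiber is a terminal surface, hence smooth, with $-K$ ample, so it is a smooth del Pezzo surface. This is case (2).
\item If $\dim B'=4$ and the relative dimension is $3$, the general fiber is a $\mathbb{Q}$-factorial terminal Fano threefold. $\mathbb{Q}$-factoriality of the fiber would follow as in the remark after Definition \ref{def:MoriFS}. This is case (4).
\end{itemize}

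\textbf{Main obstacle.} The hardest step is the claim in case (1) that \emph{every} fiber component is a smooth rational curve, rather than just the general fiber. Smoothness of fiber components is only classical for smooth or terminal threefold bases. I would first try to cite the known classification of extremal contractions of fiber type with one-dimensional fibers. Failing that, I would argue via bend-and-break: fiber components are $K$-negative rational curves, and smoothness would follow from their $-K$-degree being at most $2$. A secondary subtlety is Step 1 when $\kappa(X)>0$. There $K_B$ could be pseudo-effective, so the statement implicitly needs $\kappa(X)\le0$, matching Theorem \ref{thm:birminmodel}(8), and I would make this assumption explicit.
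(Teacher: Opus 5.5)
Your proposal is essentially the paper's argument, which consists only of invoking Theorem~\ref{thm:birminmodel}(8) and the classical MMP (\cite{MatsukiIntroMMP}): $K_B$ is not pseudo-effective, so the $K_B$-MMP ends in a Mori fiber space, whose general fibers are then read off by relative dimension; you are also right that a hypothesis such as $\kappa(X)\le 0$ is implicitly needed, since over a base of general type the first assertion fails. For the step you flag, use relative Kawamata--Viehweg vanishing instead of bend-and-break: when $\dim B'\le 3$ the contraction has only one-dimensional fibers (a prime divisor mapped to a point would be $\mathbb{Q}$-linearly trivial, which is absurd), and then $R^1f_*\mathcal{O}_{B'}=0$ forces $H^1(\mathcal{O}_\Gamma)=0$, hence $\Gamma\simeq\mathbb{P}^1$, for every fiber component $\Gamma$.
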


\begin{definition}\label{def:RC} Let $B$ be a projective variety. $B$ is rationally connected if and only if  for  any two  very general points in $B$ there is an irreducible rational curve which contains both points.
\end{definition}

\begin{definition}\label{def:MRC} Let $B$ be a projective variety. A maximal rationally connected (MRC) fibration of $B$ is a rational map
$\phi : B \dashrightarrow Z$
such that the general fiber is rationally connected and the base $Z$ is not uniruled.
\end{definition}

 A MRC as in Definition \ref{def:MRC} exists and it is unique up to birational equivalence \cite{Campana1992,KollarMiyaokaMori1992}.

\color{black}
\medskip
\subsection{Noether's Formula}

\begin{proposition}\label{prop:Noether} Let
$
\pi \colon Z \to C,
$
where  $Z$ is a smooth  proper surface
over an algebraically closed field of characteristic 0,
$C$ is a smooth curve of genus $g(C)$ and $\omega_Z= \pi^*(\omega_C
\otimes  \mathcal{L}_C)$, for some line bundle $\mathcal{L}_C$
of degree $>0$.
Then
   \begin{equation}\label{rhoform}
\rho = \mathrm{rk}\, {\rm NS}(Z)\le 10   \deg {\mathcal L}_C+2g(C).
\end{equation}
In particular, if
$\pi: X \to B$ is as in Theorem \ref{thm:birminmodel},
$\mathcal{L}$  is  a line  bundle on $B$,  $C \subset B$ a smooth projective
curve  and
$Z=\pi^{-1}(C)$, so that
$\deg \mathcal{L}_C=\mathcal{L} \cdot C>0$, then
 \begin{equation}
\rho = \mathrm{rk}\, {\rm NS}(Z)\le 10  \mathcal{L} \cdot C+2g(C).
\end{equation}

\end{proposition}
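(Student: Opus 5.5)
The plan is to combine Noether's formula with the canonical bundle formula and the Hodge index bound $\rho \le h^{1,1}$. Since $\omega_Z=\pi^*(\omega_C\otimes\mathcal L_C)$, we get $K_Z^2=0$, because $K_Z$ is numerically a multiple of the fiber class $F$ and $F^2=0$. Noether's formula $12\chi(\mathcal O_Z)=K_Z^2+e(Z)$ therefore gives $e(Z)=12\chi(\mathcal O_Z)$.

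Next we identify $\chi(\mathcal O_Z)$ with $\deg\mathcal L_C$. For a relatively minimal elliptic fibration with section, or more generally one with no multiple fibers, the canonical bundle formula reads $\omega_Z=\pi^*(\omega_C\otimes (R^1\pi_*\mathcal O_Z)^\vee)$, and $\deg (R^1\pi_*\mathcal O_Z)^\vee=\chi(\mathcal O_Z)$. In our situation we only know $\omega_Z=\pi^*(\omega_C\otimes\mathcal L_C)$. Pulling back degrees, $K_Z\equiv(2g-2+\deg\mathcal L_C)F$. Comparing with Kodaira's formula $K_Z\equiv(2g-2+\chi(\mathcal O_Z))F+\sum(m_i-1)F_i$, where $F_i$ are the reduced multiple fibers and $F_i\equiv F/m_i$, gives
\[
\deg \mathcal L_C=\chi(\mathcal O_Z)+\sum_i \tfrac{m_i-1}{m_i}\ \ge\ \chi(\mathcal O_Z).
\]
Since $\pi^*$ is injective on $\mathbb Q$-line bundles up to torsion, this comparison is legitimate. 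Relative minimality follows from the hypothesis: $K_Z$ is numerically a pullback, so it has zero intersection with every vertical curve, and hence no fiber contains a $(-1)$-curve.

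Now we bound $\rho$. By the Lefschetz $(1,1)$ theorem, $\rho\le h^{1,1}=b_2-2p_g$. We have $e(Z)=2-2b_1+b_2$ and $b_1=2q$. Since $\deg\mathcal L_C>0$, the fibration is not trivial, and the standard fact for elliptic surfaces with $\chi(\mathcal O_Z)>0$, or via the Leray spectral sequence, gives $q=g$ (compare Theorem \ref{thm:birminmodel}(2)). If $\chi(\mathcal O_Z)=0$, there are no singular fibers except multiple smooth ones, and $\rho\le b_2=e-2+4q\le 2+2g$; we must check this case separately. It is bounded by $10\deg\mathcal L_C+2g$ as long as $\deg\mathcal L_C\ge 1$ when $\mathcal L_C$ is an honest line bundle of positive degree. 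Otherwise,
\[
\rho\le b_2-2p_g = 12\chi-2+4g-2p_g,
\]
and with $p_g=\chi-1+q=\chi-1+g$ this equals $10\chi+2g$. Combined with $\chi(\mathcal O_Z)\le\deg\mathcal L_C$, this yields \eqref{rhoform}. The second statement follows by taking $\mathcal L$ to be the line bundle associated with $\Lambda$ (Theorem \ref{thm:birminmodel}(3)), which restricts to $C$. We also need $Z$ smooth with the adjunction $\omega_Z=\pi^*(\omega_C\otimes\mathcal L|_C)$. This holds when $C$ is general, for example by Bertini and adjunction applied to $K_X\equiv\pi^*(K_B+\Lambda)$, with the normal bundle of $Z$ equal to $\pi^*N_{C/B}$.

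The main obstacle is the identification $q=g$, together with the degenerate cases: $\chi(\mathcal O_Z)=0$, and multiple fibers, where $\mathcal L_C$ is a priori only $\mathbb Q$-Cartier. I would handle $q=g$ via $R^1\pi_*\mathcal O_Z\cong\mathcal L_C^{-1}$ up to torsion, which has negative degree, so $H^0(C,R^1\pi_*\mathcal O_Z)=0$ and the Leray sequence gives $h^1(\mathcal O_Z)=g$. The case $\chi=0$ with $\deg\mathcal L_C>0$ forces multiple fibers, and the bound $2+2g\le 10\deg\mathcal L_C+2g$ then uses integrality of $\deg\mathcal L_C\ge1$.
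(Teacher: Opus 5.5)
Your argument is correct in substance, but you reach the key invariant by a different route than the paper.

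\textbf{The paper's route.} It combines Noether's formula with the Hodge decomposition as $h^{1,1}=10+10p_g-8q-K_Z^2$. It then reads off $p_g$ directly from the hypothesis: $p_g=h^0(C,\omega_C\otimes\mathcal L_C)=g-1+\deg\mathcal L_C$, by the projection formula, Riemann--Roch and $h^0(C,\mathcal L_C^{-1})=0$. No canonical bundle formula, relative minimality or multiple-fibre bookkeeping enters. Because $q$ appears with coefficient $-8$, only the automatic inequality $q\ge g$ (from $H^1(C,\mathcal O_C)\hookrightarrow H^1(Z,\mathcal O_Z)$) is actually needed, although the paper writes $q=g$.

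\textbf{Your route.} You write $h^{1,1}=10\chi(\mathcal O_Z)+2q$ and control $\chi(\mathcal O_Z)$ through Kodaira's formula. Now $q$ enters with a positive sign, so you genuinely need the nontrivial direction $q\le g$. Your relative-duality argument supplies it correctly: $R^1\pi_*\mathcal O_Z\cong(\pi_*\omega_{Z/C})^\vee\cong\mathcal L_C^{-1}$, which is torsion-free in characteristic $0$. What your route gains is the explicit identification $e(Z)=12\chi(\mathcal O_Z)=12\deg\mathcal L_C$. This is the count of singular fibres the paper uses later in Proposition~\ref{prop:K3appearance1}.

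\textbf{Two corrections.}

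First, the case analysis with multiple fibres and $\chi(\mathcal O_Z)=0$ is unnecessary, and so is the worry that $\mathcal L_C$ is only $\mathbb Q$-Cartier: it is an honest line bundle by hypothesis. Once you have $R^1\pi_*\mathcal O_Z\cong\mathcal L_C^{-1}$, the Leray spectral sequence gives
$\chi(\mathcal O_Z)=\chi(\mathcal O_C)-\chi(C,\mathcal L_C^{-1})=\deg\mathcal L_C>0$ exactly. Equivalently, $\omega_Z$ being the pullback of a line bundle forces $\mathcal O_Z(\sum(m_i-1)F_i)$ to be a pullback. Pushing forward, using $\pi_*\mathcal O_Z(\sum a_iF_i)=\mathcal O_C$ for $0\le a_i<m_i$, then shows there are no multiple fibres. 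So the case $\chi(\mathcal O_Z)=0$ cannot occur.

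Second, your treatment of that case contains a false inequality. With $e(Z)=0$ one has $b_2=4q-2$, which equals $4g+2$ when $q=g+1$ (as for $E\times C$). So $b_2\le 2+2g$ fails for $g\ge 1$, and bounding $\rho$ by $b_2$ there would not give the required estimate for large $g$. The correct estimate is the same one you use in the main case:
$\rho\le h^{1,1}=10\chi+2q=2q\le 2g+2\le 10\deg\mathcal L_C+2g$.
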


\begin{proof}Let $Z$ be a smooth proper surface over any algebraically closed field (of any characteristic).  Recall from  \cite{PieneNoether} that Noether’s formula also applies in this case:
\begin{equation} \label{chitop1}
    \chi_{\mathrm{top}}(Z)=12\,\chi(\mathcal O_Z)-K_Z^2,
\end{equation}
where $
\chi(\mathcal O_Z)=1-h^1(Z,\mathcal O_Z) +p_g$ and
$p_g=h^{2,0}(Z)$.

On the other hand, in characteristic zero, even if the field is not the complex numbers \cite[Theorem 5.5]{Deligne1968}:
\begin{equation} \label{chitop2}
\chi_{\mathrm{top}}(Z)=\sum (-1)^i b_i (Z)=2-4\,h^{0,1}(Z)+2p_g+h^{1,1}(Z).
\end{equation}
By combining  (\ref{chitop1}) and (\ref{chitop2}) we have:
\begin{equation} \label{eq:h11Z1}
h^{1,1}(Z)=10-8\,h^{0,1}(Z)+10p_g-K_Z^2.
\end{equation}

\medskip

Now assume that $Z$ admits an elliptic fibration
\[
\pi \colon Z \to C,
\]
where $C$ is a smooth curve and $\omega_Z= \pi^*(\omega_C \otimes \mathcal{L}_C)$.
 By Riemann--Roch and using 
\[
p_g = h^0(Z,\omega_Z)=h^0(C,\omega_C \otimes\mathcal{L}_C) 
\] one finds
\begin{eqnarray} \label{pgform}
p_g&=&h^1(C,\omega_C\otimes\mathcal{L})+2g-2+\deg  \mathcal{L}_C + 1-g(C) \nonumber \\ &=& 
 h^0(C,\mathcal{L}^{-1}_C)+g(C)-1+ \deg {\mathcal L}_C   
= g(C)-1+\deg  \mathcal{L}_C \,
\end{eqnarray}
(since $h^0(C,{\mathcal L}_C^{-1})=0$, as ${\mathcal L}_C^{-1}$ has
negative degree).

{With (\ref{pgform}) and observing that $K_Z^2 =0$, we deduce from \eqref{eq:h11Z1} that}
\begin{eqnarray}
h^{1,1}(Z)
&=&10+10p_g-8h^1(Z,\mathcal O_Z)
=10+10[g(C)-1+ \deg {\mathcal L}_C  ]-8g(C) \nonumber \\
&=&10  \deg {\mathcal L}_C  +2g(C) \,. \nonumber
\end{eqnarray}
With $\mathrm{rk}\, {\rm NS}(Z) \leq h^{1,1}(Z)$ this yields (\ref{rhoform}). \end{proof}
\smallskip

A slightly modified proof of Proposition \ref{prop:Noether} gives the following weaker bounds:
\begin{proposition}\label{prop:NoetherAnyC} Let
$
\pi \colon Z \to C,
$
where  $Z$ is a smooth  proper surface
over an algebraically closed field of any characteristic,
$C$ is a smooth curve of genus $g(C)$ and $\omega_Z= \pi^*(\omega_C
\otimes  \mathcal{L}_C)$, for some line bundle $\mathcal{L}_C$
of degree $>0$.
Then
  \begin{equation*}\label{rhoform2}
\rho (Z) \le b_2 (Z)\le 12 \deg {\mathcal L}_C + 4 g(C)  -2
\end{equation*}
and 
 \begin{equation*}
\rho (Z) \le b_2 (Z) \le 12  \mathcal{L} \cdot C+4g(C)-2 \,,
\end{equation*}   
respectively.
\end{proposition}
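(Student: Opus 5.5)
We redo the computation in the proof of Proposition \ref{prop:Noether}, but avoid the Hodge-theoretic input (\ref{chitop2}), which needs characteristic zero. Instead of the Hodge decomposition we use only $\ell$-adic Betti numbers, for a prime $\ell$ different from the characteristic. Noether's formula (\ref{chitop1}) holds in every characteristic \cite{PieneNoether}. Since $K_Z^2=0$ (because $\omega_Z$ is pulled back from $C$), it gives
\[
\chi_{\mathrm{top}}(Z)=12\chi(\mathcal O_Z)=12\bigl(1-h^1(Z,\mathcal O_Z)+p_g\bigr).
\]
By Poincaré duality, $\chi_{\mathrm{top}}(Z)=2-2b_1(Z)+b_2(Z)$, so
\[
b_2(Z)=12\chi(\mathcal O_Z)-2+2b_1(Z).
\]
The Igusa inequality $\rho(Z)\le b_2(Z)$ holds in any characteristic, since the cycle class map of $\mathrm{NS}(Z)\otimes\mathbb Q_\ell$ into $H^2_{et}(Z,\mathbb Q_\ell(1))$ is injective. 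It therefore suffices to bound $b_2$.

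We carry out three steps in order.

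\textbf{Step 1: compute $p_g$.} The Riemann--Roch computation (\ref{pgform}) is characteristic-free. It uses $h^0(Z,\omega_Z)=h^0(C,\omega_C\otimes\mathcal L_C)$, which follows from $\pi_*\mathcal O_Z=\mathcal O_C$ (the fibers are connected) and the projection formula, together with $\deg\mathcal L_C>0$. It gives
\[
p_g=g(C)-1+\deg\mathcal L_C.
\]

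\textbf{Step 2: bound $b_1$ and $h^1(\mathcal O_Z)$.} In positive characteristic $h^1(\mathcal O_Z)$ may exceed $b_1/2$, so we rewrite $\chi(\mathcal O_Z)=1-h^1(\mathcal O_Z)+p_g$ and aim for $2b_1(Z)-12h^1(\mathcal O_Z)\le 4g(C)-12g(C)$, or some other combination yielding the stated bound. The substitution gives
\[
b_2=12\bigl(g-1+\deg\mathcal L_C\bigr)+12-12h^1(\mathcal O_Z)-2+2b_1
=12\deg\mathcal L_C+12g-12h^1(\mathcal O_Z)-2+2b_1.
\]
To reach $12\deg\mathcal L_C+4g-2$ we need $2b_1-12h^1(\mathcal O_Z)\le -8g$. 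The available inequalities are:
\begin{itemize}
\item $b_1(Z)=2\dim\mathrm{Alb}(Z)\le 2h^1(\mathcal O_Z)$, valid in any characteristic because $\dim\mathrm{Pic}^0_{red}\le h^1(\mathcal O)$;
\item $h^1(\mathcal O_Z)\ge g(C)$, from the Leray spectral sequence, since $H^1(C,\mathcal O_C)$ injects into $H^1(Z,\mathcal O_Z)$.
\end{itemize}
Using both, $2b_1-12h^1\le 4h^1-12h^1=-8h^1\le -8g$, which gives the claim. The second displayed bound is then immediate, since $\deg\mathcal L_C=\mathcal L\cdot C$ when $\mathcal L_C=\mathcal L|_C$.

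\textbf{Main obstacle.} The delicate step is the inequality $b_1\le 2h^1(\mathcal O_Z)$ in positive characteristic, where Hodge symmetry fails. Our first approach is the identification $b_1=2\dim\mathrm{Alb}=2\dim\mathrm{Pic}^0_{Z,red}$ combined with $\dim\mathrm{Pic}^0_{Z,red}\le\dim T_0\mathrm{Pic}_Z=h^1(\mathcal O_Z)$. A secondary check is the injectivity of $H^1(C,\mathcal O_C)\to H^1(Z,\mathcal O_Z)$, which again follows from $\pi_*\mathcal O_Z=\mathcal O_C$.
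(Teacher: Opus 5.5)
Your argument is correct and follows essentially the route the paper intends. The paper only says that a ``slightly modified proof'' of Proposition~\ref{prop:Noether} works, and you carry out exactly that modification: Noether's formula with $K_Z^2=0$, the Riemann--Roch value $p_g=g(C)-1+\deg\mathcal L_C$, and $\rho\le b_2$ in place of $\rho\le h^{1,1}$. Your inequalities $b_1(Z)\le 2h^1(Z,\mathcal O_Z)$ and $h^1(Z,\mathcal O_Z)\ge g(C)$ are the characteristic-free replacements for the identities $b_1(Z)=2h^1(Z,\mathcal O_Z)=2g(C)$ that the characteristic-zero proof relies on, so nothing is missing.
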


\begin{comment}

\end{comment}
\vskip 0.4in

\subsection{The Shioda map and the Shioda pairing  for elliptic fibrations with sections}~

\medskip
 From now on   we will consider elliptic fibrations with a rational section.

Let $\MW(X/B)$ be the  Mordell-Weil group of the elliptic fibration $\pi: X\to B$,  defined as  the 
 group of rational sections of
the fibration.
  Equivalently, we can write the Mordell-Weil group as $E(K)$ for $K=K(B)$ the function field of $B$.  The zero section (the zero element of the group) will be denoted by $S_0$ (or, if the zero section is rational, by the closure of its image). In particular the existence of a zero section implies that there are no multiple fibers.
   Also, there is a canonical
identification of groups between ${\rm Pic}^0(E_K)$ and the Mordell-Weil
group $E(K)$.

\begin{proposition} Let $Y \to V$ and $ X \to B$ be birationally equivalent fibrations as in Definition \ref{def:bireq}. Then $\MW(Y/V) \simeq \MW(X/B) $.

\end{proposition}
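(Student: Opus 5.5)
The plan is to identify $\MW(Y/V)$ and $\MW(X/B)$ with the group of rational points of one and the same elliptic curve over one and the same field, rather than to compare sections directly. By definition, a rational section of $\tilde\pi: Y\to V$ is a rational map $s: V\dashrightarrow Y$ with $\tilde\pi\circ s=\mathrm{id}$ on a dense open set. Such maps correspond exactly to $K(V)$-points of the generic fiber $Y_\eta=Y\times_V \operatorname{Spec} K(V)$: a rational section restricts to a point of $Y_\eta$, and conversely a $K(V)$-point spreads out to a section over a dense open subset of $V$, defined up to shrinking. Under this correspondence the group law on $\MW(Y/V)$ (fiberwise addition with respect to the zero section) is the group law of the elliptic curve $E_{K(V)}=Y_\eta$ with origin $S_0|_{\eta}$. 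So $\MW(Y/V)=E_{K(V)}(K(V))$, and in the same way $\MW(X/B)=E'_{K(B)}(K(B))$.

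Next I would use the birational maps of Definition \ref{def:bireq}. The map $\psi$ gives an isomorphism of function fields $\psi^*: K(B)\xrightarrow{\sim} K(V)$. Since the square commutes, $\phi$ is compatible with the two fibrations. Over a dense open subset $U\subset V$ on which $\psi$ is an isomorphism onto its image, $\phi$ restricts to a birational map of $\tilde\pi^{-1}(U)$ onto $\pi^{-1}(\psi(U))$ over $\psi|_U$. Passing to generic fibers, $\phi$ induces a birational map $Y_\eta\dashrightarrow X_{\eta'}\times_{K(B)}K(V)$ between curves over $K(V)$, where the base change uses $\psi^*$. Both curves are smooth projective genus-one curves over a field, and a birational map between such curves extends to an isomorphism. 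Transporting the zero section, this yields an isomorphism of elliptic curves, hence a group isomorphism on rational points. Composing gives $\MW(Y/V)\simeq\MW(X/B)$.

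The one step needing care is the zero section. $\phi$ need not send the chosen zero section of $Y$ to that of $X$. If it does not, I would compose with the translation by the point $\phi(S_0)-S_0'$. This translation is an automorphism of the genus-one curve $X_{\eta'}$ as a variety, and it carries one origin to the other, so the resulting map is an isomorphism of groups. Everything else is formal, and no minimal-model input is needed beyond the birationality data in Definition \ref{def:bireq}.
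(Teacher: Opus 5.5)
Your proof is correct and follows the same route as the paper: the generic fibers over the common function field $\mathbb{C}(V)\cong\mathbb{C}(B)$ are isomorphic elliptic curves, so their Mordell--Weil groups coincide. You fill in details the paper leaves implicit, all of them sound: identifying rational sections with $K$-points, extending the birational map of smooth projective genus-one curves to an isomorphism, and fixing the zero section by a translation.
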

\begin{proof}For two birationally equivalent fibrations as in Definition \ref{def:bireq}, the
generic elliptic fibers over the (common)
function field of the two bases are isomorphic, and therefore have the same
Mordell-Weil groups. \end{proof}
 Therefore, to derive bounds on the rank of the Mordell-Weil group, we can, without loss of generality, assume that 
$X$, $B$ and  $\pi: X\to B$ are as in Theorem \ref{thm:birminmodel}.

 \begin{proposition}\label{prop:MWtorinj} If  $C \subset B$ is a  general smooth movable curve, 
 {and $Z$ denotes the elliptic surface obtained by restricting the elliptic fibration of $X$ to $C$,}
then
 $\MW(X/B)_{\rm tors} \subseteq \MW(Z/C)_{\rm tors} .$
\end{proposition}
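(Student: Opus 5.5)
The plan is to show that restricting sections from $X$ to $Z=\pi^{-1}(C)$ gives a group homomorphism $r\colon \MW(X/B)\to\MW(Z/C)$ that is injective on torsion. Since $r$ is a homomorphism, torsion goes to torsion, so injectivity on the torsion subgroup gives the claimed inclusion.

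\emph{Step 1: restriction is a well-defined homomorphism.} Let $U\subset B$ be the dense open set over which $\pi$ is smooth and every rational section is regular. There are only countably many rational sections modulo the relevant data; alternatively, work with a fixed finite set of torsion sections, which suffices. Because $C$ is a general member of a movable family, whose curves sweep out a dense subset of $B$, $C$ meets $U$. So each rational section $\sigma$ restricts to a rational section $\sigma|_C$ of $Z_U\to C\cap U$. This extends uniquely to a section of $Z\to C$ because $C$ is a smooth curve. On the smooth locus the group law is fiberwise, so $(\sigma+\tau)|_C=\sigma|_C+\tau|_C$. Equivalently, in the language of the Background section, $r$ is the specialization $E(K(B))\to E(K(C))$ induced by the generic point of $C$ lying in $U$.

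\emph{Step 2: injectivity on torsion.} Let $\sigma\in\MW(X/B)_{\rm tors}$ be nonzero, of order $n$. Then $\sigma$ and $S_0$ are distinct irreducible subvarieties of $X$ over $U$. Hence the locus $D_\sigma=\pi(\sigma\cap S_0)\cap U$ where they meet is a proper closed subset of $U$. A general $C$ from the movable family is not contained in $D_\sigma$, because its members cover a dense set, and only finitely many torsion sections need to be avoided. So $\sigma|_C$ differs from $S_0|_C$ at the generic point of $C$, and therefore $r(\sigma)\neq 0$.

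\emph{Finiteness of the torsion subgroup.} To restrict to finitely many $\sigma$ in Step 2, use that $\MW(X/B)_{\rm tors}$ is finite. It injects into $E(K)_{\rm tors}$, and this group injects into the torsion of a single smooth fiber $E_b$, $b\in U$. That is at most $(\mathbb Z/n)^2$ for each $n$. It is also bounded, for instance because the $n$-torsion subscheme $E[n]$ is finite étale over $U$ and torsion sections are components of these subschemes, with bounded $n$ for a non-isotrivial fibration. Even without bounding $n$, it is enough to use the finitely many nontrivial torsion sections of each order. If $\MW_{\rm tors}$ is finite, one general $C$ works for all of them at once.

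The main obstacle I expect is this finiteness/genericity point: a countable union of proper closed subsets could in principle cover every member of the family. I would first handle it directly. Any torsion section $\sigma$ evaluated at a point $b\in U$ gives an element of $E_b[n]$, and distinct torsion sections stay distinct at every $b\in U$ because $E[n]$ is étale over $U$ (characteristic $0$). Hence $\MW_{\rm tors}$ injects into $E_b$ for any $b\in U$. So it suffices that $C$ meets $U$, which a general movable curve does; then $\sigma|_C$ and $S_0|_C$ differ at $b\in C\cap U$. This étale argument in fact makes Step 2 immediate and avoids the countability issue altogether.
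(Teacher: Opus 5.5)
Your argument is correct and follows the same route as the paper: distinct torsion sections can only coincide over a proper closed subset of $B$, and a general member of a movable family is not contained in that subset. Your \'etale argument ($E[n]$ is finite \'etale over the smooth locus $U$ of $\pi$ in characteristic $0$, so distinct torsion sections never meet over $U$) makes explicit what the paper leaves implicit. It shows that one proper closed set works for all torsion sections at once, so neither finiteness of $\MW(X/B)_{\rm tors}$ nor a ``very general'' choice of $C$ is needed, and any curve meeting $U$ suffices.
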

\begin{proof} The locus where two different torsion sections coincide is contained in a proper divisor of $B$. A  sufficiently general member of a movable family curve $C$ is not contained in this locus. \end{proof}
\color{black}

 From now on,  we assume that the rank of $\MW(X/B)$ is non-vanishing  so that  there are non-torsion rational sections, i.e.
 ${\rm rk}\,  \MW(X/B) \not=0$.

\begin{definition} \label{def:shioda}\cite{Shioda1990, Morrison:2012ei}
  For an elliptic fibration $\pi:X\to B$
as above, if $\dim B \leq 3$, there is a Shioda map from the Mordell-Weil group to the Neron-Severi group $\NS(X)\otimes\mathbb{Q}$,
  \[
    \sigma \;:\; \MW(X/B)\longrightarrow \NS(X)\otimes\mathbb{Q} \,,
  \]
  which associates to a section $S_a$ the class
  \begin{equation}\label{eq:shioda-general}
    \sigma(S_a) \;=\; S_a - S_0
    - \pi^*\!\bigl(\pi_*( (S_a - S_0)\cdot S_0 )\bigr)
    + \sum_{i}\sum_{A} \ell^{\,i}_A(S_a)\,E_i^{A}.
  \end{equation}
Here
  \begin{itemize}
    \item $S_a$ denotes the divisor (section) corresponding to the section  $a$  (or,  if the section is rational, the closure of its image);
    \item $S_0$ is the zero section;
    \item $E_i^{A}$ is a rationally fibered divisor  such that $\pi(E_i^A) $  is an irreducible component $\Sigma_A \subseteq\Sigma_{X/B}$ and $E_i^A$ is the $i$-th irreducible component of  $\pi^{-1}(\Sigma_A)$;
     \item $\ell^{\,i}_A(S_a)$ are rational coefficients which measure how $S_a$ meets the reducible fibers.
  \end{itemize}
\end{definition}

\begin{remark}\label{remark:ell-vanishing}
\begin{enumerate}[(\alph*)]
\item  The Shioda map is  a group homomorphism. 
\item The term $\ell^{\,i}_A(S_a)$ contains as a factor the intersection multiplicity of $S_a$ with the general fibre of the 
 divisors $E^A_i$.
  In particular, if $S_a$ intersects the singular fibres in the same component as the zero-section over the generic point of the discriminant, then all $\ell^{\,i}_A(S_a)=0$.

 \item The restriction on
$\dim B$ is not needed in Definition \ref{def:shioda}, though this is not
explicitly done in the literature. This can be deduced from the
Hard Lefschetz theorem and the Hodge index theorem (%basically that
the
pure Hodge structure $\NS(X)\otimes{\mathbb Q}$ carries a polarization,
determined using Hodge theory).

\item  Theorem \ref{thm:birminmodel} (1) implies that there is a surjection
\[{\rm NS}(X)\otimes{\mathbb Q} \twoheadrightarrow {\rm Pic}(E_K)\otimes{\mathbb
Q}=\MW(X/B)\otimes{\mathbb Q}\oplus {\mathbb Q},\]
and the kernel of the induced projection onto $\MW(X/B)\otimes{\mathbb
Q}$ is a non-degenerate subspace for the polarization, because of the
Hodge-Riemann bilinear relations (which is the Hodge index theorem,
for $\NS(X)$), hence is a split surjection of rational vector spaces.
This also accounts for the fact that the Shioda map is a group
homomorphism, whose kernel is the torsion subgroup.

 \item 
 Finally, the Shioda map is injective after tensoring with ${\mathbb Q}$.

\end{enumerate}

\end{remark}

\begin{definition}\label{def:heightpairing}
  For a section $S_a$ we define the class
  \[
    b_{S_a}
    := -\pi_*   \bigl( \sigma(S_a) \cdot  \sigma(S_a)  \bigr) = \langle  \sigma(S_a),  \sigma(S_a) \rangle  \,,
  \]
   which is viewed as an element of
$N^1(B)_{\mathbb R}$. This is the usual height pairing associated to
the section, if $B$ is a smooth projective curve, so that
$N^1(B)_{\mathbb R}={\mathbb R}$. 
\end{definition}

\begin{proposition}\label{prop:CoxZucker}
  Let $S_a$ be any (rational) section distinct from the zero section $S_0$.  Then there  exist an open set $\mathcal{V} \subset B$  and  an integer $m\in\mathbb{N}$ such that the multiple $m S_a$ (viewed in the Mordell-Weil group) intersects  the singular fibres in the same component as $S_0$ over $\mathcal{V}$.
\end{proposition}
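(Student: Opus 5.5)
The plan is to work over the generic point of each component of the discriminant and then use finiteness of the component groups. Let $\Sigma_{X/B}=\bigcup_A \Sigma_A$ be the decomposition into irreducible components; only finitely many codimension-one components occur. First remove from $B$ a closed subset of codimension $\geq 2$: the singular locus of $B$, the locus where $\pi$ fails to be equidimensional (Theorem \ref{thm:birminmodel} (5)), the pairwise intersections of the $\Sigma_A$, and the indeterminacy locus of the rational section $S_a$. The remaining points of each $\Sigma_A$ form a dense open subset $\Sigma_A^\circ$. Over the local ring at the generic point $\eta_A$ of $\Sigma_A$, which is a DVR because $B$ is normal, the elliptic curve $E_K$ has a Néron model. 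Equivalently, by Kodaira/Tate, the fiber type over $\eta_A$ is one of the Kodaira types, defined over the residue field $\kappa(\eta_A)$. Its special fiber has component group $\Phi_A$, which is finite, of order at most the number of fiber components (bounded by the Kodaira classification). The identity component of the Néron model is the component met by $S_0$.

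The key step is the specialization map $E(K)\to \Phi_A(\kappa(\eta_A))$. It sends a $K$-point to the component of the Néron special fiber through which its closure passes, and it is a group homomorphism. Let $m_A$ be the exponent of $\Phi_A$ and set $m=\operatorname{lcm}_A m_A$. Then $mS_a$ specializes to the identity component at every $\eta_A$, so over $\eta_A$ the section $mS_a$ meets the fiber in the same component as $S_0$.

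The remaining step is to spread this out from $\eta_A$ to an open set. The condition that $mS_a$ and $S_0$ meet the same fiber component of $\pi^{-1}(\Sigma_A)$ is open along $\Sigma_A$. Both sections are regular near $\eta_A$, their intersections with $\pi^{-1}(\Sigma_A)$ are irreducible near $\eta_A$, and the components $E_i^A$ are well defined over a dense open subset of $\Sigma_A$. So there is a dense open $\Sigma_A^{\circ\circ}\subseteq\Sigma_A^\circ$ on which the condition holds. Define $\mathcal V$ to be $B$ minus the closed sets $\Sigma_A\setminus\Sigma_A^{\circ\circ}$, together with the closed loci removed above. This $\mathcal V$ is open and meets every $\Sigma_A$, and it has the required property.

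The expected obstacle is the comparison between the components $E_i^A$ of the given (singular, minimal-model) $X$ and the components of the Néron model over $\eta_A$. $X$ is only $\mathbb Q$-factorial terminal, and its fibers over $\eta_A$ need not be the minimal regular model. I would handle this by passing to a resolution over $\eta_A$, which does not change $\MW$, and noting that $X$ is smooth in codimension two because it is terminal. Near $\eta_A$, the surface obtained by cutting with a general curve transverse to $\Sigma_A$ then gives a relatively minimal Kodaira model, whose components correspond to those of the Néron model. Only the exponent $m$ matters, so any ambiguity from non-minimality or monodromy of components over $\Sigma_A$ is absorbed by using $|\Phi_A|$, or a bound from Kodaira's table (e.g.\ the number of components), in place of the exact exponent.
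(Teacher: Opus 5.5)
Your argument is correct. It uses the same mechanism the paper invokes: restricting sections to a fibre is a homomorphism into a group whose component group is finite, so multiplying by the exponent lands in the identity component. Where you apply it is different.

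The paper sets $\mathcal V$ to be the whole regularity locus of $S_a$. It then appeals to an extension of Cox--Zucker's Lemma (1.5), plus the assertion that the smooth locus of \emph{every} fibre is an extension of a finite component group by the identity component. You instead localize at the codimension-one points $\eta_A$, where $\mathcal{O}_{B,\eta_A}$ is a DVR. There you use the N\'eron model and the specialization homomorphism $E(K)\to\Phi_A$, and then spread out.

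Your route gives two things the paper's does not make explicit:
\begin{itemize}
\item an $m$ independent of $S_a$, namely the lcm of the exponents of the $\Phi_A$;
\item a clean justification for an open set meeting every $\Sigma_A$.
\end{itemize}
That open set is exactly what is used later. The coefficients $\ell^i_A$ in the Shioda map, and Propositions \ref{prop:mSa}, \ref{prop:bsa} and Lemma \ref{lemma:bsaZ}, only see the generic points of the $\Sigma_A$. The paper's formulation claims more, namely all of the regularity locus of $S_a$. But its fibrewise group structure is not justified over codimension-two collision points or non-flat points, where your localized version needs nothing.

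One correction. Your final sentence says non-minimality is ``absorbed'' by using $|\Phi_A|$ or a bigger $m$; that is wrong. Take a good fibre in a non-minimal regular model and blow up the point where $S_0$ meets it. Then $S_0$ meets the exceptional curve, and no multiple of a section with non-torsion reduction ever passes through that point, so no choice of $m$ rescues the statement.

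What you actually need is that $X\times_B\operatorname{Spec}\mathcal{O}_{B,\eta_A}$ is already the minimal regular model:
\begin{itemize}
\item It is regular, because terminal implies smooth in codimension two.
\item It is relatively minimal, because $K_X\equiv\pi^*(K_B+\Lambda)$ gives $K_X\cdot\ell=0$ for every curve $\ell$ contracted by $\pi$, so there are no $(-1)$-curves in the fibres.
\end{itemize}
Its smooth locus is then the N\'eron model, and the component met by $S_0$ is the identity component. Stating this replaces both the resolution step and the transverse slicing, which are unnecessary.
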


\begin{proof} Let $\mathcal{V} \subset B$ be the open set where $S_a$ is regular. 
  The statement follows from an extension of the proof of Lemma (1.5) in \cite{CoxZucker1979} (or standard results on the behaviour of sections under multiplication by integers). 
   (See  \cite{Lee:2018ihr} for details.)  Over $\mathcal{V}$  the multiple $mS_a$  does not intersect the non-identity components of singular fibers for suitable $m$, and hence lies in the same component as $S_0$ there. \end{proof}

\begin{remark}
We note
that the smooth locus of any fibre of $\pi$ has a group structure,
which is a semidirect product of the subgroup given by the
0-component, and a finite abelian group (the group of connected
components). Proposition \ref{prop:CoxZucker} is a consequence.
\end{remark}

\begin{proposition}\label{prop:mSa}
    Let $S_a$ be any section distinct from the zero section $S_0$.  Then there exists an integer $m\in\mathbb{N}$ such that   
	\begin{equation}\label{eq:shioda-general2}
		m\sigma(S_a) \;=\; mS_a - S_0
		- \pi^*\!\bigl(\pi_*( (mS_a - S_0)\cdot S_0 )\bigr).
	\end{equation}
\end{proposition}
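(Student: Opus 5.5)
The plan is to combine three facts already recorded above: the Shioda map is a group homomorphism (Remark \ref{remark:ell-vanishing}(a)), the correction terms $\ell^{\,i}_A$ vanish for sections that meet every reducible fibre in the zero component (Remark \ref{remark:ell-vanishing}(b)), and Proposition \ref{prop:CoxZucker}. Throughout, ``$mS_a$'' on the right-hand side of \eqref{eq:shioda-general2} denotes the divisor of the section $m\cdot a$ in $\MW(X/B)$, not $m$ times the divisor $S_a$.

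\textbf{Step 1.} Apply Proposition \ref{prop:CoxZucker} to $S_a$. This gives an integer $m$ and an open set $\mathcal V\subset B$ over which the section $m\cdot a$ meets every singular fibre in the same component as $S_0$.

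\textbf{Step 2 (main obstacle).} I need $\mathcal V$ to contain the generic point of every codimension-one component $\Sigma_A$ of $\Sigma_{X/B}$, since the coefficients $\ell^{\,i}_A$ in \eqref{eq:shioda-general} only depend on the behaviour over these generic points.

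\begin{itemize}
\item A rational section is a rational map $B\dashrightarrow X$ over $B$, and $\pi$ is proper.
\item By the valuative criterion, applied to the discrete valuation rings $\mathcal O_{B,\eta_A}$ at the generic points $\eta_A$ of the $\Sigma_A$ (these are DVRs because $B$ is normal), the section extends over each $\eta_A$.
\item The same holds for the sections $S_0$ and $m\cdot a$.
\end{itemize}

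So the locus where these sections fail to be regular has codimension at least $2$ in $B$. I may therefore shrink the $\mathcal V$ of Step 1 to an open set that still contains all the $\eta_A$. If the construction in Proposition \ref{prop:CoxZucker} only gave some open set, I would redo it at each $\eta_A$ separately. Over the DVR $\mathcal O_{B,\eta_A}$ the smooth locus of the fibre is an extension of a finite component group by the identity component, so a suitable multiple lands in the identity component. Then I would take $m$ to be the least common multiple of the orders of the images of $a$ in the finitely many component groups.

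\textbf{Step 3.} By Remark \ref{remark:ell-vanishing}(b), applied to the section $m\cdot a$, all $\ell^{\,i}_A(m\cdot a)=0$. Then \eqref{eq:shioda-general} reads
\[
\sigma(m\cdot a)=S_{m a}-S_0-\pi^*\bigl(\pi_*((S_{ma}-S_0)\cdot S_0)\bigr).
\]

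\textbf{Step 4.} Since $\sigma$ is a homomorphism, $\sigma(m\cdot a)=m\,\sigma(S_a)$ in $\NS(X)\otimes\mathbb Q$. Substituting this into Step 3 gives exactly \eqref{eq:shioda-general2}.

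The only delicate point is Step 2, namely that the vanishing of the component data needs to hold at the generic points of the discriminant divisors, and not merely on some unspecified dense open set. Everything else is formal once $\sigma$ is known to be additive.
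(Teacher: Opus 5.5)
Your proposal is correct and follows the paper's route: the paper's proof simply invokes Proposition \ref{prop:CoxZucker} together with Remark \ref{remark:ell-vanishing}(a),(b), which is exactly your Steps 1, 3 and 4. Your Step 2 adds a detail the paper leaves implicit. It shows, via the valuative criterion on the DVRs $\mathcal{O}_{B,\eta_A}$, that rational sections are regular at the generic points of the discriminant components. Hence the open set in Proposition \ref{prop:CoxZucker} contains those points, and you correctly note that $m$ can also be taken as an lcm of orders in the finitely many component groups.
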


\begin{proof} The statement follows from  Proposition \ref{prop:CoxZucker} and Remark \ref{remark:ell-vanishing}. \end{proof}
	
\begin{proposition}\label{prop:fmw}
Assume $\omega_X \simeq \pi^*(\omega_B \otimes \mathcal{L})$, with $\mathcal{L}$ a line bundle on $B$ with associated divisor $\Lambda$. Then for all $S_a$, $-\pi_\ast(S_a \cdot S_a) = \Lambda$.
\end{proposition}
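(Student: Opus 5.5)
The plan is to reduce the statement to the adjunction formula applied to the divisor $S_a$, which is birational to $B$ via $\pi$. First I treat the case where $S_a$ is a regular section over an open set $U\subset B$ with $\operatorname{codim}(B\setminus U)\ge 2$. Since $\pi^{-1}(\Sigma)$ and the non-equidimensional locus only affect codimension $\geq 2$ in $B$ for the pushforward of a codimension-two cycle to a divisor class, it suffices to compute $\pi_*(S_a\cdot S_a)$ over such a $U$. We may also shrink $U$ so that $X$ is smooth along $S_a\cap\pi^{-1}(U)$ and $\pi|_{S_a}:S_a\cap \pi^{-1}(U)\to U$ is an isomorphism. This is possible because $X$ has terminal, hence codimension-$\geq 3$, singularities, and by Theorem \ref{thm:birminmodel}(5) the fibration is equidimensional away from codimension 3.

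Over $U$ I apply adjunction to the Cartier divisor $S_a$:
\begin{equation*}
K_{S_a} = (K_X + S_a)|_{S_a}.
\end{equation*}
Via the isomorphism $\pi|_{S_a}$, we have $K_{S_a}=\pi|_{S_a}^*K_U$. The hypothesis $\omega_X\simeq\pi^*(\omega_B\otimes\mathcal L)$ gives $K_X|_{S_a}=\pi|_{S_a}^*(K_B+\Lambda)$. Comparing the two expressions yields $S_a|_{S_a} = -\pi|_{S_a}^*\Lambda$ as divisor classes on $S_a\cap\pi^{-1}(U)$.

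Pushing forward by $\pi$ then gives $\pi_*(S_a\cdot S_a)=\pi_*\bigl(S_a|_{S_a}\bigr) = -\Lambda|_U$. Since divisor classes on $B$ are determined on an open set with complement of codimension $\ge2$ ($B$ being $\mathbb Q$-factorial), this gives $-\pi_*(S_a\cdot S_a)=\Lambda$.

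The main obstacle is a rational section $S_a$ that is not regular in codimension one, i.e.\ the closure of its image contains whole fiber components over a divisor of $B$. Over the generic point of a divisor $D\subset B$, however, the base is a DVR. By properness of the generic fiber (valuative criterion), the section extends over the generic point of $D$ on a relatively minimal model. On our $\mathbb Q$-factorial terminal model, I would argue that the closure $S_a$ maps birationally onto $B$ and is isomorphic over codimension one, and that the vertical part it could contain has codimension $\ge 2$ in $X$ and so does not affect the pushforward. If this extension step fails, I would instead pass to a resolution $\tilde X$ on which $S_a$ is regular in codimension one. The formula $K_{\tilde X}=\mu^*K_X+E$ has exceptional $E$, and the correction terms $E|_{S_a}$ push forward to zero modulo divisors exceptional over $B$, which do not exist since $B$ is unchanged. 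This step needs the most care, and I would check it against the $m S_a$ normalization of Proposition \ref{prop:mSa} if necessary.
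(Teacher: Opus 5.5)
Your proposal is correct and follows the paper's route: the paper's proof only points to \cite[(7.30)]{FMW1997}, whose argument is adjunction on the section, giving $\mathcal{O}_X(S_a)|_{S_a}\cong(\pi|_{S_a})^*\mathcal{L}^{-1}$. You carry this out over an open $U\subset B$ whose complement has codimension at least two, and then extend using $A_{\dim B-1}(B)\cong A_{\dim B-1}(U)$.

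Your valuative-criterion remark already shows that every rational section is regular in codimension one on the normal base $B$, on any proper model (not just a relatively minimal one), so the resolution fallback is unnecessary. One caution: $\pi^{-1}(\Sigma)$ lies over a divisor and cannot be discarded; your adjunction step in fact never discards it, so the argument is unaffected.
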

\begin{proof}
The statement follows from an argument in \cite[(7.30)]{FMW1997}.\end{proof}

\begin{corollary}\label{cor:fmw}
\label{prop:kx-sigma}
  Assume  that $\omega_X \simeq \mathcal{O}_X$.  Then for every section $S_a$ we have
  \[
    -\pi_*\bigl( S_a \cdot S_a\bigr) \;=\; -K_B.
  \]
\end{corollary}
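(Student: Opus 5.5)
This is a direct specialization of Proposition \ref{prop:fmw}, so the plan is to put the hypothesis $\omega_X \simeq \mathcal{O}_X$ into the form $\omega_X \simeq \pi^*(\omega_B \otimes \mathcal{L})$ required there. Once $\mathcal{L}$ is identified with $\omega_B^{-1}$, the associated divisor is $\Lambda = -K_B$, and Proposition \ref{prop:fmw} gives $-\pi_*(S_a\cdot S_a) = -K_B$ at once.

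First, I take $\pi\colon X\to B$ as in Theorem \ref{thm:birminmodel}. Part (3) of that theorem gives $K_X \equiv \pi^*(K_B+\Lambda)$, where $\Lambda$ is the discriminant $\mathbb{Q}$-divisor. From $\omega_X\simeq\mathcal{O}_X$ we get $\pi^*(K_B+\Lambda)\equiv 0$. Since $\pi$ is surjective with connected fibres, pullback is injective on $N^1(B)_{\mathbb{R}}$. Indeed, if $\pi^*D\equiv 0$, then $D\cdot\gamma = 0$ for every curve $\gamma\subset B$, because $\gamma$ lifts up to a positive multiple to a curve in $X$ and the projection formula applies. Hence $\Lambda \equiv -K_B$.

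To stay with line bundles rather than numerical classes, I set $\mathcal{L} := \omega_B^{-1}$. Then $\pi^*(\omega_B\otimes\mathcal{L}) = \pi^*\mathcal{O}_B = \mathcal{O}_X \simeq \omega_X$, so the hypothesis of Proposition \ref{prop:fmw} holds exactly, with $\Lambda = -K_B$. The formula then holds for every section $S_a$, the zero section included.

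The only delicate point is that $B$ may be singular, being only $\mathbb{Q}$-factorial klt. In that case $\omega_B$ need not be invertible, and $K_B$ is only $\mathbb{Q}$-Cartier. If the cited argument \cite[(7.30)]{FMW1997} needs a genuine line bundle, I would handle this in one of two ways. One is to work over the smooth locus $B^{sm}$, whose complement has codimension at least $2$, and compare Weil divisor classes there, since both sides are determined by their restriction to $B^{sm}$. The other is to replace $K_B$ by a Cartier multiple $rK_B$ and divide by $r$ at the end. In either case the identity holds as an equality of (numerical) divisor classes on $B$, which is how the paper interprets $-\pi_*(S_a\cdot S_a)$.
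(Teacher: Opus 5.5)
Your proposal is correct and follows the same route as the paper, which obtains the corollary directly from Proposition~\ref{prop:fmw} by taking $\mathcal{L}=\omega_B^{-1}$, so that $\Lambda=-K_B$. Your numerical step showing $\Lambda\equiv -K_B$ via injectivity of $\pi^*$, and your remarks on a singular $B$, are not needed for this specialization, though they are consistent with it.
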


\begin{proposition}\label{prop:bsa}Assume $\omega_X \simeq \pi^*(\omega_B \otimes  \mathcal{L})$,  with $\mathcal{L}$ a line bundle on $B$ with associated divisor $\Lambda$. Let $S_a$ be any section distinct from the zero section $S_0$.  Then there exists a positive integer $m$ such that
	\[
	b_{S_a}
	= \frac{1}{m^2}\Big(2 \Lambda + 2\pi_*\bigl((mS_a)\cdot S_0\bigr)\Big).
	\]
   Furthermore $b_{S_a} \in \overline{\operatorname{Eff}(B)}$.
	
\end{proposition}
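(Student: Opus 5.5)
We work with the multiple $T:=mS_a$ supplied by Proposition \ref{prop:mSa}. Since $\sigma$ is a group homomorphism (Remark \ref{remark:ell-vanishing}(a)), $\sigma(T)=m\,\sigma(S_a)$, and therefore $b_{T}=m^2 b_{S_a}$. It thus suffices to compute $b_T$ and divide by $m^2$. For $T$ all the correction terms $\ell^i_A$ vanish over the open set $\mathcal V$ of Proposition \ref{prop:CoxZucker}. Hence
\[
\sigma(T)=T-S_0-\pi^*D,\qquad D:=\pi_*\bigl((T-S_0)\cdot S_0\bigr)=\pi_*(T\cdot S_0)-\pi_*(S_0\cdot S_0)=\pi_*(T\cdot S_0)+\Lambda,
\]
where the last equality uses Proposition \ref{prop:fmw} for $S_0$.

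Next we expand $-\pi_*(\sigma(T)^2)$ using the projection formula, together with $\pi_*(\pi^*D\cdot\pi^*D)=0$ (the fibers are one-dimensional) and $\pi_*(S\cdot\pi^*D)=D$ for any section $S$. The terms combine as follows:
\[
-\pi_*(\sigma(T)^2)=-\pi_*(T^2)-\pi_*(S_0^2)+2\pi_*(T\cdot S_0)+2\pi_*\bigl((T-S_0)\cdot\pi^*D\bigr)-0,
\]
and the middle term $\pi_*((T-S_0)\cdot\pi^*D)=D-D=0$. Applying Proposition \ref{prop:fmw} to both $T$ and $S_0$ gives $-\pi_*(T^2)=-\pi_*(S_0^2)=\Lambda$. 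Therefore $b_T=2\Lambda+2\pi_*(T\cdot S_0)$, which is the formula claimed. The $-D$ cross terms must be handled carefully: since $\sigma(T)\cdot\pi^*(\cdot)$ pushes forward to zero, the expansion is clean.

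The main subtlety is that the $\ell$-terms vanish only over $\mathcal V$, not over all of $B$. I would handle this in one of two ways. The first is to argue that the identity is one of divisor classes on $B$, and to work over a big open subset: by Theorem \ref{thm:birminmodel}(5), $\pi$ is equidimensional away from codimension $\geq 3$, so divisor classes are determined there. The second is to choose $m$ to kill the component-group contributions over the generic points of every component $\Sigma_A$. This suffices because only codimension-one data in $B$ affect $\pi_*$ of codimension-two cycles as divisor classes. A related point is that $T$ is only a rational section; I would replace it by its closure and note that the indeterminacy has codimension $\geq 2$ in $B$, so the push-forwards are unaffected.

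For effectivity, the class $\Lambda$ is effective, since it is supported on the discriminant with nonnegative coefficients from Kodaira's classification. The class $\pi_*(T\cdot S_0)$ is effective whenever $T\neq S_0$, because it is the push-forward of an intersection of distinct irreducible divisors. If instead $mS_a=S_0$, then $S_a$ is torsion and $\sigma(S_a)=0$, so $b_{S_a}=0$. In either case $b_{S_a}$ is a nonnegative combination of effective classes, and hence lies in $\overline{\operatorname{Eff}(B)}$.
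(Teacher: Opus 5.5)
Your proposal is correct and takes essentially the paper's route: choose $m$ via Proposition \ref{prop:CoxZucker} so the $\ell$-terms drop out (Proposition \ref{prop:mSa}), apply Proposition \ref{prop:fmw} to both $mS_a$ and $S_0$, and use $b_{mS_a}=m^2 b_{S_a}$. You also spell out the expansion and the effectivity of $\Lambda$ and $\pi_*\bigl((mS_a)\cdot S_0\bigr)$, including the torsion case $mS_a=S_0$, which the paper's two-line proof leaves implicit.
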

\begin{proof}
  Let  $m$ be as in  Proposition \ref{prop:CoxZucker} so that $mS_a$ meets the singular fibers in the $S_0$-component over generic points. The statement follows from Propositions \ref{prop:mSa} and \ref{prop:fmw}.\end{proof}
\begin{corollary}\label{cor:bsa} Let $S_a$ be any section distinct from the zero section $S_0$.  Assume that $\omega_X \simeq \mathcal{O}_X$.  Then there exists a positive integer $m$ such that
  \[
    b_{S_a}
    = \frac{1}{m^2}\Big(-2K_B + 2\pi_*\bigl((mS_a)\cdot S_0\bigr)\Big).
  \]
Furthermore $b_{S_a} \in \overline{\operatorname{Eff}(B)}$.
\end{corollary}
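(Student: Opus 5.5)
The plan is to specialize Proposition \ref{prop:bsa} to the Calabi-Yau case. First I check that its hypothesis holds. If $\omega_X \simeq \mathcal{O}_X$, then in the setting of Theorem \ref{thm:birminmodel} we have $K_X \equiv \pi^*(K_B+\Lambda)$. This forces $K_B + \Lambda \equiv 0$, because $\pi^*$ is injective on divisor classes up to numerical equivalence: the pullback of a nonzero class pairs nontrivially with a suitable curve in $X$ mapping onto a curve of $B$. So we may take $\mathcal{L} = \omega_B^{-1}$, with associated divisor $\Lambda = -K_B$, and the hypothesis $\omega_X \simeq \pi^*(\omega_B\otimes\mathcal{L})$ becomes simply $\omega_X\simeq \mathcal{O}_X$. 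This is exactly the substitution by which Corollary \ref{cor:fmw} was obtained from Proposition \ref{prop:fmw}.

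Next, choose $m$ as in Proposition \ref{prop:CoxZucker}, so that $mS_a$ meets the singular fibres in the $S_0$-component over the generic points of the discriminant components. By Proposition \ref{prop:mSa}, $m\sigma(S_a)$ is then given by the formula \eqref{eq:shioda-general2}, with no exceptional correction terms. Substituting $\Lambda = -K_B$ into Proposition \ref{prop:bsa} gives
\[
b_{S_a} = \frac{1}{m^2}\Big(-2K_B + 2\pi_*\bigl((mS_a)\cdot S_0\bigr)\Big).
\]
If one prefers a direct check, expand $-\pi_*\bigl((m\sigma(S_a))^2\bigr)$ using three facts: $\pi_*(S\cdot S) = K_B$ for every section, by Corollary \ref{cor:fmw}; the projection formula $\pi_*(S\cdot \pi^*D) = D$; and $\pi^*D\cdot\pi^*D'$ pushes forward to zero. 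The cross terms then combine to give the stated expression after dividing by $m^2$.

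For effectivity, note that $-K_B = \Lambda$ is effective, being a nonnegative combination of discriminant components. The class $\pi_*\bigl((mS_a)\cdot S_0\bigr)$ is also effective whenever $mS_a \neq S_0$, since two distinct irreducible divisors meet properly. If $mS_a = S_0$, this term is replaced by the self-intersection term, and the expression reduces to the torsion case where $\sigma = 0$. We assumed non-torsion, so for our $m$ we have $mS_a \ne S_0$. Hence $b_{S_a}\in\overline{\mathrm{Eff}(B)}$.

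The main obstacle is making sure that Proposition \ref{prop:bsa} applies verbatim. Specifically, one must check that $-K_B$ is honestly the divisor of a line bundle $\mathcal{L}$, and that the identification $\Lambda = -K_B$ holds as divisors and not merely numerically. I would handle this using $\mathbb{Q}$-factoriality of $B$: pass to a Cartier multiple, noting that the formula is linear in $\Lambda$. One could also observe that only numerical classes in $N^1(B)_{\mathbb R}$ are asserted, so numerical equivalence suffices.
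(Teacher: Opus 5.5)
Your proposal is correct and takes the same route as the paper: the corollary is Proposition \ref{prop:bsa} specialized to $\Lambda = -K_B$, using $m$ from Proposition \ref{prop:CoxZucker}, the formula of Proposition \ref{prop:mSa}, and $\pi_*(S\cdot S)=K_B$ from Corollary \ref{cor:fmw}, while your explicit expansion and effectivity argument fill in details the paper leaves implicit. One small point: the statement does not assume $S_a$ is non-torsion, but you do not need that assumption, because for torsion $S_a$ one has $\sigma(S_a)=0$, so $b_{S_a}=0\in\overline{\mathrm{Eff}(B)}$ trivially.
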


Finally, we recall the following:

\begin{theorem}[Shioda-Tate-Wazir formula] \label{thmSTW} Let $X\to B $ be as in Theorem \ref{thm:birminmodel} with $\Lambda \neq 0$. Then
 the rank of the Mordell-Weil group of $X$ is bounded by the Neron-Severi group of $X$ as follows:
\begin{equation}
{\rm rk}(\MW(X/B)) \leq  {\rm rk}(\NS(X))- {\rm rk}(\NS(B)) -1 \,.
\end{equation}

\begin{proof}  Terminal and klt singularities are rational and the statement follows from \cite[Theorem 1.1]{OguisoOnShiodaWazir}, which extends \cite{Shioda1990, Wazir_2004}.\end{proof}

\end{theorem}

\section{ 
Bounds for the Mordell-Weil rank with Function Fields } \label{sec:arithmproof}

{In this section we present an arithmetic proof of the following structure theorem bounding the Mordell-Weil rank of elliptic fibrations:}
\begin{theorem}\label{main2}
Let $\pi:X\to B$ be a morphism between projective, normal complex algebraic varieties, such that \\
(i) $X$ is Cohen-Macaulay, and its singular locus has codimension at least 3;\\
(ii) $\pi$ is an elliptic fibration with a section; \\
(iii) the canonical sheaf of $X$ is trivial ($\omega_X\cong {\mathcal O}_X$).
\begin{comment}
and $h^1(X,{\mathcal O}_X)=0$.
\end{comment}

Suppose that there is a freely movable smooth rational curve $C$ in $B$ with normal bundle $\mathcal{N}_{C}$ of degree $d$.
 Then the elliptic curve $E_F$ over the function field $F={\mathbb C}(B)$
has Mordell-Weil group $E_F(F)$ with
\begin{equation}
       {\rm rk}E_F(F) \leq 18+10d = 10 (-C) \cdot K_B -2 \,,
\end{equation}
where $d$ is the degree of the normal bundle of $C_t$ for any $t\in T$.
\end{theorem}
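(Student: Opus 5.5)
Let $C=C_{t_0}$ be a general member of the freely movable family $\{C_t\}_{t\in T}$, and let $Z=\pi^{-1}(C)\to C\cong\PP^1$ be the restricted elliptic surface. The strategy has three steps: embed $\MW(X/B)$ into the Mordell--Weil group of an elliptic curve over a larger function field, identify that curve with the generic fibre of a smooth elliptic surface of the shape in Proposition \ref{prop:Noether}, and then bound its Picard number by Noether's formula.

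\emph{Step 1: a field extension.} Replace $T$ by a smooth open subset and pull back the universal family, so that the incidence ${\mathcal C}\subset B\times T$ dominates $B$. Let $K={\mathbb C}(T)$. The generic member $C_K$ is then a smooth rational curve over $K$. Since the family has a section of $\pi$ (hence points), after a finite extension of $K$ we may take $C_K\cong\PP^1_K$. The function field ${\mathbb C}({\mathcal C})=K(C_K)$ contains $F={\mathbb C}(B)$, because ${\mathcal C}\to B$ is dominant. The Mordell--Weil group of an elliptic curve can only grow under field extension (base change of points is injective), so
\[
{\rm rk}\,E_F(F)\le {\rm rk}\,E_F(K(C_K)).
\]

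\emph{Step 2: the elliptic surface $S_K$.} Pull $X$ back to ${\mathcal C}$ and take the generic fibre over $T$; this gives $S_K=X\times_B C_K\to C_K$ over $K$. Because $C$ is freely movable and $X$ has singularities only in codimension $\ge 3$, a general $C_t$ avoids $\mathrm{Sing}(X)$'s image in codimension $\ge2$, i.e. $\pi^{-1}(C_t)$ misses $\mathrm{Sing}(X)$. It also meets $B^{sm}$ and the discriminant transversally in general. Hence $S_K$ is smooth and proper over $K$; Cohen--Macaulayness ensures the fibre product is well behaved. Its generic fibre is $E_F\otimes K(C_K)$, and it carries a section.

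By adjunction along the general fibre, $\omega_{S_K}=\pi^*(\omega_{C_K}\otimes{\mathcal L}_{C_K})$, where ${\mathcal L}$ is the fundamental line bundle $(R^1\pi_*{\mathcal O}_X)^{-1}$. Because $\omega_X\cong{\mathcal O}_X$, we have $\omega_B\otimes{\mathcal L}$ trivial on the relevant locus, so $\deg{\mathcal L}_{C_K}=-K_B\cdot C=2+d$, using $\deg T_B|_C=2+d$ for a smooth rational curve with normal bundle of degree $d$. This is positive.

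\emph{Step 3: Noether and Shioda--Tate.} Apply Proposition \ref{prop:Noether} over the algebraic closure $\overline K$; this is the reason it was proved over any algebraically closed field of characteristic $0$. With $g=0$ it gives
\[
\rho(S_{\overline K})\le 10(2+d).
\]
Shioda--Tate on the surface subtracts the zero section and the fibre class, so
\[
{\rm rk}\,E(\overline K(\PP^1))\le 10(2+d)-2=18+10d=10(-K_B\cdot C)-2.
\]
Since $K(C_K)\subset\overline K(\PP^1)$, Step 1 then yields the bound in the statement.

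\emph{Main obstacle.} The delicate step is Step 2: showing that the generic member of the family really gives a smooth proper surface with the canonical bundle formula $\omega_{S_K}=\pi^*(\omega_{C_K}\otimes{\mathcal L}_{C_K})$ holding exactly, not just up to corrections from non-flat or singular fibres. I would first try to handle this with Bertini-type arguments on ${\mathcal C}\to B$, which is smooth over the locus where free movability gives submersivity, together with the codimension-$3$ singularity hypothesis. Then I would compute $\omega$ by adjunction, noting that $Z$ is a local complete intersection pullback of a curve along a flat morphism over codimension-$2$-avoiding curves.
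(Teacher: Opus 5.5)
Your proposal is correct and follows essentially the same route as the paper. You base change along $\varphi:T\times\PP^1\to B$ to enlarge the function field, view the generic fibre as a smooth elliptic surface over $\PP^1_{\bar K}$ whose canonical bundle is pulled back from $\mathcal{O}_{\PP^1}(d)$ (i.e.\ $\deg\mathcal{L}=d+2=-K_B\cdot C$), and conclude with Noether's formula ($\rho\le 10d+20$) and Shioda--Tate. A few points need tightening, though none is a gap. The paper gets both smoothness and the canonical bundle from smoothness of $\varphi$: $\varphi^{-1}(\pi(X_{sing}))$ has codimension $\geq 2$ and is removed by shrinking $T$, the incidence variety is smooth, and $\omega_{Z_t}=\pi_t^*\det\mathcal{N}_{C_t}$ by adjunction. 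This is cleaner than your transversality and global canonical-bundle-formula argument, and base-point-freeness alone would not let a general $C_t$ avoid a codimension-$2$ set. Finally, justifying $C_K\cong\PP^1_K$ via the section of $\pi$ is a non sequitur, but harmless, since $C_K=\PP^1_K$ by construction and you pass to $\bar K$ anyway.
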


\begin{remark}
From Definition~\ref{movable} recall that $C$ being a freely movable smooth rational curve in $B$ has the following implications:
\begin{itemize}
    \item There is an (irreducible) nonsingular complex variety $T$.
    \item There is a smooth morphism
$\varphi:T\times{\mathbb P}^1_{\mathbb C}\to B$ with dense image,
contained in the smooth locus of $B$.
    \item There is a $t_0\in T$ such that $C_{t_0}=C$.
    \item Regarding this as a family of morphisms $\varphi_t:{\mathbb
        P}^1_{\mathbb C}\to B$, we may shrink $T$ to assume that $\varphi_t$ is an embedding
        for every $t$ in $T$ with image $C_t$.
    \item The family $\{C_t\}_{t\in T}$ has no base points
({\it i.e.}, for any $b\in B$ there exists $t\in T$
such that the corresponding rational curve $C_t$ does not contain $b$).
\end{itemize}
\end{remark}
\color{black}

\begin{remark}
Because of our hypothesis that the family of rational curves $C_t$ has no base points,
the normal bundle of any such $C_t$ must be generated by its global sections,
and so $d\geq 0$.
\end{remark}

\begin{remark} 
Movable curves are not necessarily base point free. However, the movable curves  we consider in Section \ref{sec:dim3} are base point free.
\end{remark}

\subsection{Proof of Theorem \protect{\ref{main2}}}

Recall that for an elliptic curve $E$ over a field $K$,
which has a given rational point $P\in E(K)$,
the Mordell-Weil group $E(K)$ of $K$-rational points is naturally identified
with the group ${\rm Pic}^0(E)$ of isomorphism classes of line bundles on $E$ of degree 0,
via the map
\[
Q (\in E(K))\mapsto {\mathcal O}_E(Q-P) (\in {\rm Pic}^0(E)).
\]
Hence if $K\subset L$ is any extension of fields,
then ${\rm Pic}^0(E)\to {\rm Pic}^0(E_L)$ is injective,
since $E(K)\subset E_L(L)$ in an obvious way.

With the hypotheses and notation of Theorem \ref{main2},
consider the ``incidence variety''
\[
Z=(T\times{\mathbb P}^1_{\mathbb C})\times_BX:=
\{(t,u,x)\in T\times{\mathbb P}^1_{\mathbb C}\times X\mid \varphi(t,u)=\pi(x)\} \,.
\]
The image $\pi(X_{sing})$ in $B$ of the singular locus of $X$ has codimension at least 2 in $B$,
since $X_{sing}$ has codimension at least 3, by assumption.
Since $\varphi$ is smooth,
$\varphi^{-1}(\pi(X_{sing}))$ is also of codimension at least $2$
in $T\times{\mathbb P}^1_{\mathbb C}$.
Hence, if we replace $T$ by a nonempty Zariski open subset,
we may assume $\varphi(T\times{\mathbb P}^1_{\mathbb C})$
does not intersect  $\pi(X_{sing})$.
Now since $\varphi$ is a smooth morphism,
so is the morphism $f:Z\to X$ induced by the projection,
and its image is disjoint from $X_{sing}$.
Hence $Z$ is nonsingular. It is also clear, since it is a pull-back,
that the morphism $Z\to T\times{\mathbb P}^1_{\mathbb C}$
is an elliptic fibration with a cross section, since $X\to B$ has this structure.

The generic fibre of $f:Z\to T\times{\mathbb P}^1$ is $E_K=E\times_F K$,
where $F={\mathbb C}(B)$ is the function field of $B$,
$K={\mathbb C}(T\times{\mathbb P}^1_{\mathbb C})$
is the function field of $T\times{\mathbb P}^1_{\mathbb C}$,
and $F\hookrightarrow K$ is the inclusion of fields induced by $\varphi$.
Hence it suffices to prove the stated bounds on the Mordell-Weil rank for the elliptic curve $E_K$, since that rank can only increase after a base change of fields.

We next note that, if $L={\mathbb C}(T)$ is the function field of $T$,
we may consider the elliptic surface $Z_L\to {\mathbb P}^1_L$
obtained as the generic fiber of the composition
\[
g:Z\to T\times{\mathbb P}^1_{\mathbb C} \to T
\]
where the second map is the projection to $T$.
Then $Z_L\to {\mathbb P}^1_L$ is a nonsingular projective surface
defined over the field $L$, which has an elliptic fibration
(the map to ${\mathbb P}^1_L$) with a section,
and whose generic fiber is the {\em same} elliptic curve $E_K$.

We also note that for a Zariski open dense subset of $t\in T$,
the fiber $Z_t=g^{-1}(t)=\pi^{-1}(C_t)\subset X$ is a nonsingular projective complex surface.

Consider an algebraic closure $\bar{L}$ of $L$, and the corresponding surface $Z_{\bar{L}}$.  This surface comes equipped with an elliptic fibration
\[
h:Z_{\bar{L}}\to {\mathbb P}^1_{\bar{L}}
\]
with a cross section, and if $M=\bar{L}({\mathbb P}^1)$ is the function field,
then there are field extensions
\[
K\hookrightarrow L \hookrightarrow \bar{L}\hookrightarrow M,
\]
which allows us to identify the generic fiber of $h$ with the elliptic curve
\[
E_M=E_F\times_FM,
\]
so that
\[
E(F)\subset E(M)
\]
is a subgroup.
It clearly suffices to prove the bounds stated in Theorem~\ref{main2}
for this ``new'' elliptic curve $E_M$,
which is the generic fiber of $h:Z_{\bar{L}}\to {\mathbb P}^1_{\bar{L}}$.

By the following lemma $H^1(Z_{\bar{L}},{\mathcal O}_{Z_{\bar{L}}})$
vanishes. \color{black}%
Hence the Picard group is discrete, and equal to its Neron-Severi group;
by application of Theorem \ref{thmSTW} we therefore have the estimate
\[
{\rm rk}\,E(M)\leq {\rm rk}\,\NS(Z_{\bar{L}})-2 \,.
\]
 The bound stated in Theorem~\ref{main2} follows immediately from the lemma below.

\begin{lemma} \label{eq:rkNS-1}
Under the hypotheses of Theorem~\ref{main2}, and with the above notations,
if $d$ is the degree of the normal bundle of $C_t\subset B$, where $t\in T$,
then 
$h^1(Z_{\bar{L}},{\mathcal O}_{Z_{\bar{L}}})=0$ and
\color{black}
\[
{\rm rk}(\NS(Z_{\bar L})) \leq 20 +10d \,.
\]
\end{lemma}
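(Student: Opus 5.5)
The plan is to reduce both claims to a computation on a single general complex fiber $Z_t=\pi^{-1}(C_t)$, $t\in T$ general. The surface $Z_{\bar L}$ is the geometric generic fiber of the smooth projective morphism $g:Z\to T$. After shrinking $T$, $g$ is smooth and projective, so $h^1(\mathcal O)$ and the Betti numbers are constant in the family. Specialization of Néron–Severi from the geometric generic fiber to a very general closed fiber is injective over $\mathbb C$. It therefore suffices to show that $h^1(Z_t,\mathcal O_{Z_t})=0$ and $\rho(Z_t)\le 20+10d$ for general $t$; alternatively one can argue directly on $Z_{\bar L}$ over the algebraically closed field $\bar L$ of characteristic $0$, using Proposition \ref{prop:Noether}.

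\textbf{Adjunction and the line bundle $\mathcal L_C$.} Put $C=C_t$. We have $Z_t=\pi^{-1}(C)$, and this is a locally complete intersection in $X$ (pullback of the l.c.i.\ $C\subset B^{sm}$), lying in the smooth locus of $X$. Its normal bundle is $\pi^*\mathcal N_{C/B}$, so adjunction together with $\omega_X\cong\mathcal O_X$ gives
\[
\omega_{Z_t}\cong \pi^*\det \mathcal N_{C/B} \cong \pi^*\bigl(\omega_C\otimes \omega_C^{-1}\otimes\det\mathcal N_{C/B}\bigr).
\]
So $\omega_{Z_t}=\pi^*(\omega_C\otimes\mathcal L_C)$ with $\mathcal L_C=\omega_C^{-1}\otimes\det\mathcal N_{C/B}$. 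Since $C\cong\mathbb P^1$, this has degree $\deg\mathcal L_C=2+d$, which is positive because $d\ge 0$ by the earlier remark. Note also that $(-K_B)\cdot C=2+d$, which accounts for the equality $18+10d=10(-C)\cdot K_B-2$.

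\textbf{Vanishing of $h^1$.} Consider the fibration $\pi_t:Z_t\to C\cong\mathbb P^1$, which has a section, so there are no multiple fibers. By the Leray spectral sequence, $h^1(\mathcal O_{Z_t})\le h^1(\mathcal O_{\mathbb P^1})+h^0(R^1\pi_{t*}\mathcal O_{Z_t})$. Relative duality identifies $R^1\pi_{t*}\mathcal O$ with a line bundle dual to the fundamental line bundle $\mathbb L$, and $\omega_{Z_t}=\pi_t^*(\omega_C\otimes\mathbb L)$, so $\mathbb L\cong\mathcal L_C$ has degree $2+d>0$. Hence $R^1\pi_{t*}\mathcal O$ has negative degree on $\mathbb P^1$, its $h^0$ vanishes, and $h^1(\mathcal O_{Z_t})=0$. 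The same argument works verbatim over $\bar L$.

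\textbf{Conclusion and main obstacle.} Proposition \ref{prop:Noether} with $g(C)=0$ and $\deg\mathcal L_C=2+d$ then gives $\rho\le 10(2+d)=20+10d$, as claimed. The main obstacle is the adjunction step. I must justify that $Z_t$ is smooth, which was shown above by genericity of $t$ and smoothness of $\varphi$, and that $\det$ of the normal bundle of $Z_t$ in $X$ is the pullback of $\det\mathcal N_{C/B}$. The latter holds because $\pi$ is flat over a neighbourhood of $C$: $X$ is Cohen–Macaulay and, for general $C$ avoiding the codimension $\ge 2$ non-equidimensional locus, the fibers over $C$ are curves. I would verify the flatness by miracle flatness over the smooth locus of $B$ near $C$.
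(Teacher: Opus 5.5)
Your proposal is correct and follows essentially the paper's proof. Adjunction on $X$ gives $\omega_{Z}\cong\pi^*\mathcal O_{\mathbb P^1}(d)$. The Leray spectral sequence with $R^1\pi_*\mathcal O_Z\cong\mathcal O_{\mathbb P^1}(-d-2)$ then kills $h^1(\mathcal O)$; the paper quotes this isomorphism from Miranda's Corollary 2.4(iv), while you derive it via relative duality. Finally, Noether's formula with $g(C)=0$ and $\deg\mathcal L_C=d+2$ gives $\rho\le 20+10d$.

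The only difference is your optional specialization to a very general complex fiber $Z_t$. The paper does not need this step, because Proposition \ref{prop:Noether} already holds over the algebraically closed characteristic-zero field $\bar L$, so one can argue directly on $Z_{\bar L}$.
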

\begin{proof}
We note that if $t\in T$, and $Z_t=\pi^{-1}(C_t)\subset X$
is such that $Z_t$ is a nonsingular complex surface,
with its induced elliptic fibration $\pi_t:Z_t\to{\mathbb P}^1_{\mathbb C}$,
then (using adjunction on $X$, which has trivial canonical bundle) we see that
$Z_t$ has canonical line bundle
\[
\omega_{Z_t}={\rm det}{\mathcal N_{Z_t}} = \pi_t^*({\rm det}{\mathcal N}_{C_t}),
\]
where ${\mathcal N}_{Z_t}$ is the normal bundle of $Z_t\subset X$,
and ${\mathcal N}_{C_t}$ is the normal bundle of $C_t$ in $B$.
Since $C_t\cong {\mathbb P}^1_{\mathbb C}$,
we have that ${\rm det}{\mathcal N}_{C_t}\cong {\mathcal O}_{{\mathbb P}^1}(d)$
where $d$ is the degree of the normal bundle of $C_t$.
Similar reasoning shows that
\[
\omega_{Z_{\bar{L}}}\cong h^*{\mathcal O}_{{\mathbb P}^1_{\bar{L}}}(d).
\]

Using \cite[Corollary 2.4(iv)]{M} we see that
    $R^1h_*\mathcal{O}_{Z_{\bar{L}}}=\mathcal{O}_{\mathbb{P}^1}(-d-2)$.
By the Leray spectral sequence for $h$, the vector space
$H^1(Z_{\bar{L}},{\mathcal O}_{Z_{\bar{L}}})$ is trapped between
    $H^1(\mathbb{P}^1,\mathcal{O}_{\mathbb{P}^1})=0$ and
    $H^0(\mathbb{P}^1,\mathcal{O}_{\mathbb{P}^1}(-d-2))=0$; so it
    vanishes.
\color{black}

Hence the surface $Z_{\bar{L}}$ has geometric genus
\[
p_g(Z_{\bar{L}})=\dim H^0(Z_{\bar{L}},\omega_{Z_{\bar{L}}})=d+1,
\]
and
\[
q(Z_{\bar{L}})=\dim H^1(Z_{\bar{L}},{\mathcal O}_{Z_{\bar{L}}})=0.
\]
Noether's formula  in Proposition \ref{prop:Noether} then implies that
\[
{\rm rk} (\NS(Z_{\bar{L}}))\leq 12(1+p_g)-2-2p_g=12(d+2)-2-2(d+1)=10d+20.
\]
\end{proof}

The varieties $X$, $B$, $T$ and the morphisms $\pi$ and $\phi$ are
defined over a countable algebraically closed subfield $k\subset \mathbb{C}$.
The proof given above can be carried out over $k$ instead of
$\mathbb{C}$.

\smallbreak

\begin{remark}\begin{enumerate}[(i)]
\item Recall from Proposition \ref{prop:Noether} that Noether's formula
is also valid for a smooth projective surface over an algebraically closed field,
when interpreted suitably \cite{PieneNoether};  thus the resulting inequality on the rank of the Neron-Severi group is valid in characteristic 0 as given in  Lemma \ref{eq:rkNS-1} and a slightly weaker upper bound holds in any characteristic.
Since $\bar{L}$ has characteristic $0$,
this can in fact be deduced from the case of complex surfaces,
together with the assertion that for a surface $S$ over an algebraically closed field $K$,
and an extension of algebraically closed fields $K\subset L$,
the Neron-Severi groups of $S$ and of $S_L$ are canonically isomorphic.
\item  
We recall that our Mordell-Weil group $E_F(F)$ corresponds to
the group of {\em rational sections} of the elliptic fibration
$\pi:X\to B$, since it clearly depends only on the generic fiber,
viewed as an elliptic curve  over the function field of the base
variety. 

\end{enumerate}
\end{remark}

\section{
Bounds for the Mordell-Weil rank via birational geometry
} \label{sec:geomproof}

\medskip

In the following we assume  $X$ as in Theorem \ref{thm:birminmodel}, 
$K_X = \pi^\ast(K_B + \Lambda)$ and $\Lambda \neq 0$.
The results in this
section hold for any dimension, unless the dimension is specified.
{In particular, we will prove the structure theorems \ref{thm:BoundsFromNoetherMov} and 
\ref{thm:BoundsFromNoetherPic1}.}

\begin{proposition}\label{prop:SectionMovableCurveInt}
	Let $S_a$ be a section.
	Then $
		b_{S_a}\cdot C \ge 0 \ 
		$
     for every curve class $C\in \overline{\mathrm{NM}(B)}$.

\end{proposition}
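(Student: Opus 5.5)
The plan is to combine Proposition \ref{prop:bsa} with the duality of Theorem \ref{thm:BDPaP}. Proposition \ref{prop:bsa} states that $b_{S_a}\in\overline{\operatorname{Eff}(B)}$ for every section $S_a\neq S_0$. Theorem \ref{thm:BDPaP} identifies $\overline{\mathrm{NM}(B)}$ with the dual cone of $\overline{\mathrm{Eff}(B)}$. So every $\gamma\in\overline{\mathrm{NM}(B)}$ satisfies $D\cdot\gamma\ge 0$ for all $D\in\overline{\mathrm{Eff}(B)}$. Taking $D=b_{S_a}$ gives $b_{S_a}\cdot C\ge 0$ immediately. If $S_a=S_0$, then $\sigma(S_0)=0$, so $b_{S_0}=0$ and the inequality is trivial.

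Most of the work is in making the effectivity in Proposition \ref{prop:bsa} transparent, so that the pairing with curve classes is meaningful; $B$ is only $\mathbb{Q}$-factorial klt, so intersections of $\mathbb{Q}$-Cartier divisors with curves are defined. I would choose $m$ by Proposition \ref{prop:CoxZucker}, so that $m^2 b_{S_a}=2\Lambda+2\pi_*((mS_a)\cdot S_0)$ as in Proposition \ref{prop:bsa}.

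\textbf{First term.} $\Lambda$ is an effective $\mathbb{Q}$-divisor supported on the discriminant. This is part of Theorem \ref{thm:birminmodel}(3), since the coefficients $a_i$ of the discriminant divisor are positive.

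\textbf{Second term.} Provided $mS_a\neq S_0$ as divisors (true since $S_a$ is non-torsion), the intersection $(mS_a)\cdot S_0$ is an effective codimension-two cycle on $X$. Its pushforward to $B$ is therefore an effective (Weil, hence $\mathbb{Q}$-Cartier) divisor. The intersection is taken on the terminal $\mathbb{Q}$-factorial $X$, away from the codimension $\geq 3$ non-equidimensional locus of Theorem \ref{thm:birminmodel}(5).

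Hence $b_{S_a}$ is effective, in particular it lies in $\overline{\operatorname{Eff}(B)}$, and the duality statement applies to every class in $\overline{\mathrm{NM}(B)}$, not only to actual movable curves. For an honest movable curve $C$ one can argue even more directly: a general member of the family is not contained in the support of the effective divisor $m^2b_{S_a}$, so $m^2 b_{S_a}\cdot C\ge 0$. The statement for the closure of the cone then follows by linearity and continuity of the intersection pairing on $N^1(B)_{\mathbb R}\times N_1(B)_{\mathbb R}$.

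The main obstacle I expect is justifying the effectivity of $\pi_*((mS_a)\cdot S_0)$ when $S_a$ is only a rational section, where $S_a$ means the closure of the image. Here I would restrict to the open set $\mathcal V$ of Proposition \ref{prop:CoxZucker}, where all the identities hold on the nose. I would then check that the complement contributes only in codimension at least two on $B$, which does not affect divisor classes on $\mathbb{Q}$-factorial $B$.
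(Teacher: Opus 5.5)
Your proposal is correct and follows the paper's proof: Proposition \ref{prop:bsa} gives $b_{S_a}\in\overline{\operatorname{Eff}(B)}$, and the duality of Theorem \ref{thm:BDPaP} then gives $b_{S_a}\cdot C\ge 0$ for all $C\in\overline{\mathrm{NM}(B)}$. Your extra justification of effectivity and your direct argument for movable curves go beyond what the paper writes; the direct argument also shows that only the easy inclusion $\overline{\mathrm{NM}(B)}\subseteq\overline{\operatorname{Eff}(B)}^{\vee}$ is needed, and your effectivity argument does not cover torsion $S_a\neq S_0$ (there $mS_a=S_0$), but in that case $b_{S_a}=0$ because $\sigma$ kills torsion.
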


\begin{proof}  By Proposition \ref{prop:bsa}, $b_{S_a} \in \overline{\operatorname{Eff}(B)}$ and  the cone  $\overline{\mathrm{Eff}(B)}$ is dual to $\overline{\mathrm{NM}(B)}$, by Theorem \ref{thm:BDPaP}.\end{proof}

\begin{lemma}\label{lemma:bsaZ}
Let $S_a \in \MW(X/B)$, $ C \subset B $  be a smooth curve contained in $B^{sm}$, the smooth locus of $B$, but such that $ C \not \subset\Sigma_{X/B}$,  and $ Z \stackrel{def}= \pi^{-1}(C)$  . %smooth}. 
Then
\[
 b_{S_a} \cdot C= -{\pi_\ast}_{|\widehat Z}\bigl(\sigma ({S_a}_{|\widehat Z}) \cdot  \sigma({S_a}_{|\widehat Z}) \bigr) = {\langle \sigma ({S_a}_{|\widehat Z}) , \sigma({S_a}_{|\widehat Z})\rangle}_{|_{\widehat Z}}\,
\]
is  well-defined and gives the height pairing $\langle,\rangle_{|_{\widehat Z}}$ for $S_a|_{\widehat Z}\in \MW({\widehat Z}/C)$, where  $ \tau: \widehat Z \to Z$ is a minimal resolution  of $Z$ relative to the morphism $Z \to C$.
\end{lemma}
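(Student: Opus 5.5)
The plan is to compare the global height class $b_{S_a}$ with the height pairing on the surface by restricting every term of the Shioda map to $Z$ and then pulling back to $\widehat Z$. The difficulty is that the components $E_i^A$ and their coefficients $\ell^i_A$ need not restrict to the fibre components of $\widehat Z \to C$. To sidestep this, I would first replace $S_a$ by a multiple. By Proposition \ref{prop:CoxZucker} and Proposition \ref{prop:mSa}, there is $m$ such that $m\sigma(S_a)=mS_a-S_0-\pi^*\pi_*((mS_a-S_0)\cdot S_0)$ over an open set $\mathcal V\subset B$ containing the generic points of all components $\Sigma_A$. Both sides of the claimed identity are quadratic in $S_a$, and both Shioda maps are homomorphisms, so it suffices to prove the identity for $mS_a$.

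The first step is to check that everything is well defined. Since $C\subset B^{sm}$, the class $b_{S_a}$ is Cartier near $C$, so $b_{S_a}\cdot C$ makes sense. Since $C\not\subset \Sigma_{X/B}$, the restriction $Z\to C$ is an elliptic surface with generic fibre $E_F\times_F\mathbb{C}(C)$. The section $S_a$ restricts to a rational section of $Z/C$, which extends to a regular section of the relatively minimal smooth model $\widehat Z$ because $C$ is a curve. This gives $S_a|_{\widehat Z}\in \MW(\widehat Z/C)$, so the right-hand side is defined.

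The second step is the projection formula. Write $\iota:Z\hookrightarrow X$. For a class $\alpha$ on $X$,
\[
\pi_*(\alpha\cdot\alpha)\cdot C=(\pi|_Z)_*(\iota^*\alpha\cdot\iota^*\alpha),
\]
using $\pi^*C=[Z]$, which holds because $C$ lies in the smooth locus and $Z$ is the full preimage. Apply this with $\alpha=m\sigma(S_a)$. On $Z$, the terms $mS_a|_Z$ and $S_0|_Z$ are the restricted sections. The term $\pi^*\pi_*((mS_a-S_0)\cdot S_0)$ restricts to the fibre class multiplied by $\deg\bigl(\pi_*((mS_a-S_0)\cdot S_0)\bigr|_C)$.

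The main obstacle is making this comparison correct on $\widehat Z$, because $Z$ may be singular or non-minimal. I would pull back along $\tau$ and write $\tau^*(mS_a|_Z)=mS_a|_{\widehat Z}+(\text{exceptional and fibre components})$. The key point is that, at the points where $C$ meets $\Sigma_{X/B}$ but leaves $\mathcal V$, the section $mS_a|_{\widehat Z}$ might meet non-identity components. In that case the surface Shioda map acquires correction terms. I would handle this by enlarging $m$ further: the component groups of the finitely many singular fibres of $\widehat Z\to C$ are finite, so some multiple meets only identity components. Then both Shioda maps reduce to the form in Proposition \ref{prop:mSa}, and the differences between $\tau^*\iota^*$ of each term and the corresponding term on $\widehat Z$ are vertical divisors disjoint from $S_0$ and $mS_a$. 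These vertical differences should drop out of the self-intersection after pushforward, since both sides are orthogonal to the sections and the fibre class.

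Finally, I would compute both sides with Proposition \ref{prop:fmw} on $X$ and its surface analogue on $\widehat Z$, where $-S\cdot S=\deg\Lambda|_C$ by adjunction: $\omega_Z=\pi_Z^*(\omega_C\otimes\mathcal L_C)$. This gives
\[
\frac{1}{m^2}\bigl(2\Lambda\cdot C+2\,(mS_a\cdot S_0)_Z\bigr)
\]
for both sides, provided the intersection $(mS_a\cdot S_0)$ on $\widehat Z$ equals $\pi_*((mS_a)\cdot S_0)\cdot C$. That equality follows from the projection formula when $C$ is not contained in $\pi(S_a\cap S_0)$, and otherwise from the excess-intersection version, again using smoothness of $B$ along $C$.
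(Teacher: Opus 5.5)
\textbf{Verdict: the proposal has a genuine gap, and the paper's own proof has the same one.} Your route is essentially the paper's. Both sides are written, via Proposition~\ref{prop:bsa} and its surface analogue, as $\frac{1}{m^2}\bigl(2\Lambda\cdot C+2\,(mS_a\cdot S_0)\bigr)$, and then the $mS_a\cdot S_0$ terms are matched. The paper matches them through Weierstrass models, which agree with $X$ and $\widehat Z$ near $S_0$; you use the projection formula. Enlarging $m$ so that $mS_a|_{\widehat Z}$ meets only identity components is fine, and the paper leaves that step implicit.

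The gap is the matching step itself, and it cannot be closed without a generality hypothesis on $C$. The projection formula gives $\pi_*(mS_a\cdot S_0)\cdot C=\iota^*(mS_a)\cdot S_0|_Z$ for the \emph{pulled-back divisor class}. That class is $\iota^*(mS_a)=[mS_a|_Z]+V$, where $V$ is vertical and supported over the points of $C$ where the rational section $mS_a$ is not a morphism. $V$ can contain identity components, which meet $S_0$, so your claim that the vertical differences are disjoint from $S_0$ fails.

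More precisely, $\tau^*\iota^*\sigma(S_a)$ restricts to $S_a|_C$ on the generic fibre and is orthogonal to $S_0$ and $F$. Hence $\tau^*\iota^*\sigma(S_a)=\sigma(S_a|_{\widehat Z})+T$ with $T$ in the span of the non-identity fibre components of $\widehat Z$, and therefore $b_{S_a}\cdot C=\langle\sigma(S_a|_{\widehat Z}),\sigma(S_a|_{\widehat Z})\rangle-T^2$. Orthogonality to the sections and to $F$ does not make $T$ drop out: $T^2<0$ unless $T=0$, and no choice of $m$ controls $T$.

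The case $C\subset\pi(mS_a\cap S_0)$ is worse. There $mS_a$ reduces to the zero section at the generic point of $C$, since it lies in the kernel of reduction at a place of good reduction. So the right-hand side is $0$, and no excess-intersection formula changes that.

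This does happen. Over $B=\PP^2$, with $H$ the line class, take $y^2=x^3+fx+g$ where $f=-\tfrac34u^2+2\phi w$ and $g=\phi uw+e^2w^2-\tfrac14u^3$, with $\deg(e,\phi,u,w)=(1,4,6,8)$. Take the section $x=(\phi^2+e^2u)/e^2$, $y=(\phi^3+\tfrac32\phi ue^2+e^4w)/e^3$.
\begin{itemize}
\item The discriminant is $w^2\Delta'$, and the section meets the identity component over the generic point of the $I_2$-locus $\{w=0\}$. Hence $b_{S_a}=2\Lambda+2\{e=0\}=8H$, which is indeed the height on a general line.
\item On the line $\{e=0\}$ the restricted section is the zero section, so its height is $0$, not $8$.
\item On a general line through one of the four points $\{e=\phi=0\}$ (nodes of $\Delta'$, where $S_a$ is indeterminate), the restricted section misses $S_0$, meets only identity components, and has height $6$, not $8$.
\end{itemize}

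In general one only gets $b_{S_a}\cdot C\ge\langle\sigma(S_a|_{\widehat Z}),\sigma(S_a|_{\widehat Z})\rangle$. Equality holds when $T=0$, for instance when $C$ avoids the indeterminacy locus of $mS_a$ and meets $\Sigma_{X/B}$ transversally at general points of its components. Then every fibre component of $\widehat Z$ is a fibre of some $E_i^A$ and is orthogonal to $\sigma(S_a)$. A general member of a freely movable family satisfies this. Your argument goes through once such a hypothesis is added; the paper's proof makes the same identification without comment and needs the same hypothesis.
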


\begin{proof} 
Recall that $ b_{S_a}
   = -\pi_*   \bigl( \sigma(S_a) \cdot  \sigma(S_a)  \bigr) = \langle  \sigma(S_a),  \sigma(S_a) \rangle.$
  By Proposition \ref{prop:CoxZucker} there exists a positive integer $m$ such that 
$mS_a$ intersects the singular fibers, possibly after restricting to an open neighborhood,  in the same component as $S_0$ and then $ b_{S_a}
    = \frac{1}{m^2}\Big(2 \Lambda + 2\pi_*\bigl((mS_a)\cdot S_0\bigr)\Big)
  $. There exists a Weierstrass model $W_X$ of $X_{|\pi^{-1}(B^{sm})} \to B^{sm}$ and   a Weierstrass model $W_Z$ of $Z\to C$. Now the birational map between $X$ and $W_X$ is an isomorphism in an open set which includes $(mS_a)\cdot S_0$ for a suitable choice of $m$. Likewise, the birational maps between $Z$, $\widehat Z$ and $W_Z$ are isomorphisms in an open set of $((mS_a)\cdot S_0))_{|\pi^{-1}(C)}$. Then 
$\pi_*\bigl((mS_a)\cdot S_0\bigr)\bigr)\cdot C=(\pi_{|\widehat Z})_*\bigl((mS_a)_{|\widehat Z}\cdot {S_0}_{|\widehat Z}\bigr)\Big),$  where  $(mS_a)_{|\widehat Z} \ $ is also the addition in $\widehat Z$, {by base change}. We also have  $K_{\widehat Z}= \pi_{|\widehat Z}^*(K_C+ \Lambda \cdot C)$ and thus
\begin{eqnarray} b_{S_a} \cdot C
   & =& \frac{1}{m^2}\Big(2 \Lambda + 2\pi_*\bigl((mS_a)\cdot S_0\bigr)\Big)\cdot C \nonumber \\
   &=& \frac{1}{m^2}\Big(2 \Lambda \cdot C + 2(\pi_{|\widehat Z})_*\bigl((mS_a)_{|\widehat Z}\cdot {S_0}_{|\widehat Z}\bigr)\Big) \nonumber
\end{eqnarray}
and 
 \[ b_{S_a} \cdot C = -{\pi_\ast}_{|\widehat Z}\bigl(\sigma ({S_a}_{|\widehat Z}) \cdot  \sigma({S_a}_{|\widehat Z}) \bigr) = \langle \sigma ({S_a}_{|\widehat Z}) , \sigma({S_a}_{|\widehat Z})\rangle_{|_{\widehat Z}} \,.
 \]\end{proof}

\begin{remark}
The
compatibility of the height pairing with appropriate base changes, as
recorded in Lemma \ref{lemma:bsaZ}, may be viewed as a special case of a formula
in intersection theory, \cite{Fulton}, Theorem
6.2 (a).
\end{remark}

\begin{notation} \label{remark_notation} In the hypothesis of the previous Lemma \ref{lemma:bsaZ}, from now on  we will write $\MW(Z/C)$ in place of 
$\MW({\widehat Z}/C)$. In many cases  indeed $Z = \widehat Z$.
\end{notation}

\begin{proposition}\label{prop:InjMW} Let $C \subset B^{sm} \subseteq B$ be a smooth curve, $C \not \subset  \Sigma_{X/B}$ and %such that 
$\pi^{-1}(C)=Z$% is a smooth surface
. Assume  that $b_{S_a} \cdot C \geq 0$ for every $S_a$ and that
$b_{S_a} \cdot C = 0$ if and only if $b_{S_a}=0$, i.e. if $S_a$ is torsion.
Then 
\begin{enumerate}
    \item $ 
S_a|_Z \text{ is  torsion in } \MW(Z/C) \Longleftrightarrow  S_a \text{ is  torsion in } \MW(X/B), $
\item $ S_a \neq S_b \in \MW(X/B)/\MW(X/B)_{\rm tors} \Longrightarrow $\\
$S_a|_Z \neq S_b|_Z  \in \MW(Z/C)/\MW(Z/C)_{\rm tors}.
$
\end{enumerate}
  
\end{proposition}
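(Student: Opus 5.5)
The plan is to reduce both statements to the compatibility of the height pairing with restriction (Lemma \ref{lemma:bsaZ}), together with the standard fact that on an elliptic surface the height pairing is positive definite modulo torsion. The restriction map $S_a \mapsto S_a|_Z$ is a group homomorphism $\MW(X/B)\to\MW(Z/C)$. It is defined on rational sections via base change of the generic fiber, since $C\not\subset\Sigma_{X/B}$, so the generic point of $C$ maps to a point of $B$ over which the fiber is an elliptic curve.

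For (1), I use that $b_{S_a}$ is the height of $S_a$ and that, by Remark \ref{remark:ell-vanishing}(d),(e), $\sigma$ has kernel exactly the torsion subgroup. Thus $b_{S_a}=0$ if and only if $S_a$ is torsion, which is the identification already written into the hypothesis. By Lemma \ref{lemma:bsaZ},
\[
b_{S_a}\cdot C=\langle \sigma(S_a|_{\widehat Z}),\sigma(S_a|_{\widehat Z})\rangle_{|_{\widehat Z}}.
\]
On the elliptic surface $\widehat Z\to C$, the Shioda height pairing is positive definite on $\MW(\widehat Z/C)\otimes\mathbb{Q}$ (Shioda), so the right-hand side vanishes if and only if $S_a|_{\widehat Z}$ is torsion. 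Combining this with the hypothesis that $b_{S_a}\cdot C=0 \Leftrightarrow b_{S_a}=0 \Leftrightarrow S_a$ torsion gives (1). The implication ``$S_a$ torsion $\Rightarrow$ $S_a|_Z$ torsion'' is also immediate from the homomorphism property, so the real content is the converse.

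For (2), I apply (1) to the difference $S_a-S_b$ (Mordell--Weil subtraction), using that restriction commutes with the group law by base change, as noted in the proof of Lemma \ref{lemma:bsaZ}. If $S_a|_Z$ and $S_b|_Z$ agree modulo torsion, then $(S_a-S_b)|_Z$ is torsion. By (1), $S_a-S_b$ is then torsion in $\MW(X/B)$, so $S_a=S_b$ modulo torsion. Hence the induced map $\MW(X/B)/\MW(X/B)_{\rm tors}\to \MW(Z/C)/\MW(Z/C)_{\rm tors}$ is injective.

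The main obstacle is making sure Lemma \ref{lemma:bsaZ} applies to an arbitrary element $S_a-S_b$ and that the restriction really is a homomorphism landing in $\MW(\widehat Z/C)$ (Notation \ref{remark_notation}). The point is that the restricted section of a rational section extends to a genuine section of $\widehat Z\to C$ because $C$ is a smooth curve. I would handle this by working on the generic fiber $E_{K(C)}$ and using the valuative criterion for properness over the curve $C$. A secondary point is positive definiteness of the height on $\widehat Z$: it requires $\widehat Z\to C$ to be a relatively minimal smooth elliptic surface with section, which the minimal resolution $\tau$ provides.
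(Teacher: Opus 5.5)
Your proposal is correct and is essentially the paper's proof: (1) comes from Lemma \ref{lemma:bsaZ}, the hypothesis, and the fact that height zero characterizes torsion, and (2) from applying (1) to $S_a-S_b$ using that restriction is compatible with the group law (the paper's appeal to the Shioda homomorphism). Two minor precisions. First, the nontrivial direction of (1) only needs that torsion sections of $\widehat Z/C$ have height zero (by bilinearity), not positive definiteness. Second, restriction is a specialization rather than a base change of the generic fibre, since there is no map $K(B)\to K(C)$; it is well defined because a rational section is regular at the generic point of $C\subset B^{sm}$, as the fibres over $C\setminus\Sigma_{X/B}$ contain no rational curves.
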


\begin{proof}
  Recall that $
  b_{S_a}=
\langle S_a, S_a \rangle = 0$ if and only if $ S_a$  is torsion (see also Remark \ref{remark:ell-vanishing}, (d)). The first statement follows then  from  the assumptions and Lemma \ref{lemma:bsaZ}.   The same lemma and the  Shioda homomorphism  imply the second part.
\end{proof}

\begin{theorem}\label{thm:InjMW}
Let $C \subset B^{sm} \subseteq B$ be a smooth curve, $C \not \subset  \Sigma_{X/B}$. Assume  that $b_{S_a} \cdot C \geq 0$ for every $S_a$ and
that $b_{S_a} \cdot C = 0$ if and only if ${S_a}$ is a torsion section. Then
\[\MW(X/B)/( \MW(X/B))_{tor} \subseteq  \MW(Z/C)/ (\MW(Z/C))_{tor}
\]
and
\[ 
\operatorname{rk} \MW(X/B) \leq \operatorname{rk} \MW(Z/C)   \,.
\]
\end{theorem}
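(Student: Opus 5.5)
The plan is to define the restriction homomorphism $r\colon \MW(X/B)\to \MW(Z/C)$, $S_a\mapsto S_a|_{Z}$, and show that, after passing to quotients by torsion, it is injective. Everything then follows from Proposition \ref{prop:InjMW} together with the group structure.

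\textbf{Step 1: $r$ is a well-defined group homomorphism.} Since $C\not\subset\Sigma_{X/B}$, the generic point of $C$ maps to a point of $B$ over which the fiber is a smooth elliptic curve. Hence $\MW(X/B)=E(\mathbb{C}(B))$ specializes to $E_{\mathbb{C}(C)}$, where the generic fiber of $Z\to C$ (or of $\widehat Z\to C$, cf. Notation \ref{remark_notation}) is the base change of $E$ along the local ring of $B$ at the generic point of $C$. A rational section $S_a$ is regular at the generic point of $C$, because $C\subset B^{sm}$ and sections of a Weierstrass model extend in codimension one over the smooth locus. Restriction is therefore defined, and it respects addition because the group law is given by morphisms of the (Weierstrass) model over an open set meeting $C$. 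This base-change compatibility is the same fact already used in the proof of Lemma \ref{lemma:bsaZ}.

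\textbf{Step 2: kernel modulo torsion.} The map $r$ induces $\bar r\colon \MW(X/B)/\MW(X/B)_{tor}\to \MW(Z/C)/\MW(Z/C)_{tor}$, since torsion maps to torsion. Suppose $\bar r([S_a])=0$, i.e. $S_a|_Z$ is torsion. By the hypotheses, which are exactly those of Proposition \ref{prop:InjMW}, part (1) of that proposition gives that $S_a$ is torsion in $\MW(X/B)$. The underlying computation is that $b_{S_a}\cdot C=\langle\sigma(S_a|_{\widehat Z}),\sigma(S_a|_{\widehat Z})\rangle$ vanishes by Lemma \ref{lemma:bsaZ}. So $\bar r$ is injective, which gives the inclusion. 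Alternatively, applying part (2) to $S_a$ and $S_b$ directly gives injectivity on classes, with $r(S_a-S_b)=r(S_a)-r(S_b)$.

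\textbf{Step 3: rank inequality.} Both groups are finitely generated: $\MW(X/B)$ by Theorem \ref{thmSTW}, and $\MW(Z/C)$ by Mordell–Weil/Lang–Néron for the elliptic surface. Their torsion-free quotients are therefore free of ranks $\operatorname{rk}\MW(X/B)$ and $\operatorname{rk}\MW(Z/C)$, and an injection of free abelian groups gives $\operatorname{rk}\MW(X/B)\le\operatorname{rk}\MW(Z/C)$.

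The main obstacle is Step 1, namely that restriction is a homomorphism compatible with the Shioda map on $\widehat Z$. This requires the fibration to admit a Weierstrass model over $B^{sm}$ that is isomorphic to $X$ near the relevant loci. I would handle it exactly as in the proof of Lemma \ref{lemma:bsaZ}. The injectivity itself is then formal.
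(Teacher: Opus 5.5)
Your proposal is correct and follows the paper's route: the paper's proof of Theorem~\ref{thm:InjMW} is just an appeal to Proposition~\ref{prop:InjMW}, and your Steps 2--3 unpack exactly that. (The rank inequality already follows by tensoring the injection of torsion-free quotients with $\mathbb{Q}$, so finite generation is not needed; Theorem~\ref{thmSTW} on its own bounds the rank, not the generation.) One correction to Step 1: when $\dim B\ge 3$ the generic point of $C$ has codimension $\ge 2$ in $B$, so extension of rational sections in codimension one does not reach it; regularity there comes instead from $C\not\subset\Sigma_{X/B}$, since over $B^{sm}\setminus\Sigma_{X/B}$ the fibration is smooth with elliptic fibers and rational sections extend to morphisms there (by Weil's extension theorem, or because those fibers contain no rational curves).
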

\begin{proof} This follows from  Proposition \ref{prop:InjMW}. 
\end{proof}

\begin{theorem}\label{thm:InjMWsmooth}
Let $C \subset B^{sm} \subseteq B$ be a smooth curve such that $\pi^{-1}(C)=Z$ is a smooth surface. Assume  that $b_{S_a} \cdot C \geq 0$ for every $S_a$ and
that $b_{S_a} \cdot C = 0$ if and only if ${S_a}$ is a torsion section. Then
\[ 
\operatorname{rk} \MW(X/B) \leq \operatorname{rk} \MW(Z/C)   \leq \operatorname{rk} \NS(Z) - 2  \,.
\]
\end{theorem}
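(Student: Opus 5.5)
The argument has two inequalities, and I will prove them separately.

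\emph{First inequality.} Since $Z=\pi^{-1}(C)$ is a smooth surface, $Z$ coincides with its relative minimal resolution $\widehat Z$. The curve $C$ is not contained in $\Sigma_{X/B}$: the restriction $Z\to C$ is an elliptic fibration, so its general fiber is a smooth elliptic curve. The hypotheses of Theorem \ref{thm:InjMW} are therefore satisfied verbatim. The theorem gives the inclusion of free parts
\[
\MW(X/B)/\MW(X/B)_{tor}\subseteq \MW(Z/C)/\MW(Z/C)_{tor},
\]
and hence $\operatorname{rk}\MW(X/B)\leq\operatorname{rk}\MW(Z/C)$. This step is bookkeeping. Underneath, it rests on Lemma \ref{lemma:bsaZ}: for every section, $b_{S_a}\cdot C$ equals the height of $S_a|_Z$ on $Z$. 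The positivity hypothesis then forces non-torsion sections to restrict to non-torsion sections, and, via the Shioda homomorphism, distinct classes modulo torsion to restrict to distinct classes.

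\emph{Second inequality.} I apply the Shioda--Tate--Wazir formula, Theorem \ref{thmSTW}, to the smooth elliptic surface $Z\to C$, which has the restricted zero section $S_0|_Z$. This gives
\[
\operatorname{rk}\MW(Z/C)\leq \operatorname{rk}\NS(Z)-\operatorname{rk}\NS(C)-1=\operatorname{rk}\NS(Z)-2,
\]
since $\NS(C)\cong\mathbb Z$ for a smooth projective curve. Equivalently, one can use the classical Shioda--Tate formula directly on the smooth surface. The zero section and a fiber span a rank-two hyperbolic sublattice of $\NS(Z)$, the non-identity fiber components contribute non-negatively, and the quotient of $\NS(Z)$ by the trivial lattice is $\MW(Z/C)$ up to finite index. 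This gives the same bound and also covers the case $\Lambda\cdot C=0$.

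\emph{Main obstacle.} The delicate point is applying Theorem \ref{thmSTW} to $Z\to C$, since that theorem is stated for fibrations as in Theorem \ref{thm:birminmodel} with nonzero discriminant. I would handle this by invoking the classical Shioda--Tate formula for smooth elliptic surfaces with a section. That formula needs no minimality or discriminant assumption: at worst it equals the bound after contracting $(-1)$-curves in fibers, and contracting only decreases $\operatorname{rk}\NS(Z)$, so the inequality survives. Chaining the two inequalities proves the theorem.
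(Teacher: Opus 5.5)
Your proposal is correct and follows the paper's proof. The first inequality comes from Proposition \ref{prop:InjMW}, packaged as Theorem \ref{thm:InjMW}; your remark that smoothness of $Z$ forces $C\not\subset\Sigma_{X/B}$ supplies a hypothesis the paper leaves implicit. The second inequality comes from Theorem \ref{thmSTW} applied to $Z\to C$.

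One correction to your side remark: the classical Shioda--Tate formula needs $Z\to C$ not to be birationally trivial, because for $Z\simeq E_0\times C$ the constant sections give $E_0(\mathbb C)\subseteq \MW(Z/C)$, of infinite rank. So it does not settle the case $\Lambda\cdot C=0$ unconditionally. Non-triviality is automatic when $\Lambda\cdot C>0$, as in all the paper's applications, and the paper implicitly assumes it when it invokes Theorem \ref{thmSTW} for $Z$.
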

\begin{proof} The first inequality again follows from  Proposition \ref{prop:InjMW}. The second inequality is an application of Theorem \ref{thmSTW} to the  smooth elliptic surface $Z$.
\end{proof}

\begin{proposition}\label{prop-positiveintMov} Let $C\in \overline{\mathrm{NM}(B)}$ be a %a smooth
curve% such that $\pi^{-1}(C)=Z$ is a smooth surface
.
Assume also that $\Lambda \cdot C > 0$.  Then 
$b_{S_a} \cdot C \geq 0$ for every $S_a$, and
$b_{S_a} \cdot C = 0$ if and only if %$b_{S_a}=0$, that is
$S_a$ is torsion.

\end{proposition}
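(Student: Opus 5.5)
The nonnegativity part follows at once from Proposition \ref{prop:SectionMovableCurveInt}. Since $C\in\overline{\mathrm{NM}(B)}$ and $b_{S_a}\in\overline{\mathrm{Eff}(B)}$ by Proposition \ref{prop:bsa}, the duality of Theorem \ref{thm:BDPaP} gives $b_{S_a}\cdot C\ge 0$ for every section $S_a$. For the same reason, if $S_a$ is torsion then $\sigma(S_a)=0$ by Remark \ref{remark:ell-vanishing}(e), so $b_{S_a}=0$ and hence $b_{S_a}\cdot C=0$. The real content is therefore the converse: if $b_{S_a}\cdot C=0$, then $S_a$ is torsion.

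For the converse I would use the explicit formula of Proposition \ref{prop:bsa}. Choose $m$ as in Proposition \ref{prop:CoxZucker}, so that
\[
m^2\, b_{S_a}\cdot C \;=\; 2\,\Lambda\cdot C \;+\; 2\,\pi_*\bigl((mS_a)\cdot S_0\bigr)\cdot C .
\]
The first term is strictly positive by hypothesis. I then need the second term to be nonnegative whenever $mS_a\neq S_0$. I would note that $mS_a$ and $S_0$ are distinct irreducible divisors, so the cycle $(mS_a)\cdot S_0$ is effective of codimension $2$ in $X$. Its pushforward $D:=\pi_*((mS_a)\cdot S_0)$ is therefore an effective divisor class on $B$, because $\pi$ restricted to $S_0$ is birational onto $B$. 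Since $C$ lies in $\overline{\mathrm{NM}(B)}$, Theorem \ref{thm:BDPaP} gives $D\cdot C\ge 0$. It follows that $b_{S_a}\cdot C\ge \tfrac{2}{m^2}\Lambda\cdot C>0$ whenever $mS_a\ne S_0$.

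The remaining case is $mS_a=S_0$ in $\MW(X/B)$. Then $S_a$ is $m$-torsion, which is the desired conclusion, and there is no conflict with the formula because in that case Proposition \ref{prop:bsa} does not apply in the stated form. Hence $b_{S_a}\cdot C=0$ forces $mS_a=S_0$, i.e. $S_a$ is torsion.

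The main obstacle is justifying that $\pi_*((mS_a)\cdot S_0)$ is genuinely an effective (or pseudoeffective) class. The sections are only rational, $X$ may be singular, and the intersection must be taken in a way compatible with the definition of $b_{S_a}$. I would handle this as in the proof of Lemma \ref{lemma:bsaZ}: pass to the open set where $mS_a$ is regular and meets the fibres in the identity component, or to a Weierstrass model over $B^{sm}$, where $mS_a\cap S_0$ is a proper intersection of distinct prime divisors. I would then take closures, using $\mathbb Q$-factoriality of $X$ and $B$ to make the intersection product and pushforward well defined. Any correction supported on exceptional loci of codimension $\ge 2$ in $B$ does not affect divisor classes, so effectivity of $D$ in $N^1(B)_{\mathbb R}$ survives.
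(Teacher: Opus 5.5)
Your argument is correct and is essentially the paper's proof, which simply combines the formula of Proposition~\ref{prop:bsa} with the duality in Proposition~\ref{prop:SectionMovableCurveInt}. You make explicit the step the paper leaves implicit: for non-torsion $S_a$ the class $\pi_*\bigl((mS_a)\cdot S_0\bigr)$ is effective and so pairs nonnegatively with $C$, which gives $b_{S_a}\cdot C\ge \frac{2}{m^2}\,\Lambda\cdot C>0$.
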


\begin{proof}  Since $\Lambda \cdot C > 0$, Propositions \ref{prop:bsa}  and \ref{prop:SectionMovableCurveInt} imply that $b_{S_a} \cdot C \geq 0$ and $b_{S_a} \cdot C = 0$ if and only if $b_{S_a}=0$. \end{proof}

\begin{proposition}\label{prop-positiveintPic1}  Let  $C \subset B$  be a %smooth 
curve.
%such $\pi^{-1}(C)=Z$ is a smooth surface
 Assume also that $\Lambda \cdot C > 0$ and $\operatorname{rk} \Pic (B)=1$.  Then $b_{S_a} \cdot C \geq 0$ for every $S_a$, and
$b_{S_a} \cdot C = 0$ if and only if $b_{S_a}=0$, that is $S_a$ is torsion. 
\end{proposition}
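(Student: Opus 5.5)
Since $\operatorname{rk}\Pic(B)=1$ and $B$ is $\mathbb{Q}$-factorial, $N^1(B)_{\mathbb R}$ is one-dimensional, so every class in it is a real multiple of a single generator. I will take the ample generator $H$ and write every divisor class as $D\equiv \lambda_D H$ with $\lambda_D\in\mathbb R$. Then $D\cdot C=\lambda_D\,(H\cdot C)$ for any curve $C\subset B$, and $H\cdot C>0$ because $H$ is ample. The argument therefore reduces to comparing signs of coefficients, and no movability of $C$ is needed.

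Now take a section $S_a$. By Proposition \ref{prop:bsa}, $b_{S_a}\in\overline{\operatorname{Eff}(B)}$. In Picard rank one, $\overline{\operatorname{Eff}(B)}$ is the closed half-line $\mathbb{R}_{\ge 0}H$: it contains $H$, and it cannot contain $-H$, since otherwise it would contain a pseudo-effective class with negative degree against a movable curve class (for example $H^{\dim B-1}$), contradicting Theorem \ref{thm:BDPaP}. Hence $b_{S_a}\equiv\beta H$ with $\beta\ge 0$, and so $b_{S_a}\cdot C=\beta\,(H\cdot C)\ge 0$. This proves the first claim.

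For the equality case, $b_{S_a}\cdot C=0$ forces $\beta=0$, that is $b_{S_a}\equiv 0$ numerically. Conversely, if $b_{S_a}=0$ then $b_{S_a}\cdot C=0$ trivially. The identification of $b_{S_a}=0$ with $S_a$ being torsion comes from the Shioda map: $b_{S_a}=-\pi_*(\sigma(S_a)^2)$, and by Remark \ref{remark:ell-vanishing}(d),(e) the pairing on $\MW(X/B)\otimes\mathbb{Q}$ is non-degenerate and the kernel of $\sigma$ is exactly the torsion. This is the same fact already used in the proof of Proposition \ref{prop:InjMW}.

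The hypothesis $\Lambda\cdot C>0$ plays a consistency role. Since $\Lambda\not\equiv 0$, it gives $\lambda_\Lambda>0$, which matches the explicit formula $b_{S_a}=\frac{1}{m^2}\bigl(2\Lambda+2\pi_*((mS_a)\cdot S_0)\bigr)$ with both summands effective. The main point needing care is passing from the vanishing of the numerical class $b_{S_a}$ (a pushforward in $N^1(B)$) to the vanishing of the height pairing. I would handle this as in Proposition \ref{prop-positiveintMov}: invoke the non-degeneracy and Hodge-index statement in Remark \ref{remark:ell-vanishing}, which says that the height form is positive definite modulo torsion when tested against an ample class.
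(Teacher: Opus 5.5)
Your proposal is correct and is essentially the paper's argument: the paper's one-line proof says exactly that in Picard rank one $\overline{\mathrm{Eff}(B)}$ is the closed half-line spanned by the ample class, so $b_{S_a}=\beta H$ with $\beta\ge 0$ and $b_{S_a}\cdot C=\beta\,(H\cdot C)$ vanishes precisely when $b_{S_a}\equiv 0$, with the identification of $b_{S_a}=0$ with torsion taken as known, just as you do. If you want that last step, and the hypothesis $\Lambda\cdot C>0$, to do real work rather than a ``consistency role'', note that for non-torsion $S_a$ one has $mS_a\neq S_0$, so $\pi_*\bigl((mS_a)\cdot S_0\bigr)$ is effective and hence meets $C$ non-negatively in Picard rank one, and then Proposition~\ref{prop:bsa} gives directly $b_{S_a}\cdot C\ge \frac{2}{m^2}\,\Lambda\cdot C>0$.
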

\begin{proof} The hypotheses imply that $\overline{\mathrm{Eff}(B)}$ coincides with the closure  in $\mathbb{R}$ of the ample cone.
    \end{proof}

\begin{corollary}\label{cor:bsaNonNegMovANDPic1}  Let  $C  \subset  B$  be a smooth curve. Assume that 
$\Lambda = - K_B$. Assume also that either $C \in \overline{\mathrm{NM}(B)}$ is  rational or that  $\operatorname{rk} \Pic (B)=1 $  with $B$ Fano.
Then $b_{S_a} \cdot C \geq 0$ for every $S_a$, and
$b_{S_a} \cdot C = 0$ if and only if $S_a$ is torsion.
\end{corollary}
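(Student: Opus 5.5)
The plan is to reduce the corollary to Propositions \ref{prop-positiveintMov} and \ref{prop-positiveintPic1}. Both give exactly the desired conclusion once we know $\Lambda \cdot C > 0$, together with either $C \in \overline{\mathrm{NM}(B)}$ or $\operatorname{rk}\Pic(B)=1$. Since $\Lambda = -K_B$, the whole argument comes down to checking $-K_B \cdot C > 0$ in each of the two cases.

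\emph{Case $\operatorname{rk}\Pic(B)=1$ with $B$ Fano.} Here $-K_B$ is ample, so $-K_B\cdot C>0$ for every curve $C\subset B$. Proposition \ref{prop-positiveintPic1} then applies directly.

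\emph{Case $C\in\overline{\mathrm{NM}(B)}$ rational.} I will use the hypothesis that $C$ is a smooth rational curve moving in a family that covers $B$. We may take the general member to lie in $B^{sm}$, avoiding the codimension $\geq 2$ singular locus, since $B$ is normal. I then plan to argue as follows.
\begin{enumerate}[(a)]
\item For a general member of a covering family of rational curves, the normal bundle $N_{C/B}$ is generated by global sections. So $N_{C/B}\cong\bigoplus \mathcal O_{\PP^1}(a_i)$ with all $a_i\ge 0$, i.e.\ $C$ is free in the sense of Definition \ref{def:freeVerFree}. This is standard deformation theory in characteristic $0$; see \cite{Debarre2011GAEL}.
\item From $0\to T_C\to T_B|_C\to N_{C/B}\to 0$ we get
\[
-K_B\cdot C=\deg T_C+\deg N_{C/B}=2+\sum a_i\ \ge 2>0 .
\]
\item Proposition \ref{prop-positiveintMov} then gives the conclusion.
\end{enumerate}
The Mordell--Weil conclusion itself needs nothing new beyond those propositions.

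\emph{Main obstacle.} The difficulty is step (a), in the generality where $C$ is only assumed to be a class in $\overline{\mathrm{NM}(B)}$, or where the family meets the singular locus of $B$. I would handle this by interpreting the hypothesis as in Definition \ref{movable}: a general member of a dominating family of rational curves, which can be taken inside $B^{sm}$ by genericity.

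If one instead wants to avoid deformation theory, there is an alternative route. Since $\Lambda=-K_B\neq 0$ is effective, $B$ is uniruled; pairing $-K_B$ against a movable rational class might give positivity via Theorem \ref{thm:BDPaP}. But mere nonnegativity would not suffice, so I expect the free-curve computation to be the essential input.
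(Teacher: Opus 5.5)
Your proposal is essentially the paper's proof: you get positivity of $\Lambda\cdot C=-K_B\cdot C$ from ampleness in the Fano case and from adjunction $-K_B\cdot C=2+\deg N_{C/B}$ in the rational case, then apply Propositions \ref{prop-positiveintMov} and \ref{prop-positiveintPic1}, and your freeness argument usefully supplies the inequality $\deg N_{C/B}\ge 0$ that the paper leaves implicit. One small correction: a covering family need not avoid $\mathrm{Sing}(B)$ just because that locus has codimension $\ge 2$ (the rulings of the cone $\mathbb{P}[1,1,n]$ all pass through the vertex), so you should take $C\subset B^{sm}$ as a standing hypothesis, as the paper tacitly does when it applies adjunction.
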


\begin{proof} We only need to check the first case: if $C \in \overline{\mathrm{NM}(B)}$ is smooth and rational, the adjunction formula  gives $-K_B \cdot C = 2+ \deg N_{C/B} > 0$.  The statements follow from Propositions \ref{prop-positiveintMov} and \ref{prop-positiveintPic1}.
 \end{proof}

\begin{proposition} \label{prop:K3appearance1}Let $C \subset B^{sm} \subseteq B $ be a smooth rational curve. Assume that
$b_{S_a} \cdot C \geq 0$ for every $S_a$, and
$b_{S_a} \cdot C = 0$ if and only if $b_{S_a}=0$, that is, $S_a$ is torsion. 
\begin{enumerate}
    \item  If in addition $\Lambda \cdot C=2$, then $Z=\pi^{-1}(C)$ is  (birationally) a K3 surface and
\begin{equation*} \label{eq:rank18bound}
    \operatorname{rk} \MW(X/B) \leq \operatorname{rk} \MW(Z/C)   \leq 18.
    \end{equation*}
    \item  If in addition $\Lambda \cdot C=1$, then $Z=\pi^{-1}(C)$ is  (birationally) a rational elliptic surface and
\begin{equation*} \label{eq:rank18bound}
    \operatorname{rk} \MW(X/B) \leq \operatorname{rk} \MW(Z/C)   \leq 8.
    \end{equation*}
\color{black}
\end{enumerate}

    \end{proposition}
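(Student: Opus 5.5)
The argument has two steps: first reduce to the surface $Z$ via Theorem \ref{thm:InjMW}, then identify $Z$ up to birational equivalence from its canonical class and bound its Néron--Severi rank. Since $C\subset B^{sm}$ is smooth and $b_{S_a}\cdot C\ge 0$ with equality exactly for torsion sections, Theorem \ref{thm:InjMW} (through Proposition \ref{prop:InjMW} and Lemma \ref{lemma:bsaZ}) gives $\operatorname{rk}\MW(X/B)\le \operatorname{rk}\MW(Z/C)$. Here $\MW(Z/C)$ means $\MW(\widehat Z/C)$ for the relative minimal resolution $\widehat Z\to Z$ (Notation \ref{remark_notation}). The hypothesis implicitly requires $C\not\subset\Sigma_{X/B}$; we take this from the context in which the proposition is applied, for instance by taking $C$ general in its family. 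It then remains to bound $\operatorname{rk}\MW(\widehat Z/C)$.

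To identify $\widehat Z$, use $K_X=\pi^*(K_B+\Lambda)$ and the fact that $C$ lies in the smooth locus of $B$. Then the computation in the proof of Lemma \ref{lemma:bsaZ} gives $K_{\widehat Z}=\pi_{|\widehat Z}^*(K_C+\Lambda\cdot C)$. Because $C\cong\mathbb P^1$, the class $K_C+\Lambda\cdot C$ has degree $-2+\Lambda\cdot C$.
\begin{itemize}
\item If $\Lambda\cdot C=2$, then $K_{\widehat Z}\sim 0$. Moreover, $\widehat Z$ is relatively minimal, has a section, and has nonconstant $j$-invariant or singular fibres (since $\Lambda\cdot C>0$). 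The canonical bundle formula $\omega=\pi^*(\omega_C\otimes\mathcal L_C)$ with $\deg\mathcal L_C=\chi(\mathcal O_{\widehat Z})=2$, together with $q=h^1(C,\mathcal O_C)=0$, shows that $\widehat Z$ is a K3 surface.
\item If $\Lambda\cdot C=1$, then $\chi(\mathcal O)=1$, $q=0$ and $K_{\widehat Z}=-F$. Hence $\widehat Z$ is a rational elliptic surface.
\end{itemize}

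For the rank bounds, apply Theorem \ref{thmSTW} to the smooth elliptic surface $\widehat Z$: $\operatorname{rk}\MW\le\rho(\widehat Z)-2$. In the K3 case $\rho\le h^{1,1}=20$. This also follows from Proposition \ref{prop:Noether} with $\deg\mathcal L_C=2$, $g=0$, giving $\rho\le 20$, hence rank at most $18$. In the rational case $\rho=h^{1,1}=10$, or Proposition \ref{prop:Noether} with $\deg\mathcal L_C=1$ gives $\rho\le10$, hence rank at most $8$.

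The main obstacle is justifying that the restricted canonical class on $\widehat Z$ is exactly $\pi^*(K_C+\Lambda\cdot C)$. This requires two things. First, $Z$ may be singular, and the minimal resolution must be crepant over $C$; this holds because $X$ has terminal singularities and the fibration is Weierstrass-like over $B^{sm}$, so the singularities of $Z$ are rational double points. Second, $\mathcal L_C$ must have positive degree, which is needed to invoke Proposition \ref{prop:Noether}. The plan is to pass to the Weierstrass model $W_Z$, as in the proof of Lemma \ref{lemma:bsaZ}, where $\omega_{W_Z}=\pi^*(\omega_C\otimes\mathcal L_C)$ with $\mathcal L_C=\mathcal L|_C$. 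Adjunction along the fibration gives this formula, and the minimal resolution of the Weierstrass model's rational double points is crepant, so the formula transfers to $\widehat Z$.
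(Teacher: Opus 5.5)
Your proposal is correct and follows essentially the same route as the paper: Theorem \ref{thm:InjMW} gives the first inequality, $\widehat Z$ is identified as a K3 or rational elliptic surface, and the standard bounds $\rho\le 20$, resp.\ $\rho=10$, together with Shioda--Tate--Wazir, give $18$ and $8$. The paper identifies $\widehat Z$ by its $12\,\Lambda\cdot C$ singular fibres rather than by the canonical bundle formula, which is equivalent via Noether's formula. Both versions tacitly need $C\not\subset\Sigma_{X/B}$ and need the restricted Weierstrass model to be minimal along $C$, so that $\deg\mathcal L_C=\Lambda\cdot C$ exactly. Your justification via terminality of $X$ does not by itself give this minimality: a curve tangent to an $I_0^*$ component of the discriminant already yields orders $(\ge 4,\ge 6)$. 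It does hold for the general curves used in all the applications.
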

\begin{proof} 
By construction, $Z=\pi^{-1}(C)$ is an elliptic surface with  $ 12 \Lambda \cdot C =24$  or $ 12 \Lambda \cdot C =12 $ singular fibers and  is therefore birationally a K3 surface or a rational elliptic surface. 
 The first part of the inequality 
 follows from Theorem \ref{thm:InjMW}.
   The second part of the inequality  follows from the known bound
%$\operatorname{rk} \MW(Z/C)   \leq 18$
for $Z$ a K3 surface or a rational elliptic surface (recalling Notation \ref{remark_notation}).
\end{proof}

\begin{corollary}\label{cor:MWForZK3}
 Let $C \subset B^{sm} \subseteq  B $ be a smooth rational curve
 with normal bundle $ N_{C/B} \simeq \oplus ^{\dim B -1}\mathcal{O}_C$. 
  Assume that
$b_{S_a} \cdot C \geq 0$ for every $S_a$, and
$b_{S_a} \cdot C = 0$ if and only if $b_{S_a}=0$, that is, $S_a$ is torsion. 
In addition assume that $\Lambda = - K_B$. 
Then $Z=\pi^{-1}(C)$ is birationally a K3 surface and
   \begin{equation} 
    \operatorname{rk} \MW(X/B) \leq \operatorname{rk} \MW(Z/C)   \leq 18 \,.
\end{equation}  \end{corollary}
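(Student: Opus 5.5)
The plan is to reduce Corollary \ref{cor:MWForZK3} to part (1) of Proposition \ref{prop:K3appearance1}. The hypotheses on $b_{S_a}\cdot C$ are assumed verbatim, and $C$ is a smooth rational curve in $B^{sm}$. So the only thing left to check is the numerical condition $\Lambda\cdot C=2$.

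First I compute this intersection number. Since $C\cong\PP^1$ lies in the smooth locus of $B$, adjunction gives
\[
\deg K_C = K_B\cdot C + \deg \det N_{C/B},
\]
so $-2 = K_B\cdot C + \deg N_{C/B}$. The assumption $N_{C/B}\simeq \oplus^{\dim B-1}\mathcal{O}_C$ gives $\deg N_{C/B}=0$, hence $-K_B\cdot C=2$. This is the same computation already used in the proof of Corollary \ref{cor:bsaNonNegMovANDPic1}. With $\Lambda=-K_B$ we get $\Lambda\cdot C=2$.

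Second, I apply Proposition \ref{prop:K3appearance1}(1). It gives that $Z=\pi^{-1}(C)$ is birationally a K3 surface, and it gives the chain $\operatorname{rk}\MW(X/B)\le \operatorname{rk}\MW(Z/C)\le 18$. The first inequality is the injection of Theorem \ref{thm:InjMW}. The second is the classical bound for elliptic K3 surfaces, applied to $\widehat Z$ as in Notation \ref{remark_notation}.

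As a consistency check on the K3 claim, I would verify it directly. On $\widehat Z$ we have $K_{\widehat Z}=\pi^*(K_C+\Lambda\cdot C)=\pi^*\mathcal{O}_{\PP^1}(-2+2)$, which is trivial. By Proposition \ref{prop:Noether}, $p_g=g(C)-1+\deg\mathcal{L}_C=1$, and the proof of Lemma \ref{eq:rkNS-1} shows $q=0$. So $\widehat Z$ is a K3 surface, with $\rho(\widehat Z)\le 20$, and Theorem \ref{thmSTW} yields rank at most $18$.

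The only delicate point I expect is the hypothesis $C\not\subset\Sigma_{X/B}$, which Theorem \ref{thm:InjMW} needs but the statement does not list. I would argue it is implicit: if $C$ were contained in the discriminant, the restricted height pairing of Lemma \ref{lemma:bsaZ} would not be defined. Alternatively, a trivial-normal-bundle curve deforms in a family covering $B$, so a general deformation avoids $\Sigma_{X/B}$ and can replace $C$.
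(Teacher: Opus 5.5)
Your proposal is correct and follows the paper's proof exactly: adjunction with $\deg N_{C/B}=0$ gives $\Lambda\cdot C=-K_B\cdot C=2$, and the conclusion then follows from Proposition~\ref{prop:K3appearance1}(1). Your extra remarks are sound additions that the paper leaves unstated: the direct check that $\widehat Z$ is a K3 surface, and the observation that $C\not\subset\Sigma_{X/B}$ is an implicit hypothesis needed for Theorem~\ref{thm:InjMW}.
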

\begin{proof}
By adjunction for the rational curve $C$,
$\Lambda \cdot C = -K_B \cdot C = 2 + {\rm deg}N_{C/B} = 2$. The claim then follows from Proposition \ref{prop:K3appearance1}.
\end{proof}

\begin{theorem}\label{thm:MWNSMov}
 Let $C\in \overline{\mathrm{NM}(B)}$ be  a smooth curve    such that   $Z = \pi^{-1}(C) $ is smooth 
 with $\Lambda \cdot C > 0$.
 Then
\[ 
\operatorname{rk} \MW(X/B) \leq \operatorname{rk} \MW(Z/C)   \leq \operatorname{rk} \NS(Z) - 2  \,.
\]
\end{theorem}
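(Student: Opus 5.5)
The plan is to chain the results of this section, since the statement is essentially Theorem \ref{thm:InjMWsmooth} with the positivity hypothesis supplied by Proposition \ref{prop-positiveintMov}. First, because $C\in\overline{\mathrm{NM}(B)}$ and $\Lambda\cdot C>0$, Proposition \ref{prop-positiveintMov} gives $b_{S_a}\cdot C\ge 0$ for every $S_a\in\MW(X/B)$, with equality if and only if $S_a$ is torsion.

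Recall why this holds. By Proposition \ref{prop:bsa}, $m^2 b_{S_a}=2\Lambda+2\pi_*((mS_a)\cdot S_0)$. By Proposition \ref{prop:SectionMovableCurveInt} (which rests on the BDPP duality, Theorem \ref{thm:BDPaP}), the effective part pairs non-negatively with $C$. Hence $b_{S_a}\cdot C\ge \frac{2}{m^2}\Lambda\cdot C>0$ for every non-torsion $S_a$, while $b_{S_a}=0$ for torsion sections.

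Next I check the geometric hypotheses of Theorem \ref{thm:InjMWsmooth}: $C$ should be a smooth curve in $B^{sm}$ with $C\not\subset\Sigma_{X/B}$, so that Lemma \ref{lemma:bsaZ} identifies $b_{S_a}\cdot C$ with the height pairing of $S_a|_Z$ on $Z$. Smoothness of $Z=\pi^{-1}(C)$ is assumed. If $C$ met $\mathrm{Sing}(B)$, then $\pi^{-1}(C)$ would typically be singular (over klt non-smooth points $X$ is terminal but the fibration degenerates), so I would argue that $C\subset B^{sm}$ is forced. Failing that, I would simply read the statement as implicitly assuming $C\subset B^{sm}$, as in the previous theorems of this section. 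The condition $C\not\subset\Sigma_{X/B}$ holds because $Z$ is smooth and has elliptic general fibre, which requires the general point of $C$ to lie outside the discriminant; otherwise $Z\to C$ would have no smooth elliptic fibres.

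With these hypotheses in place, Proposition \ref{prop:InjMW} (via Lemma \ref{lemma:bsaZ} and the fact that the Shioda map is a homomorphism) shows that restriction $\MW(X/B)/\mathrm{tors}\to\MW(Z/C)/\mathrm{tors}$ is injective. Hence $\operatorname{rk}\MW(X/B)\le\operatorname{rk}\MW(Z/C)$. Finally, the Shioda--Tate--Wazir bound, Theorem \ref{thmSTW}, applied to the smooth elliptic surface $Z\to C$ with $\operatorname{rk}\NS(C)=1$, gives $\operatorname{rk}\MW(Z/C)\le\operatorname{rk}\NS(Z)-2$. Its hypothesis that $\Lambda\ne 0$ is met because $\Lambda\cdot C>0$.

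The main obstacle I expect is the bookkeeping in the second step: making sure that Lemma \ref{lemma:bsaZ}, stated for smooth $C\subset B^{sm}$, applies to a curve that is only assumed to lie in the closed movable cone. This is really an issue of interpreting $C$ as an actual curve rather than a class. I would handle it as indicated, using the smoothness of $Z$ to deduce $C\not\subset\Sigma_{X/B}$ and $C\subset B^{sm}$. The remaining steps are direct citations.
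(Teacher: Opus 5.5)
Your proposal is correct and follows the paper's proof. The paper's proof is the one-line observation that Proposition \ref{prop-positiveintMov} supplies the positivity hypotheses of Theorem \ref{thm:InjMWsmooth}, and your remarks on $C\not\subset\Sigma_{X/B}$ (which follows from smoothness of $Z$ via generic smoothness) and on $C\subset B^{sm}$ (which the paper leaves tacit) only make explicit what that citation assumes.
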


\begin{proof}  Proposition \ref{prop-positiveintMov} implies that the hypotheses of Theorem \ref{thm:InjMWsmooth} are satisfied. \end{proof}

\begin{theorem}\label{thm:MWNSPic1} 

Let  $C \subset B$  be a smooth 
curve such that $\pi^{-1}(C)=Z$ is smooth.
 Assume also that $\Lambda \cdot C > 0$ and $\operatorname{rk} \Pic (B)=1$.  Then 
\[ 
\operatorname{rk} \MW(X/B) \leq \operatorname{rk} \MW(Z/C)   \leq \operatorname{rk} \NS(Z) - 2  \,.
\]
\end{theorem}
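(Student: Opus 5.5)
The argument will mirror that of Theorem \ref{thm:MWNSMov}, with Proposition \ref{prop-positiveintPic1} taking the place of Proposition \ref{prop-positiveintMov}. The plan is to check the hypotheses of Theorem \ref{thm:InjMWsmooth} for the curve $C$ and then apply that theorem. Theorem \ref{thm:InjMWsmooth} needs three things: $C \subset B^{sm}$ is a smooth curve, $Z=\pi^{-1}(C)$ is a smooth surface, and the height condition on sections. The height condition asks that $b_{S_a}\cdot C \ge 0$ for every $S_a$, with equality exactly when $S_a$ is torsion.

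First, I settle the regularity hypotheses. Smoothness of $C$ and of $Z$ is assumed in the statement. Smoothness of $Z$ also forces $C \subset B^{sm}$ along the relevant locus, since $\pi$ is flat near a general fibre. More precisely, wherever $B$ is singular along $C$, the total space $\pi^{-1}(C)$ cannot be a smooth elliptic surface with the expected fibration. So I take $C\subset B^{sm}$ as implicit, exactly as in Theorem \ref{thm:MWNSMov}. Likewise $C\not\subset \Sigma_{X/B}$ is needed for Lemma \ref{lemma:bsaZ}. If $C$ were contained in the discriminant, the fibres of $Z \to C$ would all be singular, contradicting that $Z\to C$ is an elliptic fibration. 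I will record these two remarks briefly.

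Second, I obtain the height condition. This is exactly Proposition \ref{prop-positiveintPic1}, applied with $\Lambda\cdot C>0$ and $\operatorname{rk}\Pic(B)=1$. The reasoning behind it is as follows. Since $\rho(B)=1$, $N^1(B)_{\mathbb R}\cong\mathbb R$ and $\overline{\mathrm{Eff}(B)}$ is the closed half-line spanned by an ample class $H$. By Proposition \ref{prop:bsa}, $b_{S_a}$ lies in this half-line, so $b_{S_a}=\lambda H$ with $\lambda\ge 0$. Because $\Lambda\cdot C>0$ and $\Lambda$ is a nonzero effective class, also $H\cdot C>0$. Hence $b_{S_a}\cdot C=\lambda H\cdot C\ge 0$, with equality iff $\lambda=0$, i.e. $b_{S_a}=0$. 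By Remark \ref{remark:ell-vanishing}(d),(e), $b_{S_a}=0$ holds iff $S_a$ is torsion.

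Finally, Theorem \ref{thm:InjMWsmooth} gives $\operatorname{rk}\MW(X/B)\le\operatorname{rk}\MW(Z/C)$ via the injection modulo torsion. The Shioda--Tate--Wazir bound of Theorem \ref{thmSTW}, applied to the smooth elliptic surface $Z\to C$ with $\operatorname{rk}\NS(C)=1$, gives $\operatorname{rk}\MW(Z/C)\le \operatorname{rk}\NS(Z)-2$. The main (mild) obstacle is the first step: justifying $C\subset B^{sm}$ and $C\not\subset\Sigma_{X/B}$ from the stated hypotheses. If needed, I would simply add these as standing conventions, consistent with Lemma \ref{lemma:bsaZ}.
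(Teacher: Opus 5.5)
Your proposal is correct and is essentially the paper's proof. The paper's proof is the single observation that Proposition~\ref{prop-positiveintPic1} supplies the height hypotheses of Theorem~\ref{thm:InjMWsmooth}; that theorem's conclusion already contains the injection modulo torsion and the Shioda--Tate--Wazir bound for $Z$.

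One caveat: your heuristic that smoothness of $Z$ forces $C\subset B^{sm}$ is not valid in general. Over a singular point of $B$ off the discriminant, for example an isolated singularity of a threefold base with a Weierstrass model smooth over it, $\pi^{-1}(C)$ is still smooth for a smooth $C$ through that point. So treating $C\subset B^{sm}$ and $C\not\subset\Sigma_{X/B}$ as standing conventions is the right fallback, and the paper makes the same assumption tacitly.
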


\begin{proof}    Proposition \ref{prop-positiveintPic1}
 implies that the hypotheses of Theorem \ref{thm:InjMWsmooth} are satisfied.\end{proof}
  
We then find a bound on $\operatorname{rk} \MW(X/B)$:  
 
\begin{theorem}\label{thm:BoundsFromNoetherMov}  
 
Assume there exists a curve $C \subset B^{sm}$ which is smooth, movable with $C \cdot \Lambda > 0$  and such that   $Z = \pi^{-1}(C)$ is  either  smooth or (birationally) a K3 surface  or a rational elliptic surface\color{black}. 
Then
	\[
	\operatorname{rk} \MW(X/B) \leq 10 (C  \cdot \Lambda) +2 g(C) -2.
	\]
	
\end{theorem}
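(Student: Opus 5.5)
The plan is to combine the injectivity results of this section with Noether's formula (Proposition \ref{prop:Noether}), splitting into the three cases allowed for $Z$.

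\emph{Step 1: reduction of the rank.} Since $C$ is movable, its class lies in $\overline{\mathrm{NM}(B)}$. Together with $C\cdot\Lambda>0$, Proposition \ref{prop-positiveintMov} gives $b_{S_a}\cdot C\ge 0$ for every $S_a$, with equality exactly when $S_a$ is torsion. Because $C\subset B^{sm}$ is smooth, we may take $C$ general in its movable family, so $C\not\subset\Sigma_{X/B}$. Theorem \ref{thm:InjMW} then gives
\[
\operatorname{rk}\MW(X/B)\le \operatorname{rk}\MW(Z/C),
\]
with $\MW(Z/C)$ read via the relatively minimal resolution $\widehat Z$ (Notation \ref{remark_notation}). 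When $Z$ is smooth, Theorem \ref{thm:MWNSMov} refines this to $\operatorname{rk}\MW(X/B)\le \operatorname{rk}\NS(Z)-2$.

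\emph{Step 2: the smooth case.} Adjunction uses $K_X\equiv\pi^*(K_B+\Lambda)$ and that the normal bundle of $Z$ is $\pi^*N_{C/B}$. This gives $K_Z=\pi_{|Z}^*\bigl((K_B+\Lambda)_{|C}+\det N_{C/B}\bigr)=\pi_{|Z}^*(K_C+\Lambda_{|C})$. Hence $\omega_Z=\pi^*(\omega_C\otimes\mathcal L_C)$ with $\deg\mathcal L_C=\Lambda\cdot C>0$. Proposition \ref{prop:Noether} yields $\rho(Z)\le 10\,\Lambda\cdot C+2g(C)$. Subtracting $2$ gives the claimed bound.

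One subtlety needs care. Proposition \ref{prop:Noether} is phrased with $\mathcal L$ a line bundle, whereas $\Lambda$ is only a $\mathbb Q$-divisor. I expect this to be harmless because $C\subset B^{sm}$, so $K_B$ is Cartier near $C$. Then $K_Z-\pi^*K_C$ is an integral divisor pulled back from $C$ (the canonical bundle formula, with no multiple fibres thanks to the section), so $\Lambda\cdot C$ is an integer degree.

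\emph{Step 3: the K3 and rational cases.} For $Z$ birationally K3 or rational, I would use the Kodaira formula on the relatively minimal model: there are $12\,\Lambda\cdot C$ singular fibres counted with Euler number. A rational elliptic surface has $\Lambda\cdot C=1$, $g=0$, so Proposition \ref{prop:K3appearance1} gives rank $\le 8=10\cdot1+0-2$. A K3 surface has $\Lambda\cdot C=2$, $g=0$, so the rank is $\le 18=10\cdot 2-2$. In both cases Step 1's injection finishes the argument.

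The main obstacle is the general-position step. It must ensure that $C\not\subset\Sigma_{X/B}$ while staying in the movable class, and that the $\mathbb Q$-divisor $\Lambda$ restricts to an honest line bundle degree on $C$. It must also guarantee that $\MW(Z/C)$ computed on $\widehat Z$ matches the Shioda–Tate count. If $C$ is not general, I would replace it by a general member $C_t$ of its family. This is legitimate because $\Lambda\cdot C$ and $g(C)$ are constant in the flat family.
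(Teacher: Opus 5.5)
Your proposal is correct and is essentially the paper's proof. The paper also obtains $\operatorname{rk}\MW(X/B)\le\operatorname{rk}\NS(Z)-2$ from Theorem \ref{thm:MWNSMov}, which is exactly Proposition \ref{prop-positiveintMov} fed into Theorem \ref{thm:InjMW}; it then bounds $\operatorname{rk}\NS(Z)$ by Proposition \ref{prop:Noether} in the smooth case and invokes Proposition \ref{prop:K3appearance1} when $Z$ is birationally K3 or rational. Your added remarks make explicit points the paper leaves implicit: moving $C$ to a general member so that $C\not\subset\Sigma_{X/B}$, and checking by adjunction that $\omega_Z=\pi^*(\omega_C\otimes\mathcal L_C)$ with $\deg\mathcal L_C=\Lambda\cdot C$ for an honest line bundle $\mathcal L_C$.
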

\begin{proof}
This follows from Theorem \ref{thm:MWNSMov} %and \ref{thm:MWNSPic1}
together with Proposition \ref{prop:Noether} as well as Proposition \ref{prop:K3appearance1}.
\end{proof}

\begin{remark}
For $X$ Calabi-Yau (in particular $K_X = 0$ and hence $\Lambda = -K_B \neq 0$) and $C$ rational, the bound in Theorem (\ref{thm:BoundsFromNoetherMov}) agrees with the physics bound 
\begin{equation}\label{boundX-2}
	\operatorname{rk} \MW(X/B) \leq 10 ( -K_B \cdot C)  -2
	\end{equation}
    argued for in \cite{Martucci:2022krl} for $\dim X =3,4$ by sharpening and generalising to Calabi-Yau 4-folds the conservative bounds of \cite{Lee:2019skh}.
 See also the discussion around (\ref{bound-MW}) in the Introduction.
\end{remark}

\begin{theorem}\label{thm:BoundsFromNoetherPic1}  

Let  $C \subset B^{sm}$  be a smooth 
curve such that either  $\pi^{-1}(C)=Z$ is a smooth surface or $Z$ is (birationally) a K3 surface  or a rational elliptic surface\color{black}.
 Assume also that $C \cdot \Lambda  > 0$ and $\operatorname{rk} \Pic (B)=1$.   Then
	\[
	\operatorname{rk} \MW(X/B) \leq 10 (C  \cdot \Lambda) +2 g(C) -2.
	\]
In particular if $Z$ is birationally a K3 surface 	
\[
	\operatorname{rk} \MW(X/B) \leq 18,\]
    if $Z$ is birationally a rational elliptic surface
   \[
	\operatorname{rk} \MW(X/B) \leq 8.\] 
    \color{black}
\end{theorem}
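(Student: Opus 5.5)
The plan is to mirror the proof of Theorem \ref{thm:BoundsFromNoetherMov}, replacing the movability input by the Picard-rank-one input. The first step is to check that $C$ satisfies the hypotheses of Theorem \ref{thm:InjMW}. This is Proposition \ref{prop-positiveintPic1}. Since $\operatorname{rk}\Pic(B)=1$ and $B$ is $\mathbb{Q}$-factorial, every class $b_{S_a}\in\overline{\mathrm{Eff}(B)}$, which we have by Proposition \ref{prop:bsa}, is a nonnegative multiple of an ample generator $H$. The relation $\Lambda\cdot C>0$ forces $C\cdot H>0$. So $b_{S_a}\cdot C\ge 0$, with equality exactly when $b_{S_a}=0$, that is, when $S_a$ is torsion. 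We also need $C\not\subset\Sigma_{X/B}$. Since $C\subset B^{sm}$, if this is not automatic we replace $C$ by a general member of its class. Theorem \ref{thm:InjMW} together with Lemma \ref{lemma:bsaZ} then gives
\[
\operatorname{rk}\MW(X/B)\le\operatorname{rk}\MW(Z/C),
\]
where $Z$ is understood via Notation \ref{remark_notation}.

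Next I split into cases according to the nature of $Z$. If $Z=\pi^{-1}(C)$ is smooth, Theorem \ref{thm:MWNSPic1} gives $\operatorname{rk}\MW(Z/C)\le\operatorname{rk}\NS(Z)-2$. Adjunction on $X$ with $K_X\equiv\pi^*(K_B+\Lambda)$ gives $\omega_Z=\pi|_Z^*(\omega_C\otimes\mathcal{L}_C)$ with $\deg\mathcal{L}_C=\Lambda\cdot C>0$. Proposition \ref{prop:Noether} then yields $\operatorname{rk}\NS(Z)\le 10\,\Lambda\cdot C+2g(C)$, and hence the stated bound. If $Z$ is only birationally K3 or rational elliptic, I pass to the relatively minimal resolution $\widehat Z$. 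Its Mordell--Weil group equals that of $Z$, and the inclusion above still holds by Lemma \ref{lemma:bsaZ}. The known bounds $\operatorname{rk}\MW\le 18$ (K3) and $\le 8$ (rational elliptic) then apply. To match these with the general formula, note that on $\widehat Z$ the relation $\omega_{\widehat Z}=\pi^*(\omega_C\otimes\mathcal{L}_C)$ still holds with $\deg\mathcal{L}_C=\Lambda\cdot C$. Since $12\,\Lambda\cdot C$ counts the singular fibres, a K3 surface has $\Lambda\cdot C=2$ with $g(C)=0$, and a rational surface has $\Lambda\cdot C=1$. Thus $18=10\cdot2-2$ and $8=10\cdot1-2$, so both special cases are instances of $10(C\cdot\Lambda)+2g(C)-2$. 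This is the argument of Proposition \ref{prop:K3appearance1}, now run with Proposition \ref{prop-positiveintPic1} in place of the movability hypothesis.

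The main obstacle is the birational case. There we must make sure that the restriction of the Shioda height to a possibly singular $Z$ is computed on $\widehat Z$ compatibly, and that the identification of $\Lambda\cdot C$ with $\deg\mathcal{L}_C$ survives the resolution. I would handle both points by working with the Weierstrass models, exactly as in the proof of Lemma \ref{lemma:bsaZ}.
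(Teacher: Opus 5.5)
Your proposal is correct and is essentially the paper's argument: the paper combines Theorem~\ref{thm:MWNSPic1} (Proposition~\ref{prop-positiveintPic1} feeding the injection of Theorem~\ref{thm:InjMWsmooth}) with Proposition~\ref{prop:Noether} for smooth $Z$, and uses Proposition~\ref{prop:K3appearance1}, with Proposition~\ref{prop-positiveintPic1} in place of movability, for the K3 and rational cases. Two minor points: $C\not\subset\Sigma_{X/B}$ is already implicit in $Z\to C$ being an elliptic surface, so you should not replace $C$ by another member of its class (the curve may not move, and the hypotheses on $Z$ and on $C\cdot\Lambda$, $g(C)$ refer to the given $C$); and in the birational case the equality $\deg\mathcal{L}_C=\Lambda\cdot C$ on $\widehat Z$ can fail when the restricted Weierstrass model is not minimal, but you only need $\chi(\mathcal{O}_{\widehat Z})\le\Lambda\cdot C$, which already gives $18\le 10(C\cdot\Lambda)-2$ (resp.\ $8\le 10(C\cdot\Lambda)-2$).
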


\begin{proof}
This follows from Theorem %\ref{thm:MWNSMov} %
\ref{thm:MWNSPic1}
together with Proposition \ref{prop:Noether} as well as Proposition \ref{prop:K3appearance1}.
\end{proof}
\section{Reduction to elliptic surfaces}\label{sec:inclusion}

By combining the results of Section \ref{sec:arithmproof} with Proposition \ref{prop:MWtorinj} and the results of Section \ref{sec:geomproof} we obtain inclusions of the Mordell-Weil groups  of sections of $X \to B$ to the Mordell-Weil group of a surface.

Given an algebraically closed sub-field $k$ of $\mathbb{C}$ and a variety $T$ over $k$, the $k$-embeddings of $L=k(T)$ in $\mathbb{C}$ give a subset $V$ of
the set $T(\mathbb{C})$ of (complex) points of $T$. A point lies in the complement of $V$ if and only if it is a point of a proper $k$-subvariety of $T$. This set $V$ is called the set of \emph{very general} $\mathbb{C}$-points of $T$.
\begin{theorem}\label{thm:fieldExt}
    Given a complex elliptic fibration $\pi:X\to B$ with a section, and
    generic fiber $E_K$, where $K={\mathbb C}(B)$, and a covering
    family of movable curves $\{C_t\}_{t\in T}$.  Assume $MW(X/B)=E_K(K)$ is finitely generated.

    For a \emph{very general} $t\in T$, let $h:Z_t=\pi^{-1}(C_t)\to C_t$ be the restricted fibration with generic fiber $E_F$, where $F=\mathbb{C}(C_t)$.

    The natural restriction map $MW(X/B)=E_K(K)\to
    MW(Z_t/C_t)=E_F(F)$  is \emph{injective}.
\end{theorem}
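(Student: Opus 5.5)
The plan is to combine the injectivity of Mordell--Weil groups under field extension (used in the proof of Theorem \ref{main2}) with the definition of very general points. Everything will be defined over a countable algebraically closed subfield. Choose such a $k\subset\mathbb{C}$ over which $X$, $B$, $\pi$, the zero section, the family $\mathcal{C}\subset B\times T$, and a finite set of generators $P_1,\dots,P_r$ of $E_K(K)$ are all defined. This is possible because $\MW(X/B)$ is finitely generated. We then get models $X_k\to B_k$ and $\mathcal{C}_k\subset B_k\times T_k$.

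\textbf{Step 1: restriction to the generic curve of the family.} Let $L=k(T)$ and consider the generic member $C_\eta$ of the family, a curve over $L$. Since $\mathcal{C}\to B$ is dominant, we get an inclusion of function fields $k(B)\hookrightarrow k(\mathcal{C})=L(C_\eta)$. The restriction map to the generic curve, $E_{k(B)}(k(B))\to E(L(C_\eta))$, is then the base change along a field extension. It is injective by the remark at the start of the proof of Theorem \ref{main2}: a point of $E(K)$ maps to a point of $E(L)$ and nothing is killed. The same holds after passing to $\bar L(C_{\bar\eta})$. One must check that $E_{k(B)}(k(B))\otimes_k\mathbb{C}$ recovers all of $E_K(K)$. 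This is true by construction, since the generators are defined over $k$.

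\textbf{Step 2: specialization to a very general complex point.} A very general $t\in T(\mathbb{C})$ corresponds to a $k$-embedding $\iota:L\hookrightarrow\mathbb{C}$, which extends to $\bar L\hookrightarrow\mathbb{C}$. Under $\iota$, the generic curve $C_{\bar\eta}$ base changes to $C_t$, and $Z_{\bar\eta}$ base changes to $Z_t$. Hence $E(\bar L(C_{\bar\eta}))\to E(\mathbb{C}(C_t))$ is again a base change along a field extension, $\bar L(C_{\bar\eta})\hookrightarrow\mathbb{C}(C_t)$, and is therefore injective.

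\textbf{Step 3: compatibility and conclusion.} The composition of the maps in Steps 1 and 2 coincides with the geometric restriction of rational sections from $X$ to $Z_t$. A rational section regular over a dense open set of $B$ is regular at the generic point of $C_t$, because $C_t$ is not contained in any $k$-defined proper closed subset, in particular not in the indeterminacy locus. Injectivity of the composite follows, and this gives the result.

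\textbf{Main obstacle.} The delicate step is the identification in Step 2: that a very general complex point genuinely behaves like the generic point of $T$. Concretely, this means $C_t\not\subset$ any proper $k$-subvariety of $B$, and the field $\mathbb{C}(C_t)$ is an extension of $\iota(L)(C_\eta)$. I would prove it by noting that $\iota$ induces an isomorphism of $C_{\bar\eta}\otimes_{\bar L,\iota}\mathbb{C}$ with $C_t$. For this I would first shrink $T_k$ so that the fibers of $\mathcal{C}_k\to T_k$ are integral, which is possible since the general $C_t$ is irreducible.
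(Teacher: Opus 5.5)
Your proposal is correct and follows the paper's argument. The paper defines $X$, $B$, $T$ and the maps over a countable algebraically closed $k\subset\mathbb{C}$, reruns the field-extension injectivity from the proof of Theorem~\ref{main2}, and identifies a very general $t$ with a $k$-embedding $\bar L\hookrightarrow\mathbb{C}$, as in your Steps 1--3. Your version is more explicit in one useful respect: you enlarge $k$ to contain the fields of definition of the generators, so that $E(k(B))\to E_K(K)$ is an isomorphism (this is how your ``$\otimes_k\mathbb{C}$'' should be read). That is exactly where the finite generation hypothesis enters, which the paper leaves implicit.
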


\begin{proof}
   The varieties $X$, $B$, $T$ and the morphisms $\pi$ and $\phi$ are
defined over a countable algebraically closed subfield $k\subset \mathbb{C}$.
The proof of Theorem \ref{main2} can be carried out over $k$ instead of
$\mathbb{C}$. 
\end{proof}

\begin{remark}
$MW (X/B) = E_K (K)$ is finitely generated, unless $E_K\cong
E_0\times_{\mathbb C}{\rm Spec K}$ is a base change of a complex
elliptic curve, i.e., $X$ is birational to $B\times E_0$ for a
complex elliptic curve $E_0$. This is a special case of the
Lang-N\'eron theorem; see for example  \cite{Conrad2006, Lang1959, Silverman1994} and \cite[Th\'eor\`eme 2, with a detailed proof in Appendix 2]{Kahn-Hindry}.\footnote{We  thank Marc Hindry for correspondence on this point.}
     
 \end{remark}
\color{black}

We also have:

\begin{theorem}\label{thm:MWNSMovInjTotal}
 Let $C \subset B^{sm}  %\in \overline{\mathrm{NM}(B)}
 $ be  a   general smooth  movable curve    
 with $\Lambda \cdot C > 0$ and let $\pi^{-1}(C)=Z$.
 Then
\[ 
\MW(X/B)  \subseteq  \MW(Z/C)   \,.
\]
\end{theorem}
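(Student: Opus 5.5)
The plan is to split $\MW(X/B)$ into its torsion and free parts and show that the restriction map $S_a \mapsto S_a|_Z$ is injective on each. Since the restriction map is a group homomorphism, it suffices to show that its kernel is trivial. So let $S_a \in \MW(X/B)$ with $S_a|_Z = S_0|_Z$ in $\MW(Z/C)$.

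First I will show that $S_a$ must be torsion. The curve $C$ is general and movable, so $C \in \overline{\mathrm{NM}(B)}$. Since $\Lambda\cdot C>0$, Proposition \ref{prop-positiveintMov} gives $b_{S_a}\cdot C\ge 0$, with equality exactly when $S_a$ is torsion. Because $C$ is general, it is not contained in $\Sigma_{X/B}$ and lies in $B^{sm}$. Lemma \ref{lemma:bsaZ} then identifies $b_{S_a}\cdot C$ with the height $\langle \sigma(S_a|_{\widehat Z}),\sigma(S_a|_{\widehat Z})\rangle_{\widehat Z}$ of the restricted section. If $S_a|_Z$ is the zero section, this height vanishes, so $b_{S_a}\cdot C=0$ and $S_a$ is torsion. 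This is Proposition \ref{prop:InjMW}(1), and it gives Theorem \ref{thm:InjMW} for the free part.

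It remains to treat torsion sections, and here I will invoke Proposition \ref{prop:MWtorinj}. Two distinct torsion sections of $\MW(X/B)$ coincide only over a proper closed subset of $B$, and the same holds for $S_a$ and $S_0$. The torsion subgroup is finite because $\MW(X/B)$ is finitely generated, or simply because torsion in $\MW$ of a nonconstant fibration is bounded. So the union of these bad loci, taken over all pairs of torsion sections, is a proper closed subset $D\subset B$. A general member of the movable family is not contained in $D$. Hence the restrictions to $Z$ of distinct torsion sections are distinct rational sections of $Z\to C$, and a nonzero torsion $S_a$ cannot restrict to $S_0|_Z$. Combining the two steps, the kernel is trivial, which proves $\MW(X/B)\subseteq \MW(Z/C)$.

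The main obstacle is a subtlety in the first step. The positivity argument shows only that a section in the kernel is torsion; it does not directly exclude a non-torsion $S_a$ whose restriction is a nonzero torsion element. That is fine for the kernel argument, but it relies on the equality $b_{S_a}\cdot C=\langle\cdot,\cdot\rangle_{\widehat Z}$ holding for the particular $C$ chosen. So I must check that one general $C$ can satisfy all the requirements simultaneously. These are: avoiding $D$; lying in $B^{sm}$, which is possible because $\operatorname{codim} B^{sing}\ge 2$ and the family is movable; not being contained in $\Sigma_{X/B}$; and satisfying the genericity needed in Lemma \ref{lemma:bsaZ} for the loci where the $S_a$ are not regular. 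Each condition excludes a proper closed subset of $T$, so a general $t$ satisfies them all. The condition of Lemma \ref{lemma:bsaZ} is required only for the finitely many generators, since the kernel condition can be tested via heights on the lattice $\MW(X/B)/\mathrm{tors}$, whose height form is positive definite on $C$.
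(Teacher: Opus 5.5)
Your proposal is correct and is essentially the paper's proof. Proposition \ref{prop-positiveintMov} together with Lemma \ref{lemma:bsaZ} (that is, Theorem \ref{thm:InjMW}) forces the kernel of restriction to be torsion, and Proposition \ref{prop:MWtorinj} makes restriction injective on torsion; your kernel formulation, and your explicit handling of the finiteness of the torsion and of a single general $C$ serving all sections, spell out what the paper leaves implicit.

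One small correction: the height argument does also exclude a non-torsion $S_a$ whose restriction is torsion, since torsion elements of $\MW(Z/C)$ also have height zero. This is exactly one direction of Proposition \ref{prop:InjMW}(1).
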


\begin{proof}  Proposition \ref{prop-positiveintMov} and 
    Theorem \ref{thm:InjMW} prove that the natural restriction of sections is indeed  an injection  on the free part of the Mordell-Weil groups: $
\MW(X/B)/{\MW(X/B)}_{\rm tors}  \subseteq  \MW(Z/C)/ {\MW(Z/C)}_{\rm tors}  \,$.
    If $C$ is general and movable, we can assume that it is not contained in the divisor where the  torsion sections coincide. Proposition \ref{prop:MWtorinj} gives an injection on the torsion parts.
\end{proof}

\begin{corollary}\label{cor:MWNSSingInjTotal}  Let  $C \subset B^{sm} %\in \overline{\mathrm{NM}(B)}
$  be a  smooth  
curve general enough to be contained neither in the  divisor where the  torsion sections coincide, nor in the discriminant $ \Sigma_{X/B} $.  Assume that $\Lambda = -K_B$ and that $B$ is Fano with $\rho(B)=1$. Let   $\pi^{-1}(C)=Z$.   Then
    \[ 
\MW(X/B)  \subseteq  \MW(Z/C)   \,.
\]
\end{corollary}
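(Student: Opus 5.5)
The plan mirrors the proof of Theorem \ref{thm:MWNSMovInjTotal}, replacing the movability input by the Picard rank one hypothesis. We split $\MW(X/B)$ into its free part and its torsion part and show that restriction of sections $S_a \mapsto S_a|_Z$ is injective on each. Together these give injectivity of the whole restriction homomorphism $\MW(X/B)\to \MW(Z/C)$.

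\emph{Free part.} Since $B$ is Fano, $-K_B$ is ample, so $\Lambda\cdot C = -K_B\cdot C>0$ for any curve $C$. With $\rho(B)=1$, Proposition \ref{prop-positiveintPic1} (equivalently the second case of Corollary \ref{cor:bsaNonNegMovANDPic1}) applies. It gives $b_{S_a}\cdot C\ge 0$ for every $S_a$, with equality if and only if $S_a$ is torsion. Because $C\subset B^{sm}$ is smooth and $C\not\subset\Sigma_{X/B}$, Lemma \ref{lemma:bsaZ} identifies $b_{S_a}\cdot C$ with the height of $S_a|_{\widehat Z}$. Theorem \ref{thm:InjMW} then gives
\[
\MW(X/B)/\MW(X/B)_{\rm tors}\hookrightarrow \MW(Z/C)/\MW(Z/C)_{\rm tors}.
\]
In particular, a non-torsion section restricts to a non-torsion section.

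\emph{Torsion part and assembly.} Suppose $S_a\in\MW(X/B)$ restricts to the zero section on $Z$. Its restriction is in particular torsion, so by the free-part step $S_a$ is torsion. Now $S_a$ and $S_0$ are two torsion sections. If they were distinct, the locus in $B$ over which they meet would be a proper closed subset, as in the proof of Proposition \ref{prop:MWtorinj}; this is the divisor where torsion sections coincide. Since $S_a|_Z = S_0|_Z$ over every point of $C$, the curve $C$ would lie in that divisor, contradicting the generality assumption. Hence $S_a=S_0$, and the kernel of restriction is trivial.

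The step needing the most care is justifying that the coincidence locus of two distinct torsion sections is a proper closed subset, in fact contained in a divisor that $C$ avoids by hypothesis. I would argue this on the Weierstrass model over $B^{sm}$, where torsion sections are regular away from codimension two. There, distinct torsion sections differ by a nonzero torsion point of the generic fiber, so their difference vanishes only over a proper closed subset. This is exactly the generality condition built into the statement, so no movability is needed; Proposition \ref{prop:MWtorinj} used movability only to guarantee such a $C$ exists.
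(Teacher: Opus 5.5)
Your proposal is correct and follows essentially the paper's route: the corollary is the Picard-rank-one analogue of Theorem \ref{thm:MWNSMovInjTotal}. Proposition \ref{prop-positiveintPic1} (via ampleness of $-K_B=\Lambda$) replaces Proposition \ref{prop-positiveintMov} for the free part, and the built-in generality hypothesis on $C$ replaces movability in Proposition \ref{prop:MWtorinj} for the torsion part. Your kernel argument combining the two injectivities spells out a step the paper leaves implicit.
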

\begin{remark}\label{remark:inclusionMWTYpesOfC}
\begin{enumerate}
    \item In the above statements, $Z$ does not need to be smooth.
    \item   The curves  $C$ as in Corollary \ref{cor:MWNSSingInjTotal}  will provide the bounds of the Theorems for fourfolds (Section \ref{sec:4folds}).
\end{enumerate}
\end{remark}

\color{black}
\section{
Bounds for the Mordell-Weil rank of elliptic %Calabi-Yau 
threefolds with $\kappa (X)=0$.}\label{sec:dim3}

When $\kappa (X)=0$, then  $ \Lambda \equiv -K_B$. To obtain strong bounds on ${\rm rk}\,  \MW(X/B)$  via the results of  Sections  \ref{sec:arithmproof} and \ref{sec:geomproof}  it is necessary to minimize  $-K_B \cdot C + 2g(C)-2$.  The existence of rational curves  $C \subset B$ of minimal degree with respect to the anticanonical $K_B$ is a subtle problem in general.
 
We begin by  proving the main result motivating this work, see also  Remark \ref{remark:CY}.

\subsection{Explicit bounds for the Mordell-Weil rank of elliptic Calabi-Yau 
threefolds}\label{subsec:CY3}~

\medskip

 \begin{theorem} \label{boundCY3} Let  $X \to B$  be an elliptic Calabi-Yau 3-fold  
as in Theorem \ref{thm:birminmodel}, with $\Lambda \neq 0$. 
 Then the Mordell-Weil group of $X$ satisfies:\\
(i) ${\rm rk}\, \MW(X/B)\leq 18$ if $B \neq {\mathbb P}^2$;\\
(ii) ${\rm rk}\, \MW(X/B)\leq 28$ if $B  = {\mathbb P}^2$.

 \end{theorem}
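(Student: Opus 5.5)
We plan to reduce to the classification of bases in Theorem \ref{thm:birminmodel}(7) and then apply Theorem \ref{thm:BoundsFromNoetherMov} (or equivalently Theorem \ref{main2}) with a suitably chosen smooth rational curve $C\subset B$. Since $X$ is Calabi-Yau, $\omega_X\simeq\mathcal O_X$ and $h^1(\mathcal O_X)=0$. A section exists, so there are no multiple fibers. Hence $B$ is smooth and is either $\mathbb P^2$ or a blow-up of a Hirzebruch surface $\mathbb F_n$ with $n\le 12$. Moreover $\Lambda\equiv -K_B$ by Theorem \ref{thm:birminmodel}(3) and Corollary \ref{cor:fmw}. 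For a smooth rational curve $C$ on the surface $B$, adjunction gives $-K_B\cdot C = 2+C^2$. The bound of Theorem \ref{thm:BoundsFromNoetherMov} with $g(C)=0$ then reads $\operatorname{rk}\MW(X/B)\le 10(2+C^2)-2$. So we want a movable smooth rational curve of smallest possible self-intersection. Movability forces $C^2\ge 0$.

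\textbf{Case $B=\mathbb P^2$.} Take $C$ a general line. It is smooth, rational and freely movable with $C^2=1$, so $-K_B\cdot C=3>0$. The preimage $Z=\pi^{-1}(C)$ of a general line is a smooth surface by Bertini, since the linear system of lines is base point free and $X$ is smooth. Then Theorem \ref{thm:BoundsFromNoetherMov} gives $\operatorname{rk}\MW(X/B)\le 30-2=28$. Alternatively, Theorem \ref{main2} with $d=1$ gives $18+10=28$ directly.

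\textbf{Case $B\neq\mathbb P^2$.} Here $B$ is obtained from some $\mathbb F_n$ by a sequence of point blow-ups $\beta: B\to\mathbb F_n$. Let $f$ be a fiber of the ruling $\mathbb F_n\to\mathbb P^1$, and let $C$ be the strict transform of a general fiber. We can take $f$ to avoid all the (finitely many, possibly infinitely near) blown-up points. Then $C$ is a smooth rational curve with $C^2=0$ and normal bundle $\mathcal O_C$, and it moves in a base point free pencil covering $B$. Hence $C$ is freely movable and $-K_B\cdot C=2$. Corollary \ref{cor:MWForZK3} shows $Z=\pi^{-1}(C)$ is birationally K3; alternatively use Theorem \ref{main2} with $d=0$. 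Its hypotheses on $b_{S_a}\cdot C$ are supplied by Corollary \ref{cor:bsaNonNegMovANDPic1}, since $C$ is movable and rational and $\Lambda=-K_B$. This yields $\operatorname{rk}\MW(X/B)\le 18$.

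\textbf{Main obstacle.} The delicate point is not the numerics but the geometric input. First, we must justify that the birational model of Theorem \ref{thm:birminmodel} can be taken with $X$ Calabi-Yau, $X$ having only isolated (codimension $3$) terminal singularities, and $B$ smooth of the stated type. Second, we must check that for the chosen general $C$, the surface $Z$ is either smooth or birationally K3, as required in Theorem \ref{thm:BoundsFromNoetherMov}. Since $X$ is terminal, its singularities are finitely many points. A general member of a base point free family of curves on $B$ avoids their finite image in $B$, so $Z$ lies in the smooth locus of $X$ and is smooth by Bertini. This also matches hypothesis (i) of Theorem \ref{main2}. I would present this check explicitly, and use the Theorem \ref{main2} route as a backup that needs no Shioda-pairing input.
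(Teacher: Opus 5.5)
Your proposal is correct and follows the paper's proof essentially verbatim. The paper also invokes the classification in Theorem \ref{thm:birminmodel}(7) and then makes the same two curve choices. For $B\neq\mathbb{P}^2$ it takes the general fiber of the ruling of the blown-up Hirzebruch surface, with $d=0$, $-K_B\cdot C=2$ and $Z$ birationally K3. For $B=\mathbb{P}^2$ it takes a general line, with $d=1$ and $-K_B\cdot C=3$. In both cases it concludes by either Theorem \ref{main2} or Theorem \ref{thm:BoundsFromNoetherMov}, exactly as you do.

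One small slip: in the $\mathbb{P}^2$ case you say ``$X$ is smooth.'' That claim should be replaced by the argument you give in your last paragraph, namely that $X$ has only isolated terminal singularities, which a general line avoids.
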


\begin{proof} We present two different proofs, the first one, (a),  follows the arithmetic proof of Section \ref{sec:arithmproof}, the second, (b), the geometric proofs of Section  \ref{sec:geomproof}.
\begin{enumerate}[(a)] 
   \item   $X$ has isolated (terminal) singularities. \color{black}  In case i),
{by Theorem \ref{thm:birminmodel} $B$ is the blow up of a Hirzebruch surface and hence it has  a rational fibration.
   There is therefore} a morphism $\theta:B\to {\mathbb P}^1_{\mathbb C}$ whose general fiber is a smooth rational curve with the property that,
for some Zariski open $T\subset {\mathbb P}^1_{\mathbb C}$,
we have that $\theta^{-1}(T)\cong T\times {\mathbb P}^1_{\mathbb C}$.
We may then take the family of rational curves $C_t$ as in Theorem~\ref{main2}
to be the map $T\times{\mathbb P}^1_{\mathbb C}$
obtained from the fibres of $\theta$. Here $d=0$.

In case ii), take $T$ to be an affine open set
in the dual projective plane of lines in $B={\mathbb P}^2_{\mathbb C}$,
where we get a family of rational curves $C_t$ as in Theorem~\ref{main2} with the degree $d=1$.
     \item 
{In case i), 
 the general rational fiber $C$ of the (blow up of the) Hirzebruch surface $B$} is movable and since $C\cdot \Lambda = C \cdot (-K_B) =2$, $Z = \pi^{-1}(C)$ is (birationally) a K3 surface. Then by Theorem \ref{thm:BoundsFromNoetherMov}, ${\rm rk}\, \MW(X/B)\leq 18$. 

{In case ii)} we can take $C$ to be 
a general hyperplane class. Hence $C$ is movable and $Z = \pi^{-1}(C)$ is smooth and 
 $C \cdot (-K_B) =3$. Theorem \ref{thm:BoundsFromNoetherMov}  implies ${\rm rk }\, \MW(X/B)\leq 28$. 
\end{enumerate} \end{proof}

 \begin{remark}\label{remark:CY}
For $X$ a smooth  Calabi-Yau threefold and $B$ a ruled surface, the  bound 
$\operatorname{rk} \MW(X/B) \leq 18$
appeared in the physics literature  together  with a list of  possible torsion groups \cite{Lee:2022swr} \cite{HajoujiOehlmann}. See Appendix \ref{sec:Appendix} for details.
     \color{black}
 \end{remark}

 When $X$ is a Calabi-Yau threefold, or more generally a threefold with $K_X \simeq \mathcal{O}_X$ or $h^1(\mathcal{O}_X)=1$,  by  Theorem \ref{thm:birminmodel},  $B$ is either a toric smooth surface or the blowup of a toric smooth surface.    Then there  exists a minimal generator  of an extremal ray which represents a base point free movable  rational curve, as  toric nef divisors are base point free \cite[Theorem 1.6]{Mavlyutov2000}; then the proof of Theorem \ref{them:boundKodZeroBNonLDP}, along with Corollary \ref{cor:boundKodZeroBSmooth} leads to the bounds described in the next subsection.

\subsection{Explicit bounds for the Mordell-Weil rank of all elliptic threefolds with $\kappa(X)=0$.  %either $\omega_X \simeq \mathcal{O}_X$ or $h^1(X, \mathcal{O}_X) \geq 1$
}\label{subsec:k0}~

\medskip

\begin{theorem}\label{them:boundKodZeroBNonLDP}
    Let  $X \to B$  be an elliptic  threefold as in Theorem \ref{thm:birminmodel}, 
   with  $\Lambda \neq 0$,  $\kappa(X)=0$. 
 Then:\\
(i) If  $\operatorname{rk} {\rm Pic}(B) >1$, then ${\rm rk}\, \MW(X/B)\leq 18$;\\
(ii) if $\operatorname{rk} {\rm Pic}(B) =1$ and $B = {\mathbb P}^2$, then  ${\rm rk }\, \MW(X/B)\leq 28$.
\end{theorem}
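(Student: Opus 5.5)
Since $\kappa(X)=0$, Theorem \ref{thm:birminmodel}(3),(4) give a minimal model $\pi:X\to B$ with $K_X\equiv\pi^*(K_B+\Lambda)$, and $\kappa(X)=0$ forces $K_B+\Lambda\equiv 0$, i.e.\ $\Lambda\equiv -K_B$. Because $\Lambda\neq 0$, Theorem \ref{thm:birminmodel}(8) (equivalently Theorem \ref{thm:MMP}) shows that $B$ is birationally a Mori fiber space. As $\dim B=2$, this is either a $\mathbb{P}^1$-fibration over a curve or a surface with $\rho=1$. The plan is to find, in each case, a smooth movable rational curve $C\subset B^{sm}$ with small $-K_B\cdot C$. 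We then apply Theorem \ref{thm:BoundsFromNoetherMov} with $g(C)=0$, which gives $\operatorname{rk}\MW(X/B)\le 10(-K_B\cdot C)-2$. Alternatively we apply Theorem \ref{main2}, or Proposition \ref{prop:K3appearance1} when $-K_B\cdot C=2$.

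\emph{Case (i), $\rho(B)>1$.} I would first reduce to $B$ smooth, or at least argue that the curves below avoid the finitely many klt points of $B$. Running the $K_B$-MMP on $B$ is possible since $-K_B\equiv\Lambda$ is effective and nonzero, so $K_B$ is not pseudo-effective. It ends in a Mori fiber space $B\to B'$. Because $\rho(B)>1$, I expect one can arrange $\dim B'=1$. If the MMP instead ends on a rank one surface, then at least one divisorial contraction occurred, and the ruling of the contracted $(-1)$-type curve, or the ruling pencil through it, still yields a conic-type pencil; this is the case needing care. In the ruled case, the general fiber $F$ of $B\to B'$ pulls back to a base-point-free pencil of smooth rational curves on $B$. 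A general member lies in $B^{sm}$, avoids $\Sigma_{X/B}$, and has $F^2=0$. Adjunction then gives $-K_B\cdot F=2$. Corollary \ref{cor:bsaNonNegMovANDPic1} supplies the positivity of $b_{S_a}\cdot F$, and Proposition \ref{prop:K3appearance1}(1) shows that $Z=\pi^{-1}(F)$ is birationally K3, so $\operatorname{rk}\MW(X/B)\le 18$. Alternatively, Theorem \ref{main2} with $d=0$ gives $18$ directly, provided the Cohen–Macaulay, codimension and canonical hypotheses there hold, which they do for terminal threefolds with $K_X\equiv 0$ after passing to an index-one cover if necessary. The Geometric route avoids that issue.

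\emph{Case (ii), $B=\mathbb{P}^2$.} Take $C$ a general line. It is smooth, freely movable and lies in $B^{sm}$, with $-K_B\cdot C=3$ and normal bundle of degree $d=1$. Theorem \ref{thm:BoundsFromNoetherMov} requires $Z=\pi^{-1}(C)$ smooth. Since $X$ is terminal, its singularities are isolated points, and a general line avoids their images. The relative fibration over a general line in the base-point-free linear system is then smooth by Bertini. This gives $\operatorname{rk}\MW(X/B)\le 30-2=28$. The same number follows from Theorem \ref{main2} with $d=1$: $18+10=28$.

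The main obstacle is case (i). We must ensure that a genuine base-point-free family of conics with self-intersection zero exists on $B$, and not only on a birational model, while $B$ may carry klt singularities or be a rank-$>1$ surface whose MMP terminates at $\mathbb{P}^2$ or at a singular del Pezzo surface of rank one. My first attempt would be to pull back, via the MMP morphism $B\to B'$, the pencil of lines through the image point of an exceptional curve (or through a singular point) on $B'$. Its strict transform gives a free family with $-K_B\cdot C=2$ meeting the exceptional locus, which keeps $\Lambda\cdot C>0$. The movability argument of Proposition \ref{prop-positiveintMov} then replaces any use of $\rho(B)=1$.
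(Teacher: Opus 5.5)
Your decomposition is the paper's. For $\rho(B)>1$ the paper uses the general fiber $C$ of a Mori fiber space structure: a smooth rational curve in $B^{sm}$ with $-K_B\cdot C=2$, so $Z$ is birationally K3 and the bound is $18$. For $B=\mathbb P^2$ it uses a general line, with $-K_B\cdot C=3$ and bound $28$. Both are argued ``as in Theorem \ref{boundCY3}''. Your case (ii) is fine. Your case (i) is also fine whenever $B$ itself has a morphism onto a curve with general fiber $\mathbb P^1$. In particular this holds when $B$ is smooth: a smooth rational surface with $\rho\ge 2$ is a blow-up of some $\mathbb F_n$, and an irrational one is ruled by a morphism. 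By Theorem \ref{thm:birminmodel}(6), smoothness of $B$ covers every case with $\omega_X\simeq\mathcal O_X$ or $h^1(\mathcal O_X)\ge 1$.

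The remaining case is the gap you flag yourself, and your patch does not close it. For singular klt $B$ with $\rho(B)>1$ there may be no conic fibration at all. For example, let $\mu\colon B\to S$ be the blow-up of a smooth point of the $3A_2$ cubic surface $S$, whose basket appears in Corollary \ref{cor:basketK0}. Then $N^1(B)_{\mathbb Q}=\langle \mu^*H,E\rangle$ with intersection form $\mathrm{diag}(3,-1)$. No nonzero class has square $0$, so $B$ maps onto no curve and every $K_B$-MMP ends at a rank-one surface. A ``pencil of lines through the image point'' exists only when that rank-one model is $\mathbb P^2$. Even then it works only if the last contraction is the ordinary blow-up of a smooth point, where it just recovers the ruling of $\mathbb F_1$. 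If instead $B\to\mathbb P^2$ is, say, the $(1,2)$-weighted blow-up of a point, every strict transform of a line through that point passes through the $A_1$ point on the exceptional curve. These curves are then not in $B^{sm}$, and Lemma \ref{lemma:bsaZ} and Proposition \ref{prop:K3appearance1} do not apply.

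In fairness, the paper's one-line proof just asserts that $B$ is birational to a Mori fiber space with smooth general fiber and reuses the ruling from Theorem \ref{boundCY3}. So it does not supply the missing argument either. With the paper's tools such bases give less than $18$. On $\mathrm{Bl}_pS$ a general $C\in|-K_B|$ is a smooth elliptic curve in $B^{sm}$ with $-K_B\cdot C=2$, and Theorem \ref{thm:BoundsFromNoetherMov} gives only $\operatorname{rk}\MW(X/B)\le 20$.

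To reach $18$ you would need either to exclude such bases or to use another device, for example the index-one cover of $X$ and the cover of $B$ it induces. Your side remark about Theorem \ref{main2} needs the same thing. That theorem requires $\omega_X\cong\mathcal O_X$ for the fibration $X\to B$ itself, and the index-one cover is a different fibration, possibly over a different base.
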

\begin{proof}   If  $\operatorname{rk} {\rm Pic}(B) =1$,  $B$ is a log del Pezzo surface, if it is smooth, then  $B  \simeq {\mathbb P}^2$.  If  $\operatorname{rk} \Pic(B) >1$, $B$ is birational to a Mori fiber space with smooth general fiber as klt surface  singularities are isolated quotient singularities. The  statements follow as in Theorem \ref{boundCY3}. \end{proof}

 \begin{corollary} \label{cor:boundKodZeroBSmooth} Let  $X \to B$  be an elliptic  threefold, %X$ smooth and  $B$ smooth 
as in Theorem \ref{thm:birminmodel},   with  $\Lambda \neq 0$,  $\kappa(X)=0$ and either $\omega_X \simeq \mathcal{O}_X$  or $h^1(X, \mathcal{O}_X) \geq 1$. 
 Then the Mordell-Weil group of $X$ satisfies:\\
(i) ${\rm rk}\, \MW(X/B)\leq 18$ if $B \neq {\mathbb P}^2$;\\
(ii) ${\rm rk }\, \MW(X/B)\leq 28$ if $B = {\mathbb P}^2$.
  \end{corollary}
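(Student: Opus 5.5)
The plan is to reduce the corollary to Theorem \ref{them:boundKodZeroBNonLDP}. The only new ingredient is that, under the extra hypotheses, the base $B$ is smooth. That replaces the dichotomy on $\operatorname{rk}\Pic(B)$ with the dichotomy $B\neq\PP^2$ versus $B=\PP^2$.

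\textbf{Step 1: $B$ is smooth.} The zero section rules out multiple fibers. Since $\kappa(X)=0$ and either $\omega_X\simeq\mathcal O_X$ or $h^1(X,\mathcal O_X)\ge 1$, Theorem \ref{thm:birminmodel}(6) applies and gives that $B$ is smooth. Moreover $\kappa(X)=0$ and $\Lambda\neq 0$ give $\Lambda\equiv -K_B$ with $-K_B\not\equiv 0$. By Theorem \ref{thm:birminmodel}(8), $B$ is birational to a Mori fiber space. Hence $B$ is a smooth rational surface or a smooth ruled surface; in either case it is uniruled.

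\textbf{Step 2: the case split.} If $\rho(B)=1$, then $B$ is a smooth surface with $\rho=1$ that is birationally a Mori fiber space. It must therefore itself be a Mori fiber space of Picard rank one, so $B$ is a smooth del Pezzo surface with $\rho=1$, i.e.\ $B\simeq\PP^2$. Thus $B\neq\PP^2$ forces $\rho(B)>1$, and the two cases (i) and (ii) of the corollary match cases (i) and (ii) of Theorem \ref{them:boundKodZeroBNonLDP}.

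\textbf{Step 3: case (i), $B\neq\PP^2$.} Since $B$ is smooth with $\rho(B)>1$, it admits a birational morphism to a minimal model that is either a Hirzebruch surface $\mathbb F_n$ or a geometrically ruled surface. Pull back a general fiber of the ruling to get a smooth rational curve $C$ with $C^2=0$, so $N_{C/B}\simeq\mathcal O_C$. This $C$ moves in a base point free family, lies in $B^{sm}=B$, and is not contained in $\Sigma_{X/B}$. By adjunction, $-K_B\cdot C=2$, so $\Lambda\cdot C=2>0$. Corollary \ref{cor:bsaNonNegMovANDPic1} then gives the positivity hypotheses, and Corollary \ref{cor:MWForZK3} (equivalently Theorem \ref{thm:BoundsFromNoetherMov} together with Proposition \ref{prop:K3appearance1}) gives ${\rm rk}\,\MW(X/B)\le 18$.

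\textbf{Step 4: case (ii), $B=\PP^2$.} Take $C$ a general line. It is movable and base point free, and $\Lambda\cdot C=-K_{\PP^2}\cdot C=3$. The surface $Z=\pi^{-1}(C)$ is smooth for general $C$, since $X$ has isolated terminal singularities and a general line avoids their finitely many images. Theorem \ref{thm:BoundsFromNoetherMov} then gives the bound $10\cdot 3-2=28$. Alternatively, for the Calabi-Yau subcase, Theorem \ref{main2} applies with $d=1$.

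\textbf{Main obstacle.} The delicate point is Step 1 in the case $h^1(X,\mathcal O_X)\ge 1$ without $\omega_X$ trivial. Here one must check that Theorem \ref{thm:birminmodel}(6) applies and that ruled surfaces over curves of positive genus are still covered. Theorem \ref{thm:BoundsFromNoetherMov} does not require rationality of $B$, only a movable smooth rational fiber $C$ with $\Lambda\cdot C=2$. So the argument should go through, with $Z$ birationally K3 by Proposition \ref{prop:K3appearance1}.
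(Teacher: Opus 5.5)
Your proof is correct and follows the paper's route. The paper's proof consists of invoking Theorem \ref{thm:birminmodel}(6) to get $B$ smooth and then appealing to Theorem \ref{them:boundKodZeroBNonLDP}, whose cases you match correctly by noting that a smooth $B$ with $\rho(B)=1$ must be $\PP^2$; your Steps 3--4 just unpack the argument of Theorem \ref{boundCY3}(b) that the paper cites (a rational $B$ with $\rho(B)>1$ whose minimal model is $\PP^2$ still maps onto $\mathbb F_1$, so the ruling you use exists).
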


\begin{proof} For $X$ a threefold as in Theorem \ref{thm:birminmodel} (6), $B$ is smooth. \end{proof}
\color{black}

\begin{example}\label{ex:k0}
Let $X=(E \times T)/G$, where $E$ is an elliptic curve, $T$ an elliptic K3 surface with section and $G$ a finite group acting faithfully on $E$ as a subgroup of translations and the action on $T$ leaves the elliptic fibration with section and whole $H^{2,0}$ space invariant. Then $\omega_X \simeq \mathcal{O}_X$, $h^1(X, \mathcal{O}_X) \geq 1$ and $\Lambda \neq 0$.
 Such $G$ and $T$ exist, see for example \cite{GarbagnatiSarti2009}, and   $ {\rm rk}\, \MW(X/B)\leq 18$.

This type of threefolds (without the assumption of $T$ being elliptically fibered) are, together with Calabi-Yau,  one of the 9 classes of threefolds with $\kappa (X)= 0$ \cite{KawamataK0, Morrison1986}.  

\end{example}

\begin{remark}\label{remark:producttype}   The threefolds  $X$ as in Example \ref{ex:k0} do not satisfy the hypothesis of \cite{DiCerboSvaldi, Filipazzi:2021dcw} and in general of \cite{Birkar_2024},  which  imply (birational) boundedness of  the family, and hence boundedness of ${\rm rk}{\MW}(X/B)$.
 In fact, $X$ is not a Calabi-Yau as defined in \cite{Filipazzi:2021dcw},  $X$ is  instead a threefold of product type \cite{DiCerboSvaldi}, possibly singular and  $B$ is not rationally connected \cite{Birkar_2024}.

 These threefolds illustrate the properties described in the  Remark following Theorem 1.4 of \cite{Birkar_2024}. In fact, the Mori fibration  
$B \to E/G$ is the maximal rationally connected (MRC, as in Definition \ref{def:MRC}) fibration. The induced fibration 
$X \to E/G$ is isotrivial.
The non-rationally-connected part of the base contributes only isotrivially to the geometry, in agreement with \cite{Birkar_2024}. 
At the same time, the presence of the elliptic  curve $E$ leads to families which are not birationally bounded, even though the Mordell–Weil rank depends only on the fibration of the isotrivial surface and remains uniformly bounded by Theorem \ref{thm:boundKodZero3}.
\end{remark}

\medskip

  Let $X \to B$  be as in Theorem \ref{thm:birminmodel}, with $\kappa(X)=0$ and  %$X \to B$, elliptic with section and 
 $B$  a  singular log del Pezzo surface (that is  $B$ is a  singular $\mathbb{Q}$-factorial surface with at worst klt singularities,  $-K_B$ ample)  with $\operatorname{rk}{\Pic}(B)=1$. This is  the case excluded  by the hypothesis of  Theorem \ref{them:boundKodZeroBNonLDP}.

 Such singular  log del Pezzo surfaces %with  \( \rho(B)=1 \)  and klt singularities 
 are classified in \cite{YeGorLogDelPezzo}  and \cite{Lacini2024}, following \cite{KeelMcKernan1999} (see also \cite{Nagaoka2025reviewlacinisclassificationrank}).
 The classification provides a list of  $24$ non-Gorenstein families,  %labelled LDP$1$- -LDP$24$ in \cite{Lacini2024},  
 $29$ Gorenstein surfaces, defined  up to isomorphisms, and an infinite family of  Gorenstein surfaces %with  two  $D_4$  singularities 
 \cite{YeGorLogDelPezzo} (see also \cite{Alexeev1994} for boundedness results).

General results of \cite[Thm. 6.5]{XuSRC2012}  and \cite[Lemma 13.7]{KeelMcKernan1999} prove that $B^{sm}$ is strongly rationally connected, that is,  there  exists a family of very free rational curves $\{C_t\}  \subset  B^{sm}$.
However, these curves  cannot be used to obtain bounds on ${\rm rk}\, \MW(X/B)$, as  the following holds:
\begin{lemma}[Lemma 13.7, \cite{KeelMcKernan1999}]\label{lemma:KeelMcK} Let $B$ be as above.
    Suppose $\mathbb{P}^1 \simeq C \subset B^{sm}$% is a smooth rational curve
    . Then either $B=\mathbb{P}^2$ or $B=\mathbb{P}[1,1, n]$ is the cone over a rational normal curve of degree $n$ with zero section $\sigma_0$
    and $C  \in | 
    \sigma_0|$.
\end{lemma}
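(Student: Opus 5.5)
The plan is to work with the complete linear system $|C|$ and show that it embeds $B$ as a surface of minimal degree. The classification of such surfaces, together with $\operatorname{rk}\Pic(B)=1$, then leaves only the two cases in the statement.

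\emph{Numerics and vanishing.} Since $C\subset B^{sm}$, $C$ is a Cartier divisor and adjunction applies: $-2=C^2+K_B\cdot C$. Because $-K_B$ is ample, $d:=C^2=-K_B\cdot C-2$ is an integer. Because $\rho(B)=1$ and $C$ is effective and nonzero, $C$ is ample, so $d\geq 1$. Next consider
\[
0\to\mathcal O_B\to\mathcal O_B(C)\to\mathcal O_C(d)\to 0 .
\]
Klt surface singularities are rational and $B$ is rational, so $H^1(B,\mathcal O_B)=0$. Hence $H^0(\mathcal O_B(C))\to H^0(\mathcal O_{\mathbb P^1}(d))$ is surjective and $h^0(\mathcal O_B(C))=d+2$. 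Moreover $H^1(\mathcal O_B(C))=0$ by Kawamata--Viehweg, since $C-K_B$ is ample.

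\emph{Base-point freeness and the morphism.} Surjectivity onto the very ample system $|\mathcal O_C(d)|$ shows that $|C|$ has no base points on $C$. Any base point $p\notin C$ would lie on every member, in particular on members through a general point. I will rule this out by the standard argument: the base locus is a finite set disjoint from $C$, every curve meets the ample divisor $C$, and restricting to a second member through a point of the base locus contradicts surjectivity onto $\mathcal O_C(d)$. This gives $\varphi=\varphi_{|C|}\colon B\to\mathbb P^{d+1}$, a finite morphism since $C$ is ample. Its image $S$ is nondegenerate, so $\deg S\geq \operatorname{codim}S+1=d$. On the other hand $\deg\varphi\cdot\deg S=C^2=d$. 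Therefore $\varphi$ is birational onto a surface $S$ of minimal degree $d$, and $\varphi|_C$ is the rational normal curve, which is a hyperplane section.

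\emph{Classification.} Surfaces of minimal degree are normal; they are $\mathbb P^2$, the Veronese surface, smooth rational normal scrolls $\mathbb F_n$ (where $\rho=2$), and cones over rational normal curves. A finite birational morphism onto a normal surface is an isomorphism, so $B\cong S$. The condition $\rho(B)=1$ excludes the scrolls. This leaves two cases.
\begin{itemize}
\item $B=\mathbb P^2$, and $C$ is a line ($d=1$) or a conic (the Veronese, $d=4$).
\item $B$ is the cone $\mathbb P[1,1,d]$, and $C$ is a hyperplane section missing the vertex, i.e. $C\in|\sigma_0|$ as in the statement, with $n=d$.
\end{itemize}

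I expect the main obstacle to be the global base-point freeness of $|C|$ away from $C$, together with the normality and finiteness bookkeeping needed to identify $B$ with $S$. If the base-point argument above gets awkward, I would instead apply Reider- or Fujita-type freeness for $K_B+(C-K_B)$ on the klt surface, or work with the rational map and use the degree count $\deg\varphi\cdot\deg S\leq d$ to force the base locus to be empty.
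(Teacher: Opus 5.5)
Your proof is correct, and it takes a genuinely different route from the paper. The paper gives no argument of its own and simply imports the statement from Keel--McKernan; you derive it directly.

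Your steps are as follows.
\begin{itemize}
\item Adjunction and $\rho(B)=1$ give $d=C^2=-K_B\cdot C-2\ge 1$.
\item The vanishing of $H^1(B,\mathcal O_B)$ gives $h^0(\mathcal O_B(C))=d+2$ and freeness along $C$. This vanishing also follows at once from Kawamata--Viehweg applied to $\mathcal O_B=K_B+(-K_B)$, with no appeal to rationality of $B$.
\item The degree count $\deg\varphi\cdot\deg S=d\le\deg S$ forces $\varphi_{|C|}$ to be birational onto a surface of minimal degree.
\item The del Pezzo--Bertini classification, normality of such surfaces, Zariski's Main Theorem and $\rho=1$ finish the argument.
\end{itemize}

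Over the bare citation, this buys self-containedness and explicit information. The cone parameter is $n=C^2=-K_B\cdot C-2$, and $C$ is a line or a conic when $B=\mathbb P^2$. Hence the paper's subsequent remark that $-10K_B\cdot C+2g(C)-2=10n+18$ is unbounded becomes immediate.

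Two small corrections.
\begin{itemize}
\item The step you single out as the main obstacle is trivial. $C$ is itself a member of $|C|$, so $\operatorname{Bs}|C|\subset C$, and you have already shown there are no base points on $C$. Your sketched argument with a second member through a base point is unnecessary and does not quite parse; replace it with this one line.
\item The integrality of $C^2$ comes from $C$ being Cartier, since it lies in $B^{sm}$. It does not come from the ampleness of $-K_B$.
\end{itemize}
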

If  $B=\mathbb{P}[1,1, n]$ and $C$ are  as in Lemma \ref{lemma:KeelMcK} the numerical invariants 
${(g(C), -K_B \cdot C)}$ grow with $n$ and  $-10K_B \cdot C + 2g(C)-2$  cannot provide a uniform bound for $\operatorname{rk}\MW(X/B)$.

\smallskip

Nevertheless, in    Theorem \ref{thm:boundKodZero3} we will   prove the existence of an explicit uniform bound.
 Unlike the Calabi–Yau case, where the universal bound is $28$, the broader class of elliptic threefolds with $\kappa (X)=0$ requires a larger uniform bound.

We need the following:

\begin{lemma}[The Bogomolov Bound, Lemma 1.8.1, \cite{KeelMcKernan1999}]\label{lemma:bog} Let $B$ be a  singular log del Pezzo surface  with $\operatorname{rk}{\Pic}(B)=1$. Then $B$ has at most $4$ singular points.
\end{lemma}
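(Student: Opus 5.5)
The strategy is to combine the orbifold Euler characteristic (Bogomolov–Miyaoka–Yau type) inequality for log del Pezzo surfaces with the basic topology of a rational surface of Picard rank one. Let $B$ be a singular log del Pezzo surface with klt singularities $p_1,\dots,p_k$ and $\rho(B)=1$. Each $p_i$ is a quotient singularity $\mathbb{C}^2/G_i$ with $|G_i|\ge 2$, since $p_i$ is genuinely singular. We want to prove $k\le 4$.

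\emph{Topological Euler characteristic.} Since $B$ is a rational surface with only quotient singularities, it is a $\mathbb{Q}$-homology manifold with $b_1=b_3=0$ and $b_2=\rho(B)=1$. Hence $e_{\mathrm{top}}(B)=3$. Removing the singular points gives
\[
e_{\mathrm{top}}(B^{sm})=3-k .
\]

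\emph{Orbifold Euler characteristic.} We then set
\[
e_{\mathrm{orb}}(B)=e_{\mathrm{top}}(B^{sm})+\sum_{i=1}^k \frac{1}{|G_i|}=3-\sum_{i=1}^k\Bigl(1-\frac{1}{|G_i|}\Bigr).
\]
Since $-K_B$ is ample and $B$ has quotient singularities, the orbifold Bogomolov–Miyaoka–Yau inequality (Miyaoka, Kobayashi–Nakamura–Sakai, Megyesi), or equivalently Bogomolov's semistability of the orbifold tangent sheaf, which is the origin of the lemma's name, yields
\[
e_{\mathrm{orb}}(B)\ge 0 .
\]
Each summand satisfies $1-1/|G_i|\ge 1/2$, so $3-k/2\ge 0$, which only gives $k\le 6$. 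This estimate has to be sharpened.

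\emph{Main obstacle: sharpening to $k\le 4$.} I would try two approaches, in this order.

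\emph{First approach: the strict inequality.} Use that $e_{\mathrm{orb}}(B)>0$ whenever $-K_B$ is ample. This follows from a Bogomolov-type inequality on the orbifold tangent sheaf together with $K_B^2>0$, giving $e_{\mathrm{orb}}\ge K_B^2/3>0$. Then $k/2<3$, so $k\le 5$. To exclude $k=5$, observe that $\sum(1-1/|G_i|)<3$ with five terms forces at least four of the points to be $A_1$ points. An analysis of the minimal resolution should then show this configuration is impossible. The resolution $\tilde B$ is a smooth rational surface containing the exceptional $(-2)$-curves over those points, and we have $\rho(\tilde B)=1+\sum_i(\text{number of exceptional curves over } p_i)$. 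Combining Noether's formula on $\tilde B$ with $K_{\tilde B}^2=K_B^2-(\text{discrepancy corrections})>0$ should bound the number of disjoint $(-2)$-chains.

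\emph{Second approach: the argument of Keel–McKernan.} Pass to the smooth locus and use the simple connectivity of the finite cover coming from the local fundamental groups. The key input is that $\pi_1^{\mathrm{orb}}$ of $B^{sm}$ is finite. Using the orbifold Chern inequality $c_1^2\le 3c_2$, a cover argument forbids five or more branch points.

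Since the statement is quoted from \cite[Lemma 1.8.1]{KeelMcKernan1999}, in the paper it suffices to cite it. The steps above reconstruct its proof, with the exclusion of the boundary case $k=5$ as the delicate point.
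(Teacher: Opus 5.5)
The paper gives no argument for this lemma; it imports the statement from \cite{KeelMcKernan1999}, so your closing remark that a citation suffices matches what the paper does. The reconstruction you offer, however, does not prove the lemma.

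\textbf{The starting inequality is misattributed, and your sharper forms of it are false.} The orbifold Bogomolov--Miyaoka--Yau inequality (Miyaoka, Kobayashi--Nakamura--Sakai, Megyesi) requires $K_B$ nef, and it fails for $-K_B$ ample. On $\mathbb{P}(1,1,2)$ one has $K^2=8>\frac{15}{2}=3e_{\mathrm{orb}}$. So both $c_1^2\le 3c_2$ (your second approach) and $e_{\mathrm{orb}}\ge K_B^2/3$ (your first) are false for rank-one log del Pezzo surfaces. Semistability of the orbifold tangent sheaf is not automatic either. On $\mathbb{P}(1,1,n)$ with $n\ge 3$, the foliation by lines through the vertex has tangent sheaf $\mathcal{O}(n)$, and $n>\frac{n+2}{2}$, where $-K=\mathcal{O}(n+2)$. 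Bogomolov's inequality gives only $e_{\mathrm{orb}}\ge K_B^2/4$, and only in the semistable case. The unstable case would need a separate argument, which you do not give, so even $e_{\mathrm{orb}}(B)\ge 0$ is unproved.

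\textbf{The decisive step is missing.} Even granting $e_{\mathrm{orb}}(B)>0$, you only reach $k\le 5$. Excluding $k=5$ is the entire content of the bound $4$, and you leave it as ``should''. Your reduction is also wrong: five points with $\sum(1-1/|G_i|)<3$ force only three $A_1$ points, since $3\cdot\frac12+2\cdot\frac23=\frac{17}{6}<3$. The proposed resolution argument has no mechanism either. First, $K_{\tilde B}^2$ need not be positive: the minimal resolution of $\mathbb{P}(1,10,11)$ has Picard number $12$, so $K_{\tilde B}^2=-2$. Second, Noether's formula on $\tilde B$ only relates the number of exceptional curves to $K_{\tilde B}^2$, so it does not bound the number of chains. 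Configurations such as $5A_1$ or $3A_1+2A_2$ pass every numerical test you list. Ruling them out needs genuinely geometric input about $B$, and that is where the real work lies. The covering-space sketch does not supply this input, and it again rests on the false inequality $c_1^2\le 3c_2$. As written, your argument yields at best $k\le 6$, and even that only modulo the unproved inequality $e_{\mathrm{orb}}(B)\ge 0$.
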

\begin{lemma}[Main Theorem and Corollary 4.7, \cite{GrassiSingBase1993}]\label{lemma:SingsBaseK0} Let $X \to B$, as in Theorem \ref{thm:birminmodel}, with $\kappa(X)=0$ and  
 $B$  a  singular log del Pezzo surface with $\operatorname{rk}{\Pic}(B)=1$. 
Let 
$f:\widetilde B\rightarrow B$ be 
the minimal resolution of $B$. Then
\begin{enumerate}[(a)]
    \item  $f^{-1}(P)$ is a Hirzebruch--Jung string of type
$<n_P,q_P>$,  $\forall P \in Sing(B)$,
\item 
$n_P\le 6$, $\forall P \in Sing(B)$,
\item
$2 \chi(\mathcal O_X)=
\sum (1-1/n_P)$.
\end{enumerate}   
\end{lemma}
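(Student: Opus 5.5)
The plan is to reduce everything to a local analysis of $\pi$ over a germ $(B,P)$ at a singular point, and then obtain (c) from a global Riemann--Roch computation on $X$. Since $\kappa(X)=0$, Theorem \ref{thm:birminmodel} gives $K_X\equiv \pi^*(K_B+\Lambda)\equiv 0$. Hence $K_X$ is numerically trivial, and we may use Reid's singular Riemann--Roch for the terminal threefold $X$. Because $K_X\cdot c_2(X)=0$, it reads
\begin{equation*}
\chi(\mathcal O_X)=\sum_{Q}\frac{r_Q^2-1}{24\,r_Q},
\end{equation*}
where the sum runs over the basket of non-Gorenstein terminal points $Q$ of $X$, and $r_Q$ is the index of $Q$. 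Gorenstein points have $r_Q=1$ and contribute nothing. So the task is to show that the basket is governed entirely by the singular points of $B$, and to compute the contribution of each such point.

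\emph{Local step, for (a) and (b).} Fix $P\in \mathrm{Sing}(B)$. Since $B$ is a klt surface, $(B,P)$ is a quotient singularity $\Delta^2/G_P$ with $G_P$ finite, acting freely on $\Delta^2\setminus\{0\}$. I would pull back the fibration over $\Delta^2\setminus\{0\}$ to the punctured smooth cover. There the fibration has a section and $K$ is locally trivial. Using that $X$ has isolated terminal singularities, I would extend the pulled-back family to a smooth elliptic fibration over $\Delta^2$, and argue that it is locally trivial, of the form $\Delta^2\times E$. The point is that there is no discriminant through the cover of $P$ once we pass to the minimal model; this should follow from $K_B+\Lambda\equiv 0$ together with the log-terminality of $(B,\Lambda)$.

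Then $X$ over a neighbourhood of $P$ is $(\Delta^2\times E)/G_P$, and $G_P$ acts on $E$ through automorphisms. The action must preserve the zero section up to translation, since there are no multiple fibers. It must also act on $H^0(\omega)$ compatibly with $K_X$ being Cartier-trivial in codimension two. Together these force $G_P$ to embed in $\mathrm{Aut}(E,0)$, which is cyclic of order $n_P\in\{2,3,4,6\}$. A cyclic quotient of $\Delta^2$ is exactly a Hirzebruch--Jung singularity $\langle n_P,q_P\rangle$, which gives (a), and the order bound gives (b). Away from $\mathrm{Sing}(B)$, I would show that $X$ is Gorenstein, so that $r_Q=1$; this uses the section and the fact that $B^{sm}$ has no local class group.

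\emph{Global step, for (c).} The non-Gorenstein points of $X$ over $P$ are the images of the points of $E$ with nontrivial stabilizer under the $\mathbb Z/n_P$-action, and $r_Q$ equals the order of that stabilizer. I would then sum the local contributions case by case:
\begin{itemize}
\item $n=2$: four points of index $2$, giving $4\cdot\tfrac{3}{48}=\tfrac14$.
\item $n=3$: three points of index $3$, giving $\tfrac13$.
\item $n=4$: two points of index $4$ and one of index $2$, giving $\tfrac{30}{96}+\tfrac{6}{96}=\tfrac38$.
\item $n=6$: one point each of index $6$, $3$ and $2$, giving $\tfrac{35}{144}+\tfrac{16}{144}+\tfrac{9}{144}=\tfrac5{12}$.
\end{itemize}
In every case the contribution is $\tfrac12(1-1/n_P)$. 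Summing over $P$ gives $2\chi(\mathcal O_X)=\sum_P(1-1/n_P)$.

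The main obstacle is the local triviality claim in the local step. One has to exclude singular fibers of $\pi$ over $P$, or over its preimage in the smooth cover, and show that the action on the fiber really is through $\mathrm{Aut}(E,0)$ rather than through a translation, which would produce a multiple fiber. I would first try to derive this from the equality $K_B+\Lambda\equiv0$ and the klt condition on $(B,\Lambda)$ at $P$. If that fails, the fallback is a direct Weierstrass-model argument on the index-one cover.
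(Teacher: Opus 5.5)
\textbf{There is a genuine gap in your local step.} Note that the paper gives no proof of this lemma; it quotes it from Grassi's 1993 paper. Your global frame is sound: for $K_X\equiv 0$, Kawamata's form of Reid's formula gives $24\chi(\mathcal O_X)=\sum_Q (r_Q-1/r_Q)$, and this is combined with a local basket count over $\mathrm{Sing}(B)$. Your four counts are also right for the models you write down.

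The failure is your claim that no discriminant passes through $P$, i.e.\ that the normalized pull-back $\tilde X\to\Delta^2$ has a smooth central fibre. This is false, and nothing in $K_B+\Lambda\equiv 0$ or in the klt condition on $(B,\Lambda)$ can give it.
\begin{itemize}
\item Local counterexample: let $\mu_2$ act by $(s,t,x,y)\mapsto(-s,-t,x,-y)$ on $y^2=x^3+a(s,t)x+b(s,t)$, with $a,b$ even. The values $a(0),b(0)$ are unconstrained, so you may impose $4a(0)^3+27b(0)^2=0$ with $a(0)\neq 0$.
\item Then the central fibre is nodal and $P\in\Sigma_{X/B}$. The pair $(B,\Lambda)$ is still klt, since the $I_1$-curve has coefficient at most $1/12$.
\item For general such $a,b$, $\tilde X$ has an ordinary double point at the node. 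So $X$ acquires there a $cA/2$ point $\{\xi\eta+z^2-u^2=0\}/\tfrac12(1,1,1,0)$, which is not a quotient of $\Delta^2\times E$.
\item The same happens under the hypotheses of the lemma. Take $X=Y/\mu_3$, with $Y\to\mathbb P^2$ a $\mu_3$-equivariant Weierstrass Calabi--Yau and $B=\mathbb P^2/\mu_3$ (three $A_2$ points). The invariant coefficient $b$ may vanish at a fixed point, giving a cuspidal central fibre and a $cA/3$ point on $X$.
\end{itemize}
Even with a smooth central fibre, $\tilde X$ is not $\Delta^2\times E$, since $j$ varies, but that part is harmless.

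With singular central fibres, your mechanism $G_P\hookrightarrow\mathrm{Aut}(E,0)$ for (b) is gone. In fact your argument would prove a false local statement. On $y^2=x^3+sx+t$ (smooth total space, cuspidal fibre over $0$), let $\mu_5$ act by $(s,t,x,y)\mapsto(\zeta^4s,\zeta t,\zeta^2x,\zeta^3y)$.
\begin{itemize}
\item The base quotient is an $A_4$ point.
\item The only fixed points are the cusp, with weights $(4,2,3)$ on $(s,x,y)$, and the point $O$ of the zero section, with weights $(4,1,4)$. Both are terminal of index $5$.
\end{itemize}
So $n_P=5$ occurs locally, consistent with (b) being stated as $n_P\le 6$; the value $5$ is excluded only globally, cf.\ Corollary \ref{cor:numbasketsK0}.

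What is missing is an analysis of the $G_P$-action on $\tilde X$ that does not presuppose a smooth central fibre.
\begin{itemize}
\item Cyclicity in (a) is easy this way. A nontrivial element fixing a curve would make $X$ singular along a curve, so fixed points are isolated. This, not a condition on $H^0(\omega)$, is why the action is faithful. Then at the $G_P$-fixed point $O$ of the zero section, $G_P$ is a quotient of the local fundamental group $\mathbb Z/r$ of a terminal point, hence cyclic.
\item The bound $n_P\le 6$ needs a case analysis of the possible central fibres.
\item Part (c) needs the baskets of non-quotient points, or a computation insensitive to the fibre type. In the nodal example the basket is two $\tfrac12(1,1,1)$ points plus a $cA/2$ point of axial multiplicity $2$, total $\tfrac14$. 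In the $\mu_5$ model the total is $\tfrac25=\tfrac12(1-\tfrac15)$.
\end{itemize}

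Finally, your assertion that $X$ is Gorenstein over $B^{sm}$ needs a real proof. Triviality of the local class group of $B$ says nothing about $\mathrm{Cl}$ of $X$ along a fibre. One route is to compare $X$ with the Weierstrass model and use that flops preserve singularities.
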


Lemma \ref{lemma:SingsBaseK0} implies that the  infinite series of singular Del Pezzo surfaces $\mathbb{P}[1,1,n]$ of Lemma \ref{lemma:KeelMcK}
 do not occur.
 
\begin{corollary}\label{cor:numbasketsK0} Let $X \to B$, as in Theorem \ref{thm:birminmodel}, with $\kappa(X)=0$ and  
 $B$  a  singular log del Pezzo surface with $\operatorname{rk}{\Pic}(B)=1$.  Then the possible sets of integers $n_p$ which can occur in the
basket of singularities of $B$ are
$
(2,3,6),
(2,4,4),
(3,3,3).$

Moreover $\chi(\mathcal O_X)=1$.
    
\end{corollary}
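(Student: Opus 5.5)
The approach is to combine the constraints of Lemma \ref{lemma:SingsBaseK0} with the Bogomolov bound (Lemma \ref{lemma:bog}). Together they turn the statement into a small Diophantine problem.

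\emph{Setting up the equation.} By Lemma \ref{lemma:SingsBaseK0}(b) each singular point $P$ contributes a term $1-1/n_P$ with $2\le n_P\le 6$. Each such term lies in $[1/2,5/6]$; it is strictly positive because $P$ is genuinely singular, so $n_P\ge 2$. By Lemma \ref{lemma:SingsBaseK0}(c) the sum $\sum_P(1-1/n_P)$ equals $2\chi(\mathcal O_X)$, an even integer. The sum is positive since $B$ is singular, hence $2\chi(\mathcal O_X)\ge 2$. By Lemma \ref{lemma:bog} there are at most $4$ singular points, so the sum is at most $4\cdot 5/6<4$. Therefore $2\chi(\mathcal O_X)=2$, i.e.\ $\chi(\mathcal O_X)=1$, and the problem becomes
\[
\sum_{P\in \mathrm{Sing}(B)}\Bigl(1-\frac1{n_P}\Bigr)=2 .
\]

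\emph{Enumerating the solutions.} Write $k=|\mathrm{Sing}(B)|$, so the condition reads $\sum 1/n_P = k-2$. Since each $1/n_P\le 1/2$, we have $k-2\le k/2$, i.e.\ $k\le 4$, consistent with Lemma \ref{lemma:bog}.
\begin{itemize}
\item For $k=4$ we need $\sum 1/n_P=2$, which forces all $n_P=2$, giving the basket $(2,2,2,2)$.
\item For $k=3$ we need $\sum 1/n_P=1$ with $2\le n_P\le 6$. The classical solutions are $(2,3,6),(2,4,4),(3,3,3)$, and all satisfy $n_P\le 6$.
\item For $k\le 2$ we would need $\sum 1/n_P\le 0$, which is impossible.
\end{itemize}

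\emph{Excluding $(2,2,2,2)$: the main obstacle.} The case $(2,2,2,2)$ consists of four $A_1$ points. This must be ruled out to match the stated list, and I expect it to be the main difficulty. My first attempt would be a sharper form of the Bogomolov bound. For rank-one log del Pezzo surfaces, Keel--McKernan's orbifold Bogomolov--Miyaoka--Yau inequality gives $\sum(1-1/|G_P|)<3$, where $G_P$ is the local fundamental group. For four $A_1$ points this sum is exactly $2$, so that inequality alone does not exclude the case. I would therefore use the fact that the only Gorenstein rank-one del Pezzo surface with $4A_1$ is, up to the classification in \cite{YeGorLogDelPezzo}, $\mathbb{P}^1\times\mathbb{P}^1/\pm$ type quotients. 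I would check against that classification, or against the finer analysis of \cite{GrassiSingBase1993}, whether $4A_1$ with $\rho=1$ actually occurs and is compatible with $\kappa(X)=0$. If it does occur, the statement should be read as coming from Corollary 4.7 of \cite{GrassiSingBase1993}, and I would import that exclusion directly rather than reprove it.
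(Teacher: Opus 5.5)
Your reduction to $\chi(\mathcal O_X)=1$ and $\sum_P(1-1/n_P)=2$ is correct. It is slightly more economical than the paper's argument, which cites Kawamata's bound $0\le\chi(\mathcal O_X)\le 4$. You show that Lemma \ref{lemma:bog} together with $2\le n_P\le 6$ already gives $0<2\chi(\mathcal O_X)\le 10/3$. Your enumeration of the integer solutions is also right.

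The gap is the final step. You never actually exclude $(2,2,2,2)$, and the fact you propose to lean on is false. The quotient of $\mathbb{P}^1\times\mathbb{P}^1$ by $(x,y)\mapsto(-x,-y)$ does have four $A_1$ points, but both rulings descend to it, so it has Picard rank $2$. In fact no Gorenstein log del Pezzo surface of Picard rank one has singularities $4A_1$, independently of $X$ and of $\kappa(X)=0$. This nonexistence is exactly the classical Du Val fact the paper invokes (see the list in \cite{YeGorLogDelPezzo}). Your fallback, importing an exclusion from \cite{GrassiSingBase1993} ``if it does occur'', leaves the step open. As written, the proposal only proves that the basket is one of $(2,3,6)$, $(2,4,4)$, $(3,3,3)$, $(2,2,2,2)$.

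The missing argument is short.
\begin{itemize}
\item A Hirzebruch--Jung string of type $\langle 2,q\rangle$ is a single $(-2)$-curve, so the basket $(2,2,2,2)$ means $B$ is Du Val with exactly four $A_1$ points.
\item The minimal resolution $\widetilde B$ is crepant, hence a weak del Pezzo surface, with $\rho(\widetilde B)=\rho(B)+4=5$.
\item Noether's formula on the rational surface $\widetilde B$ gives $K_B^2=K_{\widetilde B}^2=10-\rho(\widetilde B)=5$.
\item For a weak del Pezzo surface of degree $5$, the lattice $K_{\widetilde B}^{\perp}\subset\Pic(\widetilde B)$ is (the negative of) the root lattice $A_4$. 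The four disjoint $(-2)$-curves satisfy $E_i^2=-2$, $E_i\cdot K_{\widetilde B}=0$ and $E_i\cdot E_j=0$ for $i\ne j$, so they would be four mutually orthogonal roots in it.
\item The roots of $A_4$ are $\epsilon_i-\epsilon_j$ with $1\le i\ne j\le 5$. Two of them are orthogonal only when their index pairs are disjoint, so at most two roots can be mutually orthogonal.
\end{itemize}
Hence $(2,2,2,2)$ cannot occur, and the list reduces to the three stated baskets.
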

\begin{proof}
Since  $0 \leq \chi(\mathcal O_X) \leq 4$ [Theorem 3.1, \cite{KawamataK0}],   the statement follows from Lemma \ref{lemma:bog} and Lemma \ref{lemma:SingsBaseK0}. Note that the configuration $(2,2,2,2)$  is excluded by classical results  originating with the work of Du Val (see for example \cite{YeGorLogDelPezzo}). \end{proof}
\begin{corollary}\label{cor:basketK0} Let $B$ as in Corollary \ref{cor:numbasketsK0}. 
\begin{enumerate}[(i)]
    \item If $B$ is Gorenstein, the possible singularities of $B$ are:\\
$(A_5+A_2+A_1), \
(A_1 + 2 A_3), \ 3A_2$.\\
Moreover, each basket of singularities determines uniquely
a weak del Pezzo surface $B$.
\item If $B$ is not Gorenstein, the possible baskets of singularities are:\\
$(A_1, (-3), (-6))$, $(A_1, A_2, (-6))$, $(A_1, A_5, (-3))$, $(A_1, (-4), (-4))$, \\$(A_1, A_3, (-4))$, $ ((-3),(-3), (-3))$, $(A_2, A_2, (-3))$, $(A_2,  (-3), (-3))$. \\
Moreover, there exists
a finite number of
families with the allowed baskets of singularities.
\end{enumerate}
The singularities are cyclic quotients.
\end{corollary}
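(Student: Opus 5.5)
The plan is to combine Corollary \ref{cor:numbasketsK0} with the local description of Lemma \ref{lemma:SingsBaseK0}(a), and then turn the resulting local list into a global statement using the classification and boundedness results for rank-one log del Pezzo surfaces cited above (\cite{YeGorLogDelPezzo}, \cite{Lacini2024}, \cite{Alexeev1994}).

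\emph{Step 1: local types.} By Lemma \ref{lemma:SingsBaseK0}(a), each singular point $P$ is a cyclic quotient singularity $\frac{1}{n_P}(1,q_P)$ with $\gcd(n_P,q_P)=1$. Its minimal resolution is the Hirzebruch--Jung string of $n_P/q_P$. This already gives the last assertion: the singularities are cyclic quotients. By Corollary \ref{cor:numbasketsK0}, $n_P\in\{2,3,4,6\}$, and for each such $n$ the admissible $q$ are listed below, up to the symmetry $q\leftrightarrow q^{-1} \bmod n$, which reverses the string.
\begin{itemize}
\item $n=2$: $q=1$, giving $A_1$.
\item $n=3$: $q=1$ gives a single $(-3)$-curve; $q=2$ gives $A_2$.
\item $n=4$: $q=1$ gives a single $(-4)$-curve; $q=3$ gives $A_3$.
\item $n=6$: $q=1$ gives a single $(-6)$-curve; $q=5$ gives $A_5$.
\end{itemize}
So at each point there are exactly two options: the Du Val singularity $A_{n-1}$ (Gorenstein), or the $\frac1n(1,1)$ singularity with a single $(-n)$-curve (non-Gorenstein for $n\ge3$).

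\emph{Step 2: enumeration.} I will run through the three baskets $(2,3,6)$, $(2,4,4)$, $(3,3,3)$, choosing one of the two options at each point and counting unordered configurations when some $n_P$ repeat.
\begin{itemize}
\item Choosing the Du Val option everywhere gives the Gorenstein list $A_1+A_2+A_5$, $A_1+2A_3$ and $3A_2$.
\item The remaining choices give the non-Gorenstein list. For $(2,3,6)$ there are $2\cdot2-1=3$ cases. For $(2,4,4)$ there are $3-1=2$ unordered cases. For $(3,3,3)$ there are $4-1=3$ multisets.
\end{itemize}
This reproduces exactly the eight baskets in (ii).

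\emph{Step 3: global statements.} For (i), note that $\rho(\widetilde B)=1+\sum(\text{number of exceptional curves})$ and $K_{\widetilde B}=f^*K_B$ in the Gorenstein case. Noether's formula then gives $K_B^2=10-\rho(\widetilde B)$: this is $1$ for $A_1+A_2+A_5$ and for $A_1+2A_3$, and $3$ for $3A_2$. I will then read off from Ye's classification \cite{YeGorLogDelPezzo} of Gorenstein rank-one log del Pezzo surfaces that each of these Dynkin types in the given degree is realized by a unique surface up to isomorphism. Equivalently, the weak del Pezzo surface $\widetilde B$ is unique: the cubic surface with $3A_2$ and the degree-one surfaces with $E_8$-sublattice types $A_1+A_2+A_5$ and $A_1+2A_3$. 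These types do not appear among the non-unique ones in Ye's table.

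For the finiteness in (ii), the baskets have bounded index ($n_P\le 6$) and hence bounded log discrepancies, say at least $1/6$. Alexeev's boundedness theorem \cite{Alexeev1994} for $\epsilon$-log terminal del Pezzo surfaces, or directly the classification in \cite{Lacini2024}, then shows these surfaces form finitely many families.

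\emph{Main obstacle.} The local enumeration is routine. The delicate point is the global uniqueness claim in (i), which is not derived but must be extracted correctly from Ye's tables, and in particular requires checking that none of these three types has two non-isomorphic realizations. I would check it by comparing with the list of rank-one Gorenstein del Pezzo surfaces of degree $1$ and $3$, whose uniqueness cases are known. I note that the statement only asserts which configurations are \emph{possible}, so I will not need to prove that every listed basket is actually realized by some $X$.
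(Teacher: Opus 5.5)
Your proposal is correct and is essentially the paper's argument: the paper's proof only cites Corollary \ref{cor:numbasketsK0} together with \cite{YeGorLogDelPezzo}, \cite{KeelMcKernan1999} and \cite{Lacini2024}. Your local enumeration makes explicit what that citation leaves implicit: since $(\mathbb{Z}/n)^\times=\{\pm1\}$ for $n\in\{2,3,4,6\}$, each point is either $A_{n-1}$ or $\frac1n(1,1)$. Alexeev's boundedness is a harmless alternative to Lacini for the finiteness claim.

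One slip needs fixing. $A_1+2A_3$ has $7$ exceptional curves, so $K_B^2=9-7=2$, as in Table \ref{tab:gorenstein-delpezzo}, not $1$. It is therefore a degree-two surface whose root lattice sits in $E_7$, not a degree-one $E_8$-sublattice type, and you must look it up in the degree-two part of Ye's list. Uniqueness still holds there, because Ye's only non-unique types are $E_8$, $A_1+E_7$, $A_2+E_6$ and $2D_4$.
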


\begin{proof} By Corollary \ref{cor:numbasketsK0} and \cite{YeGorLogDelPezzo}, \cite{KeelMcKernan1999} and \cite{Lacini2024}.  \end{proof}

\begin{lemma}\cite{Demazure1980_PezzoIV}, \cite{FujinoBPFree}\label{lemma:bpfIndex}~

Let $B$ as in Corollary \ref{cor:basketK0}. There exists an integer $m$ such that  $|-m K_B|$  is base point free. In particular:
\begin{enumerate}[(i)]
    \item  If $B$ is Gorenstein and it has basket $(A_5+A_2+A_1)$, then $|-2K_B|$ is base point free, in the other cases $|-K_B|$ is base point free.
\item If $B$ is not Gorenstein  then $|-2 i(B) \ K_B|$ is base point free, where $i(B)$ is the Cartier index of $B$.
\end{enumerate}
    
\end{lemma}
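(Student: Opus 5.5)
We need to prove Lemma \ref{lemma:bpfIndex}. The plan is to treat the Gorenstein and non-Gorenstein cases separately. In both cases we pass to the minimal resolution $f:\widetilde B\to B$ and transport base point freeness back to $B$ via $f^*(-mK_B)$. The existence of some $m$ with $|-mK_B|$ base point free follows at once from Kawamata--Shokurov base point freeness for the klt pair $(B,0)$. Indeed, $-K_B$ is ample, so $-K_B - K_B$ is nef and big, and $-K_B$ is $\mathbb Q$-Cartier of Cartier index $i(B)$; hence $|-m i(B)K_B|$ is free for $m\gg0$. The substance of the lemma is the explicit multipliers.

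\emph{Gorenstein case (i).} Here $B$ has only ADE (in fact $A_n$) singularities, by Corollary \ref{cor:basketK0}. So $f$ is crepant, $K_{\widetilde B}=f^*K_B$, and $\widetilde B$ is a weak del Pezzo surface of degree $d=K_B^2$. Since $\rho(B)=1$ and the exceptional curves are $(-2)$-curves, we have $\rho(\widetilde B)=1+\#\{\text{exceptional curves}\}$, giving $d=10-\rho(\widetilde B)$.
\begin{itemize}
\item[$\cdot$] For $A_5+A_2+A_1$: there are $8$ exceptional curves, so $\rho(\widetilde B)=9$ and $d=1$.
\item[$\cdot$] For $A_1+2A_3$: $\rho(\widetilde B)=8$, so $d=2$.
\item[$\cdot$] For $3A_2$: $\rho(\widetilde B)=7$, so $d=3$.
\end{itemize}
We then invoke Demazure's results on weak del Pezzo surfaces \cite{Demazure1980_PezzoIV}. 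For $d\ge 2$, $|-K_{\widetilde B}|$ is base point free. For $d=1$, $|-K_{\widetilde B}|$ has exactly one base point, while $|-2K_{\widetilde B}|$ is base point free, giving the double cover of the quadric cone. Since $f^*$ identifies $H^0(B,-mK_B)$ with $H^0(\widetilde B,-mK_{\widetilde B})$ (by normality and crepancy), freeness descends to $B$. The base point of $|-K_B|$ in degree $1$ really persists, because $f$ is an isomorphism near it, or the point lies on the fixed locus in any case. This yields (i).

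\emph{Non-Gorenstein case (ii).} Let $r=i(B)$, so $-rK_B$ is an ample Cartier divisor. We would apply Fujino's effective base point freeness \cite{FujinoBPFree} for log surfaces. For a projective klt surface and a nef Cartier divisor $L$ with $L-K_B$ nef and big, with suitable positivity bounds ($L^2>4$ and $L\cdot C\ge 2$ for curves through a given point), $|K_B+L|$ is free at that point. Take $L=-(2r+1)K_B$, so that $K_B+L=-2rK_B$, which is Cartier. Then we must verify $L^2=(2r+1)^2K_B^2>4$ and $L\cdot C=(2r+1)(-K_B\cdot C)\ge 2$ for every curve $C$. Since $-rK_B$ is Cartier and ample, $-rK_B\cdot C\ge1$. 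Therefore $(2r+1)(-K_B\cdot C)\ge (2r+1)/r>2$. Similarly, $L^2\ge (2r+1)^2/r^2\cdot (rK_B)^2$, and $(rK_B)^2\ge1$ because $rK_B$ is Cartier and ample. Hence $L^2\ge (2r+1)^2/r^2>4$. These checks are routine. The main obstacle is matching the exact form of Fujino's hypotheses at singular points of $B$, where the Reider-type criterion needs local corrections via the local multiplicity or discrepancy. If needed, we would instead check these numerically using the finite list of baskets in Corollary \ref{cor:basketK0}, whose Cartier indices are bounded by $6$.
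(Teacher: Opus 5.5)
Your proposal is correct and follows essentially the same route as the paper. Kawamata--Shokurov gives the existence of $m$. For (i), Noether's formula on the crepant minimal resolution ($K_B^2=9-e$, giving degrees $1,2,3$) combined with Demazure settles the Gorenstein case. For (ii), Fujino's effective freeness handles the non-Gorenstein case; you unpack \cite[Corollary 1.5]{FujinoBPFree} by checking the Reider-type conditions for $M=-2rK_B$ with $N=M-K_B=-(2r+1)K_B$, and your numerical checks $N^2>4$ and $N\cdot C\geq 2$ are right.

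There is one small slip in how you quote the criterion. The Cartier hypothesis is on $M=K_B+L=-2rK_B$, not on $L$ itself, which is never Cartier when $r>1$. Also, Fujino's criterion is already formulated for log canonical surfaces, so your numerical checks suffice at the singular points too and no fallback case analysis is needed.
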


\begin{proof} The existence of  an integer  $m$  such that  $|-m K_B|$  is base point free  follows from classical results of the Minimal Model Program (Kawamata-Shokurov). 
  If $B$ is Gorenstein Noether's formula gives $K_B^2= 9 -e $, where $e$ is the number of curves on the minimal resolution of $B$, so in case (i) $K_B^2=1$ and  $K_B^2 \geq 2$ otherwise. 
(i)  then follows from  \cite[Theorem 1, IV]{Demazure1980_PezzoIV} and (ii) from \cite[Corollary 1.5]{FujinoBPFree}. \end{proof}
Properties of log del Pezzo surfaces with $i(B)=2$  have been studied and classified in \cite{AlexeevNikulin}, the ones with $i(B)=3$  in \cite{FujitaYasutake}  and \cite{CortiHeuberger}, also in the context of mirror symmetry and mutations.

We are now ready to state the general result:

\begin{theorem}\label{thm:boundKodZero3}
Let \( X \to B \) be an elliptic threefold as in Theorem~\ref{thm:birminmodel}, with \( \kappa(X)=0 \) and \( \Lambda \neq 0 \). 
Then \[\operatorname{rk}\,\MW(X/B) \leq 810,\]
 the bound is independent of $X$, $B$, and the torsion index of $K_X$.
In addition one of the following holds:
\begin{enumerate}
    \item If \( \rho(B)>1 \), then
   $ \operatorname{rk}\,\MW(X/B) \leq 18.$
%\item If \( \rho(B)=1 \) and  \( B \simeq \mathbb{P}^2 \), then
  % $ \operatorname{rank}\,\MW(X/B) \leq 28.$
    \item If $\rho(B)=1 $ and $B$ is smooth, then  $\operatorname{rk}\,\MW(X/B) \leq 28$, and  $B \simeq \mathbb{P}^2$. 
    \item If   \( \rho(B)=1 \) and $B$ is singular,  the bounds on $\operatorname{rk}\,\MW(X/B) $ are listed in Tables \ref{tab:gorenstein-delpezzo}, \ref{tab:gorenstein-delpezzo-2K} and \ref{tab:delpezzo}.
\end{enumerate} 

\end{theorem}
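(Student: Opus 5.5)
Case (1) follows the proof of Theorem \ref{them:boundKodZeroBNonLDP}: if $\rho(B)>1$, then $B$ is birational to a Mori fiber space over a curve. Since klt surface singularities are isolated, a general fiber $C$ of the ruling is a smooth rational curve in $B^{sm}$, movable and base point free, with self-intersection $0$. Adjunction gives $-K_B\cdot C=2$, and since $\kappa(X)=0$ forces $\Lambda\equiv -K_B$, we get $\Lambda\cdot C=2$. By Corollary \ref{cor:bsaNonNegMovANDPic1} and Proposition \ref{prop:K3appearance1}(1), $Z=\pi^{-1}(C)$ is birationally K3 and $\operatorname{rk}\MW(X/B)\le 18$.

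Case (2): if $\rho(B)=1$ and $B$ is smooth, then $B$ is a smooth del Pezzo surface of Picard rank one, hence $B\simeq\mathbb P^2$. Taking $C$ a general line, $\Lambda\cdot C=3$ and $g(C)=0$, so Theorem \ref{thm:BoundsFromNoetherMov} gives the bound $28$.

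Case (3) is the real content. Lemma \ref{lemma:KeelMcK} rules out smooth rational curves in $B^{sm}$, so I would give up rationality and use a smooth curve of higher genus avoiding the singularities. By Corollaries \ref{cor:numbasketsK0} and \ref{cor:basketK0}, $B$ belongs to an explicit finite list of baskets. By Lemma \ref{lemma:bpfIndex}, the linear system $|-mK_B|$ is base point free, with $m=1$, $m=2$, or $m=2i(B)$ according to the type. Bertini then gives a general member $C\in|-mK_B|$ that is smooth, lies in $B^{sm}$ (the finitely many singular points are not base points), is not contained in $\Sigma_{X/B}$ or in the locus where torsion sections coincide, and is movable. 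Since $\rho(B)=1$ and $B$ is Fano, Corollary \ref{cor:bsaNonNegMovANDPic1} and Proposition \ref{prop-positiveintPic1} give the positivity hypotheses. A general $C$ meets $\Sigma$ transversally away from the codimension-two bad locus, so $Z=\pi^{-1}(C)$ is smooth, or at least its minimal resolution computes $\MW$ (Notation \ref{remark_notation}). Theorem \ref{thm:BoundsFromNoetherPic1} then yields
\[
\operatorname{rk}\MW(X/B)\le 10\,m K_B^2+2g(C)-2,\qquad 2g(C)-2=(K_B+C)\cdot C=m(m-1)K_B^2,
\]
so the bound is $(m^2+9m)K_B^2$.

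What remains is bookkeeping for each basket. I would compute $K_B^2$ from the minimal resolution $\widetilde B$ using the Hirzebruch--Jung strings: for Gorenstein $B$, $K_B^2=9-e$; in the non-Gorenstein cases, $K_B^2=K_{\widetilde B}^2+\sum$ local discrepancy corrections. I would also compute the Cartier index $i(B)$ as the lcm over the singularities, which lies in $\{1,\dots,6\}$ given $n_P\le 6$. These values fill Tables \ref{tab:gorenstein-delpezzo}, \ref{tab:gorenstein-delpezzo-2K}, \ref{tab:delpezzo}, and the maximum over the finite list should come out as $810$. One consistency check: $i=3$, $m=6$ gives $90K_B^2$, which equals $810$ exactly when $K_B^2=9$. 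Since every entry, and the bounds $18$ and $28$, depend only on the basket and not on $X$ or on the torsion index of $K_X$, this gives the stated uniformity.

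The main obstacle is this last step: determining $K_B^2$ and $i(B)$ reliably for each non-Gorenstein basket, including how $K_B^2$ varies within the finitely many families, and checking that a general member of $|-mK_B|$ really is smooth and avoids $\operatorname{Sing}B$. If the base point freeness constant from \cite{FujinoBPFree} is too crude for some basket, I would fall back on any smaller base point free multiple visible from the explicit models in \cite{Lacini2024}.
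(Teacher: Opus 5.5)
Your proposal is correct and follows the paper's proof essentially step for step. Cases (1) and (2) go through the ruling fiber and the line on $\mathbb{P}^2$, as in Theorem \ref{them:boundKodZeroBNonLDP}; case (3) takes a general smooth $C\in|-mK_B|$ in $B^{sm}$ from Lemma \ref{lemma:bpfIndex} and Bertini, with $m$ and $K_B^2$ read off from the baskets. The only difference is that you invoke Theorem \ref{thm:BoundsFromNoetherPic1} where the paper uses Theorem \ref{thm:BoundsFromNoetherMov}, and both apply here. Your closed form $(m^2+9m)K_B^2$ reproduces every entry of Tables \ref{tab:gorenstein-delpezzo}, \ref{tab:gorenstein-delpezzo-2K} and \ref{tab:delpezzo}, including $810$ for the basket $(A_1,-3,-6)$ with $m=6$ and $K_B^2=9$.
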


\begin{proof} 

(1) and (2) are proved in Theorem \ref{them:boundKodZeroBNonLDP}.   Let now $B$ be as in (3),  $|-mK_B|$  as in Lemma \ref{lemma:bpfIndex},  and $C\in |-mK_B|$ be a smooth general curve.   $C \subset B^{sm}$ satisfies the hypothesis of Theorem \ref{thm:BoundsFromNoetherMov}, by Bertini's Theorem, since the singularities of $B$ are finite.
 Then again by Theorem \ref{thm:BoundsFromNoetherMov} $ \operatorname{rk}\,\MW(X/B) \leq 10 C  \cdot (- K_B) +2 g(C) -2$,  and  this bound  depends only on the  basket of singularities,   by Corollary \ref{cor:basketK0} and Lemma \ref{lemma:bpfIndex}. 

%\newpage

\begin{table}[ht]
\caption{\scriptsize  Numerical invariants for a general smooth curve $C\in |-K_B|$, $B$ Gorenstein.}
\label{tab:gorenstein-delpezzo}
\centering
\small
\renewcommand{\arraystretch}{1.3}
\begin{tabular}{|l|c|c|c|c|}
\hline
\textbf{Basket} & \textbf{$K_B^2$} & \textbf{$-K_B\cdot C$} & \textbf{$g(C)$} & \textbf{$-10K_B\cdot C+2g(C)-2$} \\
\hline
$(A_1+2A_3)$ & $2$ & $2$ & $1$ & $20$ \\
\hline
$(3A_2)$ & $3$ & $3$ & $1$ & $30$ \\
\hline
\end{tabular}
\end{table}

\begin{table}[ht]
\caption{\scriptsize  Numerical invariants for a general smooth curve $C\in |-2K_B|$, $B$ Gorenstein.}
\label{tab:gorenstein-delpezzo-2K}
\centering
\small
\renewcommand{\arraystretch}{1.5}
\begin{tabular}{|l|c|c|c|c|}
\hline
\textbf{Basket} & \textbf{$K_B^2$} & \textbf{$-K_B\cdot C$} & \textbf{$g(C)$} & \textbf{$-10K_B\cdot C+2g(C)-2$} \\
\hline
$(A_1+A_2+A_5)$ & $1$ & $2$ & $2$ & $22$ \\
\hline
\end{tabular}
\end{table}
\begin{table}[ht]
\caption{ \scriptsize Numerical invariants associated with a general smooth curve $C\in |-2i(B)K_B|$, $B$ non Gorenstein.}
\label{tab:delpezzo}
%\vspace{0.1mm}
\centering
\small
\renewcommand{\arraystretch}{1.5}
\begin{tabular}{|l|c|c|c|c|c|}
\hline
\textbf{Basket} & $i(B)$ & $K_B^2$ &  $-K_B \cdot C %=2i(B)K_B^2
$ & $g(C)$ & $-10K_B\cdot C+2g(C)-2$ \\
\hline $(A_1,-3,-6)$ & $3$ & $9$ & $54$ & $136$ & $810$ \\
\hline
$(A_1,A_2,-6)$ & $3$ & $\frac{23}{3}$ & $46$ & $116$ & $690$ \\
\hline
$(A_1,-3,A_5)$ & $3$ & $\frac{7}{3}$ & $14$ & $36$ & $210$ \\
\hline
$(A_1,-4,-4)$ & $2$ & $8$ & $32$ & $49$ & $416$ \\
\hline
$(A_1,A_3,-4)$ & $2$ & $5$ & $20$ & $31$ & $260$ \\
\hline
$(-3,-3,-3)$ & $3$ & $7$ & $42$ & $106$ & $630$ \\
\hline
$(A_2,A_2,-3)$ & $3$ & $\frac{13}{3}$ & $26$ & $66$ & $390$ \\
\hline
$(A_2,-3,-3)$ & $3$ & $\frac{17}{3}$ & $34$ & $86$ & $510$ \\
\hline
\end{tabular}

\end{table}
\end{proof}

\medskip
\medskip

\newpage

\subsection{$\MW(X/B)_{\rm tors}$ and further bounds on $\rm rk \MW(X/B)$}\label{subsec:tors}~

\smallskip
The $19$ possible torsion subgroups 
of an elliptic fibration over a rational curve
are listed in Table \ref{table:MirPer},
following the results in \cite{MirandaPersson1989}.
\begin{table}[h]
\caption{}\label{table:MirPer}
\centering
\resizebox{\textwidth}{!}{%
\begin{tabular}{|c|c|}
\hline
  $s$ as in $(1,s)$ (cyclic) & $(r,s),\ r>1$ (product) \\
\hline
$0,2,3,4,5,6,7,8,9,10,12$
& $(2,2),(2,4),{(2,6)}, (2,8),(3,3),(3,6),(4,4),(5,5)$ \\
\hline
\end{tabular}%
}
\end{table}

These bound the possibilities for the elliptic $3$-folds we are interested in as well:

 \begin{theorem}\label{thm:TorsionCY3}
 Let  $X \to B$  be an elliptic $3$-fold, 
 as in Theorem \ref{thm:birminmodel},
 with $\kappa(X)=0$
 and $\Lambda \neq 0$.
 
 \begin{enumerate}
\item If either  \( \rho(B)>1 \) or $B = {\mathbb P}^2$, 
the possible subgroups of $\MW(X/B)_{\mathrm{tors}}$ 
are the torsion subgroups  $ G_f \cong \mathbb{Z}/r \times \mathbb{Z}/s$ listed in Table \ref{table:BsmoothTor}.

\item  If  \( \rho(B)>1 \)  and $\MW(X/B)_{\mathrm{tors}} \neq 0$, $\operatorname{rk} \MW(X/B)$ satisfies the additional bounds
listed in Table \ref{table:NonZeroTor1}.
\end{enumerate}
 \end{theorem}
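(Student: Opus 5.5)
The plan is to reduce everything to an elliptic surface over a rational curve, where the Miranda--Persson classification (Table \ref{table:MirPer}) applies, and to combine it with the inclusion of Mordell--Weil groups from Section \ref{sec:inclusion}.

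\textbf{Part (1).} In both cases $B$ carries a base point free, movable, smooth rational curve $C \subset B^{sm}$ with $\Lambda\cdot C=-K_B\cdot C>0$. If $\rho(B)>1$, take $C$ to be a general fiber of the rational fibration of (a blow-up of) the Mori fiber space, as in the proofs of Theorems \ref{boundCY3} and \ref{them:boundKodZeroBNonLDP}; then $-K_B\cdot C=2$. If $B=\mathbb P^2$, take $C$ a general line, so $-K_B\cdot C=3$. Since $C$ is general and movable, it is contained neither in $\Sigma_{X/B}$ nor in the divisor where distinct torsion sections meet. Theorem \ref{thm:MWNSMovInjTotal}, or just Proposition \ref{prop:MWtorinj}, then gives $\MW(X/B)_{\rm tors}\hookrightarrow \MW(Z/C)_{\rm tors}$ with $Z=\pi^{-1}(C)\to C\cong\mathbb P^1$.

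Thus $\MW(X/B)_{\rm tors}$ is a subgroup of a torsion group in Table \ref{table:MirPer}. I expect Table \ref{table:BsmoothTor} to be exactly the list of such groups, possibly refined by the degree $\Lambda\cdot C$. For $\Lambda\cdot C=2$ the surface $Z$ is birationally K3, and one may cite the K3 torsion list. For $\Lambda\cdot C=3$, $Z$ has $\chi(\mathcal O_Z)=3$, and one uses the Miranda--Persson bounds for that Euler number. In particular, large torsion such as $\mathbb Z/(5)\times\mathbb Z/(5)$ or $\mathbb Z/(8)\times\mathbb Z/(2)$ forces the fibration to be a modular surface of bounded $\chi$; I would check which entries survive for $\chi=2,3$.

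\textbf{Part (2).} For $\rho(B)>1$ and nontrivial torsion, $Z$ is birationally a K3 surface carrying a torsion section, and both the free part and the torsion part of $\MW(X/B)$ inject into $\MW(Z/C)$ (Theorem \ref{thm:MWNSMovInjTotal}). Hence ${\rm rk}\,\MW(X/B)\le \max\{{\rm rk}\,\MW(Z/C)\}$, where the maximum runs over elliptic K3 surfaces whose torsion group contains $\MW(X/B)_{\rm tors}$.

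To bound ${\rm rk}\,\MW(Z/C)$ given a torsion subgroup $G$, I would use that the torsion forces reducible fibers. Torsion sections of order $n$ must meet non-identity components, and the height formula $0=\langle P,P\rangle=4-\sum {\rm contr}_v(P)$ (with $\chi=2$) forces enough fiber components. Then Shioda--Tate, ${\rm rk}\,\MW=\rho-2-\sum(m_v-1)\le 18-\sum(m_v-1)$, yields the rank bound. Alternatively, one can take the quotient by the torsion isogeny, or cite known results (Shimada's list, or the physics references in Remark \ref{remark:CY}) for the maximal rank of K3 surfaces with given torsion.

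\textbf{Main obstacle.} The key difficulty is determining the minimal $\sum(m_v-1)$ compatible with each $G$ on a K3 surface. For this I would enumerate fiber configurations satisfying the height-zero condition and the discriminant-lattice constraint $|G|^2 \mid \det$ of the trivial lattice, case by case for each $G$ in the table.
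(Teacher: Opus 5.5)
Your proposal follows essentially the same route as the paper. For (1) you restrict to a general movable smooth rational curve $C$, use the torsion injection $\MW(X/B)_{\rm tors}\subseteq \MW(Z/C)_{\rm tors}$, and read off the possible groups from Miranda--Persson according to $\chi(\mathcal O_Z)=\Lambda\cdot C$, which is $2$ for ruling fibres when $\rho(B)>1$ and $3$ for lines in $\mathbb P^2$; for (2) you use that $Z$ is a K3 surface into which both the free and the torsion parts inject, then appeal to Shimada's classification. Your explicit choice of ruling fibres and lines, rather than arbitrary very free curves, is what makes the paper's remark that $(r,s)$ is determined by $K_B\cdot C$ work, and phrasing (2) as a maximum over K3 surfaces whose torsion \emph{contains} $\MW(X/B)_{\rm tors}$ is the correct form of that step.
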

 
\begin{table}[h]
\caption{}\label{table:BsmoothTor}
\centering
\resizebox{\textwidth}{!}{%
\centering
\begin{tabular}{|c|c|c|}
\hline
 & $s$ as in $(1,s)$ (cyclic) & $(r,s),\ r>1$ (product) \\
\hline
$B,\ \rho(B)>1$
& $0,2,3,4,5,6,7,8$
& $(2,2),(2,4),(2,6),(3,3),(4,4)$ \\
\hline
$B=\mathbb{P}^2$
& $0,2,3,4,5,6,9,10$
& $(2,2),(2,4),(3,3),(3,6)$ \\
\hline
\end{tabular}%
}
\end{table}

\begin{table}[h]
\caption{}\label{table:NonZeroTor1}
\centering
\begin{tabular}{|c|c|c|c|c|c|c|c||c|c|c|c|c|}
\hline
 & \multicolumn{7}{c||}{$s$ as in $(1,s)$ (cyclic) } & \multicolumn{5}{c|}{$(r,s),\ r>1$ (product) } \\
\hline
$G_f$ 
& 2 & 3 & 4 & 5 & 6 & 7 & 8 
& (2,2) & (2,4) & (2,6) & (3,3) & (4,4) \\
\hline
$\max \operatorname{rk}{\rm MW}$ 
& 10 & 6 & 4 & 2 & 4 & 0 & 0 
& 6 & 4 & 0 & 2 & 0 \\
\hline
\end{tabular}
\end{table}

\begin{proof}
\begin{enumerate}
    \item For any such $B$ there exists a smooth  movable rational curve $C \subset B^{sm}$,   \cite{XuSRC2012}. We can choose a general $C$ such that $Z= \pi ^{-1}(C)$  is a smooth (minimal)  surface and 
$\MW(X/B)_{\rm tors}  \subseteq \MW(Z/C)_{\rm tors}  $, by Proposition  \ref{prop:MWtorinj}.  The table is  then derived  from \cite[Lemma1.1, Table 4.5]{MirandaPersson1989}; note that $(r,s)$ is determined by $K_B \cdot C$.
\item In \cite{Shimada2000}, the possible values of ${\rm rk\,MW}(X/B)$ are determined for the case in which \(Z=\pi^{-1}(C)\) is a smooth K3 surface, which is the case whenever \(\rho(B)>1\).
\end{enumerate}\end{proof}
\medskip

\subsection{$\kappa(X)<0$}~
\medskip

 Similarly, we can  obtain explicit bounds on the Mordell-Weil rank of elliptic threefolds  $X$ with 
$\kappa(X) <0 $:

\begin{proposition}\label{prop:KodNeg}
Let \( X \to B \) be an elliptic threefold as in Theorem~\ref{thm:birminmodel}, with \( \kappa(X)<0 \) and \( \Lambda \neq 0 \). Then:
\begin{enumerate}
    \item If \( \rho(B)>1 \),
   $ 
    \operatorname{rk}\,\MW(X/B) \leq 8.$
  \item  If  \( \rho(B)=1 \)  and $B$ is smooth,  $\operatorname{rk}\,\MW(X/B) \leq 18.$
\end{enumerate} 
\end{proposition}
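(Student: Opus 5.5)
The plan is to follow the proof of Theorem \ref{them:boundKodZeroBNonLDP}: produce a suitable movable curve $C\subset B^{sm}$ and apply Theorem \ref{thm:BoundsFromNoetherMov} (or Proposition \ref{prop:K3appearance1}). The new feature is that for $\kappa(X)<0$ we have $K_X\equiv\pi^*(K_B+\Lambda)$ with $K_B+\Lambda$ not pseudo-effective. The degree $\Lambda\cdot C$ should therefore be strictly smaller than $-K_B\cdot C$ on suitable rational curves, and this is what lowers the bounds from $18$ and $28$ to $8$ and $18$.

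\emph{Case (1), $\rho(B)>1$.} By Theorem \ref{thm:MMP} and since $\Lambda\neq 0$, $B$ is birational to a Mori fiber space. Because $\rho(B)>1$ and klt surface singularities are isolated, the general fiber of the induced rational fibration is a smooth rational curve $C\subset B^{sm}$. Such a $C$ is movable and base point free, with $C^2=0$ and $-K_B\cdot C=2$. We need $0<\Lambda\cdot C$.
\begin{itemize}
\item Since $\Lambda$ is effective and nonzero, it must meet the general fiber positively. If instead $\Lambda\cdot C=0$, the discriminant would be vertical, i.e.\ supported on fibers of $B\to\mathbb P^1$. Then the restricted fibration over a general $C\cong\mathbb P^1$ would be smooth, hence isotrivial with trivial $j$-map. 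In that case the restricted surface $Z$ is $C\times E$, and the Mordell--Weil rank is controlled by that product; this degenerate case needs a separate short argument.
\item We also need $\Lambda\cdot C<2$. The restricted elliptic surface $Z$ over $C\cong\mathbb P^1$ has $\omega_Z=\pi^*\mathcal O(-2+\Lambda\cdot C)$. Now $\kappa(X)<0$ forces $K_B+\Lambda$ to be non-pseudo-effective. Since $C$ moves in a covering family, we want $(K_B+\Lambda)\cdot C<0$. I would get this from Theorem \ref{thm:BDPaP}, choosing $C$ as the extremal movable class dual to $K_B+\Lambda$. In the ruled situation the fiber class is the natural candidate: if $(K_B+\Lambda)\cdot C\ge 0$ for the fiber while $K_B+\Lambda$ is negative on some other movable class, I would run the MMP for $K_B+\Lambda$ instead, so as to pick the Mori fiber structure adapted to $K_B+\Lambda$.
\end{itemize}
Granting both inequalities, $\Lambda\cdot C=1$ since $\Lambda$ is a $\mathbb Q$-divisor of integral intersection on the Cartier locus. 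Then $Z$ is a rational elliptic surface and Proposition \ref{prop:K3appearance1}(2) gives $\operatorname{rk}\MW(X/B)\le 8$.

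\emph{Case (2), $\rho(B)=1$ and $B$ smooth.} Here $B$ is a smooth surface with a Mori fiber structure to a point, so $B\simeq\mathbb P^2$ and $\Lambda\equiv aH$ with $a$ a positive integer. The condition $\kappa<0$ gives $a<3$, so $a\in\{1,2\}$. Take $C$ a general line, which is movable, with $\Lambda\cdot C=a$. If $a=2$, $Z$ is birationally K3 and Proposition \ref{prop:K3appearance1}(1) gives $\operatorname{rk}\MW(X/B)\le 18$. If $a=1$, it gives $\le 8$. Alternatively, Theorem \ref{thm:BoundsFromNoetherMov} yields $10a-2\le 18$ directly.

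\emph{Expected obstacle.} The main difficulty is in case (1): guaranteeing a single movable rational curve with $\Lambda\cdot C=1$ exactly. The Mori fiber structure must be compatible with negativity of $K_B+\Lambda$, so one has to choose the extremal contraction for the pair $(B,\Lambda)$ rather than for $B$ alone. The possibly isotrivial case $\Lambda\cdot C=0$ must also be ruled out or handled separately. If the MMP for $K_B+\Lambda$ ends on a Fano base with $\rho=1$ rather than a ruling, I would fall back on the $\mathbb P^2$-type argument, at the cost of only the weaker bound.
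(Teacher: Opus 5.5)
Your case (2) is correct and is exactly the paper's argument: $B\simeq\mathbb{P}^2$, $\Lambda\equiv aH$ with $a\in\{1,2\}$, and a general line gives a rational or K3 elliptic surface. Case (1), however, has a gap that cannot be closed, because statement (1) is false as written. You need one covering family of smooth rational curves with $-K_B\cdot C=2$ and $0<\Lambda\cdot C<2$, and the hypotheses $\kappa(X)<0$, $\rho(B)>1$ do not supply it. The case $\Lambda\cdot C=0$ that you set aside is exactly where things break.

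Let $S\to\mathbb{P}^1$ be an elliptic K3 surface with section and Mordell--Weil rank $18$, so all its fibres are of type $I_1$ or $II$. Put $\pi=\phi\times\mathrm{id}\colon X=S\times\mathbb{P}^1\to B=\mathbb{P}^1\times\mathbb{P}^1$. This example has the following properties.
\begin{itemize}
\item $X$ is smooth, and $K_X=\pi^*(K_B+\Lambda)$ with $\Lambda=p_1^*\Lambda_S\neq 0$ of degree $\chi(\mathcal{O}_S)=2$.
\item $(B,\Lambda)$ is klt, and $\rho(B)=2$.
\item $X$ is covered by the curves $\{s\}\times\mathbb{P}^1$, on which $K_X$ has degree $-2$, so $\kappa(X)=-\infty$.
\item $\MW(X/B)=E(\mathbb{C}(t)(s))=E(\mathbb{C}(t))$, because a $\mathbb{C}(t)(s)$-point is a $\mathbb{C}(t)$-rational map $\mathbb{P}^1\dashrightarrow E$, which is constant.
\end{itemize}
So the rank is $18>8$. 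Taking for $S$ Shioda's surface $y^2=x^3+t^{360}+1$ gives rank $68$.

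In this example the only $(K_B+\Lambda)$-negative contraction is $p_1$, whose fibres have $\Lambda\cdot C=0$. The other ruling has $\Lambda\cdot C=2$ and $(K_B+\Lambda)\cdot C=0$. Every movable class has $\Lambda\cdot C$ even, so no movable curve has $\Lambda\cdot C=1$.

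Running the MMP for $K_B+\Lambda$, as you propose, therefore leads straight into the isotrivial case. There, restricting to $Z\cong C\times E$ only sees the constant points $E(\mathbb{C})$ and bounds nothing. It is not ``controlled by the product'': $\MW(X/B)$ is the Mordell--Weil group of an essentially arbitrary elliptic surface over the base of the ruling.

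Your fallback, for when the $(K_B+\Lambda)$-MMP ends with $\rho=1$, also only gives $18$. An example is $B=\mathbb{F}_1$ with $\Lambda$ the pull-back of $\mathcal{O}_{\mathbb{P}^2}(2)$, where every movable curve has $\Lambda\cdot C\ge 2$.

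The paper's own proof of (1) is a single sentence. It asserts that the general fibre of the Mori fibration has $\Lambda\cdot C=1$ because ``$K_B+\Lambda<0$''. It passes over precisely the two points you flagged, and the example above shows they are not technicalities.

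What the argument genuinely proves is a conditional version. Suppose $B$ carries a covering family of smooth rational curves $C\subset B^{sm}$ with $C^2=0$ and $(K_B+\Lambda)\cdot C<0<\Lambda\cdot C$. Then $\Lambda\cdot C=1$ by integrality, $Z$ is a rational elliptic surface, and $\operatorname{rk}\MW(X/B)\le 8$ by Proposition~\ref{prop:K3appearance1}(2). A hypothesis of this kind has to be added for (1) to hold.
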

\begin{proof}  Since $\kappa(X)<0$, $K_B+ \Lambda <0$. In case (1) the general  curve $C$  of the Mori fibration is rational with $\Lambda \cdot C=1$, and $Z \to C$ is a rational elliptic surface. In case (2), $B \simeq \mathbb{P}^2$,  either $\Lambda \cdot C=1$ or $2$ and $Z \to C$ is  either a rational  or K3 elliptic surface. \end{proof}

\begin{example}\label{ex:K<0} %$X$ is the resolution of a Weierstrass model on $B$, where $B$ is a ruled surface and $\mathcal{L}^{\otimes r} \simeq {\omega^{-1}_B}, \ r >1 $.
Relevant  examples  of this type of threefolds are  in the papers \cite{Totaro2008} and \cite{PrendergastSmith2012}.
\end{example}

\section{ Bounds for the Mordell-Weil rank of elliptic 
fourfolds with  
$\kappa(X) \leq 0$} \label{sec:4folds}

 In this section, we prove bounds for elliptic fourfolds with $K_X \equiv 0$.

We work under the following hypothesis:

 \color{black}
\begin{assumption}\label{Assumption} Let $X \to B$ as in Theorem \ref{thm:birminmodel}, 
with $K_X  \equiv 0$ such that $\Lambda = - K_B \neq 0$. Let $B \dasharrow B' \to  U $ as in  Definition \ref{def:MoriFS}.
\begin{enumerate}[(a)]
\item   If  $\dim U= 1$, we assume that $B'$ has terminal singularities.\color{black}
\item  If  $\dim U= 0$, that is $\rho(B')=1$, we assume that 
$B'$ is smooth and furthermore
 that it is general in its deformation family if in addition $r(B')=1$, where 
 $r(B')$ is the Fano index of $K_B$, i.e. $-K_{B'} = r({B'}) H$ with $H$ the generator of $\Pic(B')$.

\end{enumerate}

 \end{assumption}

\begin{remark}\label{remark:OnAssumption4Folds} 
\begin{enumerate}[(a)]
\item The assumption that $B'$  has  terminal singularities is a natural generalization of the  lower dimensional case, where $B$  is smooth.  
\item If $B'$ is Fano with Gorenstein terminal singularities, then $B'$ is smoothable \cite{Namikawa1997}, so  if $B'$ is Gorenstein and general, the assumption is satisfied. 

\item There are some  results on the existence of free curves on Fano klt varieties \cite{jovinellyLehemannRiedl-DeJong2026freecurvesfundamentalgroups}.
\item There exists a partial classification of  factorial Fano threefolds $B'$ with 
   terminal singularities in \cite{KuznetsovProKhorov2023}, see also \cite{bayer2025mukai} and \cite{kuznetsov2023onenodal}.  Recall that $\mathbb{Q}$-factorial Gorenstein is factorial.
  \end{enumerate}
  \end{remark}

 We prove the following:
 \begin{theorem}\label{thm-4foldgen} 
 Under the hypothesis as in Assumption \ref{Assumption},
      \[\operatorname{rk} \MW(X/B)  \leq  38. \]
      In addition, if $B' \to U$,  is a  Mori fiber space with  $\dim U >0$, 
      \begin{enumerate}[(i)]
          \item $\operatorname{rk} \MW(X/B)  \leq  28 $  when the general fiber  of the Mori fibration is $\mathbb{P}^2$,
          \item   and otherwise,
$\operatorname{rk} \MW(X/B)  \leq  18. $
      \end{enumerate}

 \end{theorem}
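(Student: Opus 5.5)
The plan is to reduce everything to the surface bound of Theorems \ref{thm:BoundsFromNoetherMov} and \ref{thm:BoundsFromNoetherPic1}, with $\Lambda=-K_B$. Since $\MW(X/B)$ is a birational invariant of the fibration, we may replace $B$ by the Mori fiber space $B'$ of Assumption \ref{Assumption} and $X$ by the corresponding elliptic fourfold over it. The relation $K_X\equiv\pi^*(K_{B'}+\Lambda)$ then still holds with $\Lambda=-K_{B'}$. The task is to find, for each type of $B'\to U$, a smooth curve $C\subset B'^{sm}$, not contained in $\Sigma_{X/B'}$, for which $10(-K_{B'}\cdot C)+2g(C)-2$ is small.

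\textbf{Case $\dim U>0$.} Here the general fiber $F$ of $B'\to U$ is smooth: a del Pezzo surface when $\dim U=1$, using that $B'$ is terminal and Theorem \ref{thm:MMP}(2), or $\mathbb{P}^1$ when $\dim U=2$. I take $C$ to be a general free rational curve of minimal anticanonical degree inside a general fiber $F$. Such a curve is movable in $B'$ and lies in $B'^{sm}$, since terminal singularities of a threefold are isolated. Because $N_{F/B'}$ is trivial, $-K_{B'}\cdot C=-K_F\cdot C$.
\begin{itemize}
\item If $F=\mathbb{P}^1$, or $F$ is a del Pezzo surface carrying a conic bundle (every del Pezzo surface other than $\mathbb{P}^2$), then a general fiber conic has $-K_{B'}\cdot C=2$. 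Proposition \ref{prop:K3appearance1}(1) then makes $Z$ birationally a K3 surface and gives the bound $18$.
\item If $F=\mathbb{P}^2$, a general line has $-K\cdot C=3$ and $g=0$, so $Z$ is smooth for general $C$ and Theorem \ref{thm:BoundsFromNoetherMov} gives $30-2=28$.
\end{itemize}
The positivity $b_{S_a}\cdot C\ge0$, with equality exactly for torsion, comes from Corollary \ref{cor:bsaNonNegMovANDPic1}. Smoothness of $Z=\pi^{-1}(C)$ for a general member of a base-point-free family should follow from Bertini applied to the pullback family on $X$.

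\textbf{Case $\rho(B')=1$.} Now $B'$ is a smooth Fano threefold, and Theorem \ref{thm:BoundsFromNoetherPic1} applies to any smooth curve $C$ with $-K_{B'}\cdot C>0$, movable or not, provided $Z$ is smooth or K3. Since smoothness of $Z$ is the delicate point, the most economical choice is a curve with $-K\cdot C=2$ making $Z$ K3, or else a low-degree rational curve. I would proceed by Fano index.
\begin{itemize}
\item For $\mathbb{P}^3$ ($r=4$), lines give $-K\cdot C=4$ and the bound $40-2=38$.
\item For the quadric ($r=3$), lines give $-K\cdot C=3$ and the bound $28$.
\item For index $2$ (del Pezzo threefolds), lines give $-K\cdot C=2$, so $Z$ is K3 and the bound is $18$.
\item For index $1$ (the general member of its family, by Assumption \ref{Assumption}), the general prime Fano threefold carries a one-dimensional family of lines with $-K\cdot C=1$. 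This makes $Z$ a rational elliptic surface and gives $8$; conics with $-K\cdot C=2$ would give $18$.
\end{itemize}
Here Corollary \ref{cor:MWNSSingInjTotal} supplies the needed positivity. Since $C$ need not be movable, I would choose $C$ general within its family so that it avoids $\Sigma_{X/B'}$ and the torsion-coincidence locus. If it cannot avoid them, I would fall back on conics or twisted cubics. The maximum over all cases is $38$, attained by $\mathbb{P}^3$.

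\textbf{Main obstacle.} The hardest step is the index-one case. There one must guarantee, via generality in moduli, the existence of lines or conics not contained in the discriminant, with $Z$ either smooth or birationally K3 or rational. The difficulty is that these families are not covering, so the smooth-$Z$ Bertini argument fails. My first attempt would be to check, using $\Lambda\cdot C\in\{1,2\}$, that $Z$ has $12$ or $24$ singular fibers counted with multiplicity. This identifies the minimal resolution $\widehat Z$ as a rational or K3 surface via Lemma \ref{lemma:bsaZ} and Notation \ref{remark_notation}, and does not require $Z$ itself to be smooth.
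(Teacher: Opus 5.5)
Your treatment of $\dim U>0$ and of Fano index $3,4$ is essentially the paper's. For del Pezzo fibrations the paper restricts to the threefold $\pi^{-1}(F)$ and quotes Theorem \ref{them:boundKodZeroBNonLDP}, which uses exactly your conic or line in $F$. The genuine gap is the index-one case with $\rho(B')=1$, which you flag but do not close.

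Lines on an index-one Fano threefold form only a one-parameter family, sweeping out a surface $R\subset B'$. Assumption \ref{Assumption} imposes generality on $B'$ only, not on $X$ or on $\Sigma_{X/B'}\in|-12K_{B'}|$, so nothing prevents $R$ from being a component of $\Sigma$. For example, on a general $V_{22}$ the lines sweep out $R\in|-2K_{B'}|$. If $s$ is an equation of $R$, take Weierstrass data $f=-3u^2$ and $g=2u^3+sg_1$, with $u\in H^0(-2K_{B'})$ and $g_1\in H^0(-4K_{B'})$. Then $\Delta=27\,s\,g_1(4u^3+sg_1)$, so every line lies in $\Sigma$. The hypothesis $C\not\subset\Sigma_{X/B}$ of Lemma \ref{lemma:bsaZ} then fails for every line, and neither your bound $8$ nor any other bound can be extracted from lines. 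Your suggested remedy, counting $12$ or $24$ singular fibres to identify $\widehat Z$, already presupposes $C\not\subset\Sigma$, so it does not address this.

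What is missing is, for every index-one deformation type (including the three with $-K_{B'}$ not very ample), a \emph{dominant} family of smooth curves with controlled $-K_{B'}\cdot C$ and genus. A general member of such a family avoids $\Sigma$, $\pi(X_{\mathrm{sing}})$ and the torsion-coincidence locus, and has $Z$ smooth or birationally K3. ``Conics or twisted cubics'' names candidates but establishes neither dominance nor the required properties of $Z$ in each case. The paper supplies this type by type:
\begin{itemize}
\item when $-K_{B'}$ is very ample and $4\le(-K_{B'})^3\le 18$, Lehmann--Tanimoto's dominant families of very free rational curves of anticanonical degree $4$; this is where generality in moduli enters, and it is the source of the $38$;
\item the covering two-dimensional family of conics on $V_{22}$;
\item on the sextic double solid and the double quadric, preimages of lines, i.e.\ curves of genus $2$, resp.\ $1$, with $-K\cdot C=2$, giving $22$ and $20$;
\item the dominating conics on the special Gushel--Mukai threefold.
\end{itemize}

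Separately, your use of lines on all five del Pezzo threefolds is sound. It yields $18$ where the paper, using conics for $(-K_{B'})^3=8,16$, records only $38$ unconditionally. But it rests on the family of lines dominating $B'$ in every case, which you should state and justify. For the quartic double solid, for instance, the bitangents to the branch quartic form a congruence of order $12$. A non-dominant family would run into the same $\Sigma$-obstruction as above.
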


The proof proceeds by considering separately the three possible dimensions of  $U$ of the Mori fiber space $B' \to U$. In each case we construct a curve $C$ satisfying the hypotheses of Theorem \ref{thm:BoundsFromNoetherPic1}.

\begin{corollary}\label{cor:CY4bounds}
In particular, the bounds of Theorem \ref{thm-4foldgen}  apply to elliptic Calabi-Yau fourfolds with the additional Assumption \ref{Assumption}.
\end{corollary}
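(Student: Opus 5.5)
The corollary follows directly from Theorem \ref{thm-4foldgen}; the only work is to check that an elliptic Calabi--Yau fourfold satisfying Assumption \ref{Assumption} meets the hypotheses of that theorem.

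First, let $\tilde X\to \tilde B$ be an elliptic Calabi--Yau fourfold with a (rational) section, so $\omega_{\tilde X}\simeq\mathcal O_{\tilde X}$ and in particular $\kappa(\tilde X)=0$. By Theorem \ref{thm:birminmodel}(3),(4) there is a birationally equivalent minimal model $\pi:X\to B$ with $X$ having $\mathbb Q$-factorial terminal singularities and $K_X\equiv\pi^*(K_B+\Lambda)$. Since $\kappa(X)=0$ and $X$ is minimal, $K_X\equiv 0$, hence $\Lambda\equiv -K_B$.

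Second, we need $\Lambda\neq 0$. If $\Lambda=0$, then $K_B\equiv 0$ and the fibration has no discriminant in codimension one. This case is excluded either by the hypothesis built into Assumption \ref{Assumption}, which requires $\Lambda=-K_B\neq 0$, or by the Calabi--Yau condition $h^{i}(\mathcal O_X)=0$ for $0<i<4$, which rules out the isotrivial situation. Note that passing to a minimal model preserves $h^i(\mathcal O)$ because terminal singularities are rational.

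Third, by the birational invariance of Mordell--Weil groups (the proposition following Definition \ref{def:shioda}), $\MW(\tilde X/\tilde B)\simeq\MW(X/B)$. With Assumption \ref{Assumption} imposed on the resulting $B\dashrightarrow B'\to U$ (this Mori fiber space exists by Theorem \ref{thm:birminmodel}(8) and Theorem \ref{thm:MMP}), Theorem \ref{thm-4foldgen} gives $\operatorname{rk}\MW\le 38$, together with the refined bounds (i) and (ii).

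The only potential subtlety is making sure that Assumption \ref{Assumption} is formulated on the minimal model produced by Theorem \ref{thm:birminmodel}, not on the original $\tilde B$. This is just a matter of reading the hypothesis in that sense.
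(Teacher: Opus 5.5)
Your argument is correct and is essentially the paper's, which states the corollary without a separate proof: $\omega_X\simeq\mathcal O_X$ gives $K_X\equiv 0$, and Assumption \ref{Assumption} itself supplies $\Lambda=-K_B\neq 0$ and the conditions on $B'$, so Theorem \ref{thm-4foldgen} applies directly.

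You should, however, drop your alternative claim that the vanishing $h^i(\mathcal O_X)=0$ for $0<i<4$ excludes $\Lambda=0$, because it is false. Take an elliptic curve $E$ and a general $(2,2,2,2)$ hypersurface $\tilde B\subset(\mathbb P^1)^4$ that is anti-invariant under $\iota\colon t_i\mapsto -t_i$. Then $\iota$ has $16$ isolated fixed points on $\tilde B$ and acts by $-1$ on $H^{3,0}(\tilde B)$. The quotient $X=(E\times\tilde B)/\langle(-1,\iota)\rangle\to \tilde B/\iota$ is then a terminal Calabi--Yau fourfold in the sense of Definition \ref{def-CY}. It has sections coming from $E[2]$, and its discriminant has codimension $3$, so $\Lambda=0$. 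This is why the paper keeps $\Lambda\neq 0$ as an explicit hypothesis.
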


\begin{remark}\label{remark:BDS4}
In addition to Calabi-Yau fourfolds in the sense of Definition \ref{def-CY}, the bounds of Theorem \ref{thm-4foldgen} apply also to other fourfolds, for example fourfolds  of the form $(E \times T)/G$, where one of the following holds (such examples exist, as in Example  \ref{ex:k0}):
\begin{enumerate}
\item 
$E$ is an elliptic curve, $T$ an elliptic Calabi-Yau threefold  with section and $G$ a finite group acting faithfully on $E$ as a subgroup of translations and the action on $T$ preserves the elliptic fibration  with  section and leaves $H^{3,0}$ invariant;
\item 
$T$ is an abelian surface and $E$ is an elliptic K3 surface  with section and $G$ a finite group acting faithfully on $T$ as a subgroup of translations and the action on $E$ leaves invariant the elliptic fibration with  section and  $H^{2,0}$.
\end{enumerate}

\end{remark}

\smallskip

\begin{remark} 
The fourfolds described in Remark  \ref{remark:BDS4} have the same isotrivial behaviour as  Example \ref{ex:k0}  and Remark \ref{remark:producttype}, and in \cite{Birkar_2024}. In case (1) 
$
X = (E \times T)/G,
$
where $T \to B_0$ is an elliptic Calabi–Yau threefold. The base of the induced elliptic fibration 
$
B = (E/G) \times B_0
$
 is uniruled but not rationally connected. The MRC fibration is given by the projection
$B \dashrightarrow E/G,$  whenever $B_0$ is uniruled,
and the induced map
$X \dashrightarrow E/G$
has fibers isomorphic to $T$. Hence this fibration is isotrivial.

A similar description applies to case (2), where the abelian surface factor plays the role of the non-rationally-connected base. 

In both cases, the non-rationally-connected directions contribute only isotrivially, while the variation relevant for the Mordell–Weil group is governed by the geometry of the elliptic fibration on the remaining factor.

This explains why, while these families are not birationally bounded,  the Mordell–Weil rank remains uniformly bounded as in Theorem \ref{thm-4foldgen}.
\end{remark}

The same reasoning, with Assumption \ref{Assumption}, provides bounds on the Mordell-Weil rank of elliptic fourfolds with
$\kappa(X) <0$, as in Example \ref{ex:K<0}.

\vskip 0.2in

We reduce the proof of  Theorem \ref{thm-4foldgen}, to the existence of suitable curves $C \subset B$ satisfying the hypotheses of  either Theorems \ref{main2}, \ref{thm:BoundsFromNoetherMov} or \ref{thm:BoundsFromNoetherPic1}:

\begin{theorem} \label{them-existencecurve} Let $B'$  as in Theorem \ref{thm:MMP}, with $B'$ smooth. Then there exists a smooth curve $C \subset B$ such that 
	$\operatorname{rk} \MW(X/B) \leq 10 (-K_{B'} \cdot C ) +2 g(C) -2.$

\end{theorem}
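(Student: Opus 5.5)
We argue by cases on $\dim U$ for the Mori fiber space $f\colon B'\to U$ of Theorem \ref{thm:MMP}, and in each case produce a smooth curve $C$ to which Theorem \ref{thm:BoundsFromNoetherMov} or Theorem \ref{thm:BoundsFromNoetherPic1} applies, with $\Lambda=-K_B$. The first reduction is to $B'$. The birational map $\mu\colon B\dashrightarrow B'$ is a composition of divisorial contractions and flips of a $K$-negative MMP. We replace $X\to B$ by a birationally equivalent fibration over $B'$: a resolution of the Weierstrass model over $B'$, or the model provided by Theorem \ref{thm:birminmodel}(3) over $B'$. This leaves $\MW$ unchanged, and since $K_X\equiv 0$ the discriminant satisfies $\Lambda'=-K_{B'}$. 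Alternatively, we take $C$ inside the open set where $\mu$ is an isomorphism; the curves below are general members of covering families, and the exceptional loci have codimension at least $2$. In either form it suffices to work on $B'$.

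\textbf{Case $\dim U\ge 1$.} Here $B'$ is terminal and the general fiber $F$ of $f$ is a smooth Fano of dimension $\le 2$. A terminal threefold has isolated singularities, so $F$ lies in $B'^{sm}$. If $\dim U=2$, take $C=F\cong\mathbb P^1$. It is movable with trivial normal bundle, so $-K_{B'}\cdot C=2$, and Corollary \ref{cor:MWForZK3} (via Corollary \ref{cor:bsaNonNegMovANDPic1}) gives the bound $18$. If $\dim U=1$, $F$ is a smooth del Pezzo surface. When $F\cong\mathbb P^2$, take $C$ a general line in a general fiber. Then $N_{C/B'}=\mathcal O(1)\oplus\mathcal O$, so $-K_{B'}\cdot C=3$, $C$ is movable, and Theorem \ref{thm:BoundsFromNoetherMov} gives $10\cdot 3-2=28$. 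Otherwise $F$ carries a conic bundle, or is a quadric containing a ruling. Take $C$ a conic (or ruling) fiber of $F$ with $N_{C/F}=\mathcal O$; then $N_{C/B'}=\mathcal O\oplus\mathcal O$, $-K_{B'}\cdot C=2$, and Corollary \ref{cor:MWForZK3} gives $18$. For degree $\le 2$ del Pezzo surfaces we instead use the conic fibration coming from a blown-down model. In all cases the smoothness of $Z$ is handled by Bertini, since $C$ is a general member of a base-point-free covering family.

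\textbf{Case $\dim U=0$.} Now $B'$ is a smooth Fano threefold with $\rho=1$, and we use Theorem \ref{thm:BoundsFromNoetherPic1}. Here movability is not needed, only $C\subset B'^{sm}$ with $-K_{B'}\cdot C>0$, and this holds automatically. Smoothness of $Z=\pi^{-1}(C)$ again follows from taking $C$ general in a covering family. The bound is $10(-K_{B'}\cdot C)+2g(C)-2$, so we want to minimize this quantity over the Iskovskikh--Mori--Mukai classification, by Fano index $r$:
\begin{itemize}
\item[$r=4$] $B'=\mathbb P^3$. A line gives $-K\cdot C=4$, hence the bound $38$.
\item[$r=3$] The quadric. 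A line gives $3$, hence $28$.
\item[$r=2$] Del Pezzo threefolds. Lines give $2$, hence $18$.
\item[$r=1$] Prime Fanos. Lines exist on general members by Shokurov's theorem, the genericity being granted by Assumption \ref{Assumption}(b). They give $-K\cdot C=1$, so $Z$ is a rational elliptic surface and the bound is $8$, by the second part of Theorem \ref{thm:BoundsFromNoetherPic1}.
\end{itemize}
One point must be checked here: lines on $\mathbb P^3$ and the quadric move in a family of dimension $\ge 2$ covering $B'$, so a general line avoids the codimension-$\ge 3$ nonequidimensional locus of Theorem \ref{thm:birminmodel}(5), and $Z$ is smooth. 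On prime Fanos the lines sweep out only a surface. We therefore need $Z$ to be birationally a rational elliptic surface rather than smooth, which is exactly what Theorem \ref{thm:BoundsFromNoetherPic1} allows, or else a conic with $-K\cdot C=2$ and bound $18$. Taking the maximum over all cases gives $38$.

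\textbf{Main obstacle.} The hardest step is guaranteeing, in the $\rho=1$ case, low-degree curves $C$ that avoid both the discriminant-component issues and the bad loci of $\pi$, so that $Z$ is smooth or birationally rational/K3. This matters especially for index $1$, where lines are not movable. If the line argument fails, the fallback is to use conics, which do cover $B'$ on general prime Fanos. They have $-K\cdot C=2$, so $Z$ is a K3 surface and the bound is $18\le 38$. The final bound therefore comes only from the $\mathbb P^3$ case.
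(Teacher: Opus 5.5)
Your overall structure matches the paper's. You split by $\dim U$. When $\dim U\ge 1$ you use fibres of the Mori fibre space, or lines and conic-bundle fibres in the del Pezzo fibres. When $\rho(B')=1$ you run through the Iskovskikh classification by index and feed a low-degree curve into Theorem~\ref{thm:BoundsFromNoetherPic1}. The cases $\dim U\ge 1$ and $r(B')=3,4$ are fine.

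\textbf{The gap: lines in index one.} Lines on a prime Fano threefold are not movable; they sweep out only a surface. Your remark that movability is not needed when $\rho=1$ holds only for the sign condition of Proposition~\ref{prop-positiveintPic1}. You still need two things for the line $\ell$:
\begin{enumerate}
\item $\ell\not\subset\Sigma_{X/B}$, which Lemma~\ref{lemma:bsaZ} and Theorem~\ref{thm:InjMW} require;
\item $Z=\pi^{-1}(\ell)$ is a genuine elliptic surface birational to a rational elliptic surface, which is the hypothesis of Theorem~\ref{thm:BoundsFromNoetherPic1} via Proposition~\ref{prop:K3appearance1}.
\end{enumerate}
For a non-covering family you cannot obtain these by taking the curve general. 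Assumption~\ref{Assumption}(b) makes $B'$ general in moduli, but says nothing about the discriminant. So nothing excludes the surface swept out by lines from being a component of $\Sigma_{X/B}$; then every line lies in the discriminant and the argument fails.

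There is a related problem with your reduction to $B'$. If $B\neq B'$, then $\rho(B)>1$, and a non-movable curve on $B'$ lifts to a non-movable curve on $B$. Neither structure theorem applies to such a curve; for covering families this issue disappears. So the bound $8$ in index one is not established. This is why the paper avoids lines in index one, and uses lines on the quartic double solid only under a generality assumption on $(B,\Sigma)$.

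\textbf{The conic fallback.} Falling back to conics is the right kind of repair, but as written it is an assertion; it has to be supplied family by family. For the bare existence statement, a general complete-intersection curve in $B'$ already satisfies the hypotheses of Theorem~\ref{thm:BoundsFromNoetherPic1} by Bertini. The case analysis is what produces the explicit value $38$. The paper uses the following curves in index one:
\begin{enumerate}
\item very free rational quartics (Lehmann--Tanimoto) when $-K_B$ is very ample and $4\le(-K_B)^3\le 18$, giving $38$;
\item conics when $(-K_B)^3=22$ and for the special Gushel--Mukai threefold, giving $18$;
\item preimages of lines for the sextic double solid and the double quadric. These are curves of genus $2$ and $1$ with anticanonical degree $2$, giving $22$ and $20$.
\end{enumerate}
In those two hyperelliptic cases, smooth rational conics arise only as components of preimages of totally tangent conics, and it is not obvious that they cover $B'$. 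If you can cite that conics cover every general prime Fano threefold, you would get $18$ in index one, improving the paper's $38$. Without that citation the step is missing.

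The same applies to your claim that lines give $18$ on every del Pezzo threefold. That needs lines to cover $B'$ also when $(-K_B)^3=8$ or $16$. The paper instead uses conics there and gets $38$. Either justify the covering or use the paper's curves.
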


\begin{proof} 
Theorem \ref{thm:MMP} implies that in cases (1) and (2) there exists a curve $C$ smooth, movable and rational  such that  $Z$ is smooth, and  Theorems \ref{main2}  and \ref{thm:BoundsFromNoetherMov} both apply. 
Otherwise, in case (3), $\operatorname{rk} \Pic (B')=1$ and we show that there exists a smooth curve $C$  such that either $Z=\pi^{-1}(C)$ is smooth  or $Z$ is birationally a K3 surface and  Theorem \ref{thm:BoundsFromNoetherPic1}  applies.

We prove the existence of such a  curve $C$ in the  sections that follow.
\end{proof}

\begin{corollary} \label{coroll-ConB} Let $B$, $B'$ and $\mu: B \to B'$ be  as in Assumption \ref{Assumption}. Then there exists a smooth curve $C$ in $B$ such that 
    \[
	\operatorname{rk} \MW(X/B) \leq 10 (-K_B \cdot C ) +2 g(C) -2.
	\]
\end{corollary}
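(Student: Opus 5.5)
The plan is to transport the curve produced on $B'$ by Theorem \ref{them-existencecurve} back to $B$ through the birational map $\mu: B \dasharrow B'$, and then check that $B$ sees the same numerical invariants and the same positivity. By Assumption \ref{Assumption}, $B'$ is either terminal with $\dim U=1$, or smooth with $\rho(B')=1$, and possibly general in moduli. In the cases $\dim U>0$ the curve is a general rational curve in a general fiber of $B'\to U$. In the case $\rho(B')=1$ it is the curve $C'$ constructed in the following sections; it lies in $(B')^{sm}$, and $Z'=\pi'^{-1}(C')$ is smooth or birationally K3. Either way, Theorem \ref{them-existencecurve} gives $C'\subset B'$ with $\operatorname{rk}\MW \le 10(-K_{B'}\cdot C')+2g(C')-2$.

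First I will note that $\MW(X/B)$ is a birational invariant: birationally equivalent fibrations have the same Mordell--Weil group. I will then use that $\mu$ is a composition of divisorial contractions and flips of the MMP. Its indeterminacy locus, and the locus where $\mu^{-1}$ is not an isomorphism, have codimension $\ge 2$ in $B'$.

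Next I will choose $C'$ general in its family, so that it avoids this exceptional locus. When $C'$ moves in a covering family this is immediate, because a general member misses any subset of codimension $\ge 2$ (for surfaces this is a finite set of points). Let $C$ be the strict transform of $C'$ in $B$. Then $\mu$ is an isomorphism near $C$, so $g(C)=g(C')$, $C\subset B^{sm}$, and the normal bundles agree. It follows that $-K_B\cdot C=-K_{B'}\cdot C'$ and that $C$ is still movable and smooth. Since $\Lambda=-K_B$, the surface $\pi^{-1}(C)$ is isomorphic to the corresponding surface over $B'$ after passing to the birational model of $X$ over $B'$. The inequality then follows from Theorem \ref{thm:BoundsFromNoetherMov} applied on $B$ when $C$ is movable, or directly from the bound on $B'$ together with birational invariance of $\MW$.

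The main obstacle is the case $\rho(B')=1$ when $C'$ is not movable, for instance a special curve on a general Fano threefold of index one. In that case Theorem \ref{thm:BoundsFromNoetherPic1} is applied on $B'$, not on $B$, since $\rho(B)$ may exceed one. So rather than working on $B$, I would take the bound on $B'$ for the birational model $X'\to B'$ given by Theorem \ref{thm:birminmodel}, and then define $C$ as the strict transform of $C'$, chosen away from the codimension $\ge 2$ locus. To make this choice I would first try deforming $C'$ within its (positive-dimensional) family, or using the automorphisms and genericity of $B'$.
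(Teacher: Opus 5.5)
Your strategy is to choose the curve on $B'$ away from the codimension-$\ge 2$ locus over which $\mu^{-1}$ is not an isomorphism, and then take its strict transform. This is the same idea the paper uses when $\dim U>0$. As written, however, the proposal has two holes.

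First, the principle you invoke is false as stated: a general member of a covering family need not miss a given codimension-$2$ subset. For example, the lines of $\mathbb{P}^3$ meeting a fixed line $\ell_0$ form an irreducible covering family with no base points, and every member meets $\ell_0$. What you need is avoidance in three specific situations, where it does hold: general fibres of $B'\to U$; base-point-free families inside a general fibre; and free rational curves, whose general deformations miss any prescribed subset of codimension $\ge 2$ (Koll\'ar, \emph{Rational curves on algebraic varieties}, II.3.7).

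Second, you leave the case $\rho(B')=1$ at ``I would first try\dots''. So the argument is incomplete exactly where you locate the difficulty.

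The paper splits on $\rho(B)$ rather than $\rho(B')$. If $\rho(B)=1$, then $-K_B\equiv\Lambda$ is a nonzero effective divisor on a $\mathbb{Q}$-factorial variety of Picard number one, hence ample. So $B$ is already a Mori fibre space over a point, $B=B'$, and there is nothing to transport; you should add this observation. If $\rho(B)>1$, the paper asserts that cases (1)--(2) of Theorem \ref{thm:MMP} occur.

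That assertion is not automatic. Take a general quartic threefold blown up along a line: it has $\rho=2$, and $-K_B$ is the pull-back of $\mathcal{O}_{\mathbb{P}^2}(1)$ under projection from the line, so it is the base of Weierstrass fibrations with $K_X\equiv 0$. Yet by birational superrigidity of the quartic, its only Mori fibre space model is the quartic itself. So your worry is legitimate, and your transport argument is the right repair.

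The repair closes without the non-movable difficulty you anticipate. For the inequality in the corollary, each curve used in Section \ref{sec:4folds} when $\rho(B')=1$ can be taken general in a dominating family:
\begin{enumerate}
\item lines on $\mathbb{P}^3$, on $Q$, and on the del Pezzo threefolds with $(-K_B)^3\ge 24$;
\item conics on the quartic double solid, the double Veronese cone, the threefold with $(-K_B)^3=22$, and the special Gushel--Mukai threefold;
\item very free quartics in the very ample index-one cases.
\end{enumerate}
All of these are free. For the two double covers, the preimage of a general line of $\mathbb{P}^3$ or $Q$ misses the image of any curve. The non-dominating lines on the quartic double solid are only needed for the sharper bound $18$, not for this corollary.

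Hence $C'$ can be chosen off the exceptional locus of $\mu^{-1}$, and also off the image of $\pi(X_{\mathrm{sing}})$, so that $Z$ is smooth. Its strict transform $C$ has the same genus and anticanonical degree, and is still a general member of a covering family on $B$. Theorem \ref{thm:BoundsFromNoetherMov}, applied directly on $B$, then gives the bound. This route also spares you from needing a good elliptic model over $B'$.
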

\begin{proof} By the structure Theorem \ref{thm:MMP}  if $\rho (B) >1$, either cases (1) and (2) apply and the curve $C$ can be taken in the open set where $\mu$ is an isomorphism. If $\rho(B)=1$, then $B= B'$. \end{proof}
We then  prove {Theorem \ref{thm-4foldgen} 
 by combining the optimal bounds obtained via Theorem \ref{them-existencecurve}  for all possible $B$ under the stated assumptions.}

\begin{proposition}\label{prop:4foldsMoriFib1}
 In case (1)  of  Theorem \ref{thm:MMP}
	\[
	\operatorname{rk} \MW(X/B) \leq 18.
	\]
\end{proposition}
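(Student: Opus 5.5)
The plan is to produce a smooth movable rational curve $C\subset B$ with $-K_B\cdot C=2$, so that $Z=\pi^{-1}(C)$ is birationally a K3 surface. Proposition \ref{prop:K3appearance1}(1) then gives the bound $18$.

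In case (1) of Theorem \ref{thm:MMP}, the Mori fiber space $f:B'\to U$ has $\dim U=\dim B'-1=2$. Its general fiber $F$ is a smooth rational curve, and every fiber component is rational. By Assumption \ref{Assumption}, $B'$ has terminal singularities. Terminal singularities of the threefold $B'$ are isolated, so a general fiber $F$ lies in $(B')^{sm}$. Moreover $F$ lies in the open set where $\mu:B\dashrightarrow B'$ is an isomorphism, since the exceptional loci of $\mu$ and $\mu^{-1}$ have codimension $\geq 1$ and the fibers $F$ cover $B'$. We then take $C$ to be the strict transform of a general $F$ in $B$. It is smooth, rational, movable (the fibers of $f$ form a family dominating $B$), contained in $B^{sm}$, and not contained in $\Sigma_{X/B}$, since it moves in a dominating family.

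Next we compute $-K_B\cdot C$. Since $F$ is a general fiber of a morphism to a surface, its normal bundle is trivial, $N_{F/B'}\simeq \mathcal{O}_F^{\oplus 2}$. Adjunction then gives $-K_{B'}\cdot F=2+\deg N_{F/B'}=2$. Because $C$ avoids the locus where $\mu$ is not an isomorphism, $-K_B\cdot C=-K_{B'}\cdot F=2$, and therefore $\Lambda\cdot C=2>0$. By Corollary \ref{cor:bsaNonNegMovANDPic1} (the movable rational case, with $\Lambda=-K_B$), we get $b_{S_a}\cdot C\ge 0$ for all $S_a$, with equality exactly for torsion sections. Corollary \ref{cor:MWForZK3}, or equivalently Proposition \ref{prop:K3appearance1}(1), then shows that $Z$ is birationally a K3 surface and that $\operatorname{rk}\MW(X/B)\le\operatorname{rk}\MW(Z/C)\le 18$. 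As an alternative route, Theorem \ref{main2} with $d=0$ gives the same bound $18$ directly, since the fibers of $f$ restricted over an open subset of $U$ form a freely movable family.

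The main obstacle is checking that the general fiber really avoids all bad loci. It must avoid the indeterminacy and exceptional sets of $\mu$, which may contain divisors contracted by $\mu$. It must also avoid $\operatorname{Sing}(B)$, which is fine because $B$ is klt and its singular locus has codimension $\geq 2$, while a general member of a dominating family of curves in a threefold misses a codimension-2 set. Both conditions hold because the family of fibers is parametrized by the surface $U$, so general members avoid any codimension-$2$ subset of $B'$. For divisors, a general fiber meets an exceptional divisor only in finitely many points, and we handle these by choosing $C$ in the open set over which $\mu^{-1}$ is an isomorphism, as in Corollary \ref{coroll-ConB}.
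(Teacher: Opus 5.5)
Your proposal is correct and is essentially the paper's proof: the paper also takes $C$ to be a general fiber of the conic-bundle Mori fibration, gets $-K_B\cdot C=2$ by adjunction, and concludes via Theorem~\ref{thm:BoundsFromNoetherMov} (Noether's formula on the smooth surface $Z$, which here amounts to your appeal to the K3 bound of Proposition~\ref{prop:K3appearance1}(1)), though it is less explicit than you about transporting $C$ from $B'$ to $B$. The one step to phrase more precisely is why a general fiber lies where $\mu^{-1}$ is an isomorphism: the locus where $\mu^{-1}$ fails to be an isomorphism has codimension $\ge 2$ in $B'$ because the MMP extracts no divisors, so fibers over a general point of the surface $U$ miss it (``codimension $\ge 1$'' would not suffice; likewise normality of $B'$ already puts $F$ in $(B')^{sm}$, so you do not need the terminality you attribute to Assumption~\ref{Assumption}, whose part (a) only concerns $\dim U=1$).
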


\begin{proof}	Since $B'$ has at worst klt singularities, the general fiber of $f: B' \to U $ is in fact a smooth rational movable curve with $Z$ smooth and
	if  $\Lambda \neq 0$,  Theorem \ref{thm:BoundsFromNoetherMov}  and adjunction imply: $\operatorname{rk} \MW(X/B) \leq - 10 (C  \cdot K_B)  -2= 18.$ \end{proof}

\begin{proposition}\label{prop:4foldsMoriFib2} In case (2)  of  Theorem \ref{thm:MMP},
$\operatorname{rk} \MW(X/B) \leq 28$ if the  fiber is $\mathbb{P}^2$ and 
$\operatorname{rk}\MW(X/B) \leq 18$  otherwise.
\end{proposition}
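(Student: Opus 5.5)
In case (2) of Theorem \ref{thm:MMP} we have a Mori fiber space $f\colon B'\to U$ with $\dim U=1$. Its general fiber $F$ is a smooth del Pezzo surface, and by Assumption \ref{Assumption}(a) $B'$ has terminal singularities. The plan is to find inside a general fiber $F$ a smooth rational curve $C$ that is movable in $B'$ and has the smallest possible anticanonical degree. We then apply Theorem \ref{thm:BoundsFromNoetherMov} with $g(C)=0$ and $\Lambda=-K_B$, which gives
\[
\operatorname{rk}\MW(X/B)\le 10(-K_B\cdot C)-2 .
\]
By Corollary \ref{coroll-ConB}, the curve can be taken in the open set where $\mu\colon B\dashrightarrow B'$ is an isomorphism, so computing in $B'$ is enough.

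First, the degree computation. Since $F$ is a general fiber, its normal bundle is trivial, so $K_{B'}|_F=K_F$ by adjunction. Hence for any curve $C\subset F$ we have $-K_{B'}\cdot C=-K_F\cdot C$. We split into cases.

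\begin{enumerate}
\item If $F\simeq\mathbb P^2$, take $C$ a general line. Then $-K_F\cdot C=3$, and $C$ is movable: it moves in $|\mathcal O(1)|$ on $F$, and $F$ itself moves with $u\in U$. This gives the bound $28$.
\item If $F\not\simeq\mathbb P^2$, then $F$ is either $\mathbb P^1\times\mathbb P^1$ or a blow-up of $\mathbb P^2$ in $1\le k\le 8$ general points. In all these cases $F$ carries a conic bundle structure. On $\mathbb P^1\times\mathbb P^1$ use a ruling; on the blow-ups use the pencil of lines through one of the blown-up points. A general fiber $C$ of this conic bundle is a smooth rational curve with $C^2=0$ on $F$, so by adjunction on $F$ we get $-K_F\cdot C=2$. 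This gives the bound $18$.
\end{enumerate}

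Next we check the hypotheses of Theorem \ref{thm:BoundsFromNoetherMov}.

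\begin{itemize}
\item $C\subset B^{sm}$: the singular locus of $B'$ is a finite set of points, since terminal threefold singularities are isolated. So the general fiber $F$ avoids it, and with it the non-isomorphism locus of $\mu$.
\item $C$ is movable, and $C\cdot\Lambda=-K_B\cdot C>0$.
\item $C\not\subset\Sigma_{X/B}$ by generality.
\item $Z=\pi^{-1}(C)$ is smooth or birationally a K3 surface. In the second case $-K_B\cdot C=2$, so Proposition \ref{prop:K3appearance1} applies directly. In the first case, which is $F\simeq\mathbb P^2$, we use a general line and argue by Bertini that $Z$ is smooth, exactly as in the threefold case of Theorem \ref{boundCY3}.
\end{itemize}

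The main obstacle I expect is this Bertini step. The family of lines is a covering family of $B'$, but it is not a linear system on $X$. I would handle it by considering the incidence correspondence of the lines, as in the proof of Theorem \ref{main2}, and pulling back along the smooth map $T\times\mathbb P^1\to B$. This makes $Z_t$ smooth for general $t$, because the singular locus of $X$ has codimension $\ge 3$ and its image in $B$ has codimension $\ge 2$. Alternatively, Theorem \ref{main2} gives the same numbers directly, since $d=-K_B\cdot C-2$ is $1$ or $0$ and $18+10d$ equals $28$ or $18$. So if smoothness of $Z$ is delicate, I would invoke Theorem \ref{main2} instead.
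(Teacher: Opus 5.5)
Your proof is correct and rests on the same curves as the paper: a general line when $F\simeq\mathbb{P}^2$, and otherwise a general fiber of a conic-bundle structure on $F$, with $-K_B\cdot C=-K_F\cdot C$ equal to $3$ or $2$. Where you differ is in how the argument is organized.

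The paper restricts to the elliptic threefold $\pi^{-1}(F)\to F$, asserts that it is smooth of Kodaira dimension zero, and then quotes Theorem \ref{them:boundKodZeroBNonLDP}. You instead apply Theorem \ref{thm:BoundsFromNoetherMov} directly to $X\to B$, treating $C\subset F$ as a movable curve in $B$.

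Your route is more self-contained in two ways:
\begin{enumerate}
\item It does not need the comparison $\MW(X/B)\hookrightarrow\MW(\pi^{-1}(F)/F)$ modulo torsion, which the paper leaves implicit.
\item It only needs $\pi^{-1}(C)$ to be smooth for general $C$, not the whole threefold $\pi^{-1}(F)$. Your incidence-correspondence argument supplies this, because $\pi(X_{\rm sing})$ has codimension $\ge 2$ in $B$.
\end{enumerate}
The paper's route is shorter, since it reuses the threefold theorem wholesale.

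Two small corrections:
\begin{enumerate}
\item A general fiber $F$ need not avoid the non-isomorphism locus of $\mu$, since that locus can contain curves dominating $U$. What you actually need still holds: lines, respectively the conic pencil, are base-point free on $F$. So a general $C\subset F$ avoids the finitely many points where $F$ meets that locus.
\item Your fallback via Theorem \ref{main2} assumes $\omega_X\cong\mathcal{O}_X$, but Assumption \ref{Assumption} only gives $K_X\equiv 0$. The incidence/Bertini argument is therefore the one to rely on.
\end{enumerate}
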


\begin{proof}  By Assumption \ref{Assumption}, the  general fiber  $F$ of $f: B' \to U $ is a smooth del Pezzo surface, as terminal singularities are isolated, and $\pi^{-1}(F) $ is a smooth threefold with    Kodaira dimension zero and conclude as in  Theorem \ref{them:boundKodZeroBNonLDP}. \end{proof}

 \begin{theorem}\label{thm:TorsionCY4} In case (1) and (2)  of  Theorem \ref{thm:MMP}, the torsion groups  ${\MW(X/B)}_{\rm tors}$ are  those listed  in  the tables of Theorem \ref{thm:TorsionCY3}.
 \end{theorem}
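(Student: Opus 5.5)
The plan is to reduce to the threefold statement, Theorem \ref{thm:TorsionCY3}, by restricting the fourfold fibration to a general fiber of the Mori fibration. In cases (1) and (2) of Theorem \ref{thm:MMP}, $B$ is birational to a Mori fiber space $B' \to U$ whose general fiber $F$ is either a smooth rational curve or a smooth del Pezzo surface. By Assumption \ref{Assumption}, $B'$ has terminal singularities, so these are isolated and a general fiber lies in the smooth locus. We also work in the open set where $\mu: B \dashrightarrow B'$ is an isomorphism, as in Corollary \ref{coroll-ConB}. The torsion subgroup will then be controlled by injecting it into the torsion of an elliptic surface over a smooth movable rational curve $C$ with $-K_B\cdot C$ small.

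\emph{Case (1).} Here the general fiber $C$ of $f$ is itself a smooth movable rational curve with $-K_B\cdot C = 2$ and trivial normal bundle. Proposition \ref{prop:MWtorinj} gives $\MW(X/B)_{\rm tors}\subseteq \MW(Z/C)_{\rm tors}$, where $Z=\pi^{-1}(C)$ is (birationally) a K3 elliptic surface, since $12\Lambda\cdot C=24$. The admissible groups are then those of \cite[Lemma 1.1, Table 4.5]{MirandaPersson1989} for $\deg = 2$, which is exactly the row ``$\rho(B)>1$'' of Table \ref{table:BsmoothTor}.

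\emph{Case (2).} Let $F$ be a general del Pezzo fiber. Its restriction $X_F=\pi^{-1}(F)\to F$ is an elliptic threefold with section and $K_{X_F}\equiv 0$ by adjunction, since $F$ has trivial normal bundle and $\Lambda|_F=-K_F$. Its base $F$ is smooth, either $\mathbb{P}^2$ or of Picard rank $>1$. We first use the analogue of Proposition \ref{prop:MWtorinj} for the movable family of fibers $F$: the locus in $B$ where two distinct torsion sections coincide is a proper closed subset, so a general $F$ is not contained in it. This gives $\MW(X/B)_{\rm tors}\subseteq \MW(X_F/F)_{\rm tors}$. We then apply Theorem \ref{thm:TorsionCY3}(1) to $X_F\to F$. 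Equivalently, we can take directly a general movable rational curve $C\subset F$, namely a ruling if $\rho(F)>1$ or a line if $F=\mathbb{P}^2$. This gives $-K_B\cdot C=-K_F\cdot C\in\{2,3\}$, and Miranda--Persson again yields the corresponding row of Table \ref{table:BsmoothTor}. Since $C$ moves in a family covering $B$, Proposition \ref{prop:MWtorinj} applies directly on $X$.

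The main obstacle is checking that $X_F\to F$ (or $Z\to C$) satisfies the hypotheses of Theorem \ref{thm:TorsionCY3}: $\Lambda\neq 0$, a minimal model with the right singularities, and $\kappa=0$. We only need $\Lambda|_F\neq 0$ and the existence of a section, so we plan to bypass the full threefold theory and argue only through the surface $Z$. Smoothness of $Z$ for general $C$ follows as in the proof of Theorem \ref{main2}: the singular locus of $X$ has codimension at least $3$, so a general member of a free covering family avoids $\pi(X_{sing})$. The bound $12(-K_B\cdot C)$ on the Euler number then places $Z$ in the Miranda--Persson classification, with rational elliptic surfaces excluded since $\Lambda\cdot C\geq 2$.
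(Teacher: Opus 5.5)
Your proposal is correct and is essentially the paper's argument: you restrict to a general smooth movable rational curve $C$ (the general Mori fiber in case (1), a general line or ruling in a general del Pezzo fiber in case (2)), inject $\MW(X/B)_{\rm tors}$ via Proposition \ref{prop:MWtorinj}, and read off the admissible groups from Miranda--Persson according to $-K_B\cdot C\in\{2,3\}$, which is what the paper means by arguing as in Propositions \ref{prop:4foldsMoriFib1}, \ref{prop:4foldsMoriFib2} and concluding as in Theorem \ref{thm:TorsionCY3}. One minor slip: Assumption \ref{Assumption} gives terminality only when $\dim U=1$, but in case (1) the general fiber still avoids $\mathrm{Sing}(B')$, because that locus has dimension at most one while $\dim U=2$.
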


\begin{proof}  We argue as in Propositions \ref{prop:4foldsMoriFib1} and \ref{prop:4foldsMoriFib2} and conclude as in Theorem \ref{thm:TorsionCY3}. \end{proof}

\vskip 0.2in

In case (3) of  Theorem \ref{thm:MMP} $B$ is a Fano threefold, which we assume to be smooth, as in Assumption \ref{Assumption}  with $\rho(B)%:=\operatorname{rk} \operatorname{Pic} (B)
=1$.
 As in the proof of Corollary \ref{coroll-ConB}, we can set $B = B'$.

Let $r(B)$ be  the index of $K_B$, i.e. $-K_B = r(B) H$ with $H$ the generator of $\Pic(B)$.
Recall the following, by now classic, result:

\begin{theorem}\cite[Table 6.5]{Iskovskih1978FanoII} Let $B$ be a smooth Fano threefold  with  $\rho  (B)=1$ and index $r(B)$. 
\begin{enumerate}[(i)]
    \item  If $r(B)=3,4$, then $B= \mathbb{P}^3$ or  a quadric $Q \subset \mathbb{P}^4$.
    \item  If  $r(B)=2$, then  there are $5$ deformation types and $B$ is called a del Pezzo threefold.
    \item  If $r(B)=1$, then $H^3=-K_B^3 $ and there are 10 deformation types.\\ Furthermore $4 \leq H^3 \leq 18$ unless:
    \begin{itemize}
    \item  $H^3 =2$ and $B$ is a double cover of $\mathbb{P}^3$ with branch locus a divisor of degree $6$%hypersurface of degree $6$ in $\mathbb{P}(1^4,3)$
    ,
    \item  $H^3 = 22$ and $B$ is the zero locus of $(\wedge^2\mathcal{U}^{v})^{\oplus 3}$ on $Gr(3,7)$.
    \end{itemize}
    
    In addition $-K_B$ is very ample unless
    \begin{enumerate}
  \item $H^3 =2$ and $B$ is a double cover of $\mathbb{P}^3$ with branch locus a divisor of degree $6$,
\item $H^3=4$ and $B$ is the double cover of a quadric in $\mathbb{P}^4$ branched along an octic,
\item $H^3=10$ and $B$ is the double cover of a section of Plücker embedding of $Gr(2,5)$.
    \end{enumerate}
        
\end{enumerate}
    
\end{theorem}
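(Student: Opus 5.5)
This statement is a citation of the classical classification of smooth Fano threefolds of Picard rank one, and I would not reprove it from scratch. The plan is to assemble it from the standard ingredients, in the following order.

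\textbf{Step 1: index at least three.} First bound the index. Since $-K_B = r(B)H$ with $H$ ample, the Kobayashi--Ochiai theorem gives $r(B)\le \dim B+1 = 4$. Moreover $r(B)=4$ forces $B\simeq\mathbb{P}^3$, and $r(B)=3$ forces $B$ to be a quadric $Q\subset\mathbb{P}^4$. This proves (i).

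\textbf{Step 2: index two.} For $r(B)=2$ I would use Fujita's classification of del Pezzo manifolds. One computes $d=H^3$ and the $\Delta$-genus of $(B,H)$, which is $1$. Riemann--Roch together with Kodaira vanishing gives $h^0(H)=d+2$. A general element of $|H|$ is a smooth del Pezzo surface, by Shokurov's ladder argument. Fujita's analysis then yields $1\le d\le 5$, one deformation type for each $d$ when $\rho=1$. These are the sextic hypersurface in $\mathbb{P}(1,1,1,2,3)$, the quartic double solid, the cubic, the intersection of two quadrics, and a linear section of $Gr(2,5)$. This gives the five types in (ii).

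\textbf{Step 3: index one.} This is the main obstacle. Here $H=-K_B$ and one writes $H^3=2g-2$, so $h^0(-K_B)=g+2$.
\begin{enumerate}
\item[(a)] By Shokurov's theorem a general member of $|-K_B|$ is a smooth K3 surface. Hence $|-K_B|$ is base point free, and the genus is controlled by the K3 geometry.
\item[(b)] The hyperelliptic cases are exactly those where $|-K_B|$ fails to be very ample. Following Iskovskikh, these are the cases $g=2$ (sextic double solid, $H^3=2$) and $g=3$ (double quadric branched in an octic, $H^3=4$).
\item[(c)] In the remaining cases $|-K_B|$ is very ample. I would then show $B$ is cut out by quadrics except when it is trigonal, which happens only for $g=3$ (quartic) and $g=4$.
\item[(d)] The genuinely hard part is bounding $g$ and identifying each value. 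For this I would follow Iskovskikh's double projection from a line, using Shokurov's theorem that lines exist on $B$. This yields $g\le 12$ and $g\ne 11$, so $H^3\in\{2,4,\dots,18,22\}$, which is exactly the range in (iii).
\item[(e)] To identify the types I would use Mukai's vector bundle method. For $g=6,\dots,10$ and $g=12$, B--N general K3 sections carry rigid stable bundles, which realise $B$ as a linear section of a homogeneous variety. For $g=12$ this is the zero locus of $(\wedge^2\mathcal{U}^\vee)^{\oplus3}$ on $Gr(3,7)$.
\item[(f)] The exception $H^3=10$ with non-very-ample anticanonical class comes from the Gushel--Mukai double cover of a linear section of $Gr(2,5)$. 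It appears naturally in the $g=6$ analysis.
\end{enumerate}

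\textbf{Step 4: counting deformation types.} Finally, count the families: one per value of $H^3$ among $2,4,\dots,18,22$, which gives $10$ deformation types for $r(B)=1$. The double covers at $H^3=4$ and $H^3=10$ lie in the same families as the quartic and the Gushel--Mukai variety, so they do not add to this count.

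I expect Step 3(d) to be the only part needing real new input. That input is the existence of lines and the double projection, and in practice I would cite it directly, as the paper does via \cite[Table 6.5]{Iskovskih1978FanoII}.
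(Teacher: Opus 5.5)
The paper gives no proof here beyond citing Iskovskikh's Table 6.5. Your route (Kobayashi--Ochiai for (i), Fujita for (ii), Iskovskikh--Shokurov or Mukai for (iii), then cite) is that citation made explicit, and most of it is accurate: $\Delta=1$ and $h^0(H)=d+2$ in index two, $H^3\in\{2,4,\dots,18,22\}$ and ten families in index one, and the remark that the double quadric and the special Gushel--Mukai threefold add no families.

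\textbf{Step 3(f) cannot be carried out, because item (c) of the statement is false.} Let $\pi\colon B\to Y_5=Gr(2,5)\cap\mathbb{P}^6$ be the double cover branched along $R\in|\mathcal{O}_{Y_5}(2)|$, and let $q$ be the quadric cutting out $R$.
\begin{enumerate}
\item We have $\pi_*\mathcal{O}_B=\mathcal{O}_{Y_5}\oplus\mathcal{O}_{Y_5}(-1)$ and $-K_B=\pi^*\mathcal{O}_{Y_5}(1)$.
\item Hence $H^0(B,-K_B)=\pi^*H^0(Y_5,\mathcal{O}_{Y_5}(1))\oplus\mathbb{C}w$, which has dimension $8=g+2$. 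Here $w$ is the extra section, with $w^2=q$.
\item The section $w$ separates the two sheets and is a local coordinate transverse to the ramification divisor.
\item So $|-K_B|$ embeds $B$ in $\mathbb{P}^7$ as the intersection of the cone over $Y_5$ with the quadric $\{w^2=q\}$, which misses the vertex. Thus $-K_B$ is very ample.
\end{enumerate}
This is exactly the description that puts special and ordinary $V_{10}$ in one family, which you yourself use in Step 4.

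Compare the double quadric. There $R\in|\mathcal{O}_Q(4)|$ gives $\pi_*\mathcal{O}_B=\mathcal{O}_Q\oplus\mathcal{O}_Q(-2)$, so there is no extra section and the anticanonical map is the $2\!:\!1$ cover.

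Your own Step 3(b) already contradicts 3(f). There you say $-K_B$ fails to be very ample exactly in the hyperelliptic cases, which for $\rho=1$ are $g=2$ and the $g=3$ double quadric. The right move is to flag (c) as an error in the statement, not to assert that it ``appears naturally in the $g=6$ analysis''.

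\textbf{A smaller gap is in Step 3(a).} Base point freeness of $|-K_B|$ does not follow from smoothness of a general $S\in|-K_B|$ alone. Restriction to $S$ is surjective since $h^1(\mathcal{O}_B)=0$. But Saint-Donat still allows base points when $-K_B|_S=kE+\Gamma$, with $E$ elliptic and $E\cdot(-K_B|_S)=1$. Ruling this out for $\rho(B)=1$ is a separate step in Iskovskikh's argument; the exceptional varieties all have $\rho\ge 2$.
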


{We now analyze the cases in turn.}\\

\vskip 0.4in
\subsection{Case (i):  
$\rho (B)=1$, $r(B)=3,4$.}~
\vskip 0.1in

\begin{proposition} Let $B$ be a smooth Fano threefold  with $\rho(B)=1$, $r(B)=3,4$. Then 
\[\operatorname{rk} \MW(X/B)  \leq   38.\]
\end{proposition}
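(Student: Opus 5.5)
The plan is to exhibit, in each of the two cases, an explicit smooth curve $C \subset B$ and apply Theorem \ref{thm:BoundsFromNoetherPic1}. Its hypotheses are available here. Since $\rho(B)=1$ and $B$ is a smooth Fano threefold, $B^{sm}=B$, we have $\Lambda=-K_B$ ample, and so $C\cdot\Lambda>0$ for every curve $C$. The work is therefore to pick $C$ of genus zero minimizing $-K_B\cdot C$, with $C$ general enough that $Z=\pi^{-1}(C)$ is smooth. The bound we aim for is then $10(-K_B\cdot C)-2$.

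For $r(B)=4$, i.e.\ $B=\mathbb{P}^3$, I would take $C$ a general line. Then $-K_B\cdot C=4$, $g(C)=0$, and the bound is $10\cdot 4-2=38$. For $r(B)=3$, i.e.\ $B=Q\subset\mathbb{P}^4$ a smooth quadric, I would again take $C$ a general line on $Q$. Then $-K_Q\cdot C=3H\cdot C=3$, and the bound is $28\le 38$. In both cases lines form a covering family without base points, and the normal bundles are $\mathcal{O}(1)^{\oplus 2}$ on $\mathbb{P}^3$ and $\mathcal{O}\oplus\mathcal{O}(1)$ on $Q$. So $C$ is freely movable, and one may equally invoke Theorem \ref{main2} with $d=2$, respectively $d=1$, which gives the same numbers $18+10d$.

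The remaining point is to ensure that $Z=\pi^{-1}(C)$ is a smooth surface, or at least that the height comparison of Lemma \ref{lemma:bsaZ} applies. $X$ has terminal singularities, so its singular locus has dimension at most $1$ in the fourfold, and its image in $B$ is a finite union of points and curves. The lines through a fixed point, or meeting a fixed curve, form a proper subfamily. Hence a general line avoids $\pi(X_{sing})$. Over the smooth part of $X$, the restriction of $\pi$ to a general member of a base-point-free covering family gives a smooth $Z$, by a Bertini/generic smoothness argument applied to the incidence variety as in the proof of Theorem \ref{main2}. A general line is also not contained in $\Sigma_{X/B}$. Theorem \ref{thm:BoundsFromNoetherPic1} together with Proposition \ref{prop:Noether} then gives the bound.

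The main obstacle is this smoothness and genericity step in dimension four, namely that a general line avoids the image of the singular locus of $X$ and of the non-flat locus. For the latter, Theorem \ref{thm:birminmodel}(5) gives codimension at least $3$ in $B$, so it is a finite set of points, which a general line misses. If direct smoothness is delicate, I would fall back on the function-field argument of Theorem \ref{main2}, which only requires $X_{sing}$ to have codimension at least $3$ and the family of lines to be smooth and freely movable. This yields $\operatorname{rk}\MW(X/B)\le 38$ in all cases.
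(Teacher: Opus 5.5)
Your proposal is correct and follows the paper's proof: take a general line $\ell$ with $-K_B\cdot\ell=r(B)\in\{3,4\}$, arrange that $Z=\pi^{-1}(\ell)$ is smooth, and apply the Noether-type bound $10(-K_B\cdot\ell)-2\le 38$. The paper simply asserts that such a line with smooth $Z$ exists. Your genericity argument (the general line avoids $\pi(X_{sing})$, the finitely many non-flat points, and $\Sigma_{X/B}$) supplies the justification the paper leaves implicit.
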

\begin{proof}Each  such Fano contains a  movable line $\ell$, that is a smooth rational curve with $H \cdot 
\ell=1$, for $H$ the ample generator of $\operatorname{Pic}(B)$, such that $Z= \pi^{-1}(\ell)$ is smooth. Then $-K_B \cdot \ell= r(B)$, and
    \[\operatorname{rk} \MW(X/B)  \leq  10(-K_B \cdot \ell ) -2\leq 38.\] \end{proof}

\subsection{Case (ii):  
$\operatorname{rk} \Pic (B)=1$, $r(B)=2$.}~
\vskip 0.2in
 In the following Propositions \ref{prop:delpezzogeq32} and \ref{prop:delpezzoleq32} we recall properties of a del Pezzo threefold $B$ (that is $B$  Fano, such that $r(B)=2$ and $\rho(B)=1$) up to deformation type:
\begin{proposition}\label{prop:delpezzogeq32} \cite[Proposition 3.21]{Prokhorov_2025}
   Let $B$ be a smooth del Pezzo threefold %(that is  $r(B)=2$ and $\rho(B)=1$) 
   such that $ (-K_B)^3 \geq 32$. Then 
   \begin{enumerate}
\item there exists a line $\ell \subset B$ through any point in $B$,
\item $F_1(B)$,  the Hilbert scheme parametrizing lines on $B$, is reduced, nonsingular, and each of its components is two-dimensional,
\item a  line $\ell$  corresponding to the general point of a component of $F_1(B)$ has normal bundle $N_{\ell / B}=\mathcal O_\ell \oplus \mathcal O_\ell $.
   \end{enumerate}
\end{proposition}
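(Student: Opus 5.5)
Since $-K_B=2H$, we have $(-K_B)^3=8H^3$, so the hypothesis $(-K_B)^3\ge 32$ means $H^3\in\{4,5\}$. By the classification of del Pezzo threefolds, $B$ is then either a smooth complete intersection of two quadrics in $\PP^5$ ($H^3=4$) or a smooth codimension-$3$ linear section of $Gr(2,5)\subset\PP^9$ ($H^3=5$). In both cases $H$ is very ample, so a line is a curve $\ell$ with $H\cdot\ell=1$, and $-K_B\cdot\ell=2$. By adjunction $\deg N_{\ell/B}=0$, so $N_{\ell/B}\simeq\mathcal O(a)\oplus\mathcal O(-a)$ with $a\ge0$. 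I would prove (1)–(3) in the order (1), bound on $a$, (3), (2), working in these two explicit models.

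For (1), fix $p\in B$ and embed $B\subset\PP^{H^3+1}$ by $|H|$. A line through $p$ in a quadric $Q$ is the same as a direction lying in $T_pQ$ and in the tangent cone of $Q$ at $p$. For $H^3=4$, lines through $p$ in $\PP^5$ form a $\PP^4$; the two tangent hyperplanes cut this to a $\PP^2$, and the two quadric conditions cut out the intersection of two conics in that $\PP^2$. This intersection is nonempty (four points counted with multiplicity, or a curve). For $H^3=5$, lines in $Gr(2,5)$ through $p$ sweep the cone over a Segre $\PP^1\times\PP^2\subset\PP^5$ of degree $3$. The three hyperplanes cutting out $B$ pass through $p$, so they impose three linear conditions on this $\PP^5$ of directions, leaving a nonempty set of degree $3$. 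This proves (1). Moreover, for general $p$ only finitely many lines pass through $p$. Combined with the universal family $\mathcal U\to F_1(B)$ being a $\PP^1$-bundle dominating $B$, this shows that every component of $F_1(B)$ dominating $B$ has dimension exactly $2$.

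To bound $a$, I would use the sequence
\[0\to N_{\ell/B}\to N_{\ell/\PP^{n}}\simeq\mathcal O(1)^{n-1}\to N_{B/\PP^n}|_\ell\to0.\]
Since $N_{\ell/B}$ is a subsheaf of $\mathcal O(1)^{n-1}$, we get $a\le1$. So $N_{\ell/B}$ is either $\mathcal O\oplus\mathcal O$, with $h^0=2$ and $h^1=0$, or $\mathcal O(1)\oplus\mathcal O(-1)$, with $h^0=3$ and $h^1=1$. Since $\chi(N_{\ell/B})=2$, every component of $F_1(B)$ has dimension $\ge2$ at each point, and $T_{[\ell]}F_1(B)=H^0(N_{\ell/B})$. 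Hence, once every component is known to be $2$-dimensional and generically reduced, (3) follows immediately: at a general point of a component the tangent space is $2$-dimensional, which forces $N_{\ell/B}=\mathcal O\oplus\mathcal O$.

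The main obstacle is (2): smoothness and reducedness of $F_1(B)$ everywhere, and the absence of a non-dominating $2$-dimensional family (for instance, infinitely many lines through special points). I would first try to rule out $(1,-1)$ lines directly. Such a line has a $2$-plane $\Pi\supset\ell$ tangent to $B$ along $\ell$, i.e.\ $\Pi\subset T_xB$ for all $x\in\ell$. In the quadric model I would show this forces the pencil of quadrics to contain a member singular along a locus meeting $\ell$ in the wrong way, contradicting smoothness of $B$ (equivalently, of the associated genus-$2$ curve). If this direct route is too fiddly, I would instead identify $F_1(B)$ explicitly. For $H^3=4$ it is the Jacobian of the genus-$2$ curve attached to the pencil (classical, Narasimhan–Ramanan / Reid). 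For $H^3=5$ it is $\PP^2$ (Iskovskikh, Furushima–Nakayama). In both cases the identification as a scheme gives smoothness and reducedness of dimension $2$. This is the route taken in the cited reference \cite{Prokhorov_2025}, and I would defer to it for the scheme-theoretic identification.
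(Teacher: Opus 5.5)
There is a genuine error: a wrong cohomology computation. It invalidates your argument for (3) and also hides the fact that (2) is easy. On $\PP^1$ one has $h^1(\mathcal{O}(-1))=h^0(\mathcal{O}(-1))=0$, so $\mathcal{O}(1)\oplus\mathcal{O}(-1)$ has $h^0=2$ and $h^1=0$, exactly like $\mathcal{O}\oplus\mathcal{O}$. The values $3$ and $1$ that you quote belong to $\mathcal{O}(2)\oplus\mathcal{O}(-2)$.

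Consequently $T_{[\ell]}F_1(B)=H^0(N_{\ell/B})$ is $2$-dimensional at every point. Its dimension at a general point of a component therefore says nothing about the splitting type, and your deduction of (3) fails.

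On the other hand, your subsheaf argument gives $a\le 1$ for every line. Hence $H^1(N_{\ell/B})=0$ for every $[\ell]\in F_1(B)$, so the Hilbert scheme is unobstructed, i.e.\ smooth (in particular reduced) of dimension $h^0(N_{\ell/B})=2$ everywhere. That is all of (2). There is no need to identify $F_1(B)$ with a Jacobian or with $\PP^2$; the paper itself only quotes the reference, but no deferral is needed.

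Your proposed direct route, excluding lines of type $(1,-1)$, aims at a false statement: such lines exist on every such $B$. On $V_5$ they form a conic in $F_1\simeq\PP^2$. In general, the evaluation map from the universal line to $B$ is étale exactly along lines of type $(0,0)$, and it cannot be étale everywhere. An everywhere étale proper map from the universal line, whose components are $\PP^1$-bundles over surfaces, onto the simply connected $B$ with $\rho(B)=1$ is impossible.

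What (3) actually needs is that every component of $F_1(B)$ dominates $B$. You flag this but leave it open; your own tangent-cone computation from (1) settles it.

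If a point $p\in B$ lay on infinitely many lines, their union would be a cone with vertex $p$ over a curve in $\PP(T_pB)\simeq\PP^2$. That curve lies in the common zero locus of conics: the restrictions of the two quadrics, resp.\ of the quadrics defining $\PP^1\times\PP^2\subset\PP^5$. These conics cannot all vanish on $\PP(T_pB)$, since the projective tangent $\PP^3$ at $p$ is not contained in $B$. So this cone would be a surface in $B$ of $H$-degree at most $2$. But $\operatorname{Pic}(B)=\mathbb{Z}H$ forces every surface in $B$ to have $H$-degree $kH^3\ge 4$.

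Hence the evaluation map is quasi-finite, and every (three-dimensional) component of the universal line dominates $B$. This also supplies the justification missing from your claim that only finitely many lines pass through a general point.

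Generic smoothness in characteristic $0$ then shows that, for a general line $\ell$ of each component and general $x\in\ell$, the evaluation $H^0(N_{\ell/B})\to N_{\ell/B}(x)$ is surjective. This rules out $\mathcal{O}(1)\oplus\mathcal{O}(-1)$, whose global sections span only the $\mathcal{O}(1)$ summand, and gives (3).
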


\begin{proposition}\label{prop:delpezzoleq32}\cite[Table 1]{Prokhorov_2025} Let $B$ be a smooth del Pezzo threefold 
such that $ (-K_B)^3 <32$. Then 
\begin{enumerate}[(i)]
    \item  If $(-K_B)^3 =24$, $B \subset \mathbb{P}^4$ is a cubic,
    \item if $(-K_B)^3 =16$ and $B$ is a double cover of $\mathbb{P}^3 $ branched along a smooth quartic, 
    \item  if $(-K_B)^3 =8$ and $B$ is the double cover of the cone over the Veronese surface (or equivalently a hypersurface of degree $6$ in $\mathbb{P}(1,1,1,2,3)$.
\end{enumerate}
\end{proposition}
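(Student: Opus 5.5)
The statement is the part of Fujita's classification of del Pezzo threefolds with $\rho(B)=1$ that is recorded in \cite[Table 1]{Prokhorov_2025}. I would argue directly from the fundamental divisor rather than only quoting the table.

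\textbf{Set-up.} Write $-K_B=2H$ with $H$ the ample generator of $\Pic(B)$ and set $d:=H^3$, so that $(-K_B)^3=8d$. The hypothesis $(-K_B)^3<32$ then means $d\in\{1,2,3\}$. Riemann--Roch together with Kawamata--Viehweg vanishing gives
\[
h^0(B,H)=d+2 .
\]

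\textbf{Ladder step.} The key tool is Fujita's ladder. A general member $S\in|H|$ is a smooth surface. By adjunction $-K_S=H|_S$, so $S$ is a del Pezzo surface of degree $d$. Moreover, the restriction $H^0(B,H)\to H^0(S,H|_S)$ is surjective, because $H^1(B,\mathcal O_B)=0$. So base-point freeness and very ampleness of $|H|$ reduce to the classical statements for $|-K_S|$ on del Pezzo surfaces of degree $d$. Going down once more to a general curve in $|H|_S|$, an elliptic curve of degree $d$, reduces them further to the curve case.

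\textbf{Case $d=3$.} Here $|-K_S|$ is very ample, hence so is $|H|$. It embeds $B$ into $\mathbb P^4$ with degree $H^3=3$, so $B$ is a cubic threefold.

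\textbf{Case $d=2$.} Here $|H|$ is base point free but not very ample, as on a degree-2 del Pezzo surface. It defines a finite morphism $B\to\mathbb P^3$ of degree $H^3=2$. Writing the ramification formula $K_B=\phi^*(K_{\mathbb P^3}+\tfrac12 R)$ with $K_B=-2\phi^*\mathcal O(1)$ forces the branch divisor $R$ to have degree $4$. Smoothness of $B$ forces $R$ to be smooth.

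\textbf{Case $d=1$.} Here $|H|$ has exactly one base point, and I would compute the section ring $\bigoplus_k H^0(B,kH)$ instead. The dimensions from Riemann--Roch are $h^0(H)=3$, $h^0(2H)=7$ and $h^0(3H)=13$. These show that one needs:
\begin{itemize}
\item three generators $x_0,x_1,x_2$ in degree $1$;
\item one new generator $y$ in degree $2$, since $7-6=1$;
\item one new generator $z$ in degree $3$;
\item a single relation in degree $6$.
\end{itemize}
The relation can be put in the form $z^2=y^3+\dots$, which exhibits $B$ as a sextic in $\mathbb P(1,1,1,2,3)$. Projecting away from $z$ gives the equivalent description as a double cover of the cone over the Veronese surface, namely $\mathbb P(1,1,1,2)$.

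\textbf{Main obstacle.} I expect the hard step to be the $d=1$ ring computation. One has to show that the ring is generated in degrees $\le 3$ with a single relation in degree $6$. I would do this by restricting to the ladder $S\supset C$, where the corresponding ring computation for a degree-1 del Pezzo surface and for an elliptic curve with a degree-1 polarization is classical, and then lifting generators using the vanishing of $H^1(B,kH)$ for $k\ge 0$.
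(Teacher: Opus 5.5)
The paper gives no argument for this statement. It imports the rows $d=H^3\le 3$ of Fujita's classification from \cite[Table 1]{Prokhorov_2025}, and uses them only as input for choosing lines and conics in the corollary that follows. Your proposal takes a genuinely different, self-contained route: it essentially reconstructs Fujita's own argument for $d\le 3$ via the ladder $B\supset S\supset C$ and the hyperplane-section principle for section rings. It is correct in outline.

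What your route buys is the explicit models: the cubic in $\mathbb{P}^4$, the quartic double solid $w^2=F_4$, and the sextic $z^2=y^3+\cdots$ in $\mathbb{P}(1,1,1,2,3)$. These give the double-cover structure the paper later exploits in case (ii). What the citation buys is brevity.

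The one deep input you take on faith is the existence of a smooth member $S\in|H|$. Since you \emph{derive} base-point freeness from $S$, this cannot come from Bertini and should be attributed to Fujita.

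There is one numerical slip which, as written, contradicts your own degree-$3$ count. Riemann--Roch gives $h^0(kH)=\tfrac{d}{6}k(k+1)(k+2)+k+1$, so for $d=1$ one has $h^0(3H)=14$, not $13$. The degree-$3$ monomials in $x_0,x_1,x_2,y$ already number $10+3=13$, so the value $13$ would leave no room for $z$. With $14$ there is exactly one new generator. In degree $6$, the $64$ weighted monomials against $h^0(6H)=63$ confirm a single relation.

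For $d=3$, very ampleness of $-K_S$ alone does not pass to $B$. What lifts is projective normality, i.e.\ the presentation of the section ring, via $H^1(B,kH)=0$ for $k\ge 0$. In fact the lifting argument you reserve for $d=1$ handles all three cases at once. The ring of $(C,H|_C)$ is a plane cubic, $w^2=f_4$ in $\mathbb{P}(1,1,2)$, or a Weierstrass sextic in $\mathbb{P}(1,2,3)$. Lifting twice yields, respectively, the cubic threefold, $w^2=F_4$ in $\mathbb{P}(1,1,1,1,2)$, and the sextic in $\mathbb{P}(1,1,1,2,3)$.
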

\begin{corollary} Let   $B$ be a del Pezzo threefold, then 
\[\operatorname{rk} \MW(X/B)   \leq {38}.\]
If $\Sigma \subset B$ is general (in the relevant linear system),  $\operatorname{rk} \MW(X/B)   \leq {18}$.
\end{corollary}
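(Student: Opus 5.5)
The plan is to produce, for each del Pezzo threefold $B$ (so $-K_B=2H$, $\rho(B)=1$), a smooth curve $C\subset B$ to which Theorem \ref{thm:BoundsFromNoetherPic1} applies, and then read off the bound $10(-K_B\cdot C)+2g(C)-2$. The natural first choice is a line $\ell$, i.e.\ a smooth rational curve with $H\cdot\ell=1$. For it, $-K_B\cdot\ell=2$ and $g=0$, so Theorem \ref{thm:BoundsFromNoetherPic1} gives $\operatorname{rk}\MW(X/B)\le 18$. In this situation $Z=\pi^{-1}(\ell)$ is birationally K3, since $\Lambda\cdot\ell=2$ (Proposition \ref{prop:K3appearance1}). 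Corollary \ref{cor:bsaNonNegMovANDPic1} already gives the positivity hypothesis, because $\rho(B)=1$ and $B$ is Fano. It then remains to guarantee that $\ell$ lies outside $\Sigma_{X/B}$. Since $B$ is smooth, $\ell\subset B^{sm}$ automatically.

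When $(-K_B)^3\ge 32$, Proposition \ref{prop:delpezzogeq32} supplies such lines. Through every point there is a line, and the lines form two-dimensional families whose general member has trivial normal bundle. They are therefore movable, sweep out $B$, and a general one avoids $\Sigma_{X/B}$ and the loci where torsion sections coincide. Arguing as in Corollary \ref{cor:MWForZK3} then gives the bound $18$.

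For the three remaining types in Proposition \ref{prop:delpezzoleq32} (degrees $24$, $16$, $8$), lines still exist, since del Pezzo threefolds are covered by lines. However, their normal bundle may be $\mathcal O(1)\oplus\mathcal O(-1)$, or the relevant family may fail to be general, so I would not rely on Proposition \ref{prop:K3appearance1} giving a smooth $Z$. Instead I would take $C=\Sigma_1\cap\Sigma_2$, a complete intersection of two general members of $|H|$. By Bertini this is a smooth curve in $B$ avoiding any fixed curve, and it is not contained in $\Sigma_{X/B}$. Here $-K_B\cdot C=2H^3=2d$, where $d=H^3=(-K_B)^3/8$. Adjunction gives $2g(C)-2=(K_B+2H)\cdot H^2=0$, so $g(C)=1$.

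The bound from Theorem \ref{thm:BoundsFromNoetherPic1} is then $20d$. This gives $20$ for the degree-$8$ case, i.e.\ $d=1$. For larger $d$ I would instead use a line in a general hyperplane section $\Sigma\in|H|$. Such a $\Sigma$ is a smooth del Pezzo surface containing lines $\ell$ with $-K_B\cdot\ell=2$. Restricting the fibration to $\Sigma$ gives a threefold $\pi^{-1}(\Sigma)$ of Kodaira dimension zero type, and I would conclude as in Theorem \ref{them:boundKodZeroBNonLDP}. This is the source of the sharper bound $18$ for $\Sigma$ general.

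To guarantee the uniform bound $38$, I would fall back, when needed, on a conic or on $C$ with $-K_B\cdot C\le 4$ and $g=0$. By Theorem \ref{thm:BoundsFromNoetherPic1} such a curve gives $10\cdot 4-2=38$. The main obstacle is verifying that the chosen curve through general points has $Z$ smooth or birationally K3 in the low-degree cases, and that it avoids the discriminant. I expect this to follow from the existence of a two-dimensional family of conics covering $B$ together with Bertini-type genericity.
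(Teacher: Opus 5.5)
Your treatment of $(-K_B)^3\ge 32$ by lines matches the paper. The remaining degrees, however, have a genuine gap: nothing you write proves the bound $38$ when $(-K_B)^3=24$ or $16$.

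\begin{itemize}
\item The complete intersections $C=\Sigma_1\cap\Sigma_2$, $\Sigma_i\in|H|$, have $g(C)=1$ and $-K_B\cdot C=2H^3$. So $10(-K_B\cdot C)+2g(C)-2=20H^3$, which is $60$ and $40$ in these two degrees.
\item The rescue via a line in a general hyperplane section fails as stated. For $\Sigma\in|H|$, adjunction gives $K_{\pi^{-1}(\Sigma)}\equiv\pi^*(H|_{\Sigma})$ with $H|_\Sigma$ ample. Hence $\pi^{-1}(\Sigma)$ has Kodaira dimension $2$, not $0$, and Theorem \ref{them:boundKodZeroBNonLDP} does not apply. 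You also give no injection $\MW(X/B)\to\MW(\pi^{-1}(\Sigma)/\Sigma)$. And such a line is just a general line of $B$, which you had decided not to use.
\item The conic fallback is exactly the nontrivial input, and you leave it as an expectation. On each such $B$ you need an explicit family of smooth rational curves with $-K_B\cdot C=4$ sweeping out $B$, so that a general member is not contained in $\Sigma_{X/B}$ and $\pi^{-1}(C)$ is smooth.
\end{itemize}
The paper supplies this case by case. On the cubic threefold it uses lines (two-dimensional Fano surface, general line with trivial normal bundle, six lines through a general point), giving $18$. On the quartic double solid it uses the conics of Markushevich--Tikhomirov, giving $38$. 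On the Veronese double cone it uses the dominant very free conics of Shimizu--Tanimoto, giving $38$.

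Your reason for discarding lines is misplaced. With $\rho(B)=1$, positivity of $b_{S_a}\cdot\ell$ comes from Proposition \ref{prop-positiveintPic1}, as you yourself note via Corollary \ref{cor:bsaNonNegMovANDPic1}. It does not depend on movability or on $N_{\ell/B}$. Moreover, Theorem \ref{thm:BoundsFromNoetherPic1} accepts $Z$ birationally K3, and Proposition \ref{prop:K3appearance1} gives that from $\Lambda\cdot\ell=2$ alone. So all you need is a line $\ell\not\subset\Sigma_{X/B}$. If you justify your claim that lines sweep out $B$, a general line gives $18$ directly. (Every family of lines has dimension at least $2$, and bend-and-break forbids a one-parameter family of lines through a fixed point, since lines cannot break.)

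Note also that $\Sigma$ in the statement is this discriminant, not a hyperplane section: the paper writes $\ell\not\subset\Sigma$ and ``general $(B,\Sigma)$''. Under your reading the second assertion would be unconditional and make the first redundant. The paper uses genericity of $(B,\Sigma)$ on the quartic double solid precisely to obtain a line outside the discriminant.

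Finally, your degree-$8$ argument has a base-point problem. There $h^0(H)=3$ and $|H|$ has a base point $p_0$, so every $\Sigma_1\cap\Sigma_2$ passes through $p_0$, and Bertini cannot make $C$ avoid it. If $\Sigma_{X/B}$ passes through $p_0$, or $\pi$ is not equidimensional over it, then $\pi^{-1}(C)$ need not be smooth. Since $g(C)=1$, this surface is neither K3 nor rational, so Theorem \ref{thm:BoundsFromNoetherPic1} really does require smoothness. The paper's base-point-free family of conics avoids this, at the cost of $38$ instead of your $20$.
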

\begin{proof} If $ (-K_B)^3 \geq 32$ by Proposition \ref{prop:delpezzogeq32} there exists a line $\ell$ with $-K_B \cdot \ell =2$, $ \ell \not \subset \Sigma$. Hence $Z$  is (birationally) a K3  
 and $\operatorname{rk} \MW(X/B)   \leq {18}$, by Theorem \ref{thm:BoundsFromNoetherPic1}.

Assume now that   $(-K_B)^3 < 32$, and let us analyze the three cases of Proposition \ref{prop:delpezzoleq32}.

In case (i),
$B \subset \mathbb{P}^4$ is  any smooth cubic,  the Fano variety of lines is two dimensional, the general line has normal bundle $\mathcal{O}_{\mathbb{P}^1} \oplus\mathcal{O}_{\mathbb{P}^1}$;  by direct computation  for every point $p \in B$ there are six lines. Then there exists a line  $ \ell \not \subset \Sigma$ with $-K_B \cdot \ell =2$.  Hence %$ \pi^{-1}(\ell)=Z \to C$ is 
$Z$  is (birationally) a K3  
 and $\operatorname{rk} \MW(X/B)   \leq {18}$ , by Theorem \ref{thm:BoundsFromNoetherPic1}.

  In case (ii), 
the double cover of a conic with 4 (distinct) bitangent points is a conic $C$ (at the ramification points the local equation is $y^2=x^2$).  The family of such conics dominates $B$, it is base point free,  the general conic is smooth  (the general such conic $C$ has either normal bundle $\mathcal{O}_X (1)\oplus \mathcal{O}_X(1)$  or $\mathcal{O}_X \oplus \mathcal{O}_X(2)$ \cite[Lemma 2.2]{MarkushevichTikhomirov}).  We then get the bound: $ \operatorname{rk} \MW(X/B)  \leq 40 - 2 =38. $
 The lines in $B$ are the inverse image of bitangent lines to the quartic surface in $\mathbb{P}^3$.  The lines do not cover $B$. If $C\subset B$ is a general line, for \textit{general} $(B, \Sigma)$,
$\quad  \operatorname{rk} \MW(X/B)  \leq18$ by 
Proposition \ref{prop:K3appearance1} (and $N_{C/X}= \mathcal{O}_{\mathbb{P}^1} \oplus \mathcal{O}_{\mathbb{P}^1} $ see also  \cite[Lemma 2.1]{MarkushevichTikhomirov}).

In case (iii),  
the conics are dominant very free movable curves \cite[Proposition 3.2] {ShimizuTanimoto}. 
Then,
$\operatorname{rk} \MW(X/B)  \leq  40 -2 =38$.  \end{proof}

\vskip 0.4in

\subsection{$B$ smooth Fano threefold  with $\operatorname{rk} \Pic (B)=1$, $r(B)=1$.}~

\medskip

\subsubsection{$-K_B$ is very ample}~
\vskip 0.1in

We start with the following: 
\begin{proposition} Let $-K_B$ be very ample with  $ 4 \leq (-K_B)^3 \leq 18$. Assume in  addition that if $ 4 \leq (-K_B)^3 \leq 8$  then $B$ is {general} in moduli. Then 
\[\operatorname{rk} \MW(X/B)  \leq 38.\]
    
\end{proposition}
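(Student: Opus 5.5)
The plan is to apply Theorem \ref{thm:BoundsFromNoetherPic1} with $C$ a conic on $B$, or failing that a line. The bound $38 = 10\cdot 4 - 2$ corresponds to a smooth rational curve $C$ with $-K_B\cdot C = 4$. When $r(B)=1$ we have $-K_B = H$, so a conic (a smooth rational curve with $H\cdot C = 2$) gives $-K_B\cdot C=2$ and hence the bound $18$. A line gives $-K_B\cdot C = 1$, a rational elliptic surface, and the bound $8$. So it suffices to produce a smooth rational curve $C\subset B$ with $-K_B\cdot C\le 4$, not contained in $\Sigma_{X/B}$, such that $Z=\pi^{-1}(C)$ is smooth or birationally K3 or rational elliptic. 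Since $\rho(B)=1$ and $B$ is Fano, Corollary \ref{cor:bsaNonNegMovANDPic1} (with $\Lambda=-K_B$) gives the positivity hypothesis of Theorem \ref{thm:MWNSPic1} for any such $C$, without requiring movability.

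\emph{Step 1: existence of lines and conics.} Since $-K_B$ is very ample, $B\subset \mathbb{P}^{g+1}$ is an anticanonically embedded Fano threefold of genus $g=(-K_B)^3/2+1$, with $3\le g\le 10$. By the classical theory (Iskovskikh, Shokurov; see also Kuznetsov--Prokhorov--Shramov), such $B$ contains a one-dimensional family of lines sweeping out a surface. It also contains a two-dimensional family of conics that covers $B$. For general $B$, and for every $B$ with $g\ge 6$ (that is, $(-K_B)^3\ge 10$), the general conic is smooth with normal bundle $\mathcal{O}\oplus\mathcal{O}$ or $\mathcal{O}(1)\oplus\mathcal{O}(-1)$. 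The generality hypothesis for $4\le (-K_B)^3\le 8$ (the quartic, the complete intersections of type $(2,3)$ and $(2,2,2)$) is exactly what excludes special members where the conic family may be nonreduced or degenerate, for instance where every conic has a nonreduced Hilbert scheme.

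\emph{Step 2: choose $C$ avoiding $\Sigma$ and making $Z$ good.} The conics cover $B$, so a general conic $C$ is not contained in the discriminant divisor $\Sigma_{X/B}$. Since $X$ has terminal singularities (isolated, hence lying over finitely many points of a threefold base) and the non-flat locus of $\pi$ has codimension $\ge 3$ by Theorem \ref{thm:birminmodel}(5), a general member of a covering two-dimensional family can be chosen to avoid both. Then $Z=\pi^{-1}(C)$ is an elliptic surface over $\mathbb{P}^1$ with $12\Lambda\cdot C=24$ singular fibers counted with multiplicity, so $Z$ is birationally K3. Proposition \ref{prop:K3appearance1} and Theorem \ref{thm:BoundsFromNoetherPic1} then give $\operatorname{rk}\MW(X/B)\le 18\le 38$. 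As a fallback, if conics fail, I would use the curve $C=B\cap L$ cut out by a general codimension-$2$ linear section through a chosen point. However, that curve has positive genus, and $10(-K_B)^3/\ldots$ quickly exceeds $38$, so conics are the essential tool.

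\emph{Main obstacle.} The hard step is Step 1 for the low-degree cases $(-K_B)^3=4,6,8$. There I must justify that for general $B$ in moduli the Fano surface of conics is reduced, of expected dimension $2$, and that its general member is a smooth rational curve through a general point. My first approach would be a dimension count on the incidence variety $\{(C,B)\}$ over the moduli of such complete intersections, showing that the general fiber is smooth of dimension $2$ (deformation of conics unobstructed since $h^1(N_{C/B})=0$ for the balanced normal bundles). This is the place where the generality assumption in the statement is used.
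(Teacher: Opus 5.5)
Your route is genuinely different from the paper's and, given the classical input about conics, it is correct; it also proves more. The paper takes a dominating family of smooth \emph{very free} rational curves of anticanonical degree $4$, quoted from Lehmann--Tanimoto. That citation is where the generality hypothesis for $4\le(-K_B)^3\le 8$ is used. These curves are freely movable with $Z$ smooth, so the movable-curve bound (Theorem~\ref{thm:BoundsFromNoetherMov}) gives exactly $10\cdot 4-2=38$, without using $\rho(B)=1$. You instead use conics, of anticanonical degree $2$, together with $\rho(B)=1$ via Corollary~\ref{cor:bsaNonNegMovANDPic1}, Proposition~\ref{prop:K3appearance1} and Theorem~\ref{thm:BoundsFromNoetherPic1}. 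This is exactly the argument the paper itself runs for $(-K_B)^3=22$ and for the special Gushel--Mukai threefold, and it gives the sharper bound $\operatorname{rk}\MW(X/B)\le 18$.

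The price is the input of Step~1: a $2$-dimensional family of smooth conics passes through a general point of $B$. Cite this (Iskovskikh; Kuznetsov--Prokhorov--Shramov) rather than re-deriving it by a dimension count. Your ``main obstacle'' is also smaller than you think. The relative Hilbert scheme of conics over the (irreducible) deformation family of $B$ is proper, so a covering family on the general member specializes to a covering family on every smooth member. Degenerate conics (line pairs, double lines) only sweep out the surface of lines. So your argument does not appear to need the generality hypothesis at all, and your explanation of its role is off.

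Two small corrections to Step~2. First, the general member of a covering family is free, so $N_{C/B}\cong\mathcal O\oplus\mathcal O$; the case $\mathcal O(1)\oplus\mathcal O(-1)$ cannot be the general one. Second, $X$ is a fourfold, so its terminal singularities need not be isolated, and $\pi(X_{\rm sing})$ may be a curve in $B$. This is harmless: for a free conic the evaluation map from the universal conic to $B$ is \'etale along $C$, so a general conic misses any given curve or finite set. Then the argument in the proof of Theorem~\ref{main2} even makes $Z$ smooth, and Theorem~\ref{thm:BoundsFromNoetherMov} applies directly without the ``$24$ singular fibres'' step.
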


\begin{proof}
The results in \cite[Theorem 1 and Lemma 8.1]{Lehmann2018RationalCO, Lehmann2025CorrigendaT}  imply that there exists a dominant family of 
smooth very free rational curves $C$  of degree $4$ on $B$; it follows that there exists a smooth surface $Z$ with $Z \to C$ and $\operatorname{rk} \MW(X/B)  \leq \operatorname{rk} \MW(Z/C) \leq 40 -2$.
\end{proof}

Next we consider the cases of  $(-K_B)^3=22$ and  the three cases when $-K_B$ is not  very ample.

\begin{proposition} If $(-K_B)^3=22$, then
\[\operatorname{rk} \MW(X/B)  \leq  10(-K_B \cdot C ) -2 \leq 18.\]
\end{proposition}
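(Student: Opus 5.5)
Here $B$ is the smooth prime Fano threefold of genus $12$, $V_{22}$, so $\rho(B)=1$, $r(B)=1$ and $-K_B=H$ is very ample, embedding $B$ as a degree-$22$ threefold in $\mathbb{P}^{13}$. The plan is to produce a \emph{line} $\ell\subset B$, i.e.\ a smooth rational curve with $-K_B\cdot\ell=H\cdot\ell=1$, chosen so that Theorem \ref{thm:BoundsFromNoetherPic1} (or Proposition \ref{prop:K3appearance1}) applies. With $g(\ell)=0$ and $\Lambda=-K_B$ this gives
\[
\operatorname{rk}\MW(X/B)\le 10(-K_B\cdot\ell)-2 = 8 \le 18 .
\]
Conics ($-K_B\cdot C=2$, so $Z$ is birationally K3) serve as a fallback giving exactly $18$; this is presumably the form of the stated bound.

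The key steps, in order, are as follows.

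\begin{enumerate}[(1)]
\item Recall the classical facts on $V_{22}$ (Iskovskikh, Prokhorov, Kuznetsov–Prokhorov–Shramov). The Hilbert scheme of lines on $V_{22}$ is a curve, nonempty, and the lines sweep out a surface $R\in|2H|$ (or $|3H|$ in special cases), not all of $B$. The Hilbert scheme of conics is $\mathbb{P}^2$, and conics do cover $B$.
\item Check the positivity hypothesis. Since $\rho(B)=1$ and $\Lambda=-K_B$ is ample, $\Lambda\cdot\ell=1>0$. Proposition \ref{prop-positiveintPic1} (equivalently Corollary \ref{cor:bsaNonNegMovANDPic1}, Fano case) then gives $b_{S_a}\cdot\ell\ge 0$, with equality exactly for torsion sections. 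Movability of $\ell$ is \emph{not} needed here, which is precisely why the Picard-rank-one version of the theorem is used.
\item Ensure $\ell\not\subset\Sigma_{X/B}$. Lines form a one-parameter family sweeping a surface $R$, and $\Sigma\in|{-12K_B}|$ is a divisor. If $R\ne\Sigma$ componentwise, a general line is not contained in $\Sigma$.
\item Apply the surface bound. With $\Lambda\cdot\ell=1$, the surface $Z=\pi^{-1}(\ell)$ has $12$ singular fibres counted with multiplicity. Hence $Z$ is birationally a rational elliptic surface, and Proposition \ref{prop:K3appearance1}(2) gives $\operatorname{rk}\MW\le 8$.
\end{enumerate}

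The main obstacle is step (3): the line surface $R$ could be a component of the discriminant, in which case every line lies in $\Sigma$. In that case I would switch to conics. These form a $2$-dimensional family covering $B$, so a general conic $C$ is smooth, lies in $B^{sm}=B$, and is not contained in $\Sigma$. It also has $-K_B\cdot C=2$ and $g=0$, so $Z$ is birationally K3, and Proposition \ref{prop:K3appearance1}(1), with positivity again from Proposition \ref{prop-positiveintPic1}, gives
\[
\operatorname{rk}\MW(X/B)\le 10\cdot 2-2=18 .
\]
Because conics dominate $B$, this fallback always works. That justifies the uniform statement $\le 10(-K_B\cdot C)-2\le 18$, with $C$ a line or a general conic.
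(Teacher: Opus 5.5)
Your argument is correct, and its unconditional part is the paper's proof. A general member of the two-dimensional family of smooth conics covering $B$ is not contained in $\Sigma_{X/B}$ and has $-K_B\cdot C=2$, so $Z$ is birationally K3, and Proposition \ref{prop:K3appearance1} together with Theorem \ref{thm:BoundsFromNoetherPic1} gives the bound $18$. Your preliminary line step is only a conditional refinement. The paper records the one-dimensional family of lines but does not use it, presumably because the surface they sweep may be a component of $\Sigma_{X/B}$. For lines confined to that surface you would also have to check that the restricted fibration is a genuine rational elliptic surface, so, as in the paper, the stated bound rests on the conics.
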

\begin{proof}There exists a $1$-dimensional family of lines 
and a $2$-dimensional family of  smooth conics  \cite[Lemma 3.2]{ArrondoFaenzi2006}. Through any point $p \in B$ there exists at least a conic  $C$. Since $-K_B \cdot C=2$, Proposition \ref{prop:K3appearance1} and Theorem \ref{thm:BoundsFromNoetherPic1} apply.
\end{proof}

\vskip 0.3in
\subsubsection{$-K_B$ is  not very ample}~
\vskip 0.1in

 Then $B$ is one of the   cases treated in the following three propositions:

\begin{proposition} Let $B$ be a hypersurface of degree $6$ in $\mathbb{P}(1^4,3)$ and $(-K_B)^3=2$, equivalently $B$ is the double cover of $\mathbb{P}^3$ branched over a surface of degree $6$.  Then 
   \[\operatorname{rk} \MW(X/B)  \leq 20+2 = 22 \,.\] 
\end{proposition}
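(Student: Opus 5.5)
The plan is to apply Theorem \ref{thm:BoundsFromNoetherPic1} to a curve $C$ of anticanonical degree $2$ and genus $2$, which gives exactly $10\cdot 2+2\cdot 2-2=22$. Let $\phi:B\to\mathbb{P}^3$ be the double cover branched along a sextic surface $R$, and let $H=\phi^*\mathcal{O}_{\mathbb{P}^3}(1)$. Then $-K_B=H$, $\rho(B)=1$ and $H^3=2$. Let $L\subset\mathbb{P}^3$ be a general line and set $C=\phi^{-1}(L)$. Since $L$ meets $R$ transversally in $6$ distinct points, $C\to L$ is a double cover branched in $6$ points. Hence $C$ is a smooth irreducible curve of genus $2$ (Riemann--Hurwitz), and
\[
-K_B\cdot C=H\cdot C=\deg(\phi|_C)\cdot \deg L=2 .
\]
Equivalently, $C$ is a complete intersection of two general members of $|H|$, and $|H|$ is base point free.

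Next we check the hypotheses of Theorem \ref{thm:BoundsFromNoetherPic1}.
\begin{enumerate}
\item $B$ is smooth, so $C\subset B^{sm}$ trivially.
\item $\Lambda\cdot C=-K_B\cdot C=2>0$.
\item $\rho(B)=1$.
\item $Z=\pi^{-1}(C)$ is smooth. Since $Z$ is the intersection of two general members of the base point free system $|\pi^*H|$ on $X$, Bertini's theorem makes $Z$ smooth away from $X_{sing}$. The singularities of $X$ are terminal, so $\dim X_{sing}\le 1$ in the fourfold $X$. Two general members of a base point free system therefore miss $X_{sing}$ entirely, so $Z$ is a smooth surface.
\end{enumerate}
By generality, $C\not\subset\Sigma_{X/B}$ and $C$ is not contained in the divisor where torsion sections coincide. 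This keeps us in the setting of Corollary \ref{cor:MWNSSingInjTotal}, although only the rank matters here.

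With these hypotheses, Theorem \ref{thm:MWNSPic1} gives $\operatorname{rk}\MW(X/B)\le \operatorname{rk}\NS(Z)-2$. Adjunction gives $\omega_Z=\pi|_Z^*(\omega_C\otimes\mathcal{L}_C)$ with $\deg\mathcal{L}_C=\Lambda\cdot C=2$. Proposition \ref{prop:Noether} then yields $\operatorname{rk}\NS(Z)\le 10\cdot 2+2\cdot 2=24$, and so $\operatorname{rk}\MW(X/B)\le 22$.

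The main point to verify carefully is the smoothness of $Z$ at singular points of $X$, and more precisely the dimension estimate for $X_{sing}$ used in step (4). If that estimate were insufficient, I would instead take $Z$ to be the minimal resolution $\widehat Z$ of Notation \ref{remark_notation}. I would then use Lemma \ref{lemma:bsaZ} together with Theorem \ref{thm:InjMW} to get the injection, and bound $\operatorname{rk}\NS(\widehat Z)$ by Noether's formula as before.
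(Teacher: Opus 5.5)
Your proof is correct and follows the paper's argument. The paper also takes $C$ to be the preimage of a general line in $\mathbb{P}^3$, a genus-$2$ curve with $-K_B\cdot C=2$, and bounds the rank by $10\cdot 2+2\cdot 2-2=22$; it phrases this via movability of the family, while you use $\rho(B)=1$ and Theorem \ref{thm:BoundsFromNoetherPic1}, and your Bertini argument that $Z$ avoids $X_{sing}$ supplies the smoothness of $Z$ that the paper only asserts.
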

\begin{proof} The double cover of the family of lines is a movable family of curves of genus $2$, with general smooth member $C$. Then $Z$ can be chosen to be smooth  and  $\operatorname{rk} \MW(X/B)  \leq  10(-K_B \cdot C ) +2g(C) -2 =20+2 \leq 22.$ \end{proof}

\begin{proposition}
 Let $B$ be the double cover of a \textit{ general} quadric in $\mathbb{P}^4$ branched along an octic. Then  
\[\operatorname{rk} \MW(X/B)  \leq 20 .\]
\end{proposition}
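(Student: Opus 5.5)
The plan is to exhibit a movable family of smooth elliptic curves $C\subset B$ with $-K_B\cdot C=2$, and then apply Theorem \ref{thm:BoundsFromNoetherPic1}. Since $\rho(B)=1$, $B$ is smooth Fano with $\Lambda=-K_B$, and $g(C)=1$, that theorem gives
\[
\operatorname{rk}\MW(X/B)\le 10\cdot 2+2\cdot 1-2=20 .
\]

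Let $p\colon B\to Q\subset\mathbb{P}^4$ be the double cover. The branch octic is $R=Q\cap Q_4$ for a quartic hypersurface $Q_4$. Since $r(B)=1$ and $H=p^*\mathcal{O}_Q(1)$, we have $-K_B=H$. I take $C=p^{-1}(L)$ for a general line $L\subset Q$.
\begin{itemize}
\item Lines on a smooth quadric threefold form a smooth $3$-dimensional family. They cover $Q$, with a $1$-dimensional family through each point.
\item For general $(Q,R)$, a general line $L$ meets $R$ transversally in $\deg(Q_4|_L)=4$ distinct points. This is a dimension count: lines tangent to $R$ form a proper closed subfamily.
\item Hence $C\to L$ is a double cover of $\mathbb{P}^1$ branched at $4$ points, so $C$ is a smooth curve of genus $1$.
\item $-K_B\cdot C=H\cdot C=2\,(\mathcal{O}_Q(1)\cdot L)=2$.
\end{itemize}
The curves $C$ form a movable family with no base points in $B$, because the lines $L$ have none in $Q$. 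In particular $C\subset B=B^{sm}$ and $\Lambda\cdot C=2>0$.

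It remains to arrange that $C\not\subset\Sigma_{X/B}$ and that $Z=\pi^{-1}(C)$ is smooth.
\begin{itemize}
\item The first holds for general $C$, since the family dominates $B$.
\item For smoothness, $X$ has terminal singularities, so $X_{sing}$ has codimension $\ge 3$ and $\pi(X_{sing})$ has dimension $\le 1$. The curves meeting a fixed curve in $B$ form a proper subfamily, so a general $C$ avoids $\pi(X_{sing})$.
\item Over the smooth locus, I argue as in the proof of Theorem \ref{main2}. The evaluation map from the universal curve over the (open) parameter space $T$ to $B$ is smooth on an open set, so the incidence variety $(\text{universal curve})\times_B X$ is smooth there. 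Generic smoothness applied to its projection to $T$ then makes $Z_t=\pi^{-1}(C_t)$ smooth for general $t$.
\end{itemize}
With this, Theorem \ref{thm:BoundsFromNoetherPic1} (equivalently Theorem \ref{thm:MWNSPic1} together with Proposition \ref{prop:Noether}, using $\deg\mathcal{L}_C=2$ and $g(C)=1$) yields the bound $20$.

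The main obstacle is the transversality step: showing that the general line meets $R$ in four distinct points and that the universal family is smooth over an open set. This is where the generality of the quadric (and of the octic) is used. I would first try a direct incidence-dimension count on the $3$-dimensional family of lines. If that is delicate, I would fall back on the fact that the family is base point free on $Q$ and use Bertini-type arguments on the universal line.
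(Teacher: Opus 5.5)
Your proposal is correct and follows the paper's argument: the paper also takes the double covers $C=p^{-1}(L)$ of lines $L\subset Q$, which form a movable base-point-free family of genus-one curves with $-K_B\cdot C=2$. It then chooses $C$ with $Z=\pi^{-1}(C)$ smooth and reads off $10\cdot 2+2\cdot 1-2=20$; you supply details the paper omits. Two small tightenings: your generic-smoothness step needs the evaluation map $\mathcal{C}=\mathcal{L}\times_Q B\to B$ to be smooth on all of $\mathcal{C}$, not merely on an open subset, and this holds because it is the base change of the $\mathbb{P}^1$-bundle of lines $\mathcal{L}\to Q$. Also, transversality of a general line to $R$ uses only the smoothness of $R$, which smoothness of $B$ already forces, not any generality of $(Q,R)$.
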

\begin{proof} The double covers of the family of lines, is a movable base point free family of curves of genus $1$, with general smooth member $C$.    Then $Z$ can be chosen to be smooth  and 
$$\operatorname{rk} \MW(X/B)  \leq  10(-K_B \cdot C ) +2-2 \leq 20 \,.$$ \end{proof}

 \begin{proposition}
    $B$ is   a special Gushel-Mukai threefold (the general double cover of a section of  a Plücker embedding of $Gr(2,5)$ by a codimension $3$ subspace) and $(-K_B)^3=10$. Then
    \[
	\operatorname{rk} \MW(X/B)  \leq  18.
	\]
\end{proposition}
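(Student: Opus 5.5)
We follow the pattern of the preceding cases: we find a curve $C \subset B$ with $-K_B \cdot C = 2$, so that $Z=\pi^{-1}(C)$ is birationally a K3 surface, and then apply Proposition \ref{prop:K3appearance1} together with Theorem \ref{thm:BoundsFromNoetherPic1}. Since $\rho(B)=1$ and $B$ is Fano with $\Lambda=-K_B$, Proposition \ref{prop-positiveintPic1} (equivalently Corollary \ref{cor:bsaNonNegMovANDPic1}) supplies the positivity hypotheses on $b_{S_a}\cdot C$ for \emph{every} curve $C$. So movability is not needed; we only need $C$ smooth, rational, contained in $B^{sm}=B$ and not contained in $\Sigma_{X/B}$.

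\textbf{The curve.} Let $\theta: B \to Y$ be the double cover, where $Y = Gr(2,5)\cap \PP^6$ is the smooth quintic del Pezzo threefold (index $2$, $H_Y^3=5$). Its branch locus is $R \in |\mathcal{O}_Y(2)|$ and $-K_B=\theta^*H_Y$. The variety $Y$ carries a two-dimensional family of lines, and a line through a general point of $Y$ exists. A line $\ell\subset Y$ has $H_Y\cdot\ell=1$, so it meets $R$ in $2$ points.

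If $\ell$ is tangent to $R$ at one point, then $\theta^{-1}(\ell)$ splits into two smooth rational curves $C, C'$ with $-K_B\cdot C = 1$. More generally, $\theta^{-1}(\ell)$ for general $\ell$ is a smooth rational curve (a double cover of $\PP^1$ branched at $2$ points) with
\[
-K_B\cdot \theta^{-1}(\ell)=\theta^*H_Y\cdot\theta^{-1}(\ell)=2 H_Y\cdot \ell = 2 .
\]
We take $C=\theta^{-1}(\ell)$ for $\ell$ a general line of $Y$. These curves sweep out $B$, since lines cover $Y$. Hence a general $C$ is not contained in the divisor $\Sigma_{X/B}$, and by generality $Z$ is (birationally) K3 with $12\,\Lambda\cdot C=24$ singular fibers. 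Then Theorem \ref{thm:BoundsFromNoetherPic1} gives $\operatorname{rk}\MW(X/B)\le 10\cdot 2+0-2=18$.

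\textbf{Main obstacle.} The delicate step is checking that for a general line $\ell$ the preimage $\theta^{-1}(\ell)$ is really irreducible and smooth, and not split or singular. This requires $\ell$ to meet $R$ transversally at two distinct points. The tangent lines form a proper (one-dimensional) subfamily of the two-dimensional family of lines; this uses that $R$ is general, i.e.\ $B$ is general in its family, which matches the hypothesis ``general double cover''. I would verify this by a dimension count on the incidence of lines with the tangent bundle of $R$. If irreducibility fails, the fallback is to use the smooth split components $C$ with $-K_B\cdot C=1$. These would give rational elliptic surfaces and the even stronger bound $8$, still within the claimed $18$.
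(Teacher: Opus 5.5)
Your proof is correct and follows essentially the paper's route: take a covering family of smooth rational curves with $-K_B\cdot C=2$, so that $\rho(B)=1$ gives the positivity hypotheses, then apply Proposition \ref{prop:K3appearance1} and Theorem \ref{thm:BoundsFromNoetherPic1} to get $10\cdot 2-2=18$. The only difference is that you construct these conics explicitly as preimages $\theta^{-1}(\ell)$ of lines on the quintic del Pezzo threefold meeting the branch divisor transversally, whereas the paper cites Debarre--Kuznetsov for a two-dimensional covering family of conics.
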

   
\begin{proof}  By construction ($B$ is the double cover of a $\mathbb{P}^2$-bundle) there exists a  family of conics which dominates $B$; the family has dimension two  \cite[Theorem 7.3, Remark 7.4]{debarre2024quadricsgushelmukaivarieties}. We conclude by  Proposition \ref{prop:K3appearance1} and Theorem \ref{thm:BoundsFromNoetherPic1}.\end{proof}

\section{Bounds for elliptic varieties with $\omega_X \simeq \mathcal{O}_X$, 
of any dimension}\label{conjecture}
 Theorems \ref{thm:BoundsFromNoetherMov}, \ref{boundCY3} and Theorem \ref{thm-4foldgen}  with the Assumption \ref{Assumption} can be stated as
 \begin{theorem}\label{thm:boundsCY3and4Combined} Let  $ X \to B$ be as in Theorem \ref{thm:MMP}, with $\omega_X \simeq \mathcal{O}_X$, 
 $\dim X=3,4$.   Assume that Assumption \ref{Assumption} is satisfied if $\dim X=4$. Then
  \[\operatorname{rk} \MW(X/B) \leq  10 \cdot (\dim B +1) -2 \leq 10 \cdot  \dim X -2.\] 
In addition, if $B$ is birationally    a  Mori fiber space  $B' \to U$ with $\dim U >0$  and 
general      fiber  $F$, 
\[\operatorname{rk} \MW(X/B) \leq 10 \cdot  r(F)-2  \leq 10 \cdot (\dim F +1) -2 \leq 10 \cdot  \dim X -2.\]
 \end{theorem}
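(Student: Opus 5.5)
The plan is to treat Theorem~\ref{thm:boundsCY3and4Combined} as a repackaging of the case-by-case bounds already proved, organised through a single numerical invariant. Since $\omega_X\simeq\mathcal O_X$, we have $\Lambda\equiv -K_B$, and $\Lambda\neq 0$. The target bound is $10(\dim B+1)-2$, with $\dim B+1=\dim X$. This is exactly $10\,(-K_B\cdot C)+2g(C)-2$ for a rational curve $C$ of anticanonical degree at most $\dim B+1$; by Mori's bend-and-break, this is the maximal minimal degree of a covering family of rational curves on a Fano variety of dimension $\dim B$.

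So in each case I exhibit a curve $C$ to which Theorem~\ref{thm:BoundsFromNoetherMov}, Theorem~\ref{thm:BoundsFromNoetherPic1} or Proposition~\ref{prop:K3appearance1} applies. I then check that the resulting number $10(-K_B\cdot C)+2g(C)-2$ is at most $10(\dim B+1)-2$.

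For $\dim X=3$, Theorem~\ref{boundCY3} (via Theorem~\ref{thm:birminmodel}(7)) gives $B=\mathbb P^2$ or a blown-up Hirzebruch surface. The bounds there are $28=10\cdot 3-2$ and $18$, both at most $10\cdot 3-2$.

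For $\dim X=4$ under Assumption~\ref{Assumption}, Theorem~\ref{thm-4foldgen} gives $\operatorname{rk}\MW(X/B)\le 38=10\cdot 4-2$. The proof of that theorem is case by case: lines on $\mathbb P^3$ and $Q$, conics and lines on del Pezzo threefolds, the degree-$4$ curves of Lehmann--Tanimoto for index one, and the elliptic and genus-$2$ covers of lines in the non-very-ample cases. It covers all possibilities with $\rho(B')=1$, and each case yields at most $38$. This settles the first inequality.

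For the refined statement, suppose $B'\to U$ is a Mori fiber space with $\dim U>0$ and general fiber $F$. By Theorem~\ref{thm:MMP}, $F$ is smooth, being a smooth rational curve or a smooth del Pezzo surface under the terminality assumptions. By Corollary~\ref{coroll-ConB}, we work on the open set where $B\dashrightarrow B'$ is an isomorphism. By adjunction, $-K_B|_F=-K_F$, since the normal bundle of a general fiber is trivial.

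I choose $C\subset F$ to be a minimal rational curve of $F$ through a general point:
\begin{itemize}
\item if $F=\mathbb P^1$, take $C=F$, so $-K_B\cdot C=2=r(F)$;
\item if $F=\mathbb P^2$, take a line, so $-K_B\cdot C=3=r(F)$;
\item for other del Pezzo surfaces, take a conic-bundle fiber or ruling, so $-K_B\cdot C=2$.
\end{itemize}
Such a $C$ is smooth, rational and movable in $B$, and $\pi^{-1}(C)$ is smooth or birationally K3. Theorem~\ref{thm:BoundsFromNoetherMov}, as in Propositions~\ref{prop:4foldsMoriFib1} and~\ref{prop:4foldsMoriFib2}, then gives $\operatorname{rk}\MW\le 10\, r(F)-2$, which equals $28$ or $18$.

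The remaining inequalities are elementary. First, $r(F)\le\dim F+1$, by the Kobayashi--Ochiai bound for the Fano index. Second, $\dim F+1\le\dim B\le\dim X-1$, because $\dim U>0$.

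The main obstacle is interpretive rather than technical, and concerns two points.

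The first is the meaning of $r(F)$ when $F$ is a del Pezzo surface other than $\mathbb P^2$ or $\mathbb P^1\times\mathbb P^1$. There the Fano index is $1$, yet the bound obtained is $18$, not $8$. I would therefore read $r(F)$ as the minimal anticanonical degree of a covering family of rational curves on $F$, the "pseudo-index of the minimal family". With that reading the inequality $\le\dim F+1$ still holds by bend-and-break.

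The second is the case $\dim U=0$ in dimension $4$, where the refined statement does not apply and only the global $38$ is claimed. There I rely entirely on the completeness of the Iskovskikh list used in Section~\ref{sec:4folds}.
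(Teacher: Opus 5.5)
Your route is the paper's. There the theorem is offered purely as a restatement of Theorems \ref{thm:BoundsFromNoetherMov}, \ref{boundCY3} and \ref{thm-4foldgen}, and your case-by-case check of the curves from Sections \ref{sec:dim3} and \ref{sec:4folds} is exactly that. Two points need correcting.

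\textbf{Index-one fibers.} The discrepancy you flag is genuine, and the paper does not resolve it either. Right after the statement the paper fixes $r(F)$ as the Fano index, the largest $a$ with $-K_F=aH$. So for a del Pezzo fiber other than $\mathbb P^2$ and $\mathbb P^1\times\mathbb P^1$, the refined inequality claims $\operatorname{rk}\MW(X/B)\le 8$. But Proposition \ref{prop:4foldsMoriFib2} and Theorem \ref{thm-4foldgen}(ii), which the statement is supposed to repackage, give only $18$: they use conics with $-K_B\cdot C=2$ and $Z$ birationally K3.

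Redefining $r(F)$ as the minimal anticanonical degree of a covering family makes your argument correct. However, it then proves a weaker statement than the one written, and you should say so explicitly rather than present it as a reading of $r(F)$.

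The literal bound needs an extra idea. One possible route works at least when $B=B'$. Since $\rho(B/U)=1$, all curves in fibers are numerically proportional. Hence a $(-1)$-curve $\ell$ in a general fiber satisfies $[\ell]=\tfrac12[C]\in\overline{\mathrm{NM}(B)}$, with $\Lambda\cdot\ell=1$. Proposition \ref{prop-positiveintMov} uses only the numerical class of the curve. So Theorem \ref{thm:InjMW} and Proposition \ref{prop:K3appearance1}(2) would give $\operatorname{rk}\MW(X/B)\le 8$, provided some such $\ell$ is not contained in $\Sigma_{X/B}$; that is not automatic.

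\textbf{Dimension $3$ citation.} The hypothesis is only $\omega_X\simeq\mathcal O_X$, not $h^1(\mathcal O_X)=0$. So Theorem \ref{thm:birminmodel}(7) and Theorem \ref{boundCY3} do not apply in general; in Example \ref{ex:k0} the base is ruled over an elliptic curve. Cite Corollary \ref{cor:boundKodZeroBSmooth} instead, or use the ruling fibers as in your $F=\mathbb P^1$ argument. The bound is unchanged.
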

Here the Fano index $r(F)$ is the largest integer $a$ such that $-K_F = a H$ for $H$ an ample divisor, since $F$ is smooth, in particular Gorenstein. 
Also, for a smooth (Gorenstein) Fano variety $F$, the highest index is $r(F)=\dim F+1$ and it is attained for the projective space \cite{KobayashiOchiai1973}. \\

   We also have
 \begin{theorem}\label{thm:boundsCYanyDimCRational} Let  $ X \to B$ be as in Theorem \ref{thm:MMP}, $\Lambda \neq 0$ and $X$ Calabi-Yau. 
Assume there exists a  smooth rational curve $C \subset B^{sm}$ and one  of the following holds:
\begin{enumerate}[(a)] 
\item  there is an (irreducible) nonsingular complex variety $T$,
together with a smooth morphism
$\varphi:T\times{\mathbb P}^1_{\mathbb C}\to B$ with dense image,
contained in the smooth locus of $B$.
Regarding this as a family of morphisms $\varphi_t:{\mathbb P}^1_{\mathbb C}\to B$,
assume further that these are all embeddings,
giving a family $C_t\hookrightarrow B$ of smooth rational curves, which has no base points; let $C=C_t$ be a general curve in the family;
\item $C$ is movable;   % and that there are singular fibers
\item $\rho(B)=1$;\end{enumerate}
and  that  in cases  (b) and (c) either  $Z = \pi^{-1}(C)$ is   smooth or (birationally) a K3 surface.

Then
	\[
	\operatorname{rk} \MW(X/B) \leq 10 (- C  \cdot K_B)  -2 .
	\]
	
\end{theorem}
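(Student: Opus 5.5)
The plan is to reduce each of the three cases to an earlier structure theorem, using that for $X$ Calabi-Yau we have $\omega_X\simeq\mathcal{O}_X$, hence $\Lambda=-K_B\neq 0$. For a smooth rational curve $C\subset B^{sm}$ we have $g(C)=0$, so the bound $10(C\cdot\Lambda)+2g(C)-2$ appearing in the earlier theorems becomes $10(-K_B\cdot C)-2$.

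\emph{Case (a).} This is exactly the situation of Theorem \ref{main2}, with $\varphi$ and $T$ as stated. I would first check the hypotheses (i)--(iii) on $X$. The fibration has a section since the Mordell-Weil group is defined via a zero section $S_0$, and $\omega_X\simeq\mathcal{O}_X$ holds by assumption. For the Cohen-Macaulay and codimension-three conditions, $X$ is $\mathbb{Q}$-factorial terminal (Theorem \ref{thm:birminmodel}). Terminal singularities are rational, hence Cohen-Macaulay, and the singular locus of a terminal variety has codimension at least $3$. Theorem \ref{main2} then gives
\[
\operatorname{rk}\MW(X/B)\le 18+10d=10(-K_B\cdot C)-2 .
\]
The last equality uses adjunction for the smooth rational curve $C\subset B^{sm}$: $-K_B\cdot C=2+d$, where $d=\deg N_{C/B}$.

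\emph{Cases (b) and (c).} Here I would check that $C\cdot\Lambda=-K_B\cdot C>0$.

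In case (b), $C$ is movable and rational, and Corollary \ref{cor:bsaNonNegMovANDPic1} applies. I would spell out its argument: $-K_B\cdot C=2+\deg N_{C/B}$, and $\deg N_{C/B}\ge 0$ for a movable curve in the smooth locus, because deformations of $C$ through a general point give sections of $N_{C/B}$. I expect this to be the only delicate point. If it causes trouble, I would instead use that $C$ lies in $\overline{\mathrm{NM}(B)}$ and $-K_B=\Lambda$ is the pushforward of the effective class $-\pi_*(S_0\cdot S_0)$ (Proposition \ref{prop:fmw}), which gives $-K_B\cdot C\ge 0$ by Theorem \ref{thm:BDPaP}. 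I would then rule out equality, since it would force $\deg N_{C/B}=-2$, contradicting movability. With positivity established, Theorem \ref{thm:BoundsFromNoetherMov} applies and gives the claimed bound.

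In case (c), $\rho(B)=1$ and $\Lambda\neq 0$ is effective and nonzero, so $\Lambda$ is a positive multiple of the ample generator. Hence $\Lambda\cdot C>0$ for every curve $C$, and Theorem \ref{thm:BoundsFromNoetherPic1} gives the bound.

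In both cases (b) and (c) the remaining hypothesis, that $Z=\pi^{-1}(C)$ is smooth or birationally K3, is assumed in the statement. When $Z$ is birationally K3 we have $-K_B\cdot C=2$, so the bound $18$ coming from Proposition \ref{prop:K3appearance1} coincides with $10(-K_B\cdot C)-2$.
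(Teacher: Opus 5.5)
Your proposal is correct and is essentially the paper's proof, which simply invokes Theorem~\ref{main2} for case (a), Theorem~\ref{thm:BoundsFromNoetherMov} for case (b) and Theorem~\ref{thm:BoundsFromNoetherPic1} for case (c). Your checks of the hypotheses (terminal singularities are Cohen--Macaulay and smooth in codimension two, $\Lambda\equiv -K_B$, and $-K_B\cdot C=2+\deg N_{C/B}>0$ by adjunction as in Corollary~\ref{cor:bsaNonNegMovANDPic1}) make explicit what the paper leaves implicit; one correction to your fallback in case (b) is that effectivity of $\Lambda$ comes from its definition as the discriminant $\mathbb{Q}$-divisor, not from $S_0\cdot S_0$ (which is not an effective class), and excluding $-K_B\cdot C=0$ there again uses $\deg N_{C/B}\ge 0$, so it is your first argument (freeness of a general member of the covering family) that actually does the work.
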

 
\begin{proof}  The statement follows from  Theorems \ref{main2}, \ref{thm:BoundsFromNoetherMov} and \ref{thm:BoundsFromNoetherPic1}. \end{proof}
\begin{remark}\label{remark:CharPN} If $(B, \Lambda)$ has log canonical singularities,  and  ${-K_B \cdot C > \dim B}$ for all rational curve $C \subset B$, then $B \simeq \mathbb{P}^{\dim B}$ (\cite{fujino2026CharacterizationProjectiveSpaceLengths}, which generalizes \cite{ChoMiyaokaSheperd-Barron2003}).   Then  there always exists a rational curve $C\subset B $, $B$ as in Theorem \ref{thm:birminmodel}, with  $(- C  \cdot K_B) \leq (\dim B+1)$. 
If such a curve satisfies one of the hypotheses of  Theorem \ref{thm:boundsCYanyDimCRational}, then
\begin{equation}\label{eq:boundsdim}
	\operatorname{rk} \MW(X/B) \leq 10 (- C  \cdot K_B)  -2 \le 10 \cdot (\dim B+1)  -2 = 10 \cdot  (\dim X)  -2 .  
\end{equation}
\end{remark}

However, it is not known that  curves with the properties in Theorem \ref{thm:boundsCYanyDimCRational} and  $(- C  \cdot K_B) \leq (\dim B+1)$  exist when $\dim X >3$. On the other hand, as in the proofs of Theorem \ref{thm-4foldgen}, it is also natural to consider curves of higher genus which, as we have proved,  also provide the expected bounds.

By combining Theorems  \ref{thm:boundsCY3and4Combined} and \ref{thm:boundsCYanyDimCRational} with  equation \ref{eq:boundsdim} in Remark \ref{remark:CharPN} and extrapolating from our results, it is natural to make the following 
\begin{conjecture}\label{conjecture} Let  $ X \to B$ be as in Theorem \ref{thm:MMP}, $X$ Calabi-Yau. 
Then   $B$ is birationally    a  Mori fiber space  $B' \to U$ with %$\dim U >0$  and 
      fiber  $F$ and 
\[\operatorname{rk} \MW(X/B)  \leq 10 \cdot (\dim F +1) -2 \leq 10 \cdot \dim X -2. \]

 \end{conjecture}
 Recall that  $B$ is birationally a Mori fiber space if $\dim X \leq 5$, or if  the standard conjectures of the minimal model program hold   (Theorem \ref{thm:birminmodel}).

 \appendix
 
 \section{Bounds from physics}\label{sec:Appendix}

To every elliptic Calabi-Yau $3$-fold $X$
one can associate a six-dimensional $N=1$ supergravity theory obtained as the low-energy limit of F-theory \cite{Vafa:1996xn, Morrison:1996na, Morrison:1996pp} compactified on $X$.
In this context,
 the free 
 part of the Mordell-Weil group of $X$ encodes
 the continuous abelian gauge symmetries in the effective action, while its torsion part
  constrains the first fundamental group of the non-abelian gauge group (see, for example, the reviews \cite{Weigand:2018rez,Cvetic:2018bni} for details and references). 
 Bounds on the rank of the Mordell-Weil group, or in physics terms on the rank $r_{\rm U(1)}$ of the abelian gauge group, are motivated by general arguments 
suggesting that the number of light degrees of freedom and their interactions  in any consistent
quantum gravitational theory are severely restricted. 
 While gauge and gravitational anomalies alone do admit infinite families of theories with arbitrarily high abelian gauge rank \cite{Park:2011wv}, taking into consideration constraints from quantum consistency of probe strings \cite{Kim:2019vuc} sets tight bounds on the ranks \cite{Lee:2019skh,Martucci:2022krl,Lee:2022swr,Kim:2024eoa}.

 In \cite{Lee:2019skh}  bounds on $r_{\rm U(1)}$ have been obtained for general $N=1$ supergravity theories in six dimensions by analysing probe consistency of certain solitonic strings coupling to gravity. 
 These bounds are given by $r_{\rm U(1)} \leq  22$ for theories with $T\geq 1$ tensor multiplets (subsequently improved  to $r_{\rm U(1)} \leq  20$ in \cite{Kim:2024eoa}) and $r_{\rm U(1)} \leq  32$ for $T =0$. 
  In explicit realisations of the supergravity via F-theory, the solitonic strings arise from
  D3-branes wrapping suitable curves on $B$. If the curve $C$ is rational, movable  and without base points on $B$, the associated string couples to all abelian gauge groups because $C$ has a non-zero intersection number with the height pairing divisor associated with the Shioda map. This follows by extending arguments in \cite{CoxZucker1979} to smooth Calabi-Yau threefolds \cite{Lee:2018ihr} and led, for smooth base $B$, to the conservative bounds 
  ${\rm rk}(\MW(X/B)) \leq 32$ for $B = \mathbb P^2$ and ${\rm rk}(\MW(X/B)) \leq 20$ otherwise in \cite{Lee:2019skh}. In \cite{Martucci:2022krl} these bounds were sharpened 
   to 
   \begin{equation} \label{bound-MW}
   {\rm rk}(\MW(X/B)) \leq 10\,  C \cdot (-K_B) -2 \,,
   \end{equation}
 which also holds for elliptic Calabi-Yau fourfolds with a smooth base $B$ and $C$ rational, smooth and in a movable class without base points. 
  For elliptic threefolds, (\ref{bound-MW}) implies  
the stronger bound ${\rm rk}(\MW(X/B)) \leq 28$ if $B = \mathbb P^2$ (leading to $T=0$) and  
${\rm rk}(\MW(X/B)) \leq 18$ otherwise. 
 The physics derivation of (\ref{bound-MW}) relies on a specific interpretation of the zero modes on the probe strings. For $X$ an elliptic threefold  with rationally fibered base $B$, \cite{Lee:2022swr} claims the bound ${\rm rk}(\MW(X/B)) \leq 18$ 
 and, by combining with \cite{HajoujiOehlmann}, a list of the possible torsion groups.

 \bigskip
 
 \hrule

\bigskip

% \bibliography{Writeup-Proof3.bib}{}

 \bibliographystyle{JHEP}
 \bibliography{Writeup-Proof3_final}

%  \bibliographystyle{plain}
%\bibliographystyle{plainarxiv}
%\bibliographystyle{plainurl}
% \newpage
 
\end{document}